\tikzset{>=latex, 
	point/.style = {circle,draw,thick,minimum size=2mm,inner sep=0pt},
	point1/.style = {circle,draw,thick,minimum size=6mm,inner sep=0pt},
	hm/.style = {dotted,semithick},
	role/.style = {thick},
	tree/.style = {rounded corners=10pt, dashed, fill opacity=0.5, fill=nullscolour},
	wiggly/.style={thick,
	},
	query/.style={thick},
	itria/.style={
  draw,dashed,shape border uses incircle,
  isosceles triangle,shape border rotate=90,yshift=-1.45cm},
  square/.style={regular polygon,regular polygon sides=4}
}
\theoremstyle{plain}
\newtheorem{lemma}{Lemma}[section]
\newtheorem{theorem}[lemma]{Theorem}
\newtheorem{example}[lemma]{Example}
\newtheorem{definition}[lemma]{Definition}
\newtheorem{remark}[lemma]{Remark}
\newcommand{\clusterki}{\mbox{\begin{picture}(13,10)
			\put(0,0){\scalebox{1.3}{$\bigcirc$}}
			\put(2.9,0.35){\mbox{{\scriptsize $k_i$}}}
			\end{picture}}}
\newcommand{\clusterkj}{\mbox{\begin{picture}(13,10)
			\put(0,0){\scalebox{1.3}{$\bigcirc$}}
			\put(2.9,0.19){\mbox{{\scriptsize $k^j$}}}
			\end{picture}}}			
\newcommand{\clusterkn}{\mbox{\begin{picture}(13,10)
			\put(0,0){\scalebox{1.5}{$\bigcirc$}}
			\put(2.9,0.19){\mbox{{\scriptsize $k^\partN$}}}
			\end{picture}}}	
\newcommand{\clusterkz}{\mbox{\begin{picture}(13,10)
			\put(0,0){\scalebox{1.35}{$\bigcirc$}}
			\put(2.9,0.19){\mbox{{\scriptsize $k^0$}}}
			\end{picture}}}						
\newcommand{\tight}{tig}
\newcommand{\minmax}{minmax}
\newcommand{\atomic}{atomic}
\newcommand{\simple}{simple}
\newcommand{\qpbmp}{quasi-polysize bisimilar model property}
\newcommand{\qfbmp}{quasi-finite bisimilar model property}
\newcommand{\bmp}[1]{$({#1})$}
\newcommand{\bounded}{$L$-bounded}
\newcommand{\yy}{b}
\newcommand{\iso}{f}
\newcommand{\sC}[1]{C_{#1}^-}
\newcommand{\eC}[1]{C_{#1}^+}
\newcommand{\Bt}{\Box_\mathsf{X}}
\newcommand{\sig}{\textit{sig}}
\newcommand{\sub}{\textit{sub}}
\newcommand{\bis}{\boldsymbol{\beta}}
\newcommand{\INT}{\mathcal{P}}
\newcommand{\at}{\textit{at}}
\newcommand{\att}{\textit{a}}
\newcommand{\md}{\textit{md}}
\newcommand{\bl}{{\boldsymbol{b}}}
\newcommand{\FC}{\mathfrak F_c}
\newcommand{\KF}{\mathsf{K4}}
\newcommand{\SF}{\mathsf{S4}}
\newcommand{\SFi}{\mathsf{S5}}
\newcommand{\KFT}{\mathsf{K4.3}}
\newcommand{\GLT}{\mathsf{GL.3}}
\newcommand{\K}{\mathsf{K}}
\newcommand{\Lin}{\mathsf{Lin}}
\newcommand{\coNP}{\textup{\textsc{coNP}}}
\newcommand{\clusterk}{\mbox{\begin{picture}(10,10)
			\put(0,0){$\bigcirc$}
			\put(2.9,0.19){\mbox{{\scriptsize $k$}}}
\end{picture}}}
\newcommand{\clustern}{\mbox{\begin{picture}(10,10)
			\put(0,0){$\bigcirc$}
			\put(2.5,0.4){\mbox{{\scriptsize $n$}}}
\end{picture}}}
\newcommand{\clusterm}{\mbox{\begin{picture}(10,10)
			\put(0,0){$\bigcirc$}
			\put(1.5,0.3){\mbox{{\scriptsize $m$}}}
\end{picture}}}
\newcommand{\clusterl}{\mbox{\begin{picture}(10,10)
			\put(0,0){$\bigcirc$}
			\put(3.9,0.19){\mbox{{\scriptsize $l$}}}
\end{picture}}}
\newcommand{\clustero}
{\mbox{\begin{picture}(10,10)
			\put(0,0){$\bigcirc$}
			\put(3,0.19){\mbox{{\scriptsize $1$}}}
\end{picture}}}
\newcommand{\clustert}{\mbox{\begin{picture}(10,10)
			\put(0,0){$\bigcirc$}
			\put(3.1,0.19){\mbox{{\scriptsize 2}}}
\end{picture}}}
\newcommand{\ExpTime}{\textnormal{\sc ExpTime}\xspace}
\newcommand{\cluster}[1]{\mbox{\begin{picture}(10,10)
			\put(0,0){$\bigcirc$}
			\put(2.9,0.19){\mbox{{\scriptsize ${#1}$}}}
\end{picture}}}
\newcommand{\chain}[2]{{\mathfrak C}(\cluster{#1},{#2})}
\newcommand{\Log}{\mathsf{Log}}
\newcommand{\intord}[1]{\prec_{#1}}
\newcommand{\atom}{atom}
\newcommand{\move}{move}
\newcommand{\globalb}{global}
\newcommand{\globally}{globally}
\newcommand{\tail}{tail}
\newcommand{\tails}{tails}
\newcommand{\source}{head}
\newcommand{\rest}[2]{{#1}\mathop{\restriction}_{#2}}
\newcommand{\parent}{\mathsf{h}}
\newcommand{\partN}{N}
\newcommand{\Dt}{\Diamond_\mathsf{X}}
\newcommand{\Df}{\Diamond_\mathsf{F}}
\newcommand{\Dp}{\Diamond_\mathsf{P}}
\newcommand{\Bf}{\Box_\mathsf{F}}
\newcommand{\Bp}{\Box_\mathsf{P}}
\newcommand{\Linf}{\Lin_{<\omega}}
\newcommand{\LinZ}{\Lin_{\mathbb{Z}}}
\newcommand{\LinQ}{\Lin_{\mathbb{Q}}}
\newcommand{\LinR}{\Lin_{\mathbb{R}}}
\newcommand{\mset}[1]{\boldsymbol{M}_{#1}}
\newcommand{\sset}[1]{\boldsymbol{S}_{#1}}
\newcommand{\size}[1]{\|{#1}\|}
\newcommand{\kbound}{\boldsymbol{k}(\varphi_1,\varphi_2)}
\newcommand{\pbound}{\boldsymbol{p}_L(\varphi_1,\varphi_2)}
\newcommand{\cbound}{\boldsymbol{c}_L}
\newcommand{\step}[1]{{\bf{(s{$_#1$})}}}
\newcommand{\rn}{\boldsymbol{r}}
\newcommand{\nn}{\boldsymbol{n}}
\newcommand{\match}{$\sigma$-matching}
\newcommand{\qfin}{quasi-finite}
\newcommand{\starr}{\star}
\newcommand{\nice}{nice}
\title[A Non-Uniform View of Craig Interpolation in Modal Logics with Linear Frames]{A Non-Uniform View of Craig Interpolation in Modal Logics with Linear Frames}
\author[A.~Kurucz]{Agi~Kurucz}\revauthor{Kurucz, Agi}
\author[F.~Wolter]{Frank~Wolter}\revauthor{Wolter, Frank}
\author[M.~Zakharyaschev]{Michael~Zakharyaschev}\revauthor{Zakharyaschev, Michael}
\address{Department of Informatics\\
King's College London\\
Strand Campus, Bush House, 30 Aldwych, London  WC2B 4BG, U.K.}
\email{agi.kurucz@kcl.ac.uk}
\address{School of Computer Science and Informatics\\
University of Liverpool\\
Ashton Building,
Liverpool L69 3DR, U.K.}
\email{wolter@liverpool.ac.uk}
\address{School of Computing and Mathematical Sciences\\
Birkbeck, University of London\\
Malet Street, London WC1E 7HX, U.K.}
\email{m.zakharyaschev@bbk.ac.uk}
\date{ }
\begin{document}

\begin{abstract}
Normal modal logics extending the logic $\KFT$ of linear transitive frames are known to lack the Craig interpolation property, except some logics of bounded depth such as $\SFi$. We turn this `negative' fact into a research question and pursue a non-uniform approach to Craig interpolation by investigating the following interpolant existence problem: decide whether there exists a Craig interpolant between two given formulas in any fixed logic above $\KFT$. Using a bisimulation-based characterisation of interpolant existence for descriptive frames, we show that this problem is decidable and \coNP-complete for all finitely axiomatisable normal modal logics containing $\KFT$. It is thus not harder than entailment in these logics, which is in sharp contrast to other recent non-uniform interpolation results. 
We also extend our approach to Priorean temporal logics (with both past and future modalities) over the standard time flows---the integers, rationals, reals, and finite strict linear orders---none of which is blessed with the Craig interpolation property.
\end{abstract}

\maketitle


\section{Introduction}	

Unlike classical and intuitionistic first-order and propositional logics, numerous modal logics, $L$, do not enjoy the Craig interpolation property (CIP): they contain valid implications $\varphi \to \psi$ without an interpolant in $L$---a formula $\iota$ in the shared signature of $\varphi$ and $\psi$ such that both $\varphi \to \iota$ and $\iota \to \psi$ are also valid in $L$. Typical examples of such $L$ are first-order modal logics with constant domains between $\mathsf{K}$ and $\SFi$~\cite{DBLP:journals/jsyml/Fine79} and propositional modal logics with linear transitive Kripke frames of unbounded depth~\cite{MGabbay2005-MGAIAD,DBLP:journals/jphil/Wolter97}. There have been various attempts to classify propositional modal logics with the CIP, successful for extensions of $\mathsf{S4}$~\cite[Section~8]{MGabbay2005-MGAIAD} and unsuccessful for extensions of $\KF$ or $\mathsf{GL}$, where the CIP turned out to be undecidable~\cite[Sections~14, 17]{DBLP:books/daglib/0030819}. 

While establishing the CIP of a logic $L$ typically gives rise to further research problems---develop proof systems that admit efficient/elegant interpolant computation~\cite{DBLP:journals/apal/Kuznets18,DBLP:journals/tocl/BenediktCB16}, investigate the complexity of computing interpolants from proofs~\cite[Sections 17, 18]{Krajicek}, consider restrictions on the shape of interpolants such as in, say, Lyndon's interpolation~\cite{Lyndon59}, or employ the CIP to investigate related properties such as Beth definability~\cite{craig_1957,DBLP:journals/synthese/Feferman08}---a counterexample to the CIP has usually  terminated further research of Craig interpolants and their applications for the unfortunate logic in question.  

In this article, we take a different, non-uniform view of Craig interpolation and aim to understand interpolants also for logics $L$ without the CIP. We consider the following \emph{interpolant existence problem} (IEP) for $L$: given formulas $\varphi$ and $\psi$,  
decide whether $\varphi \to \psi$ has an interpolant in $L$. 
For $L$ without the CIP, the existence of an interpolant for $\varphi$ and $\psi$ does not follow from the validity of $\varphi\rightarrow\psi$ in $L$, and so the IEP does not reduce to validity checking (which is reducible to the IEP). A first question then is whether the former problem is 
genuinely harder than the latter one.
In fact, when the IEP was introduced~\cite{DBLP:conf/lics/JungW21,DBLP:journals/corr/abs-2007-02736}, this was shown to be the case  for modal logics with nominals and for the two-variable and guarded fragments of first-order logic. Since then, this has also been confirmed for the one-variable fragment of first-order modal logic $\mathsf{S5}$ and weak $\KF$~\cite{ourLMCS25,DBLP:conf/aiml/KuruczWZ24}.  

Here, we show that the opposite is true of propositional modal logics containing $\KFT$, the logic of linear transitive frames: while none of these logics with frames of unbounded depth has the CIP~\cite{MGabbay2005-MGAIAD,DBLP:journals/jphil/Wolter97}, interpolant  existence is nevertheless decidable in \coNP{} for finitely axiomatisable logics, and so is as hard as validity~\cite{DBLP:journals/sLogica/LitakW05}. This is the first general result on Craig interpolant existence covering a large family of modal logics and, potentially, a step towards a classification of modal logics according to the complexity of the IEP.

We proceed as follows. To begin with, we give a `folklore' characterisation of interpolant existence via bisimulations between models based on descriptive frames: $\varphi \to \psi$ does not have an  interpolant in $L$ iff $\varphi$ and $\neg\psi$ can be satisfied in 
$\sig(\varphi)\cap\sig(\psi)$-bisimilar models based on descriptive frames for $L$. If $L$ had the CIP, we could merge these two models into a single one satisfying $\varphi \land \neg \psi$ (using, say, bisimulation products~\cite{DBLP:conf/amast/Marx98}) or amalgamate the induced modal algebras~\cite{MGabbay2005-MGAIAD}, which is impossible in our case. Instead, we aim to understand the fine-grained structure of the required bisimilar models and use it to decide their existence. We show that, for some logics (such as first-order definable cofinal subframe logics), any pair of bisimilar models can be transformed into  bisimilar models of polynomial size; in other words, such logics enjoy the polysize bisimilar model property. However, for other logics like $\GLT$, not even models based on infinite Kripke frames are enough despite $\GLT$ having the finite model property. 
%

We prove, nevertheless, that every pair of bisimilar models satisfying $\varphi$ and $\neg\psi$ and based on descriptive frames for a finitely axiomatisable $L \supseteq \KFT$ can be converted to a pair of such models with an understandable structure. 
In a nutshell, their underlying frames look like a polynomial-size chain of polynomial-size clusters and tadpole-like descriptive frames that comprise a non-degenerate cluster $\{a_0,\dots, a_{k-1}\}$, for some polynomial-size $k>0$, followed by an infinite descending chain of points $\yy_n$, $n < \omega$, which are all irreflexive or all reflexive, with the internal sets (restricting possible valuations) generated as a modal algebra by the singletons $\{\yy_n\}$ and the $k$-many pairwise disjoint infinite sets \mbox{$X_i = \{a_i\} \cup \{\yy_n \mid n \equiv i \ (\text{mod}\ k) \}$}. The picture below illustrates the underlying Kripke frame and the generators of the tadpole descriptive frame with $k=2$.\\ 
\centerline{
\begin{tikzpicture}[>=latex,line width=0.4pt,xscale = 1.2,yscale = 1]
\node[point,scale = 0.7,label=above:{\footnotesize $a_0$}] (a0) at (0,0) {};
\node[point,scale = 0.7,label=below:{\footnotesize $a_{1}$}] (a1) at (1,0) {};
\draw[] (.5,0) ellipse (1 and .6);
\node[]  at (2,0) {$\dots$};
\node[]  at (2,1) {$\dots$};
\node[scale = 0.9,label=above:{\footnotesize $\yy_{2n}$}] (bn) at (3,0) {$\ast$};
\node[scale = 0.9,label=below:{\footnotesize $\yy_{2n-1}$}] (bnn) at (4,0) {$\ast$};
\node[]  at (5,0) {$\dots$};
\node[scale = 0.9,label=above:{\footnotesize $\yy_{2}$}] (b2) at (6,0) {$\ast$};
\node[scale = 0.9,label=below:{\footnotesize $\yy_{1}$}] (b1) at (7,0) {$\ast$};
\node[scale = 0.9,label=above:{\footnotesize $\yy_{0}$}] (b0) at (8,0) {$\ast$};
\draw[->] (2.5,0) to (bn);
\draw[->] (bn) to (bnn);
\draw[->] (bnn) to (4.5,0);
\draw[->] (5.5,0) to (b2);
\draw[->] (b2) to (b1);
\draw[->] (b1) to (b0);
\draw[gray,dashed,rounded corners=5] (1.5,.7) -- (.3,.7) -- (.3,-.2) -- (-.3,-.2) -- (-.3,1) -- (1.5,1);
\node[gray]  at (2,1) {$\dots$};
\node[gray]  at (2,.7) {$\dots$};
\draw[gray,dashed] (2.4,1) -- (4.6,1);
\node[gray]  at (5,1) {$\dots$};
\node[gray]  at (5,.7) {$\dots$};
\draw[gray,dashed,rounded corners=5] (2.4,.7) -- (2.7,.7) -- (2.7,-.3) -- (3.3,-.3) -- (3.3,.7) -- (4.6,.7);
\draw[gray,dashed,rounded corners=5] (5.4,.7) -- (5.7,.7) -- (5.7,-.3) -- (6.3,-.3) -- (6.3,.7) -- (7.7,.7) -- (7.7,-.3) -- (8.3,-.3) -- (8.3,1) -- (5.3,1);
\draw[gray,dashed,rounded corners=5] (2.4,-.5) -- (3.5,-.5) -- (3.5,.3) -- (4.4,.3) -- (4.4,-.5) -- (4.6,-.5);
\draw[gray,dashed,rounded corners=5] (5.4,-.5) -- (6.6,-.5) -- (6.6,.3) -- (7.5,.3) -- (7.5,-.8) -- (5.4,-.8);
\draw[gray,dashed] (2.4,-.8) -- (4.6,-.8);
\draw[gray,dashed,rounded corners=5] (1.5,-.8) -- (.7,-.8) -- (.7,.3) -- (1.2,.3) -- (1.2,-.5) -- (1.5,-.5);
\node[gray]  at (2,-.5) {$\dots$};
\node[gray]  at (2,-.8) {$\dots$};
\node[gray]  at (5,-.5) {$\dots$};
\node[gray]  at (5,-.8) {$\dots$};
\node[gray]  at (8.6,0.8) {\small $X_0$};
\node[gray]  at (7.8,-0.67) {\small $X_1$};
\end{tikzpicture}
}
%

Because of this, we say that all finitely axiomatisable $L\supseteq \KFT$ have the quasi-polysize bisimilar model property. We show that the existence of such quasi-polysize bisimilar models can be checked in NP in the size of $\varphi$ and $\psi$, for any finitely axiomatisable $L$.

Finally, we extend the developed techniques to analyse the IEP for a few Priorean temporal logics with past and future modal operators: the logic $\Lin$ of all linear frames, the logic $\Linf$ of all finite strict linear orders, and the logics $\LinQ$ of the rationals, $\LinR$ of the reals, and $\LinZ$ of the integers. We prove that $\Lin$, $\LinQ$, and $\LinR$ have the polysize bisimilar model property, while $\Linf$ and $\LinZ$ have the quasi-polysize one, with the IEP being  \coNP-complete.  
The proofs can be regarded as applications of the general method, which works for all extensions of $\KFT$, to a few concrete logics with transparent frames. 
In fact, one could read the Priorean case in parallel with the full general proof, using the former as an illustration of the latter. 

The remainder of the article is organised as follows. The introduction is concluded with a brief discussion of related work. \S\ref{prelims} contains the necessary modal logic preliminaries. \S\ref{sec:warming} gives the bisimulation-based criterion of interpolant existence and applies it to first-order definable cofinal subframe logics above $\KFT$. It also provides illustrative examples explaining why the same method does not work in general and what kind of descriptive frames might be needed. \S\ref{section:K4.3} establishes the quasi-finite  bisimilar model property of all logics above $\KFT$ and the quasi-polysize bisimilar model property of all finitely axiomatisable ones; for the latter, it gives a \coNP{}-algorithm for deciding the IEP. \S\ref{sec:temporal} extends the developed techniques to the Priorean temporal logics mentioned above.

\subsection{Related work} 
The IEP for some logics of linear frames turns out to be closely related to separability of  regular languages by first-order definable languages. Formally, the separability problem is to decide whether two input regular languages $L_1$ and $L_2$ can be separated by some language $L$ in a given class $\mathcal{L}$ in the sense that $L_{1} \subseteq L$ and $L \cap L_{2} = \emptyset$. If $\mathcal{L}$ is the class of first-order definable languages over finite words, the separability problem is equivalent to the IEP for the linear temporal logic $\mathsf{LTL}$ extending modal logic with the operators `next' and `until' over finite strict linear orders.
For regular languages of infinite words, the separability problem is equivalent to the IEP for $\mathsf{LTL}$ over the natural numbers; see~\cite{chapter:separation} for details. It was shown in~\cite{henkell1,henkell2,DBLP:journals/corr/PlaceZ14} that both of these separability problems are decidable in 2\ExpTime{} in the size of NFAs defining $L_1$ and $L_2$. It follows that the corresponding IEPs are decidable in 3\ExpTime{} in the size of $\mathsf{LTL}$-formulas. (Separability by other language classes $\mathcal{L}$ are discussed in~\cite{DBLP:journals/lmcs/Place18,DBLP:journals/lmcs/PlaceZ21}.) 
These separability results have been obtained using algebraic machinery from semigroup theory, which seems to be orthogonal to our model-theoretic approach to the IEP developed to deal with all modal logics of linear orders.
However, for finite strict linear orders and the natural numbers, the algebraic approach also provides an upper bound for the size of interpolants.

It is also worth mentioning that, for these two frame classes, the smallest modal logic with the CIP is $\mathsf{LTL}$ extended with fixed-point operators or, equivalently, monadic second-order logic (under very mild conditions on the definition of what a logic is)~\cite{DBLP:conf/csl/GheerbrantC09}. Thus, to `repair' the CIP by extending the expressive power of the logic, we require the addition of second-order features.


\section{Preliminaries}\label{prelims}

This section provides the basic definitions and facts that will be used later on in the article; consult~\cite{Goldblatt76a,Goldblatt76b,DBLP:books/daglib/0030819,DBLP:books/cu/BlackburnRV01,DBLP:books/el/07/BBW2007} for more details.


\subsection{Descriptive frames for normal modal logics.}\label{basics}
The formulas, $\varphi$, of propositional unimodal logics are built from propositional variables $p_i \in \mathcal{V}$, for some countably-infinite set $\mathcal{V} = \{p_i \mid i < \omega\}$, and constants $\top$, $\bot$ using the Boolean connectives $\neg$, $\land$, and the unary possibility operator $\Diamond$. The other Booleans and the necessity operator $\Box$ dual to $\Diamond$ are defined as standard abbreviations. We also use $\Diamond^+\varphi= \varphi\lor\Diamond\varphi$, $\Box^+\varphi=\varphi\land\Box\varphi$, and $\Diamond\Gamma=\{\Diamond\varphi\mid\varphi\in\Gamma\}$, for a set $\Gamma$ of formulas.
By a \emph{signature} we mean any 
set $\sigma \subseteq \mathcal{V}$, denoting by $\sig(\varphi)$ the (finite) set of variables in a formula $\varphi$. If $\sig(\varphi) \subseteq \sigma$, we call $\varphi$ a $\sigma$-\emph{formula\/}. We denote by $\sub(\varphi)$ the set of subformulas of $\varphi$ together with their negations, and let $|\varphi|=|\sub(\varphi)|$.

A (\emph{normal}) \emph{modal logic}, $L$, is any set of formulas that contains all Boolean tautologies, the modal axiom
$
\Box (p_0 \to p_1) \to (\Box p_0 \to \Box p_1) ,
$
and is closed under the rules of \emph{modus ponens}, uniform substitution of formulas in place of variables, and necessitation $\varphi/\Box\varphi$. The smallest such logic goes by the moniker $\K$.
Given a set $\Gamma$ of formulas and a modal logic $L$, the smallest modal logic to contain $L$ and $\Gamma$ is denoted by $L\oplus\Gamma$. We write $L\oplus\varphi$ for $L\oplus\{\varphi\}$. For example, 
\begin{align*}
\KF & = \K\oplus \Box p_0 \to \Box\Box p_0, \\
\KFT & = \KF \oplus \Box (\Box^+ p_0 \to p_1) \lor \Box (\Box^+ p_1 \to p_0),\\
\GLT & = \KFT \oplus \Box( \Box p_0 \to p_0) \to \Box p_0,\\
\Log\{(\mathbb N,<)\} & = \KFT \oplus \Diamond\top \oplus \Box (\Box p_0 \to p_0) \to (\Diamond\Box p_0 \to \Box p_0). 
\end{align*}
All logics considered in this article are extensions of $\KFT$. 

We interpret formulas in (\emph{general}) \emph{frames} $\mathfrak{F} = (W,R,\INT)$, where $R$ is a binary (accessibility) relation on a nonempty set $W$ (of worlds or, more neutrally, points) and $\INT \subseteq 2^W$ contains $\emptyset$, $W$ and is closed under $\cap$, $\neg$, and the operator
\[
\Diamond^{\mathfrak F} X = \{ x \in W \mid \exists y\in X \, x R y  \}. 
\]
The structure $\mathfrak F^+ = (\INT, \cap,\neg,\emptyset, W, \Diamond^{\mathfrak F})$ is a Boolean algebra $(\INT, \cap,\neg,\emptyset, W)$ with a normal and additive operator $\Diamond^{\mathfrak F}$ (BAO, for short). 
If $\mathfrak F^+$ is generated by a set $\mathcal X\subseteq\INT$ as a BAO, we say that the frame $\mathfrak F$ 
(or the set $\INT$) is \emph{generated by} $\mathcal X$. If $|\mathcal X|=n$, for some $n<\omega$, we call $\mathfrak F$ $n$-\emph{generated} or \emph{finitely generated\/}.
The elements of $\INT$ are called \emph{internal sets} in $\mathfrak F$. If $\INT = 2^W$, $\mathfrak{F}$ is known as a \emph{Kripke frame}; in this case, we drop $\INT$ and write $\mathfrak{F}=(W,R)$. 
A frame $\mathfrak{F}=(W,R,\INT)$ is \emph{descriptive} if the following conditions hold, for any $x,y \in W$
and any $\mathcal{X} \subseteq \mathcal{P}$:

\smallskip
\noindent
\textbf{(dif)} $x = y$ iff $\forall X \in \INT\, (x \in X \leftrightarrow y \in X)$, 

\noindent
\textbf{(\tight)} $xRy$ iff $\forall X\in \INT \, (y \in X \to x \in \Diamond^{\mathfrak F} X)$,  

\noindent
\textbf{(com)} if $\mathcal{X} \subseteq \mathcal{P}$ has the \emph{finite intersection property} (\emph{fip\/}, for short)---that is,\\
\mbox{} \hspace*{0.9cm} $\bigcap\mathcal{X}' \ne \emptyset$, for every finite $\mathcal{X}' \subseteq \mathcal{X}$---then $\bigcap\mathcal{X} \ne \emptyset$.

\smallskip
\noindent
(Frames with {\bf (dif)} are called \emph{differentiated\/}, with {\bf (\tight)} \emph{tight\/}, and with {\bf (com)} \emph{compact\/}.) Every BAO is isomorphic to $\mathfrak F^+$, for some descriptive frame $\mathfrak F$.
A finite frame is descriptive iff it is a Kripke frame~\cite[Section~8]{DBLP:books/daglib/0030819}. 

Given a signature $\sigma$,
a $\sigma$-\emph{model}
based on a frame $\mathfrak{F}=(W,R,\INT)$ is a pair $\mathfrak{M}=(\mathfrak{F},\mathfrak{v})$ with a \emph{valuation} 
 $\mathfrak{v} \colon \sigma \to \INT$. 
%
%
The \emph{atomic $\sigma$-type} of $x\in W$ in $\mathfrak{M}$ is

\[
\at^\sigma_{\mathfrak{M}}(x) = \{ p_i\mid p_i \in\sigma,\ x\in\mathfrak v(p_i)\}\cup \{ \neg p_i \mid p_i \in\sigma,\ x\notin\mathfrak v(p_i)\}.
\]
We omit $\sigma=\mathcal V$, saying simply \emph{model} and writing $\at_{\mathfrak{M}}(x)$. 
The \emph{value} of a formula $\varphi$ in $\mathfrak M$ is the set $\mathfrak{v}(\varphi) \in \INT$ computed inductively in the obvious way starting from $\mathfrak v(p_i)$, $\mathfrak v(\top) = W$ and $\mathfrak v(\bot) = \emptyset$. 
A set $X\subseteq W$ is \emph{definable} in $\mathfrak{M}$ if $X=\mathfrak{v}(\varphi)$, for some
formula $\varphi$, in which case $X \in \INT$. If every internal set $X\in\INT$ is definable in $\mathfrak M$, we say that $\mathfrak{F}$ is $\mathfrak{M}$-\emph{generated\/}. 
Every $\mathfrak F$ with countable $\INT$ is clearly $\mathfrak M$-generated, for some model $\mathfrak M$.

%

A formula $\varphi$ is \emph{true} at $x$ in $\mathfrak M$, written $\mathfrak M,x \models \varphi$, if $x \in \mathfrak{v}(\varphi)$. 
The \emph{$\sigma$-type of $x$ in} $\mathfrak{M}$ is the set $t^\sigma_{\mathfrak{M}}(x)$ of all $\sigma$-formulas that are true at $x$ in $\mathfrak M$. For a set $X$ of points in $\mathfrak M$, we let
$t_{\mathfrak{M}}^{\sigma}(X)=\bigl\{t_{\mathfrak{M}}^{\sigma}(x)\mid x\in X\bigr\}$. As before, we drop $\sigma=\mathcal V$. 

A set $\Gamma$ of formulas is \emph{finitely satisfiable} in $\mathfrak M$ if, for every finite subset $\Gamma' \subseteq \Gamma$, there is $x' \in W$ such that $\Gamma' \subseteq t_{\mathfrak{M}}(x')$; $\Gamma$ is \emph{satisfiable} in $\mathfrak M$ if $\Gamma \subseteq t_{\mathfrak{M}}(x)$, for some $x \in W$. 
 Using these definitions and notations, we can equivalently reformulate
conditions {\bf (dif)}, {\bf (\tight)}, and {\bf (com)} for $\mathfrak M$-generated frames as follows: for any $x,y \in W$ and any set $\Gamma$ of formulas,

\smallskip
\noindent
\textbf{(dif)} $x = y$ iff $t_{\mathfrak{M}}(x) = t_{\mathfrak{M}}(y)$,

\noindent
\textbf{(\tight)} $xRy$ iff $\Diamond t_{\mathfrak{M}}(y) \subseteq t_{\mathfrak{M}}(x)$ iff 
$\{\varphi\mid\Box\varphi\in t_{\mathfrak{M}}(x)\}\subseteq t_{\mathfrak{M}}(y)$,


\noindent
\textbf{(com)} if $\Gamma$ is finitely satisfiable in $\mathfrak M$, then $\Gamma$ is satisfiable in $\mathfrak M$.

\smallskip

A frame $\mathfrak F$ \emph{satisfies} $\Gamma$ if there is a model $\mathfrak M$ based on
 $\mathfrak F$ satisfying $\Gamma$.
Further, $\varphi$ is \emph{valid} in $\mathfrak F$, written $\mathfrak F \models \varphi$, if $\mathfrak M,x \models \varphi$, for any model $\mathfrak M$ based on $\mathfrak F$ and any $x \in W$. We call $\mathfrak F$ a \emph{frame for} a logic $L$ and write $\mathfrak F \models L$ if $\mathfrak F \models \varphi$, for all $\varphi \in L$. 
Conversely, any class $\mathcal{S}$ of general frames \emph{determines} the modal logic $\Log\, \mathcal{S} = \{\varphi \mid \forall \mathfrak F \in \mathcal{S} \ \mathfrak F \models \varphi\}$. We write $\Log(\mathfrak F)$ for $\Log(\{\mathfrak F\})$. 
%
%
%


A set $\Gamma$ of formulas is $L$-\emph{consistent} if
$(\bigwedge\Gamma'\to\bot)\notin L$, for any finite $\Gamma'\subseteq\Gamma$.
%
%
We require the following well-known fact (see,  e.g.,~\cite[Section 8.6]{DBLP:books/daglib/0030819}): 

\begin{lemma}\label{univframe}
%
%
%
%
For any modal logic $L$ and any finite signature $\sigma$, if $\Sigma$ is an $L$-consistent set of $\sigma$-formulas, then $\Sigma$ is satisfiable in a $\sigma$-model $\mathfrak M$ based on a finitely $\mathfrak M$-generated descriptive frame for $L$. 
\end{lemma}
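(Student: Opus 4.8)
The plan is to build the \emph{$\sigma$-canonical model} for $L$ and read off every required property from a truth lemma together with the BAO duality recalled in \S\ref{basics}.

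First I would take as worlds the maximal $L$-consistent sets of $\sigma$-formulas, i.e. $L$-consistent sets $\Gamma$ of $\sigma$-formulas such that, for each $\sigma$-formula $\varphi$, either $\varphi\in\Gamma$ or $\neg\varphi\in\Gamma$. Since $\sigma$ is finite there are only countably many $\sigma$-formulas, and a routine Lindenbaum argument shows that every $L$-consistent set of $\sigma$-formulas---in particular $\Sigma$---extends to such a maximal $\Gamma_0$. Define $\Gamma R\Delta$ iff $\{\varphi\mid\Box\varphi\in\Gamma\}\subseteq\Delta$, set $\mathfrak v(p)=\hat p:=\{\Gamma\mid p\in\Gamma\}$ for $p\in\sigma$ (and, say, $\mathfrak v(p)=\emptyset$ off $\sigma$), and let $\INT=\{\hat\varphi\mid\varphi\text{ a }\sigma\text{-formula}\}$ with $\hat\varphi=\{\Gamma\mid\varphi\in\Gamma\}$. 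That $\INT$ contains $\emptyset,W$ and is closed under $\cap,\neg,\Diamond^{\mathfrak F}$ holds because these operations on the sets $\hat\varphi$ mirror the Boolean and modal connectives; the only non-routine closure, under $\Diamond^{\mathfrak F}$, uses the standard Existence Lemma (if $\Diamond\varphi\in\Gamma$ then $\Gamma R\Delta$ and $\varphi\in\Delta$ for some world $\Delta$), itself a Lindenbaum argument. The same induction yields the truth lemma $\mathfrak v(\varphi)=\hat\varphi$ for every $\sigma$-formula, i.e. $t^\sigma_{\mathfrak M}(\Gamma)=\Gamma$; in particular $\mathfrak M,\Gamma_0\models\Sigma$, so $\Sigma$ is satisfiable.

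Next I would check the descriptive conditions via their $\mathfrak M$-generated reformulations. The frame is $\mathfrak M$-generated since every internal set $\hat\varphi=\mathfrak v(\varphi)$ is definable. For {\bf (dif)}, distinct worlds are distinct maximal sets and so differ on some $\sigma$-formula, whence $t^\sigma_{\mathfrak M}(\Gamma)\neq t^\sigma_{\mathfrak M}(\Delta)$. For {\bf (\tight)}, the biconditional is exactly the definition of $R$ combined with the truth lemma. For {\bf (com)}, if a set $\Gamma$ of $\sigma$-formulas is finitely satisfiable, then by the truth lemma each finite subset lies in some world and is therefore $L$-consistent, so $\Gamma$ is $L$-consistent, extends by Lindenbaum to a world $\Delta$, and $\Gamma\subseteq t^\sigma_{\mathfrak M}(\Delta)$. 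Finally $\mathfrak F$ is $|\sigma|$-generated, hence finitely generated, because $\mathfrak F^+$ is generated as a BAO by $\{\hat p\mid p\in\sigma\}$: every element of $\INT$ is the value of a $\sigma$-formula, i.e. a Boolean--modal combination of these generators.

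The main obstacle is establishing $\mathfrak F\models L$, i.e. validity under \emph{all} valuations into $\INT$, not merely under the canonical $\mathfrak v$. Here I would invoke the duality $\mathfrak F\models\chi$ iff $\mathfrak F^+\models(\chi\approx\top)$, valuations into $\INT$ being precisely assignments into the algebra $\mathfrak F^+$. The map $[\varphi]\mapsto\hat\varphi$ is a BAO-isomorphism from the Lindenbaum--Tarski algebra of $L$ over $\sigma$ onto $\mathfrak F^+$: it is well defined and injective because $\hat\varphi=\hat\psi$ iff $\widehat{\varphi\leftrightarrow\psi}=W$ iff $\varphi\leftrightarrow\psi\in L$, using the truth lemma and the fact that a $\sigma$-formula valid throughout $\mathfrak M$ is an $L$-theorem (otherwise its negation is consistent and holds at some world). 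That Lindenbaum--Tarski algebra lies in the variety of BAOs validating $L$, so $\mathfrak F^+$ does too, whence $\mathfrak F\models L$. This transfer from the canonical valuation to arbitrary ones is the only genuinely non-trivial point; everything else is a finite-signature rerun of the classical canonical-model machinery.
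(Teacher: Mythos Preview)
Your argument is correct and is precisely the standard $\sigma$-canonical model construction. The paper does not actually prove this lemma: it states it as a well-known fact and refers the reader to \cite[Section~8.6]{DBLP:books/daglib/0030819}, where exactly this construction (the canonical frame and model restricted to a finite signature, yielding the free $|\sigma|$-generated algebra in the variety corresponding to $L$) is carried out. So your proposal matches what one finds behind the citation; there is nothing to compare.
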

%

%
By Lemma~\ref{univframe}, every modal logic $L$ is determined by the class of all descriptive frames for $L$.
A logic $L$ is \emph{Kripke complete} if $L$ is determined by the class of all Kripke frames for $L$. 
$L$ is \emph{d-persistent} (aka \emph{canonical}) if $(W,R,\INT) \models L$ implies $(W,R) \models L$, for any descriptive frame $(W,R,\INT)$. $L$ has the \emph{finite model property} (\emph{fmp}) if it is determined by its finite (Kripke) frames. 

The smallest logic $\KFT$ we are interested in is \mbox{d-persistent}; its descriptive and Kripke frames $\mathfrak F=(W,R,\INT)$ are \emph{transitive}  and \emph{weakly connected\/}, that is,
\begin{align*}
& \forall x,y,z \in W\, (xRy \land yRz \to xRz),\\
& \forall x,y,z \in W\, (xRy\land xRz \to y = z \lor yRz \lor zRy).
\end{align*}
$\GLT$, on the contrary, is not d-persistent yet has the fmp. In fact, all extensions of $\KFT$ are Kripke complete~\cite{DBLP:journals/jsyml/Fine74}.

%
%
%


A frame $\mathfrak F' = (W',R',\INT')$ is a \emph{subframe} of a frame $\mathfrak F = (W,R,\INT)$ if $W' \subseteq W$, $R' = \rest{R}{W'}=R\cap(W'\times W')$, and $\INT' \subseteq \INT$. 
For every internal set $V\in\INT$, the frame
$\rest{\mathfrak F}{V}=\bigl(V,\rest{R}{V},\rest{\INT}{V}\bigr)$ with $\rest{\INT}{V}=\{V\cap X\mid X\in\INT\}$ is a subframe of $\mathfrak F$.
For a model $\mathfrak M=(\mathfrak F,\mathfrak v)$, we let 
$\rest{\mathfrak M}{V}=(\rest{\mathfrak F}{V},\rest{\mathfrak v}{V})$, where
$\rest{\mathfrak v}{V}(p)=V\cap\mathfrak v(p)$. 
Given a frame $\mathfrak F = (W,R,\INT)$ with transitive $R$ and a point $x\in W$, we define the frame 
$\mathfrak F_x = (W_x,R_x,\INT_x)$ by taking 
$W_x = \{y \in W \mid xR^+y\}$, where $R^+$ is the \emph{reflexive closure} of $R$ (that is, $R^+=R\cup\{(y,y) \mid y\in W\}$),
$R_x = \rest{R}{W_x}$, and $\INT_x = \rest{\INT}{W_x}$. 
We call $\mathfrak{F}$ \emph{rooted} if $\mathfrak F = \mathfrak F_x$, for some $x \in W$, in which case $x$ is called a \emph{root} of $\mathfrak F$.
Note that $\mathfrak F_x$ is not necessarily a subframe of $\mathfrak F$, but we have:
\begin{equation}\label{rootdescr}
\mbox{if $\mathfrak F$ is descriptive and transitive, then $\mathfrak F_x$ is descriptive as well.}
\end{equation}
Indeed, suppose $\mathfrak F = (W,R,\INT)$ is descriptive and $x\in W$.
Conditions \textbf{(dif)} and \textbf{(\tight)} for $\mathfrak F_x$ are straightforward and left to the reader. 
To establish  \textbf{(com)},
consider any $\mathcal{X}_x \subseteq \mathcal{P}_x$ with the fip. Then 
\begin{equation*}
\mathcal{X} = \{V \in \mathcal{P} \mid V \cap W_x \in \mathcal{X}_x\} \cup \{V \in \mathcal{P} \mid W_x \subseteq V\}
\end{equation*}
also has the fip, and so $\bigcap\mathcal{X} \ne \emptyset$. 
To prove that $\bigcap\mathcal{X}_x \ne \emptyset$, it suffices to show that $\bigcap \{V \in \mathcal{P} \mid W_x \subseteq V\} \subseteq W_x$.
To this end,
suppose on the contrary that $y \in \bigcap \{V \in \mathcal{P} \mid W_x \subseteq V\}$ and $y \notin W_x$. Then \textbf{(dif)} and \textbf{(\tight)} give  $Z,Y \in \INT$ such that $x\in Z$, $y\notin Z$, $y \in Y$, and $x \in \Box\neg Y$. It follows that $Z\cup\neg Y \in \INT$, $W_x \subseteq Z\cup\neg Y$, and
so $y \in Z\cup\neg Y$, which is a contradiction.

\subsection{The structure of linear finitely-generated descriptive frames}\label{ss:structure} 
From now on, all frames $\mathfrak F=(W,R,\INT)$ are assumed to be  rooted frames for $\KFT$, so their relation $R$ is always transitive and 
\emph{connected}: 
\begin{equation}\label{connected}
\forall x,y \in W \, \big(xRy \lor x=y \lor yRx \big). 
\end{equation}
A \emph{cluster} in $\mathfrak F$ is any set of the form $C(x) = \{x\} \cup \{ y \in W \mid xRy \land y R x\}$ with $x\in W$. If $x$ is \emph{irreflexive}, i.e., $xRx$ does not hold, $C(x)$ is called a \emph{degenerate cluster} and depicted as $\bullet$; a  \emph{reflexive} $x$ (for which $xRx$) is depicted as $\circ$. A non-degenerate cluster with $k \ge 1$ (reflexive) points is depicted as $\clusterk$. 
The next  example will be used many times in what follows. 

\begin{example}\label{k-omega}\em
Consider the frame $\mathfrak F=(W_k,R_{k\bullet},\INT_k)$, where $0 < k < \omega$,
%
\begin{align*}
& W_k=A_k\cup\{\yy_n\mid  n < \omega \},\qquad A_k= \{a_0,\dots, a_{k-1} \},\\
& xR_{k\bullet} y\ \text{ iff }\ \text{either } x = a_i \text{ or } x=\yy_n,\ y=\yy_m\text{ and }m<n,
\end{align*}
%
and $\INT_k$ is generated by the sets 
$X_i = \{a_i\} \cup \{\yy_n\mid n < \omega,\ n \equiv i \ (\text{mod}\ k)  \}$, for $i <k$, and $\{\yy_n\}$, for $n < \omega$. 
(For instance, $\INT_1$ consists of all finite subsets of $\{\yy_n\mid n<\omega\}$ and their complements in $W_1$.)
The underlying Kripke frame $(W_k,R_{k\bullet})$ is shown in the picture below, where all $\ast$ are $\bullet$. 
%
%

\begin{center}
\begin{tikzpicture}[>=latex,line width=0.5pt,xscale = 1.2,yscale = 1]
\node[point,scale = 0.7,label=below:{\footnotesize $a_0$}] (a0) at (0,0) {};
\node[]  at (.5,0) {$\dots$};
\node[point,scale = 0.7,label=below:{\footnotesize $a_{k-1}\ $}] (akk) at (1,0) {};
\draw[] (.5,-.15) ellipse (1 and .4);
\node[]  at (2,0) {$\dots$};
\node[scale = 0.9,label=below:{\footnotesize $\yy_n$}] (bn) at (3,0) {$\ast$};
\node[scale = 0.9,label=below:{\footnotesize $\yy_{n-1}$}] (bnn) at (4,0) {$\ast$};
\node[]  at (5,0) {$\dots$};
\node[scale = 0.9,label=below:{\footnotesize $\yy_{2}$}] (b2) at (6,0) {$\ast$};
\node[scale = 0.9,label=below:{\footnotesize $\yy_{1}$}] (b1) at (7,0) {$\ast$};
\node[scale = 0.9,label=below:{\footnotesize $\yy_{0}$}] (b0) at (8,0) {$\ast$};
\draw[->] (2.5,0) to (bn);
\draw[->] (bn) to (bnn);
\draw[->] (bnn) to (4.5,0);
\draw[->] (5.5,0) to (b2);
\draw[->] (b2) to (b1);
\draw[->] (b1) to (b0);
\end{tikzpicture}
\end{center}
%
It is not hard to see that
\begin{equation}\label{infiniteincl}
\mbox{
for any $X\in\INT_k$, $X$ is infinite iff $A_k\cap X\ne\emptyset$,}
\end{equation}
and so  $A_k\notin\INT_k$.
For every nonempty $X\in\INT_k$, the set $\Diamond^{\mathfrak F}X$ is cofinite in $W_k$. Using these observations, it is readily checked that $\mathfrak F$ is a descriptive frame;
we denote it by $\chain{k}{\bullet}$.
Clearly, $\chain{k}{\bullet}$ is $\mathfrak M$-generated for $\mathfrak M$ with $\mathfrak{v}(p_i)=X_i$ if $i<k$, and $\mathfrak{v}(p_i)=\emptyset$ otherwise. 
The descriptive frame $(W_k,R_{k\circ},\INT_k)$ with $R_{k\circ}=R_{k\bullet}\cup\{ (\yy_n,\yy_n)\mid n<\omega\}$ is denoted  by $\chain{k}{\circ}$; $(W_k,R_{k\circ})$ looks like in the picture above,
with all $\ast=\circ$. Note that $\chain{k}{\bullet}\models \GLT$ but $\chain{k}{\circ} \not\models \GLT$, cf.~Example~\ref{e:canform}~$(a)$.
\hfill $\dashv$
\end{example}

The next lemma, originating in~\cite{DBLP:journals/jsyml/Fine74}, will play a key role in our subsequent constructions. 
Let $\mathfrak{M}$ be a model based on a 
rooted frame $\mathfrak{F} = (W,R,\INT)$ for $\KFT$, 
and let $\Gamma$ be a set of formulas. 
A point $x\in W$ is called 
\emph{$\Gamma$-maximal in} $\mathfrak{M}$ if $\mathfrak{M},x \models \Gamma$, and whenever $xRy$ and $\mathfrak{M},y\models \Gamma$, then $yRx$. We denote by $\max_{\mathfrak{M}} \Gamma$ the set of all $\Gamma$-maximal points in $\mathfrak{M}$. 

\begin{lemma}\label{maxpoints}
Suppose $\Gamma$ is a set of formulas and 
$\mathfrak M$ a model based on a 
rooted descriptive frame $\mathfrak{F}= (W,R,\INT)$ for $\KFT$. 
Then the following hold\textup{:}
%

\smallskip
\noindent
{\rm\bf (modal saturation)}
if $\mathfrak M, x\models \Diamond \bigwedge\Gamma'$ for every finite $\Gamma' \subseteq \Gamma$, then there is $y$ with $xRy$ and $\mathfrak M, y \models \Gamma$\textup{;} 

\smallskip
\noindent
{\rm\bf (maximal points)}
if there is $x$ with $\mathfrak M, x \models \Gamma$, then $\max_{\mathfrak{M}}\Gamma \ne \emptyset$.
\end{lemma}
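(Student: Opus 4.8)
The plan is to prove the two assertions essentially independently, using only compactness \textbf{(com)} and tightness \textbf{(\tight)} of the descriptive frame, and arguing throughout with internal sets directly (rather than with the type-reformulations, which presuppose $\mathfrak M$-generation). Rootedness enters solely through the observation that, for a rooted $\KFT$-frame, connectedness~\eqref{connected} makes the family of clusters of $\mathfrak F$ \emph{linearly} ordered; transitivity gives the cluster structure its usual shape.

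For \textbf{modal saturation}, I would form the family
\[
\mathcal X=\{\mathfrak v(\gamma)\mid\gamma\in\Gamma\}\cup\{Z\in\INT\mid x\in\Box^{\mathfrak F}Z\},
\]
all of whose members are internal, where $x\in\Box^{\mathfrak F}Z$ abbreviates that every $R$-successor of $x$ lies in $Z$. To check the fip, take a finite subfamily mentioning $\gamma_1,\dots,\gamma_n\in\Gamma$ and $Z_1,\dots,Z_m$ with $x\in\Box^{\mathfrak F}Z_j$. The hypothesis applied to $\Gamma'=\{\gamma_1,\dots,\gamma_n\}$ (with $\bigwedge\emptyset=\top$ covering $n=0$) yields $z$ with $xRz$ and $z\in\bigcap_i\mathfrak v(\gamma_i)$; since $xRz$ and $x\in\Box^{\mathfrak F}Z_j$, also $z\in Z_j$, so $z$ witnesses a nonempty finite intersection. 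By \textbf{(com)} there is $y\in\bigcap\mathcal X$; then $y\models\Gamma$, and $y$ lies in every $Z$ with $x\in\Box^{\mathfrak F}Z$, which by \textbf{(\tight)} is exactly the condition $xRy$.

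For \textbf{maximal points}, put $S=\bigcap_{\gamma\in\Gamma}\mathfrak v(\gamma)$, nonempty by assumption, let $\mathcal K$ be the set of clusters meeting $S$, and choose a point $c_C\in C\cap S$ for each $C\in\mathcal K$. I would then apply \textbf{(com)} to
\[
\mathcal Y=\{\mathfrak v(\gamma)\mid\gamma\in\Gamma\}\cup\{Z\in\INT\mid C\in\mathcal K,\ Z\supseteq\{z\mid c_CR^+z\}\}.
\]
For the fip, a finite subfamily mentions finitely many clusters $C_1,\dots,C_k\in\mathcal K$; by linearity they have a greatest element $C^\dagger$, and its witness $c^\dagger=c_{C^\dagger}\in S$ satisfies all the chosen $\gamma_i$ and lies in each chosen cone-set (as $C_j\le C^\dagger$ gives $c_{C_j}R^+c^\dagger$). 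Thus \textbf{(com)} produces $w\in\bigcap\mathcal Y$, so that $w\in S$ and $c_CR^+w$ for every $C\in\mathcal K$; hence $w$ sits in a cluster lying above every cluster of $\mathcal K$. Finally $w\in\max_{\mathfrak M}\Gamma$: it satisfies $\Gamma$, and if $wRz$ with $\mathfrak M,z\models\Gamma$ then the cluster of $z$ is in $\mathcal K$, hence $\le$ that of $w$, while $wRz$ forces it $\ge$ that of $w$; so $z$ and $w$ share a cluster and $zRw$.

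The main obstacle is the cluster-and-reflexivity bookkeeping concealed in the cone-sets. Expressing ``the future cone $\{z\mid c_CR^+z\}$'' by internal supersets is harmless for the fip direction (immediate from the meaning of $\Box^{+}$), but the converse used at the end---that $w\in\bigcap\{Z\in\INT\mid Z\supseteq\{z\mid c_CR^+z\}\}$ entails $c_CR^+w$---requires a genuine separation argument combining \textbf{(dif)} and \textbf{(\tight)}, exactly the manipulation carried out in the proof of~\eqref{rootdescr} (produce, for $\neg(c_CR^+w)$, an internal $Z$ containing the cone but excluding $w$). The second delicate point is the systematic use of linearity~\eqref{connected} of the cluster order, both to guarantee that every finite set of chosen clusters has a greatest element (so the fip holds) and to make ``a cluster above every cluster of $\mathcal K$'' meaningful; without rootedness and weak connectedness this global-maximum argument would collapse.
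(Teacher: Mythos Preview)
Your argument is correct. The modal-saturation part is a clean application of \textbf{(com)} and \textbf{(\tight)}, and for maximal points your cone-compactness construction works: the fip follows from linearity of the cluster order, and the separation step recovering $c_CR^{+}w$ from ``$w$ lies in every internal superset of the cone'' is exactly the manipulation carried out in the proof of~\eqref{rootdescr}, using \textbf{(dif)} to separate $c_C$ from $w$ and \textbf{(\tight)} to separate $w$ from the strict successors of $c_C$.

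There is nothing to compare against: the paper does not prove Lemma~\ref{maxpoints} but attributes it to Fine. It only remarks afterwards that, in the special case of \emph{finitely generated} descriptive frames, the existence of maximal points also falls out of Lemma~\ref{lem:descr0}, since converse well-foundedness of the cluster order lets one simply pick an $R$-maximal cluster meeting $\bigcap_{\gamma\in\Gamma}\mathfrak v(\gamma)$. Your direct compactness proof is more general---it does not need finite generation---and is in the spirit of what is needed for the temporal analogue (Lemma~\ref{minpoints}), where well-foundedness is not available in either direction.
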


Given a rooted frame $\mathfrak F = (W, R,\INT)$ for $\KFT$, let 
$R^s = \{(x,y) \in R \mid (y,x) \notin R\}$ be the \emph{strict} $R$-accessibility in $\mathfrak F$.  
Sometimes it will be convenient to view $(W,R)$ as a strict linear order
%
%
$\FC = (W_c, <_R)$ of clusters, where $W_c = \{C(x) \mid x \in W\}$ and $C(x) <_R C(y)$ iff $xR^sy$. 
A cluster $C$ is \emph{final} in $\mathfrak F$ if 
there is no cluster $C'$ with $C<_R C'$. 
A cluster $C$ is a \emph{root cluster} if there is no cluster $C'$ with $C'<_R C$, in which case 
$C<_R C'$ for every $C'\ne C$ in $\mathfrak F$; the root cluster in $\mathfrak F$ is unique. A cluster $C'$ is an \emph{immediate successor}  of a cluster $C$ in $\mathfrak F$ if $C<_R C'$
and there is no $C''$ with $C<_R C''<_R C'$, in which case  
$C$ is an \emph{immediate predecessor} of $C'$.
%
%
%
A sequence $C_n$, $n<\omega$, of clusters in $\FC$ is an
\emph{infinite ascending chain} if $C_n <_R C_{n+1}$, for all $n < \omega$. $\FC$ is \emph{converse well-founded} if it has no infinite ascending chain of clusters. 

The next lemma follows from, e.g., the more general~\cite[Theorems~10.34, 10.35]{DBLP:books/daglib/0030819}:

\begin{lemma}\label{lem:descr0}
If $\mathfrak F$ is a rooted $n$-generated descriptive frame for $\KFT$, for some $n<\omega$, then 
\begin{itemize}
\item[$(a)$] $\FC$ is converse well-founded, and so
the strict linear order $\FC^{-1} = (W_c, >_R)$ is isomorphic to some ordinal\textup{;}

\item[$(b)$] every cluster in $\mathfrak F$ has at most $2^n$ points.
\end{itemize}
\end{lemma}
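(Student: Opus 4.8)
The plan is to handle the two clauses separately, treating $(b)$ first since it feeds into $(a)$. Fix a generating family $\{X_0,\dots,X_{n-1}\}\subseteq\INT$ and choose the model $\mathfrak M$ over $\sigma=\{p_0,\dots,p_{n-1}\}$ with $\mathfrak v(p_i)=X_i$. Then every internal set is the value of some $\sigma$-formula, so $\mathfrak F$ is $\mathfrak M$-generated and \textbf{(dif)} reads $x=y$ iff $t^\sigma_{\mathfrak M}(x)=t^\sigma_{\mathfrak M}(y)$. For $(b)$, the key observation is that any two points $x,x'$ of one cluster $C$ have the same set of $R$-successors, by transitivity and mutual accessibility. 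A straightforward induction on $\sigma$-formulas then shows that the full $\sigma$-type $t^\sigma_{\mathfrak M}(x)$ of a point $x\in C$ is already determined by its atomic type $\at^\sigma_{\mathfrak M}(x)$: the Boolean and atomic cases are immediate, and for $\Diamond\psi$ the truth value at $x$ depends only on the common successor set. Hence $x\mapsto\at^\sigma_{\mathfrak M}(x)$ is injective on $C$ by \textbf{(dif)}, and since there are at most $2^n$ atomic $\sigma$-types, $|C|\le 2^n$.

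For $(a)$ I would argue by contradiction, assuming an infinite ascending chain of clusters $C_0<_R C_1<_R\cdots$ and picking representatives $x_i\in C_i$. By $(b)$ and the finiteness of the signature there are only finitely many \emph{cluster-types} (an atomic-type multiset together with the reflexive/degenerate flag), so by pigeonhole I may pass to a subchain along which every cluster has one fixed type $T$. Compactness supplies an upper bound: the set $\bigcup_i\{\varphi\mid\Box\varphi\in t^\sigma_{\mathfrak M}(x_i)\}$ is finitely satisfiable, since any finite part of it holds at the $x_i$ of largest index involved, by transitivity and \textbf{(\tight)}; hence by \textbf{(com)} it is realised at some $x^\ast$ with $x_i R x^\ast$ for all $i$. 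The intended contradiction then comes from differentiatedness: I would build a \emph{shift} bisimulation pairing $x_i$ with $x_{i+1}$ for all large $i$ while fixing every point lying $R$-above the whole chain. Because the clusters along the chain all have type $T$ and share the same ``top'' (the cones above the chain, capped by $x^\ast$), this shift should respect atomic types and the back-and-forth conditions, forcing $t^\sigma_{\mathfrak M}(x_i)=t^\sigma_{\mathfrak M}(x_{i+1})$ and hence $x_i=x_{i+1}$ by \textbf{(dif)} --- contradicting $C_i\ne C_{i+1}$.

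The main obstacle is exactly verifying that this shift is a genuine bisimulation: one must control the clusters that the chosen subchain skips over, as well as any clusters lying strictly above the entire chain, so that the forth and back conditions match successors correctly. Making this uniform typically requires a Ramsey-style homogenisation of the whole interval structure rather than a mere pigeonhole on single clusters, and this bookkeeping is precisely what the general structure theory of finitely generated descriptive frames carries out; accordingly I would either complete it directly or simply invoke \cite[Theorems~10.34,~10.35]{DBLP:books/daglib/0030819} (see also \cite{DBLP:journals/jsyml/Fine74}). Once converse well-foundedness is established, $\FC$ is a converse well-founded strict linear order of clusters, so every nonempty set of clusters has an $<_R$-greatest element; thus the reverse $\FC^{-1}$ is a well-order and is therefore isomorphic to a unique ordinal, as claimed.
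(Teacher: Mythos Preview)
Your argument for $(b)$ is correct and is essentially the paper's: both show that within a single cluster the full $\sigma$-type (equivalently, membership in every internal set) is determined by the atomic $\sigma$-type, so \textbf{(dif)} bounds the cluster size by $2^n$.

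For $(a)$, your target---producing two distinct chain points that \textbf{(dif)} forces to coincide---is the right one, but the shift-bisimulation route has a genuine gap, and the obstacle you flag is not mere bookkeeping: successors lying strictly between consecutive members of your thinned subchain cannot be matched by pigeonhole (or even Ramsey) on cluster-types of the chain clusters alone, and your compactness witness $x^\ast$ does not help with them. The paper's self-contained proof avoids bisimulations entirely and uses a simpler device. With $\mathcal G$ the $n$-element generating set and $x\sim_{\mathcal G}y$ meaning $x,y$ agree on every $G\in\mathcal G$, it defines, for each \emph{middle-point} $x$ (a point with $C(x_0)\le_R C(x)\le_R C(x_i)$ for some $i$), the set
\[
V_x=\bigl\{[y]_{\mathcal G}\mid y\text{ a middle-point with }xRy\bigr\}.
\]
Since $V_x\supseteq V_y$ whenever $xRy$ and all $V_x$ lie in a fixed finite set, the chain $V_{x_0}\supseteq V_{x_1}\supseteq\cdots$ stabilises at some $V_{x_m}$; then automatically $V_y=V_{x_m}$ for \emph{every} middle-point $y$ with $C(x_m)\le_R C(y)$, not just the $x_i$. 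A direct induction on the construction of $X\in\INT$ from $\mathcal G$ now shows that any two such middle-points in the same $\sim_{\mathcal G}$-class belong to exactly the same internal sets; the only nontrivial case $X=\Diamond^{\mathfrak F}Y$ is handled precisely because $V_y=V_z$ supplies a middle-point witness of the required $\sim_{\mathcal G}$-class above $z$. A final pigeonhole on the $x_i$ with $i\ge m$ yields $k\ne\ell$ with $x_k\sim_{\mathcal G}x_\ell$, contradicting \textbf{(dif)}. So the missing idea is the monotone stabilisation of $V_x$: it simultaneously tames the ``in-between'' clusters your bisimulation cannot, and renders both the Ramsey homogenisation and the upper bound $x^\ast$ unnecessary.
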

\begin{proof}
Let $\mathfrak F=(W,R,\INT)$, let $\le_R$ be the reflexive closure of $<_R$, and let $\mathcal G$ be a finite set generating $\INT$ with $|\mathcal G| = n$. For $x,y\in W$, we write $x\sim_{\mathcal G} y$ in case $x\in G$ iff $y\in G$,
for all $G\in\mathcal G$, and denote by $[x]_{\mathcal G}$ the $\sim_{\mathcal G}$-class of $x$.
Clearly, $\bigl|\bigl\{[x]_{\mathcal G}\mid x\in W\bigr\}\bigr|\le 2^{|{\mathcal G}|}=2^n$.

$(a)$
Suppose on the contrary that $C(x_i)$, $i<\omega$, is an infinite ascending chain in $\FC$.
Call $x \in W$ a \emph{middle-point} if $C(x_0)\le_R C(x)\le_R C(x_i)$, for some $i<\omega$.
Let $V_x=\{[y]_{\mathcal G}\mid \mbox{$y$ a middle-point with $xR y$}\}$. Since $V_x\supseteq V_y$ whenever $xR y$ and each $V_x$ is
finite, there is $m<\omega$ such that $V_y=V_{x_m}$, for every middle-point $y$ with $C(x_m)\le_R C(y)$.
By induction on the construction of $X\in\INT$ from the generators in $\mathcal G$, it is readily seen that
\begin{multline}\label{noasc}
\mbox{if $y,z$ are middle-points, $C(x_m)\le_RC(y)$, $C(x_m)\le_RC(z)$, and $y\sim_{\mathcal G}z$,}\\
\mbox{then $y\in X$ iff $z\in X$, for all $X\in\INT$.}
\end{multline}
(Indeed, the only non-trivial case is when $X=\Diamond^{\mathfrak F}Y$, $yRz$ and $y\in\Diamond^{\mathfrak F}Y$.
Then there is $x\in Y$ with $yRx$. If $zRx$, we are done. Otherwise, $x$ is a middle-point.
As $V_y=V_z$, there is a middle-point $x'$ with $zRx'$ and $x\sim_{\mathcal G} x'$. By IH, $x'\in Y$.)
As there are finitely many $\sim_{\mathcal G}$-classes, 
there exist $k\ne\ell\geq m$ such that $x_k\sim_{\mathcal G}x_\ell$, and so
$x_k\in X$ iff $x_\ell\in X$, for all $X\in\INT$, by \eqref{noasc}. But this contradicts {\bf (dif)}.

$(b)$
It is straightforward to show that if $C(x)=C(y)$ and $x\sim_{\mathcal G}y$, then $x\in X$ iff $y\in X$, for all $X\in\INT$.
So by {\bf (dif)},
every cluster in $\mathfrak F$ has $\le 2^{|{\mathcal G}|}$ points.
\end{proof}

%

%

Note 
that the existence of maximal points (Lemma~\ref{maxpoints}) in models based on rooted finitely generated descriptive frames 
for $\KFT$ also follows from Lemma~\ref{lem:descr0}. 
Another consequence is that
such a frame $\mathfrak F$ contains a unique final cluster, and any non-root cluster in $\mathfrak F$ has an immediate predecessor. 
%
If \mbox{$\FC^{-1}= (W_c, >_R)$} is isomorphic to an ordinal $\gamma$ and $\alpha\le\gamma$, we denote by $C_\alpha^{\mathfrak F}$ the cluster that is the image of $\alpha$  under this isomorphism. If $\alpha$ is a non-zero limit ordinal, we call $C_\alpha^{\mathfrak F}$ a \emph{limit cluster\/}.
A non-final cluster is a limit cluster iff it does not have an immediate successor.
By \textbf{(dif)} and Lemma~\ref{lem:descr0}~$(b)$, we also have:
\begin{lemma}\label{l:defpoints}
If $\mathfrak F = (W,R,\INT)$ is a rooted finitely generated descriptive frame for $\KFT$ and $C\in\INT$, for some cluster $C$, then $\{x\}\in\INT$, for every $x\in C$.
\end{lemma}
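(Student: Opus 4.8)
The plan is to exploit finiteness of the cluster together with the differentiation property \textbf{(dif)}. First I would invoke Lemma~\ref{lem:descr0}~$(b)$: since $\mathfrak F$ is a rooted finitely generated (say $n$-generated) descriptive frame for $\KFT$, every cluster has at most $2^n$ points, so in particular the given cluster $C$ is \emph{finite}. This finiteness is the crux of the whole argument, because it will let me turn an a priori infinite separation requirement into a finite intersection that is guaranteed to stay inside $\INT$.

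Fix $x \in C$. For each other point $y \in C$ with $y \ne x$, differentiation \textbf{(dif)} supplies an internal set that separates $x$ from $y$: there is some $Z \in \INT$ containing exactly one of $x$ and $y$. Replacing $Z$ by its complement $\neg Z \in \INT$ if necessary, I may assume $x \in Z$ and $y \notin Z$; call this set $X_y$.

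Now I would form $Y = \bigcap_{y \in C \setminus \{x\}} X_y$. Because $C$ is finite, this is a finite intersection of internal sets, and $\INT$ is closed under $\cap$, so $Y \in \INT$. By construction $x \in Y$ while $y \notin Y$ for every $y \in C \setminus \{x\}$, whence $Y \cap C = \{x\}$. Finally, using the hypothesis $C \in \INT$ together with closure of $\INT$ under intersection once more, I conclude $\{x\} = Y \cap C \in \INT$, which is exactly what is required.

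The only genuine obstacle is securing the finiteness of $C$: without it, the intersection defining $Y$ would be infinite and need not be internal, so the reduction to Boolean operations inside $\INT$ would break down. But Lemma~\ref{lem:descr0}~$(b)$ delivers finiteness directly, after which everything reduces to routine Boolean bookkeeping using the closure properties of $\INT$.
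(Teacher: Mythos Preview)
Your proof is correct and follows essentially the same approach as the paper, which simply states that the lemma holds ``by \textbf{(dif)} and Lemma~\ref{lem:descr0}~$(b)$''. Your writeup just spells out the routine details: finiteness of $C$ from Lemma~\ref{lem:descr0}~$(b)$, separation of $x$ from each $y\in C\setminus\{x\}$ via \textbf{(dif)}, and then a finite Boolean combination inside $\INT$.
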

%

%


Now, suppose $\mathfrak M$ is a model based on a rooted finitely $\mathfrak M$-generated descriptive frame $\mathfrak{F}= (W,R,\INT)$ for $L\supseteq\KFT$.
Given a formula $\mu$, a cluster $C$ is called \emph{$\mu$-maximal in $\mathfrak{M}$} if
there is a point in $C$ that is $\{\mu\}$-maximal in $\mathfrak M$. 
Further, $C$ is \emph{maximal in} $\mathfrak{M}$ if it is $\mu$-maximal in $\mathfrak M$,  for some $\mu$, and $C$ is \emph{$\sigma$-maximal in} $\mathfrak{M}$, for a signature $\sigma$, if there is such a $\sigma$-formula $\mu$. 
Every definable in $\mathfrak M$ cluster is clearly maximal in $\mathfrak M$.
The next lemma says that the converse is also true:

\begin{lemma}\label{lem:descr'}
Suppose $\mathfrak{M}$ is a model based on a rooted  finitely $\mathfrak M$-generated descriptive frame $\mathfrak F = (W, R,\INT)$ for $\KFT$. Then
\begin{itemize}
%
%
\item[$(a)$] every degenerate cluster in $\mathfrak F$ is maximal in $\mathfrak{M}$\textup{;}

\item[$(b)$] a cluster is maximal in $\mathfrak{M}$ iff either it is final or has an immediate successor\textup{;}

\item[$(c)$] a cluster is definable in $\mathfrak{M}$ iff it is maximal in $\mathfrak{M}$.
\end{itemize}
So limit clusters are not definable and not degenerate, while every other cluster
is definable in $\mathfrak M$.
%
\end{lemma}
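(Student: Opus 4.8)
The plan is to prove the three items essentially independently, with a single observation about the value of $\Diamond^+$ of a suitable set doing most of the work, and then to read off the concluding remark from \textup{(a)}--\textup{(c)}. Write $D_C=\{z\mid C(z)\le_R C\}$ for the \emph{non-strict downset} and $U_C=\{z\mid C<_R C(z)\}$ for the \emph{strict upset} of a cluster $C$; note $U_C$ is $R$-upward closed and $U_C=W\setminus D_C$. The backbone is the following equivalence $(\ast)$: \emph{for any cluster $C$, the following are equivalent: (i) $C$ is maximal in $\mathfrak M$; (ii) some $X\in\INT$ satisfies $X\cap C\ne\emptyset$ and $X\cap U_C=\emptyset$; (iii) $D_C$ is definable in $\mathfrak M$.} For (ii)$\Rightarrow$(iii), if $X=\mathfrak v(\xi)$ meets $C$ at some $x_0$ and avoids $U_C$, then a routine case check gives $\mathfrak v(\Diamond^+\xi)=D_C$: every point in or below $C$ either equals $x_0$ or $R$-sees $x_0$ (using $x_0\in X$ for the degenerate case), while every point of $U_C$ has all its successors in $U_C$, where $\xi$ fails. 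For (iii)$\Rightarrow$(i), if $D_C=\mathfrak v(\eta)$ then any $x\in C$ is $\{\eta\}$-maximal, since any $\eta$-point $R$-seen from $x$ lies in $D_C\cap R^+(x)=C$. For (i)$\Rightarrow$(ii), take $X=\mathfrak v(\mu)$ for a witnessing $\mu$: a $\{\mu\}$-maximal point of $C$ forces $\mu$ to fail throughout $U_C$ (any $\mu$-point $R$-seen from it must fall back into $C$), so $X$ meets $C$ and misses $U_C$.

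Part \textup{(a)} is now immediate and uses only \textbf{(\tight)}: if $C=\{x\}$ is degenerate then $x$ is irreflexive, so $\neg(xRx)$ yields $Z\in\INT$ with $x\in Z$ and $x\notin\Diamond^{\mathfrak F}Z$, i.e.\ $R(x)\cap Z=\emptyset$. Thus $Z$ meets $C$ at $x$ and avoids $U_C$ (as $U_C\subseteq R(x)$), so $C$ is maximal by condition (ii) of $(\ast)$. For the easy direction of \textup{(b)}, a final cluster satisfies (iii) trivially, since $D_C=W=\mathfrak v(\top)$. If $C$ has an immediate successor $C'=C(y)$, I use the standard fact that in a descriptive frame the successor set $R^+(y)=\bigcap\{Y\in\INT\mid y\models\Box^+Y\}$ is \emph{closed} (an intersection of internal sets), which follows from \textbf{(\tight)} and \textbf{(dif)}. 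Since $R^+(y)=U_C$ and any $x\in C$ lies outside it, closedness supplies $Y\in\INT$ with $U_C\subseteq Y$ and $x\notin Y$; then $W\setminus Y\in\INT$ meets $C$ at $x$ and avoids $U_C$, so (ii) holds and $C$ is maximal.

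The main obstacle is the remaining direction of \textup{(b)}, i.e.\ \emph{maximal $\Rightarrow$ final or immediate successor}, which I plan to isolate as the key lemma: \emph{every nonempty internal $R$-upset $U$ has a $<_R$-least cluster.} Granting this, the direction follows: a maximal $C$ has $D_C$ definable by $(\ast)$, so $U_C\in\INT$; if $U_C=\emptyset$ then $C$ is final, and otherwise its least cluster is an immediate successor of $C$. To prove the key lemma, suppose $U=\mathfrak v(\theta)$ and consider $\Gamma=\{\theta\}\cup\{\Diamond^+\beta\mid \mathfrak v(\theta\wedge\beta)\ne\emptyset\}$. For a finite $\Gamma'\subseteq\Gamma$, the finitely many witnesses lie in $U$ and, by linearity, the one in the $<_R$-least cluster satisfies all of $\Gamma'$, so $\Gamma$ is finitely satisfiable; by \textbf{(com)} it has a realising point $x^*$, which lies in $U$ and $R^+$-sees a representative of every type realised in $U$. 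If $C(x^*)$ were not least in $U$, there would be $v\in U$ with $C(v)<_R C(x^*)$; since $R^+(x^*)$ is closed and $v\notin R^+(x^*)$, there is $Y\in\INT$ with $R^+(x^*)\subseteq Y$ and $v\notin Y$, so $\beta=\neg\zeta$ (where $Y=\mathfrak v(\zeta)$) is realised in $U$ at $v$ yet fails throughout $R^+(x^*)$, contradicting $x^*\models\Diamond^+\beta$. Hence $C(x^*)$ is least. This step is where compactness and the closedness of successor sets are both essential, and it is the part I expect to require the most care (in particular to handle reflexive versus degenerate clusters correctly in the witness choice).

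Finally, \textup{(c)} is assembled from the pieces. The implication \emph{definable $\Rightarrow$ maximal} is the observation already noted before the lemma. For \emph{maximal $\Rightarrow$ definable}, maximality gives $D_C$ definable via $(\ast)$. If $C$ is the root then $D_C=C$ and we are done; otherwise $C$ has an immediate predecessor $C''$ (recall every non-root cluster does), and $C''$ has the immediate successor $C$, so $C''$ is maximal by the easy direction of \textup{(b)} and $D_{C''}=U_C\cup\cdots$ wait—$D_{C''}$ is its downset—is definable; then $C=D_C\setminus D_{C''}\in\INT$ is definable because $\mathfrak F$ is $\mathfrak M$-generated. The closing remark then drops out: a limit cluster is neither final nor has an immediate successor, so by \textup{(b)} it is not maximal and by \textup{(c)} not definable, and by \textup{(a)} it cannot be degenerate; every remaining cluster is final or has an immediate successor, hence maximal and definable.
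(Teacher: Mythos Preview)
Your proof is correct, and the organizing equivalence $(\ast)$ together with the key lemma ``every nonempty internal $R$-upset has a $<_R$-least cluster'' is a clean way to package the argument. A small check you left implicit: the closedness of $R^+(y)$ really does follow from \textbf{(dif)} and \textbf{(\tight)} together---given $z\notin R^+(y)$, take $Z$ from \textbf{(\tight)} with $z\in Z$, $R(y)\cap Z=\emptyset$, and $V$ from \textbf{(dif)} with $y\in V$, $z\notin V$; then $Y=V\cup\neg Z$ separates as required.

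The paper's proof is close in spirit but organized more directly, without isolating $(\ast)$ or your key lemma. For $(b,\Leftarrow)$ the paper does a case split on whether the immediate successor $C(y)$ is degenerate (using part~$(a)$ to get a formula $\mu$ with $y\models\mu\land\neg\Diamond\mu$, whence $C(x)$ is $\Diamond(\mu\land\neg\Diamond\mu)$-maximal) or non-degenerate (if $C(x)$ were not maximal then $\Diamond t_{\mathfrak M}(x)\subseteq t_{\mathfrak M}(y)$, forcing $yRx$ by \textbf{(\tight)}). For $(b,\Rightarrow)$ the paper assumes $C(x)$ is both $\mu$-maximal and limit, then uses \textbf{(com)} on
\[
\Gamma=\bigcup_{C>_R C(x)}\Diamond t_{\mathfrak M}(y_C)\ \cup\ \{\psi\mid\Box\psi\in t_{\mathfrak M}(x)\}\ \cup\ \{\Box\neg\mu\}
\]
to produce a point $y$ with $xRy$, $yRy_C$ for all $C>_R C(x)$, and $y\models\Box\neg\mu$; the limit hypothesis forces $y\in C(x)$, contradicting $x\models\mu$. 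For $(c)$ the paper writes down explicit defining formulas ($\mu\land\neg\Diamond\mu$ in the degenerate case, $\Box^+\neg\tau\land\Diamond\mu$ otherwise). Your route via $(\ast)$ and the key lemma is more modular and reusable (the least-cluster lemma is of independent interest), while the paper's direct arguments are shorter and avoid the detour through closedness of successor sets.
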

\begin{proof}
$(a)$ 
If $C(x)$ is degenerate, then $\Diamond t_{\mathfrak M}(x) \not\subseteq t_{\mathfrak M}(x)$ by {\bf (\tight)}.
So there is a formula $\mu$ with $\mathfrak M,x\models\mu$ but $\mathfrak M,x\not\models\Diamond\mu$.

$(b,\Rightarrow)$ Let $C(x)$ be maximal in $\mathfrak M$ 
with $\mathfrak M,x \models \mu$ and $\mathfrak M,y \not\models \mu$ whenever $xR^sy$. 
Suppose $C(x)$ is a limit cluster.
%
%
Let $S=\{C\in W_c\mid C(x)<_R C\}$ with $y_C\in C$, for $C\in S$.
Consider
\[
\Gamma = \bigcup_{C\in S}\Diamond t_{\mathfrak M}(y_C) \cup \{\psi \mid \Box\psi \in t_{\mathfrak M}(x)\} \cup \{ \Box\neg\mu \}.
\]
Clearly, $\Gamma$ is finitely satisfiable in $\mathfrak M$, and so, by {\bf (com)}, $\Gamma \subseteq t_{\mathfrak M}(y)$, for some $y$. 
Thus, by {\bf (\tight)},  $x R y R y_C$ for all $C\in S$, and so $yR^s y_C$ for all $C\in S$ and $yRx$. 
But we also have $\mathfrak M,y\models\Box\neg\mu$, contrary to $\mathfrak M,x\models\mu$. 

$(b,\Leftarrow)$ 
The (unique) final cluster is maximal in $\mathfrak M$ for $\top$.
%
Suppose $C(y)$ is an immediate successor of $C(x)$. If $C(y)$ is degenerate, then $C(y)$ is maximal 
in $\mathfrak M$ by $(a)$, and so there is $\mu$ with $\mathfrak M,y \models \mu \land \neg \Diamond \mu$. 
It follows that $C(x)$ is $\Diamond (\mu \land \neg \Diamond \mu)$-maximal in $\mathfrak M$.
If $C(y)$ is non-degenerate and $C(x)$ is not maximal in $\mathfrak M$, then $\Diamond t_{\mathfrak M}(x) \subseteq t_{\mathfrak M}(y)$, and so $yRx$ by {\bf (\tight)}, contrary to $xR^s y$.

$(c,\Leftarrow)$ Let $C(x)$ be $\mu$-maximal in $\mathfrak M$. If $C(x)$ is degenerate, it is defined by 
$\mu \land \neg \Diamond \mu$. 
If $C(x)$ is the non-degenerate root cluster, then $\Diamond \mu$ defines $C(x)$.
Otherwise, take the immediate predecessor $C(y)$ of $C(x)$.
By $(b)$, $C(y)$ is $\tau$-maximal in $\mathfrak M$, for some $\tau$, so  $\Box^+ \neg \tau \land \Diamond \mu$ defines $C(x)$. 
$(c,\Rightarrow)$ is obvious.
%
\end{proof}

We require a few important consequences of Lemmas~\ref{lem:descr0} and \ref{lem:descr'}.

\begin{lemma}\label{l:countable}
If $\mathfrak F = (W,R,\INT)$ is a rooted finitely generated descriptive frame for $\KFT$, then 
$W$ is countable.
\end{lemma}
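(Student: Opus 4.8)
The plan is to reduce counting the points of $W$ to counting the clusters, and then to bound the number of clusters by combining the countability of the algebra $\INT$ with the ordinal structure of $\FC$ supplied by Lemma~\ref{lem:descr0}. First I would note that, since $\mathfrak F$ is finitely generated as a BAO, the set $\INT$ is countable: every internal set is the value of one of the countably many terms built from the finitely many generators using $\neg$, $\cap$, and $\Diamond^{\mathfrak F}$. By Lemma~\ref{lem:descr0}~$(b)$ every cluster has at most $2^n$ points, so it suffices to show that the set $W_c$ of clusters is countable; and by Lemma~\ref{lem:descr0}~$(a)$, $\FC^{-1}$ is isomorphic to an ordinal $\gamma$, so the goal reduces to proving that $\gamma$ is countable.

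Next, since $\INT$ is countable I can fix a valuation $\mathfrak v$ that enumerates $\INT$, making $\mathfrak F$ finitely $\mathfrak M$-generated for the resulting model $\mathfrak M$; this is exactly what is needed in order to invoke Lemma~\ref{lem:descr'}. Under $\mathfrak M$-generation a cluster is definable in $\mathfrak M$ precisely when it is an internal set, so the definable clusters form a subset of the countable set $\INT$ and are therefore countable. By Lemma~\ref{lem:descr'}~$(b,c)$, the definable clusters are exactly the final cluster together with those having an immediate successor.

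I would then read this through the isomorphism $h\colon\gamma\to(W_c,>_R)$ with $C_\alpha=h(\alpha)$. The final cluster is $C_0$, and $C_\alpha$ has an immediate $<_R$-successor iff $\alpha$ has a largest predecessor, i.e.\ iff $\alpha$ is a successor ordinal; the remaining clusters $C_\alpha$, with $\alpha$ a nonzero limit, are the non-definable limit clusters. Hence the definable clusters correspond bijectively to $\{0\}$ together with the successor ordinals below $\gamma$. The argument then closes by contradiction: were $\gamma$ uncountable, we would have $\omega_1\le\gamma$, and the injection $\alpha\mapsto\alpha+1$ would map $\omega_1$ into the successor ordinals below $\gamma$, producing uncountably many definable clusters and contradicting the countability of $\INT$. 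Thus $\gamma$ is countable, and so are $W_c$ and $W$.

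The point needing the most care is the translation between the algebraic notion (definable cluster $\iff$ internal set) and the order-theoretic one (successor versus limit position in $\gamma$), together with the key observation that an uncountable ordinal already contains uncountably many successor ordinals below it; once these are in hand the counting is immediate. A secondary technical step is justifying the passage to an $\mathfrak M$-generated model so that Lemma~\ref{lem:descr'}, stated for finitely $\mathfrak M$-generated frames, is applicable here.
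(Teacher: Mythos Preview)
Your proposal is correct and follows essentially the same approach as the paper's proof: reduce to showing the ordinal $\gamma\cong\FC^{-1}$ is countable, observe that successor-position clusters are definable (hence internal) by Lemma~\ref{lem:descr'}, and use that $\INT$ is countable together with the fact that an uncountable ordinal has uncountably many successor ordinals below it. The paper phrases the last step directly as $|Z|=|\gamma|$ for $Z$ the set of successor ordinals below $\gamma$, while you phrase it as a contradiction via $\omega_1\le\gamma$, but these are the same argument; your explicit remark about needing to pass to an $\mathfrak M$-generated model (by assigning the finite generators to variables) is a point the paper leaves implicit.
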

\begin{proof}
By Lemma~\ref{lem:descr0}, it suffices to show that the ordinal $\gamma$ isomorphic to $\FC^{-1}= (W_c, >_R)$ is countable. 
Let $Z=\{\alpha+1\mid \alpha<\gamma,\ \alpha+1\ne\gamma\}$ be the set of successor ordinals $<\gamma$. Then $|Z|=|\gamma|$ and $C_\beta^{\mathfrak F}\in\INT$, for any $\beta\in Z$, by Lemma~\ref{lem:descr'}.
As $\mathfrak F$ is finitely generated, $\INT$ is countable, and so are $Z$ and $W$.
\end{proof}

Given a rooted  finitely $\mathfrak M$-generated descriptive frame $\mathfrak F = (W, R,\INT)$
for $\KFT$, let $m_{\mathfrak F}$ be the 
largest ordinal $\le\omega$ with degenerate  
$C_n^{\mathfrak F}$ for all $n<m_{\mathfrak F}$.
We call the (possibly empty) interval $Z=\bigcup_{n<m_{\mathfrak F}}C_n^{\mathfrak F}$
the \emph{\tail{} of} $\mathfrak F$.
%
%
%
We may assume that
$Z=\{z_n\mid n< m_{\mathfrak F}\}$, 
where all $z_n$ are irreflexive and $z_{n}R z_{n-1}$,  $0<n<m_{\mathfrak F}$.
If $Z$ is infinite, then $Z\ne W$ (as $\mathfrak F$ is rooted).
%
%
If $Z\ne W$, we call $C_{m_{\mathfrak F}}^{\mathfrak F}$ the \emph{\source{} of} $Z$.
In particular, if $Z=\emptyset$, its \source{} is the final (non-degenerate) cluster 
$C_{0}^{\mathfrak F}$;
if $Z\ne W$ and $Z\ne\emptyset$ is finite, its \source{} is the immediate predecessor of 
$C_{m_{\mathfrak F}-1}^{\mathfrak F}=\{z_{m_{\mathfrak F}-1}\}$; and if $Z$ is infinite, its \source{} is the limit 
cluster $C_{\omega}^{\mathfrak F}$.
Thus, by Lemma~\ref{lem:descr'}, 
\begin{equation}\label{facenondeg}
\mbox{the \source{} of a \tail{} is always non-degenerate.}
\end{equation}

\subsection{Building linear models from pieces}\label{modular} 

\begin{definition}\label{d:orderedsum}\em
The \emph{ordered sum} $\mathfrak F_{0} \lhd \dots \lhd \mathfrak F_{n-1} = (W, R,\INT)$ of rooted frames $\mathfrak F_{i}= ( W_i,R_i,\INT_i)$, $i<n$, for $\KFT$ with pairwise disjoint $W_i$ is defined by \label{orderedsum}
$$
W = \bigcup_{i<n}W_{i},\ \ 
R = \bigcup_{i<n}R_{i} \cup \hspace*{-1mm} \bigcup_{i<j<n} \hspace*{-1mm} (W_{i}\times
W_{j}),\ \ \INT = \{ X_{0}\cup \dots \cup X_{n-1} \mid X_{i}\in \INT_{i}\}.
$$
It is not hard to see that 
if the $\mathfrak F_i$ are descriptive, then $\mathfrak F_{0} \lhd \dots \lhd \mathfrak F_{n-1}$ is also descriptive.  
If $\mathfrak M_i = (\mathfrak F_i, \mathfrak v_i)$, then $\mathfrak M=\mathfrak M_{0} \lhd \dots \lhd \mathfrak M_{n-1}$ is the model based on $\mathfrak F_{0} \lhd \dots \lhd \mathfrak F_{n-1}$ with the valuation $\mathfrak v(p) = \bigcup_{ i < n} \mathfrak v_i(p)$, for any $p \in \mathcal{V}$.
We call the $\mathfrak M_i$ $\lhd$-\emph{components of} $\mathfrak M$. 
\end{definition}

%
%


Now, let $\mathfrak F = (W, R,\INT)$ be a rooted frame for $\KFT$.
An \emph{interval in} $\mathfrak{F}$ is any subset $I\subseteq W$ such that $xRyRz$ and $x,z \in I$ imply $y \in I$, for all $x,y,z \in W$. If $I\cap C\ne\emptyset$, for a cluster $C$, then clearly $C\subseteq I$. 
An 
interval $I$ is \emph{closed} if there are clusters $C,C'$ such that
$I=C\cup C'\cup \bigcup\{ D\mid C<_R D <_R C'\}$, in which case we write $I=[C,C']$.
%
%
Given two closed intervals $I,I'$ in $\mathfrak F$, we write $I\intord{\mathfrak F}I'$ if $I$ and $I'$ are disjoint and $xRx'$, for all $x\in I$, $x'\in I'$.
Notice that if $I$ is a closed internal interval in $\mathfrak F$, then $\rest{\mathfrak F}{I}$ is also a rooted frame for $\KFT$. Also, 
if $\mathfrak F$ is descriptive, then $\rest{\mathfrak F}{I}$ is descriptive as well.
And if $\mathfrak F$ is finitely $\mathfrak M$-generated, for some model $\mathfrak M$, then $\rest{\mathfrak F}{I}$ 
is finitely $\rest{\mathfrak M}{I}$-generated. 
%
%
We clearly have the following:
\begin{lemma}\label{intpart}
Suppose $\mathfrak F= (W, R,\INT)$ is a rooted frame for $\KFT$ and $W$ 
is partitioned as $\{I_j\mid j<n\}$, $n<\omega$, with closed intervals $I_j\in\INT$ and $I_j\intord{\mathfrak F}I_k$ iff $j<k$. Then 
\begin{itemize}
\item[$(a)$]
$\mathfrak F=\rest{\mathfrak F}{I_0}\lhd\dots\lhd\rest{\mathfrak F}{I_{n-1}}$\textup{;}

\item[$(b)$]
if $\mathfrak M$ is a model based on $\mathfrak F$, then
$\mathfrak M=\rest{\mathfrak M}{I_0}\lhd\dots\lhd\rest{\mathfrak M}{I_{n-1}}$.
\end{itemize}
%
%
\end{lemma}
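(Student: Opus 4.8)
The plan is to unwind Definition~\ref{d:orderedsum} of the ordered sum together with the definition of the restricted subframes $\rest{\mathfrak F}{I_j}=(I_j,\rest{R}{I_j},\rest{\INT}{I_j})$, and simply check that the domain, the accessibility relation, and the internal sets of $\rest{\mathfrak F}{I_0}\lhd\dots\lhd\rest{\mathfrak F}{I_{n-1}}$ coincide with those of $\mathfrak F$. First I would record that the ordered sum is legitimately formed: each $I_j$ is a closed internal interval, so (as already noted) $\rest{\mathfrak F}{I_j}$ is a rooted frame for $\KFT$, and the $I_j$ are pairwise disjoint since $\{I_j\mid j<n\}$ partitions $W$. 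The domains then agree trivially, because the partition gives $\bigcup_{j<n}I_j=W$. For the internal sets I would argue both inclusions from closure of $\INT$: given $X\in\INT$ we have $X=\bigcup_{j<n}(X\cap I_j)$ with each $X\cap I_j\in\rest{\INT}{I_j}$; conversely, any $\bigcup_{j<n}X_j$ with $X_j=I_j\cap Y_j\in\rest{\INT}{I_j}$ lies in $\INT$ because $I_j\in\INT$ (closure under $\cap$) and $\INT$ is closed under finite unions (closure under $\cap$ and $\neg$). This matches the prescription $\INT=\{X_0\cup\dots\cup X_{n-1}\mid X_j\in\rest{\INT}{I_j}\}$ of the ordered sum.

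The only step requiring an actual argument is the accessibility relation, where I must show
\[
R=\bigcup_{j<n}\rest{R}{I_j}\cup\bigcup_{j<k<n}(I_j\times I_k).
\]
The inclusion from right to left is immediate: $\rest{R}{I_j}=R\cap(I_j\times I_j)\subseteq R$, and $I_j\times I_k\subseteq R$ whenever $j<k$ by the hypothesis $I_j\intord{\mathfrak F}I_k$. For the converse, I would take $(x,y)\in R$ and let $j,k$ be the unique indices with $x\in I_j$ and $y\in I_k$; the cases $j=k$ and $j<k$ place the pair in the right-hand side directly.

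The hard part — in fact the sole subtlety of an otherwise routine verification — is ruling out $j>k$. Here I would invoke the fact that a closed interval is a union of clusters (if $I\cap C\ne\emptyset$ then $C\subseteq I$): if $j>k$, then $I_k\intord{\mathfrak F}I_j$ yields $yRx$, so together with $xRy$ the points $x,y$ lie in a single cluster $C$; but then $C\subseteq I_j$ and $C\subseteq I_k$ force $I_j=I_k$, contradicting $j\ne k$. This completes part~$(a)$. Part~$(b)$ is then immediate from $(a)$, since the ordered-sum valuation satisfies $\mathfrak v'(p)=\bigcup_{j<n}\rest{\mathfrak v}{I_j}(p)=\bigcup_{j<n}(I_j\cap\mathfrak v(p))=\mathfrak v(p)$ as the $I_j$ cover $W$; thus $\rest{\mathfrak M}{I_0}\lhd\dots\lhd\rest{\mathfrak M}{I_{n-1}}$ has both the same underlying frame and the same valuation as $\mathfrak M$.
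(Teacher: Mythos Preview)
Your proof is correct. The paper does not actually give a proof of this lemma at all --- it merely introduces it with ``We clearly have the following:'' and leaves the verification to the reader --- so your direct unwinding of the definitions is exactly the intended (and only natural) route.
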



\subsection{Canonical formulas}\label{can-for}
To check whether a frame validates a given finitely axiomatisable logic,
we use the canonical formulas of~\cite{DBLP:journals/mlq/ZakharyaschevA95,DBLP:journals/mlq/Wolter96,DBLP:books/daglib/0030819,DBLP:journals/sLogica/BezhanishviliB11} whose basic properties are  summarised below in the context of $\KFT$; for more details  consult~\cite[Section 16.3]{DBLP:books/daglib/0030819}.  
%
%
Every logic $L \supseteq\KFT$ can be represented in the form
\begin{equation}\label{canon}
L = \KFT \oplus \{\alpha(\mathfrak G_j,\mathfrak D_j,\bot) \mid j \in J_L\}, \quad \text{for some index set $J_L$},
\end{equation}
where each $\alpha(\mathfrak G_j, \mathfrak D_j,\bot)$ is a  \emph{canonical formula} based on a finite rooted Kripke frame $\mathfrak G_j = (V_j, S_j)$ for $\KFT$ and a (possibly empty) set $\mathfrak D_j\subseteq V_j$ of \emph{irreflexive} non-root points in $\mathfrak G_j$. 
If $L$ is finitely axiomatisable, its canonical axiomatisation \eqref{canon} with finite $J_L$ can be constructed effectively, given any finite set of axioms.

Let $\mathfrak F = (W,R,\INT)$ be any rooted finitely generated descriptive frame for $\KFT$. By Theorem~\ref{lem:descr0}, $\mathfrak F$ contains a unique final cluster, and any non-root cluster in $\mathfrak F$ has an immediate predecessor. 
The formulas $\alpha(\mathfrak G_j, \mathfrak D_j,\bot)$ are defined so that
$\mathfrak F\not \models \alpha(\mathfrak G_j, \mathfrak D_j,\bot)$ iff 
there is an injection $f \colon V_j \to W$ such that
the following conditions hold, for all $x,y \in V_j$: 



\smallskip
\noindent
\textbf{(cf$_1$)}
$x S_j y$ iff $f(x) R f(y)$ (so $x$ is irreflexive iff $f(x)$ is);

\smallskip
\noindent
\textbf{(cf$_2$)}
if $C(x)$ is the final cluster in $\mathfrak G_j$, then $C(f(x))$ is the final cluster in $\mathfrak F$;

\smallskip
\noindent
\textbf{(cf$_3$)}
if $x \in \mathfrak D_j$ and $C(y)$ is the immediate predecessor of $C(x) = \{x\}$ in $\mathfrak G_j$,\\ 
\mbox{}\hspace*{0.8cm} then $C(f(y))$ is the immediate predecessor of $C(f(x))=\{f(x)\}$ in $\mathfrak F$;

\smallskip
\noindent
\textbf{(cf$_4$)}
$\{f(x)\} \in \INT$.

\smallskip
\noindent
%
%
%
%
Intuitively, every frame $\mathfrak F$ with $\mathfrak F \not \models \alpha(\mathfrak G_j, \mathfrak D_j,\bot)$ can be obtained by inserting certain chains of clusters immediately before some clusters $C(x)$ in $\mathfrak G_j$, provided that $x \notin \mathfrak D_j$, and by enlarging some non-degenerate clusters in $\mathfrak G_j$. 


%

Canonical formulas of the form $\alpha(\mathfrak G, \emptyset,\bot)$ axiomatise exactly the \emph{cofinal subframe logics} whose frames are closed under taking cofinal subframes. We remind the reader~\cite{DBLP:books/daglib/0030819} that a subframe $\mathfrak F' = (W',R',\INT')$ of a frame $\mathfrak F = (W,R,\INT)$ is called  \emph{cofinal} if $W'$ is cofinal in $\mathfrak F$ in the sense that, for any $x \in W'$ and $y \in W$, whenever $x R y$ then either $y \in W'$ or  there is $z \in W'$ with $y R z$. Cofinal subframe logics enjoy the fmp, and so are decidable if finitely axiomatisable~\cite{DBLP:journals/jsyml/Zakharyaschev96}. Example~\ref{e:canform} shows the canonical axioms of some extensions of $\KFT$.

\begin{example}\label{e:canform}\em
$(a)$
We prove that
%
\[
\GLT = \KFT \oplus \Box( \Box p_0 \to p_0) \to \Box p_0 = \KFT \oplus \alpha (\circ, \emptyset, \bot) \oplus \alpha (\circ \lhd \bullet, \emptyset, \bot).
\]
%
Let $\mathfrak F = (W,R,\INT)$ be a rooted finitely generated descriptive frame for 
$\KFT$. By Lemma~\ref{l:countable}, $W$ is countable, and so $\mathfrak F$ is $\mathfrak M$-generated, for some model
$\mathfrak M=(\mathfrak{F},\mathfrak{v})$.
We claim that the following are equivalent:
\begin{enumerate}
\item
$\mathfrak M\not\models\GLT$;

\item
there is a formula $\psi$ with a non-degenerate $\psi$-maximal cluster in $\mathfrak M$;

\item
there is a non-degenerate non-limit cluster in $\mathfrak F$;

\item
$\mathfrak F\not\models\alpha (\circ, \emptyset, \bot) \land \alpha (\circ \lhd \bullet, \emptyset, \bot)$.
\end{enumerate}

1.\ $\Rightarrow$\ 2.\
Suppose $\mathfrak M,x\not\models\Box(\Box\varphi \to\varphi)\to\Box\varphi$, for some formula $\varphi$. Then the $\neg\bigl(\Box(\Box\varphi \to\varphi)\to\Box\varphi\bigr)$-maximal cluster $C$ in $\mathfrak M$    is non-degenerate.

2.\ $\Leftrightarrow$\ 3.\ by Lemma~\ref{lem:descr'}.

2.\ $\Rightarrow$\ 4.\
Suppose the $\psi$-maximal cluster $C_\psi$ in $\mathfrak M$ is non-degenerate, for some $\psi$. If $C_\psi$ is the final cluster of $\mathfrak F$, then the injection $f$ mapping $\circ$ to a point in $C_\psi$ satisfies
\textbf{(cf$_1$)}--\textbf{(cf$_4$)}, and so $\mathfrak F\not\models\alpha (\circ, \emptyset, \bot)$.
If $C_\psi$ is not the final cluster, then the $\neg\psi$-maximal cluster $C_{\neg\psi}$ is the final cluster in $\mathfrak F$. If $C_{\neg\psi}$  is non-degenerate, then again $\mathfrak F\not\models\alpha (\circ, \emptyset, \bot)$; 
otherwise $\mathfrak F\not\models\alpha (\circ \lhd \bullet, \emptyset, \bot)$ as witnessed by $f$ sending $\bullet$ to the point in the final cluster and $\circ$ to a point in $C_\psi$.

4.\ $\Rightarrow$\ 1.\
If $\mathfrak F\not\models\alpha (\circ \lhd \bullet, \emptyset, \bot)$, then take an injection $f$ from $\circ \lhd \bullet$ to $\mathfrak F$ satisfying \textbf{(cf$_1$)}--\textbf{(cf$_4$)}.
By \textbf{(cf$_4$)} and Lemma~\ref{lem:descr'}, $\{f(\circ)\}=\mathfrak{v}(\varphi)$, for some $\varphi$.
As $f(\circ)R f(\circ)$ by \textbf{(cf$_1$)}, it is easy to see that
$\mathfrak M,f(\circ)\not\models\Box(\Box\neg\varphi\to\neg\varphi)\to\Box\neg\varphi$. 
The case when $\mathfrak F\not\models\alpha (\circ, \emptyset, \bot)$ is similar.

$(b)$
Similarly, we can prove that
\begin{align*}
\Log\{(\mathbb N,<)\} & = \KFT \oplus \Diamond\top \oplus \Box (\Box p_0 \to p_0) \to (\Diamond\Box p_0 \to \Box p_0)\\
& = \KFT \oplus \alpha (\bullet, \emptyset, \bot) \oplus \alpha (\circ \lhd \circ, \emptyset, \bot)
\end{align*}
by showing that, for every $\mathfrak M$ and $\mathfrak F$ as above,
the following are equivalent:
\begin{itemize}
\item[--]
$\mathfrak M\not\models\Log\{(\mathbb N,<)\}$;


\item[--]
either the final cluster in $\mathfrak F$ is degenerate or
there is a non-degenerate non-limit cluster different from the final cluster in $\mathfrak F$;

\item[--]
$\mathfrak F\not\models\alpha (\bullet, \emptyset, \bot) \land\alpha (\circ \lhd \circ, \emptyset, \bot)$. 
\end{itemize}
%

$(c)$ A 
prominent example of a non-cofinal subframe logic is $\KFT \oplus \Diamond p \to \Diamond \Diamond p$ with \emph{dense} frames, whose canonical axioms 
\[
\KFT \oplus \alpha (\bullet \lhd \bullet^a, \{a\}, \bot) \oplus \alpha (\bullet \lhd \bullet^a \lhd \circ, \{a\}, \bot) \oplus \alpha (\bullet \lhd \bullet^a \lhd \bullet, \{a\}, \bot)
\]
forbid any two consecutive 
degenerate clusters in finitely generated descriptive
frames for the logic (see also Lemma~\ref{l:tempdframes}). 
\hfill $\dashv$
\end{example}


\section{Craig interpolant existence: warming up}\label{sec:warming}

In this section, we first give a model-theoretic, bisimulation-based criterion of interpolant non-existence, then apply it to design a \coNP-algorithm deciding the IEP in any finitely axiomatisable d-persistent cofinal subframe logic containing $\KFT$. 
Finally, we illustrate by examples that a way more involved  approach is needed to tackle arbitrary finitely axiomatisable extensions of $\KFT$.

A formula $\iota$ is called a \emph{Craig interpolant} of formulas $\varphi_1$ and $\varphi_2$ in a logic $L$ if $\sig(\iota) \subseteq \sig(\varphi_1) \cap \sig(\varphi_2)$ and both  $\varphi_1 \to \iota$ and $\iota \to \varphi_2$ are in $L$. We say that $L$ has the \emph{Craig interpolation property} (\emph{CIP}) if an interpolant for $\varphi_1$ and $\varphi_2$ exists whenever $(\varphi_1\rightarrow \varphi_2) \in L$.

Many standard modal logics have the CIP, including $\K$, $\KF$, $\SF$. In fact, there are a continuum of logics containing $\KF$ with the CIP.  However, none of the continuum-many extensions of $\KFT$ with frames of unbounded depth has the CIP, and very few---not more than 37---out of the continuum-many logics containing $\SF$ enjoy the CIP (deciding whether a finitely axiomatisable logic above $\SF$ has the CIP is in \textsc{coNExpTime} and \textsc{PSpace}-hard). The reader can find proofs of these results and further references in~\cite{MGabbay2005-MGAIAD,DBLP:books/daglib/0030819}; see also Example~\ref{ex:GL.3}. 

We now introduce the model-theoretic notions and tools that are needed in our non-uniform approach to deciding interpolant existence in modal logics.


Given two models $\mathfrak{M}_i$, $i=1,2$, based on $\mathfrak F_i = (W_i,R_i,\INT_i)$ with $x_i \in W_i$, we write $\mathfrak{M}_1,x_1 \equiv_{\sigma} \mathfrak{M}_2,x_2$, for a signature $\sigma$, if  
 $t_{\mathfrak{M}_1}^{\sigma}(x_1)=t_{\mathfrak{M}_2}^{\sigma}(x_2)$.
 %
The equivalence relation $\equiv_{\sigma}\subseteq W_1\times W_2$ can be characterised in terms of bisimulations. 
Namely, a relation $\bis \subseteq W_1 \times W_2$ is called a $\sigma$-\emph{bisimulation} between $\mathfrak M_1$ and $\mathfrak M_2$ if the following conditions hold whenever $x_1 \bis x_2$:

\smallskip
\noindent
{\bf (\atom)} 
$\at_{\mathfrak{M}_1}^{\sigma}(x_1) =\at_{\mathfrak{M}_2}^{\sigma}(x_2)$\textup{;}

\smallskip
\noindent
{\bf (\move)} if $x_1R_1y_1$, then there is $y_2$ such that $x_2R_2y_2$ and $y_1 \bis y_2$; and, conversely,\\
\mbox{}\hspace*{0.7cm} if $x_2R_2 y_2$, then there is $y_1$ with $x_1R_1 y_1$ and $y_1 \bis y_2$.

\smallskip
\noindent
If there is such $\bis$ with $z_1\bis z_2$, we write $\mathfrak{M}_1,z_1 \sim_{\sigma} \mathfrak{M}_2,z_2$.
We call $\bis$ \emph{\globalb} if, for every $x_1\in W_1$, there is $x_2\in W_2$ with 
$x_1\bis x_2$, and, 
for every $x_2\in W_2$, there is $x_1\in W_1$ with 
$x_1\bis x_2$. In this case, we say that $\mathfrak{M}_1$ and $\mathfrak{M}_2$ are \emph{\globally} $\sigma$-\emph{bisimilar} 
and write $\mathfrak{M}_1\sim_{\sigma} \mathfrak{M}_2$. 

%
We employ the following characterisation of $\equiv_{\sigma}$ (see~\cite{goranko20075} for a further discussion of the relationship between bisimulations and modal equivalence): 

\begin{lemma}\label{bisim-lemma}
For any signature $\sigma$, any models $\mathfrak{M}_i$, $i = 1,2$, based on descriptive frames $\mathfrak F_i = (W_i,R_i,\INT_i)$, and any $x_i \in W_i$,
$$
\mathfrak{M}_1,x_1 \equiv_{\sigma} \mathfrak{M}_2,x_2 \quad \text{ iff } \quad \mathfrak{M}_1,x_1  \sim_{\sigma} \mathfrak{M}_2,x_2.
$$
The implication $(\Leftarrow)$ holds for arbitrary models.
\end{lemma}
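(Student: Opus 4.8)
The plan is to treat the two implications separately: prove the easy direction $(\Leftarrow)$ by a routine induction that is valid for arbitrary models, and spend the real effort on $(\Rightarrow)$, where I would show that the relation $\equiv_{\sigma}$ is \emph{itself} a $\sigma$-bisimulation. Once that is established, $x_1 \equiv_\sigma x_2$ directly witnesses $\mathfrak{M}_1,x_1 \sim_\sigma \mathfrak{M}_2,x_2$, closing the argument.

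For $(\Leftarrow)$, I would fix a $\sigma$-bisimulation $\bis$ with $x_1 \bis x_2$ and prove by induction on the $\sigma$-formula $\varphi$ that $\mathfrak{M}_1,u_1 \models \varphi$ iff $\mathfrak{M}_2,u_2 \models \varphi$ whenever $u_1 \bis u_2$. The propositional atoms are handled by {\bf (\atom)}, the Booleans by the induction hypothesis, and $\Diamond\psi$ by applying {\bf (\move)} in each direction together with the induction hypothesis for $\psi$. Ranging over all $\sigma$-formulas $\varphi$ yields $t^\sigma_{\mathfrak{M}_1}(x_1) = t^\sigma_{\mathfrak{M}_2}(x_2)$; since descriptiveness is never used, this holds for arbitrary models.

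For $(\Rightarrow)$ I would verify that $\equiv_\sigma$ satisfies {\bf (\atom)} and {\bf (\move)}. Condition {\bf (\atom)} is immediate, as equal $\sigma$-types in particular agree on atomic $\sigma$-formulas. For the forth part of {\bf (\move)}, suppose $x_1 \equiv_\sigma x_2$ and $x_1R_1y_1$, and put $\Gamma = t^\sigma_{\mathfrak{M}_1}(y_1)$. For every finite $\Gamma' \subseteq \Gamma$, the formula $\Diamond\bigwedge\Gamma'$ is a $\sigma$-formula true at $x_1$ (witnessed by $y_1$), hence true at $x_2$ because $x_1 \equiv_\sigma x_2$; so $x_2 \in \Diamond^{\mathfrak F_2}\mathfrak{v}_2(\bigwedge\Gamma')$. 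The goal is to collapse all of these finite witnesses into a single $R_2$-successor of $x_2$ realising the whole type $\Gamma$ — and this is precisely the step that forces the use of descriptiveness.

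Here is the key move, using compactness and tightness of $\mathfrak F_2$. I would consider the family of internal sets
\[
\mathcal{X} = \{\mathfrak{v}_2(\varphi) \mid \varphi \in \Gamma\} \cup \{\neg Z \mid Z \in \INT_2,\ x_2 \notin \Diamond^{\mathfrak F_2} Z\},
\]
where the second group, by {\bf (\tight)}, has as its intersection exactly the set of $R_2$-successors of $x_2$. One checks that $\mathcal{X}$ has the finite intersection property: a witness $z$ for $\Diamond\bigwedge\Gamma'$ as above satisfies $x_2R_2z$ and $z \in \mathfrak{v}_2(\bigwedge\Gamma')$, and {\bf (\tight)} forbids $z \in Z$ whenever $x_2 \notin \Diamond^{\mathfrak F_2} Z$, so $z$ lies in every member of any finite subfamily drawn from $\mathcal{X}$. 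By compactness {\bf (com)} of the descriptive frame $\mathfrak F_2$, $\bigcap\mathcal{X} \ne \emptyset$; any $y_2 \in \bigcap\mathcal{X}$ is then an $R_2$-successor of $x_2$ by {\bf (\tight)} and satisfies all of $\Gamma$, so $t^\sigma_{\mathfrak{M}_2}(y_2) = \Gamma = t^\sigma_{\mathfrak{M}_1}(y_1)$ and hence $y_1 \equiv_\sigma y_2$. The back part of {\bf (\move)} is symmetric. I expect this compactness-and-tightness step — turning ``every finite fragment of the type is reachable'' into ``the full type is realised at one successor'' — to be the only genuine obstacle; the rest is bookkeeping. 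Note that {\bf (dif)} is not needed, and that compactness must be applied in its genuine set-theoretic form, since the successor set and the sets $\neg Z$ above need not be definable by formulas.
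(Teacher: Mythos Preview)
Your proposal is correct and follows the same strategy as the paper: show that $\equiv_\sigma$ is itself a $\sigma$-bisimulation, with the forth clause established via modal saturation of descriptive frames. The only cosmetic difference is that the paper packages the key step as an appeal to Lemma~\ref{maxpoints} (modal saturation), whereas you spell out the underlying compactness-plus-tightness argument directly; your version is in fact slightly more self-contained, since Lemma~\ref{maxpoints} is stated there only for rooted $\KFT$-frames.
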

\begin{proof}
$(\Rightarrow)$ We show that
$\{(y_1,y_2)\in W_1\times W_2\mid t_{\mathfrak{M}_1}^{\sigma}(y_1)=t_{\mathfrak{M}_2}^{\sigma}(y_2) \}$ 
is a $\sigma$-bisimulation between $\mathfrak M_1$ and $\mathfrak M_2$.  
Condition {\bf (\atom)} is obvious. For {\bf (\move)}, suppose $y_1R_1z_1$ and $t_{\mathfrak{M}_1}^{\sigma}(y_1)=t_{\mathfrak{M}_2}^{\sigma}(y_2)$. Let $\Gamma=t_{\mathfrak{M}_1}^{\sigma}(z_1)$. Then, for every finite $\Gamma' \subseteq \Gamma$, we have $\mathfrak M_1,y_1\models \Diamond \bigwedge\Gamma'$, and so 
$\mathfrak M_2,y_2\models \Diamond \bigwedge\Gamma'$ as well. Since $\mathfrak F_2$ is descriptive, 
Lemma~\ref{maxpoints} gives us $z_2$ with $y_2R_2z_2$ and $\mathfrak M_2, z_2 \models \Gamma$. It follows that $t_{\mathfrak{M}_1}^{\sigma}(z_1)=t_{\mathfrak{M}_2}^{\sigma}(z_2)$, as required. 
The implication $(\Leftarrow)$ is straightforward.
\end{proof}

Note that if $\mathcal{B}$ is a set of $\sigma$-bisimulations between $\mathfrak M_1$ and $\mathfrak M_2$, then $\bigcup_{\bis\in \mathcal{B}} \bis$ is also a $\sigma$-bisimulation between $\mathfrak M_1$ and $\mathfrak M_2$. It follows that there is always a \emph{largest} $\sigma$-bisimulation between $\mathfrak M_1$ and $\mathfrak M_2$ 
(which is $\equiv_\sigma$ if both $\mathfrak M_i$ are based on descriptive frames).


Variations of the following criterion of interpolant (non-)existence are implicit in various (dis-)proofs of the CIP in modal logics~\cite{DBLP:conf/amast/Marx98,goranko20075}. 

\begin{theorem}\label{criterion}
Formulas $\varphi_1$ and $\varphi_2$ do not have an interpolant in a modal logic $L$
iff there are models $\mathfrak M_i$, $i=1,2$, based on finitely $\mathfrak M_i$-generated 
descriptive frames $\mathfrak{F}_i = (W_{i},R_{i},\INT_{i})$ for $L$ with 
points
$x_i \in W_i$ such that  
\[
\mathfrak{M}_{1},x_{1} \models \varphi_1, \ \ 
\mathfrak{M}_{2},x_{2} \models \neg\varphi_2, \ \ 
\mathfrak{M}_{1},x_{1} \sim_{\sigma} \mathfrak{M}_{2},x_{2}, \text{ for $\sigma = \sig(\varphi_1) \cap \sig(\varphi_2)$}.
\]
%
%
%
If $L \supseteq \KF$, we may assume that $x_i$ is the root of the descriptive frame $\mathfrak F_i$, $i=1,2$.
\end{theorem}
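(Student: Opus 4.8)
The plan is to prove both directions of the biconditional, using Lemma~\ref{univframe} to realise $L$-consistent sets of formulas on finitely generated descriptive frames for $L$, and Lemma~\ref{bisim-lemma} to translate equality of $\sigma$-types into $\sigma$-bisimilarity. The direction from right to left is the easy one. Suppose the models and points exist and, for contradiction, that some $\iota$ with $\sig(\iota)\subseteq\sigma$ interpolates $\varphi_1\to\varphi_2$ in $L$. Since $\varphi_1\to\iota\in L$ and $\mathfrak F_1\models L$, from $\mathfrak M_1,x_1\models\varphi_1$ I get $\mathfrak M_1,x_1\models\iota$. As $\iota$ is a $\sigma$-formula and $\mathfrak M_1,x_1\sim_\sigma\mathfrak M_2,x_2$, the implication $(\Leftarrow)$ of Lemma~\ref{bisim-lemma}, which holds for arbitrary models, gives $\mathfrak M_2,x_2\models\iota$; then $\iota\to\varphi_2\in L$ and $\mathfrak F_2\models L$ yield $\mathfrak M_2,x_2\models\varphi_2$, contradicting $\mathfrak M_2,x_2\models\neg\varphi_2$.

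For the forward direction I would construct the two models in turn. Put $T=\{\iota\mid\sig(\iota)\subseteq\sigma,\ (\varphi_1\to\iota)\in L\}$. First I claim $T\cup\{\neg\varphi_2\}$ is $L$-consistent: otherwise some finite $\{\iota_1,\dots,\iota_n\}\subseteq T$ satisfies $(\bigwedge_{j}\iota_j\to\varphi_2)\in L$, whence $\bigwedge_{j}\iota_j$ is a $\sigma$-formula interpolating $\varphi_1\to\varphi_2$, contrary to hypothesis. Since $T\cup\{\neg\varphi_2\}$ is a set of $\sig(\varphi_2)$-formulas and $\sig(\varphi_2)$ is finite, Lemma~\ref{univframe} satisfies it at some $x_2$ in a model $\mathfrak M_2$ based on a finitely $\mathfrak M_2$-generated descriptive frame for $L$. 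Writing $S=t_{\mathfrak M_2}^\sigma(x_2)$, I next claim $\{\varphi_1\}\cup S$ is $L$-consistent: otherwise a finite $S'\subseteq S$ gives $(\varphi_1\to\neg\bigwedge S')\in L$, so $\neg\bigwedge S'\in T$ and thus $\mathfrak M_2,x_2\models\neg\bigwedge S'$, contradicting $S'\subseteq t_{\mathfrak M_2}^\sigma(x_2)$. Applying Lemma~\ref{univframe} again realises $\{\varphi_1\}\cup S$ at some $x_1$ in a model $\mathfrak M_1$ based on a finitely $\mathfrak M_1$-generated descriptive frame for $L$, so $\mathfrak M_1,x_1\models\varphi_1$ and $S\subseteq t_{\mathfrak M_1}^\sigma(x_1)$. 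As $S$ and $t_{\mathfrak M_1}^\sigma(x_1)$ are both complete $\sigma$-types, this inclusion forces $t_{\mathfrak M_1}^\sigma(x_1)=S=t_{\mathfrak M_2}^\sigma(x_2)$, i.e.\ $\mathfrak M_1,x_1\equiv_\sigma\mathfrak M_2,x_2$; Lemma~\ref{bisim-lemma} then delivers $\mathfrak M_1,x_1\sim_\sigma\mathfrak M_2,x_2$, as required.

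For the rooted refinement when $L\supseteq\KF$, so that each $R_i$ is transitive, I would replace each $\mathfrak F_i$ by the point-generated frame $(\mathfrak F_i)_{x_i}$ with universe $W_{i,x_i}=\{y\mid x_iR_i^+y\}$: it is descriptive by~\eqref{rootdescr}, it is a frame for $L$ since validity is preserved under point-generated subframes, and it is finitely generated by the restricted model $\rest{\mathfrak M_i}{W_{i,x_i}}$ (restrictions of the generators still generate, and definability transfers). Because $W_{i,x_i}$ is an $R_i$-upset, $\rest{\mathfrak M_i}{W_{i,x_i}}$ is a generated submodel, so truth of $\varphi_1$ and $\neg\varphi_2$ at $x_i$ is preserved; and restricting a witnessing $\sigma$-bisimulation to $W_{1,x_1}\times W_{2,x_2}$ preserves both {\bf(\atom)} and {\bf(\move)}, since the successors produced by the back-and-forth conditions stay inside the two upsets. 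Thus $x_i$ becomes the root and the three conditions still hold. I expect the only genuinely delicate point to be the two consistency claims, where the assumption ``no interpolant exists'' must be converted into consistency by exhibiting the putative interpolant as a finite conjunction of $\sigma$-formulas; the remaining steps are routine applications of Lemmas~\ref{univframe} and~\ref{bisim-lemma} together with the completeness of types.
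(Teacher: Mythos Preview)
Your proof is correct and follows essentially the same approach as the paper's: both directions are handled identically, defining $T$ (the paper's $\Sigma$) as the set of $\sigma$-consequences of $\varphi_1$, showing $T\cup\{\neg\varphi_2\}$ is $L$-consistent to build $\mathfrak M_2$, then showing $\{\varphi_1\}\cup t_{\mathfrak M_2}^\sigma(x_2)$ is $L$-consistent to build $\mathfrak M_1$, and concluding via Lemma~\ref{bisim-lemma}. You supply more detail than the paper for the two consistency claims and for the rooted refinement (the paper merely cites~\eqref{rootdescr}); the only inessential difference is that the paper applies Lemma~\ref{univframe} with the single signature $\delta=\sig(\varphi_1)\cup\sig(\varphi_2)$ for both models, whereas you use $\sig(\varphi_2)$ and $\sig(\varphi_1)$ respectively.
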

\begin{proof}
$(\Leftarrow)$ is straightforward (and holds for arbitrary frames for $L$). For $(\Rightarrow)$, 
consider the signature $\delta = \sig(\varphi_1) \cup \sig(\varphi_2)$ and the set
\[
\Sigma=\{\chi\mid\chi\mbox{ is a $\sigma$-formula and }(\varphi_1\to\chi)\in L\}\cup\{\neg\varphi_2\}
\]
of $\delta$-formulas.
As $\varphi_1$ and $\varphi_2$ have no interpolant in $L$, $\Sigma$ is $L$-consistent, and so, by 
Lemma~\ref{univframe}, there exist a $\delta$-model $\mathfrak M_2$ based on a finitely $\mathfrak M_2$-generated descriptive frame $\mathfrak F_2$ and a point $x_2$ with $\mathfrak M_2,x_2\models\Sigma$. 
Let $\Sigma'=t_{\mathfrak M_2}^\sigma(x_2)\cup\{\varphi_1\}$. As $\Sigma'$ is an $L$-consistent set of $\delta$-formulas, 
Lemma~\ref{univframe} gives a $\delta$-model $\mathfrak M_1$ based on a finitely $\mathfrak M_1$-generated descriptive frame $\mathfrak F_1$ and an $x_1$ in $\mathfrak M_1$ such that $\mathfrak M_1,x_1\models\Sigma'$.  
We clearly have
$t_{\mathfrak M_1}^\sigma(x_1)=t_{\mathfrak M_2}^\sigma(x_2)$, and so 
 $\mathfrak{M}_{1},x_{1} \sim_{\sigma} \mathfrak{M}_{2},x_{2}$ by Lemma~\ref{bisim-lemma}.
In case $L \supseteq \KF$, \eqref{rootdescr} allows us to make $x_i$ the root of $\mathfrak F_i$.
\end{proof}


The next lemma refines Theorem~\ref{criterion}; it is used in the proof of Lemma~\ref{l:replacement}. 
\begin{lemma}\label{l:rootnolimit}
If $\varphi_1$ and $\varphi_2$ do not have an interpolant in a logic $L \supseteq \KFT$, then there are rooted 
models $\mathfrak M_i,x_i$, $i=1,2$, satisfying the criterion Theorem~\ref{criterion} such that
$C(x_i)$ is not a limit cluster in $\mathfrak M_i$, for $i=1,2$.
\end{lemma}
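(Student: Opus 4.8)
The plan is to start from Theorem~\ref{criterion}: since $\varphi_1,\varphi_2$ have no interpolant in $L$ and $L\supseteq\KFT\supseteq\KF$, it yields rooted models $\mathfrak M_i,x_i$ ($x_i$ the root) on finitely $\mathfrak M_i$-generated descriptive frames $\mathfrak F_i$ for $L$ with $\mathfrak M_1,x_1\models\varphi_1$, $\mathfrak M_2,x_2\models\neg\varphi_2$, and $\mathfrak M_1,x_1\sim_\sigma\mathfrak M_2,x_2$ for $\sigma=\sig(\varphi_1)\cap\sig(\varphi_2)$. If both $C(x_i)$ are already non-limit we are done; otherwise I repair each offending model separately. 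By Lemma~\ref{lem:descr'} a limit cluster is non-degenerate, so a problematic root cluster $C(x_i)=C^{\mathfrak F_i}_\lambda$ (with $\lambda$ a limit ordinal) has no immediate successor but is approached from above by the cofinally many clusters $C^{\mathfrak F_i}_\gamma$, $\gamma<\lambda$; among these the \emph{definable} ones, i.e.\ those of successor index (definable by Lemma~\ref{lem:descr'}), are cofinal, and at least one of the two reflexivity types (reflexive / irreflexive) occurs cofinally among them.

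The construction is to prepend a single new point. I form the ordered sum $\mathfrak M_i^\ast=\mathfrak M^\ast\lhd\mathfrak M_i$, where $\mathfrak M^\ast$ is the one-point model on $\{x_i^\ast\}$; I declare $x_i^\ast$ reflexive or irreflexive so as to match the reflexivity type that occurs cofinally among the definable clusters below $C_\lambda$, and I give $x_i^\ast$ the atomic $\delta$-type of $x_i$, where $\delta=\sig(\varphi_1)\cup\sig(\varphi_2)$. By the remark after Definition~\ref{d:orderedsum} the underlying frame $\mathfrak F^\ast\lhd\mathfrak F_i$ is descriptive, and it is clearly finitely generated. Since $x_i^\ast$ is the new root it sees exactly $W_i$, which is also what $x_i$ sees, so $\beta=\{(x_i^\ast,x_i)\}\cup\{(w,w)\mid w\in W_i\}$ is a $\delta$-bisimulation on $\mathfrak M_i^\ast$ relating $x_i^\ast$ and $x_i$ (reflexivity of $x_i^\ast$ is irrelevant, as the only co-cluster successor concern is matched through $x_i$ itself). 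By Lemma~\ref{bisim-lemma}, $t^\delta_{\mathfrak M_i^\ast}(x_i^\ast)=t^\delta_{\mathfrak M_i^\ast}(x_i)$, and since adding a point below $x_i$ leaves the generated submodel at $x_i$ untouched, this equals $t^\delta_{\mathfrak M_i}(x_i)$. Hence $x_i^\ast$ inherits the whole $\delta$-type of $x_i$: in particular $\varphi_1$ (resp.\ $\neg\varphi_2$) still holds at the new base point, and $t^\sigma(x_1^\ast)=t^\sigma(x_2^\ast)$, so the new base points remain $\sigma$-bisimilar by Lemma~\ref{bisim-lemma}. Finally $C(x_i^\ast)=\{x_i^\ast\}$ has immediate successor $C_\lambda$ and so is not a limit cluster.

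The main obstacle is to check that the new frame is still a frame for $L$. I would use the canonical-formula representation~\eqref{canon}: it suffices that $\mathfrak F^\ast\lhd\mathfrak F_i$ validates each $\alpha(\mathfrak G_j,\mathfrak D_j,\bot)$. Suppose not, with a witnessing injection $f\colon V_j\to W_i\cup\{x_i^\ast\}$ satisfying \textbf{(cf$_1$)}--\textbf{(cf$_4$)}. As $\mathfrak F_i\models L$, $f$ must use $x_i^\ast$; being the unique $<_R$-minimal (singleton) cluster, $x_i^\ast$ can only be the image of the root cluster of $\mathfrak G_j$ (necessarily a singleton of the same reflexivity as $x_i^\ast$, and not in $\mathfrak D_j$), while all other clusters of $\mathfrak G_j$ are mapped into $\mathfrak F_i$. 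By \textbf{(cf$_4$)} and the non-definability of the limit cluster $C_\lambda$ (Lemmas~\ref{lem:descr'},~\ref{l:defpoints}), none of those latter images lies in $C_\lambda$, so they all sit strictly above $C_\lambda$; let $C_{\gamma^\ast}$ be the $<_R$-lowest of them. By the choice of the reflexivity of $x_i^\ast$ there is a definable cluster $D$ of that reflexivity with $C_\lambda<_R D<_R C_{\gamma^\ast}$. Relocating the image of $\mathfrak G_j$'s root from $x_i^\ast$ to a point of $D$ and leaving all other images fixed produces an injection witnessing $\mathfrak F_i\not\models\alpha(\mathfrak G_j,\mathfrak D_j,\bot)$, contradicting $\mathfrak F_i\models L$; here \textbf{(cf$_1$)} holds since $D$ is strictly below $C_{\gamma^\ast}$, \textbf{(cf$_2$)} is unaffected, \textbf{(cf$_4$)} holds because $D$ is definable, and \textbf{(cf$_3$)} imposes no constraint on $\mathfrak G_j$'s root and would already have failed for $f$ had the second cluster of $\mathfrak G_j$ been in $\mathfrak D_j$. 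This relocation into the cofinal sequence of definable clusters beneath the limit root is the crux of the argument (a pigeonhole/cofinality step in the spirit of Lemma~\ref{lem:descr0}$(a)$); applying the construction to whichever of $\mathfrak M_1,\mathfrak M_2$ has a limit root then gives models meeting the criterion of Theorem~\ref{criterion} with non-limit root clusters.
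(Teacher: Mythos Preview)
Your approach is essentially the paper's: prepend a fresh one-point cluster below the limit root and use the canonical-formula refutation criterion to show the enlarged frame still validates $L$ by relocating the root-image into a suitable definable cluster strictly between $C_\lambda$ and the lowest non-root image. The only organisational difference is that the paper tries both reflexivity types $\bullet,\circ$ simultaneously and argues at least one must work (by looking at the immediate predecessor of the lower of the two second-cluster images and modifying whichever $f^\ast$ matches its reflexivity), whereas you commit to one type in advance via the observation that one of the two reflexivity classes of definable clusters is cofinal towards $C_\lambda$; the two arguments are interchangeable.

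There is one small but genuine oversight. Your $\delta$-bisimulation $\beta$ shows precisely that $x_i^\ast$ and $x_i$ have the \emph{same} $\delta$-type in $\mathfrak M_i^\ast$, so $\{x_i^\ast\}$---which is an internal set of $\mathfrak F^\ast\lhd\mathfrak F_i$---is not definable by any $\delta$-formula. Hence the frame, while descriptive and finitely generated, is not $\mathfrak M_i^\ast$-\emph{generated}, and Theorem~\ref{criterion} (as well as the downstream machinery of \S\ref{section:K4.3} that Lemma~\ref{l:rootnolimit} feeds into) requires the frames to be finitely $\mathfrak M_i$-generated. The paper handles this by introducing a fresh variable $q\notin\delta$ true only at $x_i^\ast$; since $q\notin\sigma$ and $q\notin\sig(\varphi_1)\cup\sig(\varphi_2)$, this affects neither the truth of $\varphi_1,\neg\varphi_2$ nor the $\sigma$-bisimulation, but it makes $\{x_i^\ast\}$ definable and the frame finitely $\mathfrak M_i^\ast$-generated over $\delta\cup\{q\}$. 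With that one-line fix your argument goes through.
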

\begin{proof}
Suppose $\mathfrak M,x$ is a rooted $\delta$-model, for some finite signature $\delta$,  that is based on a finitely $\mathfrak M$-generated descriptive frame $\mathfrak F=(W,R,\INT)$ such that $\mathfrak M,x\models\varphi$, for some $\varphi$ with $\sig(\varphi)\subseteq\delta$, and $C(x)$ is a root limit cluster in $\mathfrak F$.
Pick a fresh variable $q\notin\delta$. For $\ast \in \{\bullet, \circ\}$, take the frames $\mathfrak F^\ast = \ast \lhd \mathfrak F$, denote the root point of $\mathfrak F^\ast$ by $x^\ast$, and consider the $\delta\cup\{q\}$-models $\mathfrak M^\ast$ based on $\mathfrak F^\ast$, which coincide with $\mathfrak M$ on $\mathfrak F$ and have $\mathfrak M^\ast, x^\ast \models p$ iff $\mathfrak M, x\models p$, for $p\in\delta$, and $\mathfrak M^\ast, x^\ast \models q$.
To prove the lemma,
it suffices to show that there is $\ast \in \{\bullet, \circ\}$ with
$(i)$ $\mathfrak M^\ast,x^\ast\models\varphi$,
$(ii)$ $\mathfrak M^\ast,x^\ast\sim_\sigma\mathfrak M,x$, for any $\sigma\subseteq\sig(\varphi)$, and
$(iii)$ $\Log(\mathfrak F)\subseteq\Log(\mathfrak F^\ast)$.

As the limit cluster $C(x)$ is non-degenerate by Lemma~\ref{lem:descr'}, we have $(i)$ and $(ii)$. To show $(iii)$, suppose on the contrary that, for each $\ast \in \{\bullet, \circ\}$, there is a canonical formula $\alpha(\mathfrak G^\ast, \mathfrak D^\ast, \bot)$ with  
\mbox{$\mathfrak F\models \alpha(\mathfrak G^\ast, \mathfrak D^\ast, \bot)$} and 
$\mathfrak F^\ast\not\models \alpha(\mathfrak G^\ast, \mathfrak D^\ast, \bot)$. 
Let $f^\ast$ be an injection from $\mathfrak G^\ast$ to $\mathfrak F^\ast$ satisfying \textbf{(cf$_1$)}--\textbf{(cf$_4$)} for $\alpha(\mathfrak G^\ast, \mathfrak D^\ast, \bot)$, and
let $C(r^\ast)$ be the root-cluster in $\mathfrak G^\ast$ and $C(y^\ast)$ its immediate successor in $\mathfrak G^\ast$. By assumption, $f^\ast$ is not an injection from $\mathfrak G^\ast$ to $\mathfrak F$ satisfying \textbf{(cf$_1$)}--\textbf{(cf$_4$)}, so  $f^\ast(r^\ast)=x^\ast$ and
$f^\ast(y^\ast) \in W$.
As $\{f^\ast(y^\ast)\}\in\INT$ by \textbf{(cf$_4$)} and $C(x)$ is a limit cluster, it follows from Lemma~\ref{lem:descr'} that $f^\ast(y^\ast) \notin C(x)$, and so $y^\ast \notin \mathfrak D^\ast$. 
Suppose, for definiteness, that $f^\circ(y^\circ)R f^\bullet(y^\bullet)$ or $f^\circ(y^\circ)=f^\bullet(y^\bullet)$. Let $C$ be the immediate predecessor of $C(f^\circ(y^\circ))$
in $\mathfrak F$. Then $C$ is a non-limit cluster. By Lemma~\ref{lem:descr'}, $C\in\INT$ 
and, by Lemma~\ref{l:defpoints}, $\{z\}\in\INT$, for every $z\in C$. If $C$ is non-degenerate, then 
we modify $f^\circ$ by taking $f^\circ(r^\circ)\in C$; otherwise, we modify $f^\bullet$ by taking $f^\bullet(r^\bullet) \in C$. In either case, the modified $f^\ast$ is an injection 
from $\mathfrak G^\ast$ to $\mathfrak F$ satisfying \textbf{(cf$_1$)}--\textbf{(cf$_4$)}, a contradiction.
\end{proof}


We begin our study of the interpolant existence problem (IEP) by showing how the criterion of Theorem~\ref{criterion} can be used to decide whether given formulas have an interpolant in any fixed d-persistent cofinal subframe logic $L \supseteq \KFT$ (defined in \S\ref{can-for}). 
%
%
Suppose that $\varphi_1$ and $\varphi_2$ do not have an interpolant in $L$. Let $\sigma = \sig(\varphi_1) \cap \sig(\varphi_2)$. By Theorem~\ref{criterion}, there exist models $\mathfrak M_i$, $i=1,2$, based on descriptive frames \mbox{$\mathfrak{F}_i = (W_{i},R_{i},\INT_{i})$} for $L$ with roots $x_i \in W_i$ such that $\mathfrak{M}_{1},x_{1} \sim_{\sigma} \mathfrak{M}_{2},x_{2}$, \mbox{$\mathfrak{M}_{1},x_{1} \models \varphi_1$} and $\mathfrak{M}_{2}, x_{2} \models \neg \varphi_2$. 
We may assume that $\bis$ is the largest
$\sigma$-bisimulation $\equiv_\sigma$ between $\mathfrak M_1$ and $\mathfrak M_2$ 
(for which $x_1 \bis x_2$, 
of course). 
We show how to extract from the $\mathfrak M_i$ polynomial-size models $\mathfrak M'_i$ that still witness that $\varphi_1$ and $\varphi_2$ lack an interpolant in $L$. 
We proceed in two steps.
\begin{description}
\item[Step 1] For each $i=1,2$ and each $\tau\in \sub(\varphi_i)$ satisfied in $\mathfrak{M}_i$, we take a $\{\tau\}$-maximal point $y_{\tau}\in W_i$ 
(which exists by Lemma~\ref{maxpoints}), and denote the 
set of all these $y_\tau$ by $\mset{i} \subseteq W_i$. 
Note that $\mset{i}$ is cofinal in $\mathfrak F_i$ because each point in $W_i \setminus \mset{i}$ has a $\{\varphi_i\}$- or $\{\neg\varphi_i\}$-maximal $R_i$-successor. Set
\begin{equation}\label{typesT}
T=\bigl\{ t_{\mathfrak{M}_{1}}^{\sigma}(x) \mid x\in \{x_{1}\}\cup \mset{1}\bigr\} \cup\bigl\{ t_{\mathfrak{M}_{2}}^{\sigma}(x) \mid x\in \{x_{2}\}\cup \mset{2}\bigr\}.
\end{equation}

\item[Step 2]  As $\mathfrak{M}_{1},x_{1} \sim_{\sigma} \mathfrak{M}_{2},x_{2}$ and $\bis$ is the largest $\sigma$-bisimulation,
 each $t\in T$ is satisfied in both $\mathfrak M_i$.
For $i=1,2$ 
and every $t\in T$, we take a $t$-maximal point $z_t^i$ in $\mathfrak M_i$ (which exists by Lemma~\ref{maxpoints}) such that if $\{x_i\}\cup \mset{i}$ already contained some $t$-maximal points, then we choose $z_t^i$ from those, always taking $z_t^i=x_i$ when $x_i$ is $t=t_{\mathfrak M_i}^\sigma(x_i)$-maximal. We let $\sset{i}=\{z_t^i\mid t\in T\}$. 
Then $|\sset{1}|=|\sset{2}|=|T|$. Note, however, that $\{x_i\}\cup \mset{i}$ may contain more than one $t$-maximal point, for some $t\in T$ (but we only choose one of these to be included in $\sset{i}$).
\end{description}
%
%

Now, let 
$W_{i}'=\{x_{i}\}\cup \mset{i} \cup \sset{i}$, 
$R'_i = \rest{R_i}{W'_i}$, $\mathfrak F'_i = (W'_i,R'_i)$, and let $\mathfrak{M}_{i}'$ be the restriction of $\mathfrak{M}_{i}$ to $\mathfrak{F}_{i}'$. 
We let

\begin{equation}\label{kbound}
\kbound=3+3\max(|\varphi_1|,|\varphi_2|).
\end{equation}
Clearly,  $|W_i'|\leq \kbound$, 
so the size of $\mathfrak M_i$ is $\mathcal{O}\bigl(\max(|\varphi_1|,|\varphi_2|)\bigr)$.  
As $L$ is d-per\-sistent, $(W_i,R_i) \models L$. By construction, $\mathfrak F'_i$ is a cofinal subframe of $(W_i, R_i)$, and so $\mathfrak F'_i \models L$ as $L$ is a cofinal subframe logic. Finally, we define $\bis'$ as the restriction of $\bis$ to $W_{1}'\times W_{2}'$: $x_1' \bis' x_2'$
iff $t_{\mathfrak{M}_1}^{\sigma}(x_1')=t_{\mathfrak{M}_2}^{\sigma}(x_2')$, for all $x_1'\in W_1'$, $x_2'\in W_2'$.

\begin{lemma}\label{cofinsubfr}\
$(a)$ $\mathfrak{M}_{1}',x_{1}\models\varphi_1$, $\mathfrak{M}_{2}',x_{2}\models\neg \varphi_2$,  and 
$(b)$ 
$\bis'$ is a $\sigma$-bisimulation between $\mathfrak{M}_{1}'$ and $\mathfrak{M}_{2}'$ with $x_1 \bis' x_2$.
\end{lemma}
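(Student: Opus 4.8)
The plan is to treat (a) and (b) separately; both hinge on the retained $\{\tau\}$- and $t$-maximal points serving as canonical witnesses for $\Diamond$. For (a) I would establish the stronger claim that, for each $i$, each $y\in W_i'$ and each $\tau\in\sub(\varphi_i)$, one has $\mathfrak M_i',y\models\tau$ iff $\mathfrak M_i,y\models\tau$, by induction on $\tau$. Atomic and Boolean cases are immediate since $\mathfrak M_i'$ inherits the valuation of $\mathfrak M_i$, so only $\tau=\Diamond\chi$ needs work. The direction from $\mathfrak M_i'$ to $\mathfrak M_i$ is trivial because $R_i'=\rest{R_i}{W_i'}\subseteq R_i$ and the inductive hypothesis applies to $\chi$. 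For the converse, if $\mathfrak M_i,y\models\Diamond\chi$ then some $R_i$-successor of $y$ satisfies $\chi$, so $\chi$ is satisfied in $\mathfrak M_i$ and Step~1 retained a $\{\chi\}$-maximal point $y_\chi\in\mset{i}\subseteq W_i'$ (which exists by Lemma~\ref{maxpoints}); the essential sub-step is $yR_iy_\chi$. By connectedness the $\{\chi\}$-maximal points form a single greatest $\chi$-cluster, so any successor $z$ of $y$ with $\mathfrak M_i,z\models\chi$ has $C(z)\le_R C(y_\chi)$; since $yR_iz$, a short case distinction on $C(z)<_R C(y_\chi)$ versus $C(z)=C(y_\chi)$ yields $yR_iy_\chi$ by transitivity, and the inductive hypothesis gives $\mathfrak M_i',y\models\Diamond\chi$. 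Instantiating the claim at $x_1$ with $\varphi_1$ and at $x_2$ with $\neg\varphi_2\in\sub(\varphi_2)$ proves (a).

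For (b), note first that $x_1\bis'x_2$ holds because $\mathfrak M_1,x_1\sim_\sigma\mathfrak M_2,x_2$ gives $t_{\mathfrak M_1}^\sigma(x_1)=t_{\mathfrak M_2}^\sigma(x_2)$ by Lemma~\ref{bisim-lemma}, and condition {\bf (\atom)} is immediate since $\bis'$ equates full $\sigma$-types and the valuations are inherited. The decisive preliminary observation for {\bf (\move)} is that every $y_i'\in W_i'$ satisfies $t_{\mathfrak M_i}^\sigma(y_i')\in T$: this holds for $x_i$ and the points of $\mset{i}$ by the definition \eqref{typesT} of $T$, and for the points of $\sset{i}$ because each is a $t$-maximal point for some $t\in T$. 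Now suppose $x_1'\bis'x_2'$ and $x_1'R_1'y_1'$; put $t=t_{\mathfrak M_1}^\sigma(y_1')\in T$ and take the retained $t$-maximal point $z_t\in\sset{2}\subseteq W_2'$ as the candidate successor, noting $y_1'\bis'z_t$ since both carry the $\sigma$-type $t$. It remains to verify $x_2'R_2'z_t$.

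This last verification is the main obstacle. From $x_1'R_1y_1'$ and $t_{\mathfrak M_1}^\sigma(x_1')=t_{\mathfrak M_2}^\sigma(x_2')$ we obtain $\mathfrak M_2,x_2'\models\Diamond\bigwedge\Gamma'$ for every finite $\Gamma'\subseteq t$, whence modal saturation (Lemma~\ref{maxpoints}), applicable because $\mathfrak F_2$ is descriptive, produces some $z$ with $x_2'R_2z$ and $t_{\mathfrak M_2}^\sigma(z)=t$; the cluster-comparison argument from (a), now invoking $t$-maximality of $z_t$, upgrades this to $x_2'R_2z_t$, i.e.\ $x_2'R_2'z_t$. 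The converse half of {\bf (\move)} is entirely symmetric, using $\sset{1}$ and modal saturation in $\mathfrak F_1$. I expect the reduction of an arbitrary saturation witness $z$ to the single retained maximal point $z_t$ via its maximality to be the most delicate point, since it is precisely what preserves bisimilarity while keeping the extracted models $\mathfrak M_i'$ of bounded size.
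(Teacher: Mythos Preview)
Your proposal is correct and follows essentially the same approach as the paper. The only cosmetic difference is in part~(b): to obtain a point $z\in W_2$ with $x_2'R_2z$ and $t_{\mathfrak M_2}^\sigma(z)=t$, the paper simply invokes the \textbf{(\move)} condition of the original largest bisimulation $\bis$ (which equals $\equiv_\sigma$ by Lemma~\ref{bisim-lemma}), whereas you re-derive this point via modal saturation---but since Lemma~\ref{bisim-lemma} itself is proved by modal saturation, the two arguments are interchangeable.
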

\begin{proof}
$(a)$ follows from the fact that, for any $\tau\in\sub(\varphi_i)$ and $x\in W'_i$,
$\mathfrak M_i,x\models\tau$ iff $\mathfrak M'_i,x\models\tau$, which can be established by a straightforward induction on the construction of $\varphi_1$ and $\varphi_2$.
We only show $(\Rightarrow)$ for $\tau=\Diamond\psi$. 
If $\mathfrak M_i,x\models\Diamond\psi$, then there is $y\in W_i$ with $xR_iy$ and 
$\mathfrak M_i,y\models\psi$. Take $y_\psi\in \mset{i}\subseteq W_i'$. By the $\{\psi\}$-maximality of $y_\psi$, 
either $y=y_\psi$ or $yR_iy_\psi$, and so $xR_i'y_\psi$ and $\mathfrak M'_i,x\models\Diamond\psi$.

$(b)$
 Condition {\bf (\atom)} follows from the definition. To establish {\bf (\move)}, assume $x \bis' x'$ and $x R'_1 y$. 
 Let $t=t^\sigma_{\mathfrak M_1}(y)$. Then $t\in T$,
 and so  there is a $t$-maximal $z_t\in \sset{2}\subseteq W_2'$ in $\mathfrak M_2$. 
 In particular, $t^\sigma_{\mathfrak M_2}(z_t)=t$, and so $y \bis' z_t$. As $x \bis x'$ and $\bis$ is the largest $\sigma$-bisimulation, there is $z\in W_2$ with $x'R_2z$ and $t^\sigma_{\mathfrak M_2}(z)=t$.
  It follows from the $t$-maximality of $z_t$ that $z=z_t$ or $zR_2 z_t$, and so $x'R_2'z_t$, as required.
%
\end{proof}

Thus, the fact that $\varphi_1$ and $\varphi_2$ have no interpolant in $L$ can always be witnessed (in the sense of Theorem~\ref{criterion}) by models $\mathfrak M_i$ of size polynomial in 
$\max(|\varphi_1|,|\varphi_2|)$, 
and so we can say that $L$ has the \emph{polysize bisimilar model property}. This gives the first claim of the following theorem:  

\begin{theorem}\label{dperscofinal}
$(a)$ All d-persistent cofinal subframe logics $L \supseteq \KFT$ have the polysize bisimilar model property. $(b)$ If such an $L$ is consistent and finitely axiomatisable, then the IEP for $L$ is \coNP-complete. 
\end{theorem}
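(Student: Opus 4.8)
The plan is to establish part~$(b)$ by proving the \coNP{} upper bound and \coNP-hardness separately; part~$(a)$ has just been secured by the two-step construction preceding the theorem. For the upper bound I would show that the \emph{complement} of the IEP---exhibiting a witness to interpolant non-existence---lies in \textsc{NP}. By Theorem~\ref{criterion} together with the polysize bisimilar model property of part~$(a)$, the formulas $\varphi_1$ and $\varphi_2$ fail to have an interpolant in $L$ iff there are finite rooted Kripke models $\mathfrak M_1',\mathfrak M_2'$, each with at most $\kbound$ points, based on frames for $L$, with roots $x_1,x_2$ such that $\mathfrak M_1',x_1\models\varphi_1$, $\mathfrak M_2',x_2\models\neg\varphi_2$, and $\mathfrak M_1',x_1\sim_\sigma\mathfrak M_2',x_2$ for $\sigma=\sig(\varphi_1)\cap\sig(\varphi_2)$. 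A nondeterministic algorithm therefore guesses two $\delta$-models, $\delta=\sig(\varphi_1)\cup\sig(\varphi_2)$, of size bounded by $\kbound$ and verifies these conditions.

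Each verification runs in polynomial time. The two satisfaction statements are ordinary model checking; the bisimulation condition is decided by computing the largest $\sigma$-bisimulation between the finite models via partition refinement and testing whether $x_1$ and $x_2$ are related. The remaining, and crucial, task is to confirm that each guessed frame $\mathfrak F_i'$ is a frame for $L$, for which I would use the canonical axiomatisation~\eqref{canon}: since $L$ is finitely axiomatisable, $J_L$ is finite and each $\mathfrak G_j$ is a fixed finite frame whose size $|V_j|$ is a constant depending only on $L$. As $\mathfrak F_i'$ is a finite Kripke frame---hence descriptive and finitely generated---the characterisation via \textbf{(cf$_1$)}--\textbf{(cf$_4$)} applies: $\mathfrak F_i'$ refutes $\alpha(\mathfrak G_j,\mathfrak D_j,\bot)$ iff some injection $f\colon V_j\to W_i'$ meets \textbf{(cf$_1$)}--\textbf{(cf$_4$)}. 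Since $|V_j|$ is constant there are only $|W_i'|^{|V_j|}$ candidate injections, and each of \textbf{(cf$_1$)}--\textbf{(cf$_4$)} is checkable in polynomial time (note that \textbf{(cf$_4$)} is automatic over a Kripke frame). Thus $\mathfrak F_i'\models L$ is decided in polynomial time, the whole check lies in \textsc{NP}, and so the IEP is in \coNP.

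For \coNP-hardness I would reduce validity in $L$ to the IEP. Given a formula $\chi$, map it to the instance $(\varphi_1,\varphi_2)=(\top,\chi)$, so that $\sigma=\emptyset$. If $\chi\in L$ then $\iota=\top$ is an interpolant, since $\top\to\top\in L$ and $\top\to\chi\in L$; conversely, any interpolant $\iota$ forces $\top\to\chi\in L$, i.e.\ $\chi\in L$. Hence $\chi\in L$ iff $(\top,\chi)$ has an interpolant, and this is a polynomial-time reduction. As $L$ is consistent, finitely axiomatisable, and contains $\KFT$, validity in $L$ is \coNP-hard~\cite{DBLP:journals/sLogica/LitakW05}, and \coNP-hardness of the IEP follows; combining with the upper bound yields \coNP-completeness.

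I expect the main obstacle to be the polynomial-time frame-recognition step in the upper bound: one must be certain that membership of a guessed finite frame in $L$ reduces to a bounded injection search over the fixed frames $\mathfrak G_j$, which is exactly what the canonical-formula characterisation \textbf{(cf$_1$)}--\textbf{(cf$_4$)} supplies once $L$ is known to be finitely axiomatisable. The other steps are routine, provided part~$(a)$ is invoked to keep the guessed models within the size bound $\kbound$.
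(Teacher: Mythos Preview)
Your proposal is correct and follows essentially the same route as the paper: guess polynomial-size Kripke models, verify satisfaction, bisimilarity, and frame membership in $L$ via the canonical-formula characterisation, with the injection search over the fixed frames $\mathfrak G_j$ giving polynomial-time recognition. The paper additionally remarks that, since $L$ is a cofinal subframe logic, $\mathfrak D_j=\emptyset$ for every axiom, so \textbf{(cf$_3$)} is vacuous; your argument checks it anyway, which is harmless. Your explicit hardness reduction via $(\top,\chi)$ is also correct and makes explicit what the paper leaves to the citation of~\cite{DBLP:journals/sLogica/LitakW05}.
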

%
%
\begin{proof}
We show that $(a) \Rightarrow (b)$; cf.\ Theorem~\ref{t:iepcoNP} in \S\ref{section:K4.3}. Indeed, suppose $L$ is given by~\eqref{canon} (with 
$\mathfrak G_j=(V_j,S_j)$ and $\mathfrak D_j= \emptyset$, for all 
$j$ in the finite index set $J_L$). To decide whether formulas $\varphi_1$ and $\varphi_2$ do not have an interpolant in $L$, we guess polynomial-size pointed models $\mathfrak M_i,x_i$ based on 
Kripke frames $\mathfrak F_i=(W_i,R_i)$ for $\KFT$ and restricted to the variables in $\varphi_1$ and $\varphi_2$. 
The conditions $\mathfrak M_1,x_1\models\varphi_1$ and  $\mathfrak M_2,x_2\models\neg\varphi_2$ are clearly polynomially checkable; 
that $\mathfrak{M}_{1},x_{1} \sim_{\sigma} \mathfrak{M}_{2},x_{2}$, for $\sigma = \sig(\varphi_1) \cap \sig(\varphi_2)$, can be established in polynomial time using a standard technique from~\cite[Chapter 7]{DBLP:books/daglib/0020348}.
Finally, to check whether $\mathfrak F_i \models \alpha(\mathfrak G_j, \emptyset,\bot)$, for   
each $j\in J_L$, we simply enumerate all injective functions from $\mathfrak G_j$ to $\mathfrak F_i$, whose number does not exceed $|W_i|^{|V_j|}$, and verify that at least one of them satisfies \textbf{(cf$_1$)}--\textbf{(cf$_2$)}, which can obviously be done in time polynomial in $|W_i|$.
(Condition  \textbf{(cf$_3$)} holds vacuously, and  \textbf{(cf$_4$)} always holds as $\mathfrak F_i$ is a Kripke frame.) 
%
\end{proof}

We now give two examples illustrating that the construction above does not work for logics that are not d-persistent,
even for logics with the fmp.
Prominent examples of such logics are $\GLT$ and $\Log\{(\mathbb N,<)\}$; see Example~\ref{e:canform}.
We show that, for these logics, establishing model-theoretically (using Theorem~\ref{criterion}) that some formulas do not have an interpolant requires a pair of models that are based on infinite descriptive (non-Kripke) frames.

%

%

\begin{example}\label{ex:GL.3}\em 
$(a)$ Consider the following formulas $\varphi_1$ and $\varphi_2$:
%
\begin{align}
\nonumber
& \mbox{$\varphi_1 = \Diamond( p_{1} \wedge  \Diamond^+ \neg q_{1}) \wedge \Box (p_{2} \rightarrow \Box^+ q_{1})$,}\\ \label{psi}
%
& \varphi_2 = \neg [ \Diamond( p_{2} \wedge \Diamond^+\neg q_{2}) \land \Box (p_{1} \rightarrow \Box^+ q_{2}) ].
\end{align}
To show that $(\varphi_1 \rightarrow \varphi_2) \in \KFT \subseteq \GLT$, 
suppose otherwise. Then there exist a model $\mathfrak M$ based on a frame $\mathfrak F = (W,R)$ for $\KFT$ and $z \in W$ with $\mathfrak M, z \models \varphi_1 \land \neg \varphi_2$. 
So we have $x,x',y,y' \in W$ with $z R x R^+ x'$, $z R y R^+ y'$,
$\mathfrak M, x \models p_1$, $\mathfrak M, x' \models \neg q_1$, $\mathfrak M, y \models p_2$, and $\mathfrak M, y' \models \neg q_2$. 
%
Since $\mathfrak F$ is 
a frame for $\KFT$, either $x'=y'$ or $x'Ry'$ or $y'Rx'$. 
However, none of these is possible because of the boxed conjuncts of $\varphi_1$ and $\neg\varphi_2$. 

We now use Theorem~\ref{criterion} to show that $\varphi_1$ and $\varphi_2$ do not have an interpolant in $\GLT$. Let $\sigma = \sig(\varphi_1) \cap \sig(\varphi_2) = \{p_1,p_2\}$. Observe that any models $\mathfrak M_i$ meeting the conditions of Theorem~\ref{criterion} cannot be based on a Kripke frame $\mathfrak F_i=(W_i,R_i)$ for $\GLT$. 
 Indeed, let $\bis$ be the corresponding bisimulation. Then $\mathfrak{M}_{1},x_{1} \models \varphi_1$ implies that there is $x^1_1 \in W_1$ with $x_1 R_1 x^1_1$ and $\mathfrak{M}_{1},x^1_{1} \models p_1$; we must also have \mbox{$\mathfrak{M}_{1},y_{1} \models \neg q_1$}, for some $y_1$ with $x^1_1 R^+_1 y_1$. 
Similarly, $\mathfrak{M}_{2},x_{2} \models \neg\varphi_2$ implies that there is $x^1_2 \in W_2$ with $x_2 R_1 x^1_2$ and $\mathfrak{M}_{2},x^1_{2} \models p_2$, and we also have $\mathfrak{M}_{2},y_{2} \models\neg q_2$, for some $y_2$ with $x^1_2 R^+_2 y_2$. As $x_{1} \bis x_{2}$ and $x_1 R_1 x^1_1$, {\bf (\move)} gives $x^2_2$ with $x_2 R_2 x^2_2$ and $x^1_{1} \bis x^2_{2}$. But then $\mathfrak{M}_{2},x^2_{2} \models p_1$, and so $x_2 R_2 x^1_2 R_2^+ y_2R_2 x^2_2$ since $\mathfrak F_2$ is a frame for $\KFT$
and in view of $\neg\varphi_2$'s second conjunct. Symmetrically, we find 
$x^2_1$ with $x_1 R_1 x^1_1 R_1^+y_1 R_1 x^2_1$ and $x^2_1 \bis x^1_2$. 
Using {\bf (\move)}, we construct infinite ascending chains of not necessarily distinct
points as shown in the picture below.\\
%
%
\centerline{
\begin{tikzpicture}[>=latex,line width=0.5pt,xscale = 1,yscale = .65]
\node[]  at (-1,0) {$\mathfrak M_2$};
\node[scale = 0.9,label=below:{\footnotesize $\neg\varphi_2$},label=above:{\footnotesize $\qquad x_2$}] (x2) at (0,0) {$\ast$};
\node[scale = 0.9,label=below:{\footnotesize $p_2$},label=above:{\footnotesize $x_2^1$}] (x21) at (2,0) {$\ast$};
\node[scale = 0.9,label=below:{\footnotesize $\neg q_2$},label=above:{\footnotesize $y_2$}] (y2) at (3,0) {$\ast$};
\node[scale = 0.9,label=below:{\footnotesize $p_1$},label=above:{\footnotesize $x_2^2$}] (x22) at (4,0) {$\ast$};
\node[scale = 0.9,label=below:{\footnotesize $p_2$},label=above:{\footnotesize $x_2^3$}] (x23) at (6,0) {$\ast$};
\node[]  at (7,0) {$\dots$};
\draw[->] (x2) to (x21);
\draw[->] (x21) to (y2);
\draw[->] (y2) to (x22);
\draw[->] (x22) to (x23);
\node[]  at (-1,3) {$\mathfrak M_1$};
\node[scale = 0.9,label=above:{\footnotesize $\varphi_1$},label=below:{\footnotesize $\qquad x_1$}] (x1) at (0,3) {$\ast$};
\node[scale = 0.9,label=above:{\footnotesize $p_1$},label=below:{\footnotesize $x_1^1$}] (x11) at (2,3) {$\ast$};
\node[scale = 0.9,label=above:{\footnotesize $\neg q_1$},label=below:{\footnotesize $y_1$}] (y1) at (3,3) {$\ast$};
\node[scale = 0.9,label=above:{\footnotesize $p_2$},label=below:{\footnotesize $x_1^2$}] (x12) at (4,3) {$\ast$};
\node[scale = 0.9,label=above:{\footnotesize $p_1$},label=below:{\footnotesize $x_1^3$}] (x13) at (6,3) {$\ast$};
\node[]  at (7,3) {$\dots$};
\draw[->] (x1) to (x11);
\draw[->] (x11) to (y1);
\draw[->] (y1) to (x12);
\draw[->] (x12) to (x13);
\node[gray]  at (-.5,1.5) {$\bis$};
\draw[gray,thick,dotted] (x1) to (x2);
\draw[gray,thick,dotted] (x11) to (x22);
\draw[gray,thick,dotted] (x12) to (x23);
\draw[gray,thick,dotted] (x12) to (x21);
\draw[gray,thick,dotted] (x13) to (x22);
\end{tikzpicture}
}
%
\noindent
It follows that the $\mathfrak F_i$ are not frames for $\GLT$; see any of \cite{Gol,DBLP:books/daglib/0030819,DBLP:books/cu/BlackburnRV01} for details.

We now give a descriptive frame for $\GLT$ that can be used to show that $\varphi_1$ and $\varphi_2$ do not have an interpolant in $\GLT$. Take the descriptive frame $\mathfrak{C}(\clustert, \bullet)$  defined in Example~\ref{k-omega} and construct $\mathfrak F=\bullet \lhd \bullet \lhd \mathfrak C(\clustert, \bullet)$ (see Definition~\ref{d:orderedsum}),
%
which is a frame for $\GLT$ by property
$(iii)$ in Example~\ref{e:canform}~$(a)$.
%
%
Consider the rooted models $\mathfrak M_i, x_i$, $i=1,2$, shown in Fig.~\ref{GL3gf}, both of which are based on a frame isomorphic to  $\mathfrak F$. 
It is readily checked that $\mathfrak{M}_{1},x_1 \models \varphi_1$, $\mathfrak{M}_{2},x_2 \models \neg\varphi_2$, and the depicted relation $\bis$ is a $\sigma$-bisimulation 
between $\mathfrak M_1$ and $\mathfrak M_2$ with $x_1\bis x_2$.

In fact, the argument above shows that none of the logics $L$ in the interval $\KFT \subseteq L \subseteq \GLT$ has the CIP.

\begin{figure}[htbp]
\centering
\begin{tikzpicture}[>=latex,line width=0.5pt,xscale = 1.15,yscale = .65]
\node[]  at (-1,0) {$\mathfrak M_2$};
\node[point,fill=black,scale = 0.7,label=below:{\footnotesize $\neg\varphi_2$},label=above:{\footnotesize $\qquad x_2$}] (x2) at (0,0) {};
\node[point,fill=black,scale = 0.7,label=below:{\footnotesize $p_2,\neg q_2$},label=above:{\footnotesize $y_2$}] (y2) at (1,0) {};
\node[point,scale = 0.7,label=above:{\footnotesize $a_2^0$},label=below:{\footnotesize $p_2$}] (a20) at (2.5,0) {};
\node[point,scale = 0.7,label=above:{\footnotesize $a_2^1$},label=below:{\footnotesize $p_1$}] (a21) at (3.5,0) {};
\draw[] (3,.1) ellipse (1 and .95);
\node[scale = 0.9]  at (4.1,-.5) {$q_2$};
\node[]  at (4.5,0) {$\dots$};
\node[point,fill=black,scale = 0.7,label=above right:{\footnotesize $\yy_2^3$},label=below:{\footnotesize $p_1,q_2$}] (b23) at (5,0) {};
\node[point,fill=black,scale = 0.7,label=above right:{\footnotesize $\yy_2^2$},label=below:{\footnotesize $p_2,q_2$}] (b22) at (6,0) {};
\node[point,fill=black,scale = 0.7,label=above right:{\footnotesize $\yy_2^1$},label=below:{\footnotesize $p_1,q_2$}] (b21) at (7,0) {};
\node[point,fill=black,scale = 0.7,label=above right:{\footnotesize $\yy_2^0$},label=below:{\footnotesize $p_2,q_2$}] (b20) at (8,0) {};
\draw[->] (x2) to (y2);
\draw[->] (y2) to (2,0);
\draw[->] (b23) to (b22);
\draw[->] (b22) to (b21);
\draw[->] (b21) to (b20);
\node[]  at (-1,3) {$\mathfrak M_1$};
\node[point,fill=black,scale = 0.7,label=above:{\footnotesize $\varphi_1$},label=below:{\footnotesize $\qquad x_1$}] (x1) at (0,3) {};
\node[point,fill=black,scale = 0.7,label=above:{\footnotesize $p_1,\neg q_1$},label=below:{\footnotesize $y_1$}] (y1) at (1,3) {};
\node[point,scale = 0.7,label=below:{\footnotesize $a_1^0$},label=above:{\footnotesize $p_2$}] (a10) at (2.5,3) {};
\node[point,scale = 0.7,label=below:{\footnotesize $a_1^1$},label=above:{\footnotesize $p_1$}] (a11) at (3.5,3) {};
\draw[] (3,2.9) ellipse (1 and .95);
\node[scale = 0.9]  at (4.1,3.5) {$q_1$};
\node[]  at (4.5,3) {$\dots$};
\node[point,fill=black,scale = 0.7,label=below right:{\footnotesize $\yy_1^3$},label=above:{\footnotesize $p_1,q_1$}] (b13) at (5,3) {};
\node[point,fill=black,scale = 0.7,label=below right:{\footnotesize $\yy_1^2$},label=above:{\footnotesize $p_2,q_1$}] (b12) at (6,3) {};
\node[point,fill=black,scale = 0.7,label=below right:{\footnotesize $\yy_1^1$},label=above:{\footnotesize $p_1,q_1$}] (b11) at (7,3) {};
\node[point,fill=black,scale = 0.7,label=below right:{\footnotesize $\yy_1^0$},label=above:{\footnotesize $p_2,q_1$}] (b10) at (8,3) {};
\draw[->] (x1) to (y1);
\draw[->] (y1) to (2,3);
\draw[->] (b13) to (b12);
\draw[->] (b12) to (b11);
\draw[->] (b11) to (b10);
\node[gray]  at (-.5,1.5) {$\bis$};
\draw[gray,thick,dotted] (x1) to (x2);
\draw[gray,thick,dotted] (y1) to (a21);
\draw[gray,thick,dotted] (y2) to (a10);
\draw[gray,thick,dotted] (a10) to (a20);
\draw[gray,thick,dotted] (a11) to (a21);
\draw[gray,thick,dotted] (b13) to (b23);
\draw[gray,thick,dotted] (b12) to (b22);
\draw[gray,thick,dotted] (b11) to (b21);
\draw[gray,thick,dotted] (b10) to (b20);
\end{tikzpicture}
\caption{$\sigma$-bisimilar models based on a descriptive frame for $\GLT$.}\label{GL3gf}
\end{figure}
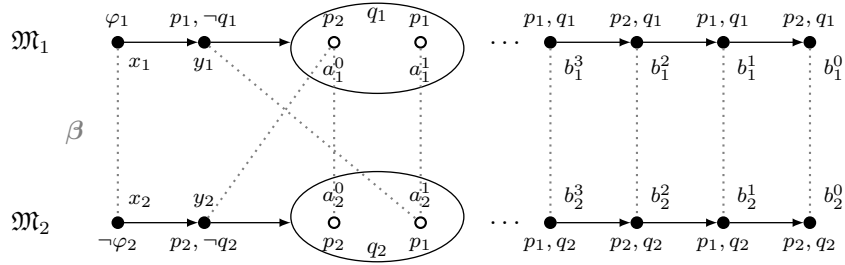

%
%
%


$(b)$ Consider next the logic $\Log\{(\mathbb N,<)\}$ and show that the formulas 
\[
\varphi_1' = \Diamond( p_{1} \wedge  \Diamond^+ \neg q_{1}) \wedge \Box (p_{2} \rightarrow \Box^+ q_{1}) \land \Diamond r \wedge \neg \Diamond (r \wedge \Diamond p_{1}) 
\]
and $\varphi_2$ given by~\eqref{psi} do not have an interpolant in it, though $(\varphi_1 \rightarrow \varphi_2) \in \KFT$,
and so $(\varphi_1' \rightarrow \varphi_2) \in \KFT \subseteq \Log\{(\mathbb N,<)\}$. 


As in $(a)$ above, any models $\mathfrak M_i$, $i=1,2$, satisfying the conditions of Theorem~\ref{criterion} for $\varphi_1'$ and $\varphi_2$ cannot be based on Kripke frames, however the reason for this is slightly different. Suppose $\bis$ is a bisimulation witnessing these conditions. Then the models $\mathfrak{M}_i$ must contain infinite ascending chains such as those in Example~\ref{ex:GL.3}~$(a)$. 
Also, the model $\mathfrak{M}_1$ with $\mathfrak{M}_1,x_1 \models \varphi_1'$ must contain a point $z$ such that $x_1R_1z$ and $\mathfrak{M}_1,z \models r \land \Box \neg p_1$, which means that $z$ is located after all of the $x_1^j$, $j < \omega$. But then the Kripke frame $\mathfrak F_1$ underlying $\mathfrak{M}_1$ is not a frame for $\Log\{(\mathbb N,<)\}$, as it refutes its axiom $\Box (\Box p \to p) \to (\Diamond\Box p \to \Box p)$ if we make $p$ true everywhere after the initial ascending chain in $\mathfrak F_1$ and false elsewhere. 


The picture below shows models $\mathfrak M_1$ and $\mathfrak M_2$ based on $\bullet \lhd \bullet \lhd {\mathfrak  C}(\clustert, \bullet) \lhd \circ$ and satisfying the conditions of Theorem~\ref{criterion} for $\varphi_1'$ and $\varphi_2$. That this frame is a frame for $\Log\{(\mathbb N,<)\}$ follows from 
Example~\ref{e:canform}~$(b)$.
\hfill $\dashv$
\end{example} 
%
%
\centerline{
\begin{tikzpicture}[>=latex,line width=0.5pt,xscale = 1.1,yscale = .65]
\node[]  at (-1,0) {$\mathfrak M_2$};
\node[point,fill=black,scale = 0.7,label=below:{\footnotesize $\neg\varphi_2$},label=above:{\footnotesize $\qquad x_2$}] (x2) at (0,0) {};
\node[point,fill=black,scale = 0.7,label=below:{\footnotesize $p_2,\neg q_2$},label=above:{\footnotesize $y_2$}] (y2) at (1,0) {};
\node[point,scale = 0.7,label=above:{\footnotesize $a_2^0$},label=below:{\footnotesize $p_2$}] (a20) at (2.5,0) {};
\node[point,scale = 0.7,label=above:{\footnotesize $a_2^1$},label=below:{\footnotesize $p_1$}] (a21) at (3.5,0) {};
\draw[] (3,.1) ellipse (1 and .9);
\node[scale = 0.8]  at (4.1,-.5) {$q_2$};
\node[]  at (4.5,0) {$\dots$};
\node[point,fill=black,scale = 0.7,label=above right:{\footnotesize $\yy_2^3$},label=below:{\footnotesize $p_1,q_2$}] (b23) at (5,0) {};
\node[point,fill=black,scale = 0.7,label=above right:{\footnotesize $\yy_2^2$},label=below:{\footnotesize $p_2,q_2$}] (b22) at (6,0) {};
\node[point,fill=black,scale = 0.7,label=above right:{\footnotesize $\yy_2^1$},label=below:{\footnotesize $p_1,q_2$}] (b21) at (7,0) {};
\node[point,fill=black,scale = 0.7,label=above right:{\footnotesize $\yy_2^0$},label=below:{\footnotesize $p_2,q_2$}] (b20) at (8,0) {};
\node[point,scale = 0.7,label=below:{\footnotesize $r,q_2$}] (z2) at (9,0) {};
\draw[->] (x2) to (y2);
\draw[->] (y2) to (2,0);
\draw[->] (b23) to (b22);
\draw[->] (b22) to (b21);
\draw[->] (b21) to (b20);
\draw[->] (b20) to (z2);
\node[]  at (-1,3) {$\mathfrak M_1$};
\node[point,fill=black,scale = 0.7,label=above:{\footnotesize $\varphi_1'$},label=below:{\footnotesize $\qquad x_1$}] (x1) at (0,3) {};
\node[point,fill=black,scale = 0.7,label=above:{\footnotesize $p_1,\neg q_1$},label=below:{\footnotesize $y_1$}] (y1) at (1,3) {};
\node[point,scale = 0.7,label=below:{\footnotesize $a_1^0$},label=above:{\footnotesize $p_2$}] (a10) at (2.5,3) {};
\node[point,scale = 0.7,label=below:{\footnotesize $a_1^1$},label=above:{\footnotesize $p_1$}] (a11) at (3.5,3) {};
\draw[] (3,2.9) ellipse (1 and .9);
\node[scale = 0.8]  at (4.1,3.5) {$q_1$};
\node[]  at (4.5,3) {$\dots$};
\node[point,fill=black,scale = 0.7,label=below right:{\footnotesize $\yy_1^3$},label=above:{\footnotesize $p_1,q_1$}] (b13) at (5,3) {};
\node[point,fill=black,scale = 0.7,label=below right:{\footnotesize $\yy_1^2$},label=above:{\footnotesize $p_2,q_1$}] (b12) at (6,3) {};
\node[point,fill=black,scale = 0.7,label=below right:{\footnotesize $\yy_1^1$},label=above:{\footnotesize $p_1,q_1$}] (b11) at (7,3) {};
\node[point,fill=black,scale = 0.7,label=below right:{\footnotesize $\yy_1^0$},label=above:{\footnotesize $p_2,q_1$}] (b10) at (8,3) {};
\node[point,scale = 0.7,label=above:{\footnotesize $r,q_1$}] (z1) at (9,3) {};
\draw[->] (x1) to (y1);
\draw[->] (y1) to (2,3);
\draw[->] (b13) to (b12);
\draw[->] (b12) to (b11);
\draw[->] (b11) to (b10);
\draw[->] (b10) to (z1);
\node[gray]  at (-.5,1.5) {$\bis$};
\draw[gray,thick,dotted] (x1) to (x2);
\draw[gray,thick,dotted] (y1) to (a21);
\draw[gray,thick,dotted] (y2) to (a10);
\draw[gray,thick,dotted] (a10) to (a20);
\draw[gray,thick,dotted] (a11) to (a21);
\draw[gray,thick,dotted] (b13) to (b23);
\draw[gray,thick,dotted] (b12) to (b22);
\draw[gray,thick,dotted] (b11) to (b21);
\draw[gray,thick,dotted] (b10) to (b20);
\draw[gray,thick,dotted] (z1) to (z2);
\end{tikzpicture}
}

\section{Interpolant existence in logics above $\KFT$}\label{section:K4.3}

We now generalise Theorem~\ref{dperscofinal} to all finitely axiomatisable logics containing $\KFT$. 
It turns out that, even though these logics do not have the polysize bisimilar model property in general, the structure of the models required in Theorem~\ref{criterion} is perfectly understandable. 
We show that
one can assemble a pair of bisimilar models witnessing the absence of an interpolant for $\varphi_1$ and $\varphi_2$ in any $L \supseteq \KFT$ 
as the ordered sum of finitely-many `nice' models, which are either finite or infinite but finitely `presentable'\!.
Hence, we say that all $L\supseteq \KFT$ have the `\qfbmp'\!.
Moreover, if $L$ is finitely axiomatisable, we can replace `finite' by `polynomial in $\cbound$ and $\max(|\varphi_1|,|\varphi_2|)$', for some constant $\cbound$ depending on $L$ only.  In this case, we say that $L$ has the `\qpbmp'\!.

\S\ref{section:K4.3} is organised as follows. 
In \S\ref{ss:qpbmp}, we formulate our main results 
(Theorems~\ref{t:structmodel}--\ref{t:finax} and \ref{t:iepcoNP}), 
and show how Theorem~\ref{t:finaxstruct} implies Theorem~\ref{t:iepcoNP}.
In \S\ref{ss:modular} and \S\ref{ss:small}, we prove  Theorem~\ref{t:structmodel}.
Then, in \S\ref{ss:finax}, we show how to fine-tune the proof of Theorem~\ref{t:structmodel} and obtain proofs of Theorem~\ref{t:finaxstruct} and \ref{t:finax}.
Finally, in \S~\ref{ss:cofinalsfr}, we formulate and prove an interesting consequence 
of our methods for cofinal subframe logics (Theorem~\ref{t:cofinalsfr}). 


\subsection{The \qpbmp}\label{ss:qpbmp}

Given a finite signature $\delta$, a $\delta$-model $\mathfrak M=(\mathfrak F,\mathfrak w)$
is called \emph{simple} if either $\mathfrak F$ is finite or 
$\mathfrak F = \chain{k}{\ast}$, for $0<k<\omega$ and $\ast\in\{\bullet,\circ\}$, and, 
 for every $p \in \delta$, there is $A_p\subseteq\{0,\dots,k-1\}$ with $\mathfrak{w}(p)=\bigcup_{i\in A_p}X_i$, where the $X_i$ are the infinite generators of the internal sets in
$\chain{k}{\ast}$ defined in Example~\ref{k-omega}.
Thus, even though $\chain{k}{\ast}$ is infinite, 
 any \simple{} $\delta$-model based on it is fully determined by the  \emph{finitary} 
information provided by the sets $A_p$, $p\in\delta$, that is, by the atomic $\delta$-types of the points in
the $\cluster{k}$-cluster.
A $\delta$-model is called \emph{\qfin} if it is the ordered sum of finitely-many \simple{} models.

\begin{definition}\label{d:bmp}
\rm
A logic $L \supseteq \KFT$ is said to have the \emph{\qfbmp} if, for any formulas $\varphi_1$, $\varphi_2$ without an interpolant in $L$,  
there are rooted \qfin{} $\delta$-models $\mathfrak N_1,x_1$ and $\mathfrak N_2,x_2$ 
satisfying conditions $(a)$--$(c)$ below, for $\delta = \sig(\varphi_1) \cup \sig(\varphi_2)$ and $\sigma = \sig(\varphi_1) \cap \sig(\varphi_2)$:
\begin{itemize}
\item[$(a)$] 
$\mathfrak N_1,x_1\models\varphi_1$ and $\mathfrak N_2,x_2\models\neg\varphi_2$;

\item[$(b)$] 
$\mathfrak N_1$ and $\mathfrak N_2$ are based on frames for $L$;

\item[$(c)$] 
$\mathfrak{N}_{1},x_{1} \sim_{\sigma} \mathfrak{N}_{2},x_{2}$.
\end{itemize}
\end{definition}

Our first result is:

\begin{theorem}\label{t:smallmodel}
All $L \supseteq\KFT$ have the \qfbmp.
\end{theorem}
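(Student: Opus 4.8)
The plan is to start from the bisimilar descriptive models supplied by Theorem~\ref{criterion} and reshape them, piece by piece, into quasi-finite ones. First I would apply Theorem~\ref{criterion} together with Lemma~\ref{l:rootnolimit} to obtain rooted $\delta$-models $\mathfrak M_1,x_1$ and $\mathfrak M_2,x_2$ on finitely $\mathfrak M_i$-generated descriptive frames for $L$, with $\mathfrak M_1,x_1\models\varphi_1$, $\mathfrak M_2,x_2\models\neg\varphi_2$, $\mathfrak M_1,x_1\sim_\sigma\mathfrak M_2,x_2$, and non-limit root clusters, and fix the largest $\sigma$-bisimulation $\bis=\equiv_\sigma$. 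By Lemma~\ref{lem:descr0} each $\mathfrak F_i$ is a converse well-founded chain of finite clusters, so the only obstruction to finiteness is the occurrence of limit clusters, each sitting below an infinite chain of clusters that accumulates at it from the final side. By \eqref{facenondeg} and Lemma~\ref{lem:descr'} every such accumulation point is a non-degenerate, non-definable cluster, whereas the clusters strictly between it and the final cluster all have immediate successors, hence are definable and carry internal singletons by Lemma~\ref{l:defpoints}. This is exactly the shape abstracted by $\chain{k}{\ast}$ in Example~\ref{k-omega}, whose head is a non-degenerate limit cluster and whose tail is an infinite chain of $\ast$-singletons with periodic atomic types.

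Next I would carry out a modular decomposition. As in Step~1 and Step~2 of the cofinal subframe argument, I collect finitely many \emph{landmark} clusters: the $\{\tau\}$-maximal clusters for $\tau\in\sub(\varphi_i)$, to control satisfaction, and the $t$-maximal clusters for the finitely many $\sigma$-types $t$ occurring in the relevant region, to control the bisimulation. By Lemma~\ref{lem:descr'} these landmarks are definable, hence internal, so by Lemma~\ref{intpart} they cut each $\mathfrak M_i$ into an ordered sum $\rest{\mathfrak M_i}{I_0}\lhd\dots\lhd\rest{\mathfrak M_i}{I_{n-1}}$ of restrictions to closed internal intervals. Since a limit cluster is never a landmark, each interval is either finite or consists of one accumulation cluster together with the infinite chain above it, i.e.\ a tadpole candidate. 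Choosing the landmarks compatibly in the two models (which have matching $\sigma$-type profiles because $\bis=\equiv_\sigma$) makes $\bis$ factor through the summands, reducing the task to treating one aligned infinite piece at a time and then reassembling by ordered sum.

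The core step replaces each infinite tadpole candidate by a \simple{} model on $\chain{k}{\ast}$. Along the infinite chain the $\delta$-types range over a finite set, so a pigeonhole argument yields a homogeneous, periodic sub-pattern of period $k$ with a block of types $t_0,\dots,t_{k-1}$ occurring cofinally; I take $\ast\in\{\bullet,\circ\}$ to match its reflexivity (and I expect this homogeneity to be \emph{forced} by the canonical axioms of $L$, exactly as $\GLT$ allows only degenerate non-limit clusters). I then set $\mathfrak{w}(p)=\bigcup_{i\in A_p}X_i$ so that the atomic $\delta$-type of $\yy_n$ is $t_{n\bmod k}$ and the non-degenerate head $A_k$ realises precisely $t_0,\dots,t_{k-1}$; since $\Diamond^{\mathfrak F}X$ is cofinite for nonempty internal $X$, the head also witnesses every $\Diamond$-subformula satisfiable in the piece, so preservation of the $\delta$-type at the interface, and hence of satisfaction $(a)$, follows by induction. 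For $(c)$ I run the construction in both models with a common period (the lcm of the two individual ones), so the two tadpoles stay globally $\sigma$-bisimilar. For frame validity $(b)$ I use the canonical formulas of \S\ref{can-for}: by \eqref{infiniteincl} the head singletons $\{a_i\}$ are not internal in $\chain{k}{\ast}$, so condition \textbf{(cf$_4$)} prevents any injection witnessing $\mathfrak F\not\models\alpha(\mathfrak G_j,\mathfrak D_j,\bot)$ from meeting the head; every such injection therefore lands in the tail, and mapping the tail order-preservingly into the original infinite chain (using matching reflexivity and Lemma~\ref{l:defpoints} for \textbf{(cf$_4$)}) turns it into an injection into $\mathfrak M_i$, contradicting $\mathfrak M_i\models L$.

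The hard part, I expect, is this core step, and within it the simultaneous fulfilment of $(b)$ and $(c)$. The single periodic block realised by the non-degenerate head must at once encode the $\sigma$-type rhythm demanded by the bisimulation in \emph{both} models and respect the reflexivity and forbidden-configuration constraints read off the canonical axioms of $L$; producing one $\chain{k}{\ast}$ (with a single $k$ and $\ast$) that discharges the formula, the bisimulation, and validity together—and then verifying that no canonical-formula injection runs across the seams of the final ordered sum—is where the genuine bookkeeping lies. The modular decomposition and the reassembly into a quasi-finite model are, by comparison, routine applications of Lemmas~\ref{intpart} and \ref{lem:descr'}.
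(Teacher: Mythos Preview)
Your overall architecture---start from Theorem~\ref{criterion}, decompose the two models into aligned intervals, replace each infinite piece by a tadpole $\chain{k}{\ast}$, and verify~$(b)$ via canonical formulas---is the paper's, but two of your steps contain real gaps. First, the claim that ``each interval is either finite or consists of one accumulation cluster together with the infinite chain above it'' is false: the ordinal $\FC^{-1}$ can be any countable ordinal, so an interval between two consecutive landmark clusters may contain \emph{several} limit clusters with several infinite chains stacked above them (already $\omega\cdot 2$ shows this). Moreover, your landmarks need not even be definable: a $t$-maximal point for an infinite $\sigma$-type $t$ can perfectly well sit in a limit cluster, so Lemma~\ref{lem:descr'} does not apply and you cannot cut there using Lemma~\ref{intpart}. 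The paper's remedy is the notion of \emph{$\sigma$-block} (Lemma~\ref{int-prop}): maximal intervals on which the $\sigma$-types are $\Diamond$-closed. These, not the landmark clusters, are the true units of the decomposition; they come in a canonical $\sigma$-bisimulation-respecting bijection between $\mathfrak M_1$ and $\mathfrak M_2$ (Lemma~\ref{bis-blocks}), which is what makes ``choosing the landmarks compatibly in the two models'' work. Irrelevant stretches between relevant $\sigma$-blocks (however complicated their ordinal type) are then collapsed to their tail plus a single head point (Definition~\ref{d:ints}, step~\step{3}, and Case~I of Lemma~\ref{l:replacement}).

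Second, your tadpole construction (``pigeonhole yields a periodic sub-pattern; take the lcm of the two periods'') is not what the paper does and is not obviously sound. The paper does not search the original chain for periodicity; it \emph{imposes} it. The head $A_k$ is taken to be the relevant points of the limit cluster $\eC{\bl_i}$, and Lemma~\ref{l:maxbisblock} guarantees that $|A_1^\ell|=|A_2^\ell|=k$ with a $\sigma$-type-preserving bijection between them; the tail valuation is then defined by $\mathfrak w(p)=\bigcup_{a_s\in\mathfrak v(p)}X_s$ via a parent function~$\parent$ (Definition~\ref{d:nice}). This makes the two tadpoles $\sigma$-matching by construction (Definition~\ref{d:match}\,$(c)$), not because two independently discovered periods happen to agree. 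Finally, the validity argument for~$(b)$ needs more than ``map the tail order-preservingly into the original chain'': to handle \textbf{(cf$_3$)} for points in $\mathfrak D_j$ you must preserve immediate-predecessor relations across the $\lhd$-seams, which is exactly why the paper imposes the $(I,i)$-\nice\ conditions \eqref{fincluster}--\eqref{inpred} and verifies~\eqref{samef} in Lemma~\ref{l:final}.
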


We actually prove a stronger Theorem~\ref{t:structmodel} that prescribes more structure for the pair of \qfin{} models witnessing the lack of an interpolant, which makes it easy to deduce the existence of a $\sigma$-bisimulation between the models. The prescribed structure is easily checkable, which is used in the proof of the main Theorem~\ref{t:iepcoNP}.
To formulate our `structural' theorem, we require a few definitions.
%

For $0 < m < \omega$, let 
$m^{<} = \underbrace{\bullet \lhd \dots \lhd \bullet}_m$. An \emph{\atomic{} frame}
takes one of the forms
%
\begin{multline}\label{atomicF}
m^{<}, \quad \cluster{1} \lhd m^{<}, \quad \cluster{k}, \quad \chain{k}{\bullet}, \quad \chain{k}{\circ},\\
\mbox{where $0<m<\omega$ and $0<k\le 2^{|\delta|}$}.
\end{multline}
%
%
The \emph{size} $\size{\mathfrak{F}}$
of an \atomic{} $\mathfrak{F}$ is defined by taking $\size{m^{<}}=m$, $\size{\cluster{1} \lhd m^{<}}=1+m$,  and
$\size{\cluster{k}}=\size{\chain{k}{\bullet}}=\size{\chain{k}{\circ}}=k$. 
If $\mathfrak M=\mathfrak M_0\lhd\dots\lhd\mathfrak M_{n-1}$, for some $0<n<\omega$ and
\simple{} $\delta$-models $\mathfrak M_j$ based on \atomic{} frames $\mathfrak F_j$, $j<n$, then
we set $\size{\mathfrak{M}}=\size{\mathfrak F_0\lhd\dots\lhd\mathfrak F_{n-1}}=
\size{\mathfrak F_0}+\dots +\size{\mathfrak F_{n-1}}$.

\begin{definition}\label{d:match}\rm
Suppose $\mathfrak N_i$, $i=1,2$, is the ordered sum of finitely-many \simple{} $\delta$-models based on \atomic{} frames.
The pair $(\mathfrak N_1,\mathfrak N_2)$ is called \emph{\match}
if it satisfies one of the following conditions \bmp{a}--\bmp{c}:
\begin{itemize}
\item[\bmp{a}] 
$\mathfrak N_1$ and $\mathfrak N_2$ are \simple{} models based on the same \atomic{} frame $\mathfrak H$ with
$\at^\sigma_{\mathfrak N_1}(y)=\at^\sigma_{\mathfrak N_2}(y)$, for every point $y$ in $\mathfrak H$;
 
 \item[\bmp{b}] 
the final clusters $C_i$ of  $\mathfrak N_i$, $i=1,2$, are non-degenerate and,
for every point $y_1$ in $\mathfrak N_1$, there is $y_2 \in C_2$
with $\at^\sigma_{\mathfrak N_1}(y_1)=\at^\sigma_{\mathfrak N_2}(y_2)$, and, 
for every point $y_2$ in $\mathfrak N_2$, there is $y_1 \in C_1$
with $\at^\sigma_{\mathfrak N_2}(y_2) = \at^\sigma_{\mathfrak N_1}(y_1)$;

 \item[\bmp{c}] 
 \begin{enumerate}
%
\item
the last $\lhd$-components of the $\mathfrak N_1$ and $\mathfrak N_2$ are 
based on the same \atomic{} frame $\mathfrak G$ of the form $\chain{k}{\bullet}$ or $\chain{k}{\circ}$,
with $0<k\le 2^{|\delta|}$;

\item
$\at^\sigma_{\mathfrak N_1}(y)=\at^\sigma_{\mathfrak N_2}(y)$,
for every point $y$ in the root $\cluster{k}$-cluster $A_k$ of $\mathfrak G$; 

\item
for every point $y_1$ in any non-last $\lhd$-component of $\mathfrak N_1$, there is $y_2 \in A_k$
with $\at^\sigma_{\mathfrak N_1}(y_1)=\at^\sigma_{\mathfrak N_2}(y_2)$ and, 
for every point $y_2$ in any non-last $\lhd$-component of $\mathfrak N_2$, there is $y_1\in A_k$
with $\at^\sigma_{\mathfrak N_2}(y_2)=\at^\sigma_{\mathfrak N_1}(y_1)$. 
\end{enumerate}
\end{itemize}
If $(\mathfrak N_1,\mathfrak N_2)$ satisfies condition \bmp{x}, for $x=a,b,c$, we say that 
it \emph{is of type} \bmp{x}. 
\end{definition}


\begin{center}
\begin{tikzpicture}[>=latex,line width=0.5pt,xscale = .65,yscale = 1]
\node[]  at (-.6,1.25) {{\small $\sim_\sigma$}};
\node[]  at (-.6,2.8) {{\small $(a)$}};
\node[]  at (-.5,.25) {{\small $\mathfrak N_2$}};
\draw [thin] (0,0) rectangle ++(1.5,.5);
\node[]  at (-.5,2.25) {{\small $\mathfrak N_1$}};
\draw [thin] (0,2) rectangle ++(1.5,.5);
\draw [thick,dotted,gray] (.15,.6) -- (.15,1.9);
\draw [thick,dotted,gray] (.45,.6) -- (.45,1.9);
\draw [thick,dotted,gray] (.75,.6) -- (.75,1.9);
\draw [thick,dotted,gray] (1.05,.6) -- (1.05,1.9);
\draw [thick,dotted,gray] (1.35,.6) -- (1.35,1.9);
\draw [thick,dashed] (1.75,-.5) -- (1.7,3);
\node[]  at (2.2,2.8) {{\small $(b)$}};
\draw[thin] (2,0) rectangle ++(1.2,.5);
\node[]  at (3.5,.25) {$\lhd$};
\draw[thin] (3.8,0) rectangle ++(.6,.5);
\node[]  at (4.7,.25) {$\lhd$};
\draw[thin] (5,0) rectangle ++(.6,.5);
\node[]  at (5.9,.25) {$\lhd$};
\draw[thin] (6.2,0) rectangle ++(.6,.5);
\node[]  at (7.1,.25) {$\lhd$};
\draw[thin] (7.4,0) rectangle ++(1.2,.5);
\draw [thin] (8.35,.25) ellipse (.17 and .2);
\node[gray]  at (5.3,.7) {$\overbrace{\hspace*{4.2cm}}$};
\draw[thin] (2,2) rectangle ++(1.4,.5);
\node[]  at (3.7,2.25) {$\lhd$};
\draw[thin] (4,2) rectangle ++(.6,.5);
\node[]  at (4.9,2.25) {$\lhd$};
\draw[thin] (5.2,2) rectangle ++(1.4,.5);
\draw [thin] (6.35,2.25) ellipse (.17 and .2);
\node[gray]  at (4.3,1.8) {$\underbrace{\hspace*{3cm}}$};
\draw [thick,dotted,gray] (8.35,.6) -- (4.35,1.65);
\draw [thick,dotted,gray] (5.45,.9) -- (6.35,1.95);
\draw [thick,dashed] (8.9,-.5) -- (8.9,3);
\node[]  at (9.4,2.8) {{\small $(c)$}};
\draw[thin] (9.2,0) rectangle ++(1.2,.5);
\node[]  at (10.7,.25) {$\lhd$};
\draw[thin] (11,0) rectangle ++(.6,.5);
\node[]  at (11.9,.25) {$\lhd$};
\draw[thin] (12.2,0) rectangle ++(.6,.5);
\node[]  at (13.1,.25) {$\lhd$};
\draw[thin] (13.4,0) rectangle ++(.6,.5);
\node[]  at (14.3,.25) {$\lhd$};
\draw[thin] (14.6,0) rectangle ++(1.8,.5);
\draw [thin] (14.85,.25) ellipse (.17 and .2);
\node[]  at (15.7,.25) {{\tiny $\cdots\ast\ast$}};
\node[gray]  at (12.1,.7) {$\overbrace{\hspace*{3.7cm}}$};
\draw[thin] (9.2,2) rectangle ++(1.4,.5);
\node[]  at (10.9,2.25) {$\lhd$};
\draw[thin] (11.2,2) rectangle ++(.6,.5);
\node[]  at (12.1,2.25) {$\lhd$};
\draw[thin] (12.4,2) rectangle ++(1.8,.5);
\draw [thin] (12.65,2.25) ellipse (.17 and .2);
\node[]  at (13.55,2.25) {{\tiny $\cdots\ast\ast$}};
\node[gray]  at (11,1.8) {$\underbrace{\hspace*{2.3cm}}$};
\draw [thick,dotted,gray] (14.8,.6) -- (10.9,1.65);
\draw [thick,dotted,gray] (12.1,.9) -- (12.65,1.95);
\draw [thick,dotted,gray] (15.75,.4) -- (13.7,2.2);
\draw [thick,dotted,gray] (16.05,.4) -- (14,2.2);

\end{tikzpicture}
\end{center}


The following lemma justifies this definition:
\begin{lemma}\label{l:gbisall}
Suppose $\mathfrak N_i=\mathfrak N_i^{0}\lhd\dots\lhd\mathfrak N_i^{N-1}$, for $i=1,2$ and $0<N<\omega$,  and $x_i$ is a root of $\mathfrak N_i^0$. 
If $\at^\sigma_{\mathfrak N_1}(x_1) = \at^\sigma_{\mathfrak N_2}(x_2)$ and, for every $\ell<N$, the pair $(\mathfrak N_1^\ell,\mathfrak N_2^\ell)$ is \match, then 
$\mathfrak N_1,x_1\sim_\sigma\mathfrak N_2,x_2$.
\end{lemma}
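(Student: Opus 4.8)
The plan is to build a single global $\sigma$-bisimulation $\bis$ between $\mathfrak N_1$ and $\mathfrak N_2$ relating $x_1$ to $x_2$; by the definition of $\sim_\sigma$ this yields the claim. Since the ordered sum is associative (cf.\ Definition~\ref{d:orderedsum} and Lemma~\ref{intpart}), each $\mathfrak N_i$ is a flat ordered sum of simple models on \atomic{} frames, and the restriction of its accessibility relation to the universe $W_i^\ell$ of the $\ell$-th component $\mathfrak N_i^\ell$ is exactly the relation of $\mathfrak N_i^\ell$. The first reduction is to produce, for every $\ell<N$, a $\sigma$-bisimulation $\bis_\ell\subseteq W_1^\ell\times W_2^\ell$ between $\mathfrak N_1^\ell$ and $\mathfrak N_2^\ell$ that is \emph{total} in both directions. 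I then set $\bis=\{(x_1,x_2)\}\cup\bigcup_{\ell<N}\bis_\ell$ and verify that $\bis$ is a $\sigma$-bisimulation.

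\emph{Constructing the $\bis_\ell$.} The shape of $\bis_\ell$ is dictated by the type of the matching pair. For type \bmp{a}, where $\mathfrak N_1^\ell$ and $\mathfrak N_2^\ell$ live on the same \atomic{} frame $\mathfrak H$ with pointwise equal atomic $\sigma$-types, I take $\bis_\ell$ to be the identity on $\mathfrak H$; here \textbf{(\atom)} is immediate and \textbf{(\move)} is trivial as the two models share the frame. For type \bmp{b}, using that the final clusters $C_i$ are non-degenerate and hence $R$-seen in full by every point of $\mathfrak N_i^\ell$, I let $\bis_\ell$ consist of all pairs $(y_1,y_2)$ with $\at^\sigma_{\mathfrak N_1}(y_1)=\at^\sigma_{\mathfrak N_2}(y_2)$ and $y_1\in C_1$ or $y_2\in C_2$; totality and both \textbf{(\move)}-clauses follow from the two halves of \bmp{b}, because any required successor type can be realised inside the opposite final cluster. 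For type \bmp{c} I first observe that agreement of the atomic $\sigma$-types on the root cluster $A_k$ of the shared last component $\mathfrak G=\chain{k}{\ast}$ propagates to all of $\mathfrak G$: a simple model on $\chain{k}{\ast}$ has $\mathfrak w(p)=\bigcup_{i\in A_p}X_i$ with $a_i\in X_i$, so the second item of \bmp{c} forces $A_p$ to coincide in the two models for every $p\in\sigma$, and hence by the generator description in Example~\ref{k-omega} the two valuations agree on every $X_i$ and thus at every point of $\mathfrak G$. Accordingly I take $\bis_\ell$ to be the identity on $\mathfrak G$ together with all pairs matching a non-last-component point of one model to an $A_k$-point of the other of equal atomic $\sigma$-type; totality comes from the third item of \bmp{c}, and \textbf{(\move)} uses that $A_k$, being the root cluster of $\mathfrak G$, $R$-sees all of $\mathfrak G$, while every non-last point $R$-sees all of $\mathfrak G$ and in particular all of $A_k$.

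\emph{Verifying that $\bis$ is a $\sigma$-bisimulation.} Condition \textbf{(\atom)} is inherited from the $\bis_\ell$ and from the hypothesis $\at^\sigma_{\mathfrak N_1}(x_1)=\at^\sigma_{\mathfrak N_2}(x_2)$. For \textbf{(\move)}, take $(y_1,y_2)\in\bis_\ell$ and a move $y_1R_1z_1$; as the accessibility relation of an ordered sum never points backwards, $z_1$ lies in a component $\ell'\ge\ell$. If $\ell'=\ell$ the matching successor is supplied by $\bis_\ell$ itself. If $\ell'>\ell$ I use totality of $\bis_{\ell'}$ to pick $z_2\in W_2^{\ell'}$ with $z_1\bis_{\ell'}z_2$, and then $y_2R_2z_2$ holds automatically because every point of an earlier component $R$-sees every point of a later one; the converse clause is symmetric. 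The moves out of the adjoined pair $(x_1,x_2)$ are handled identically: $x_1$, being a root, $R_1$-sees every point of $\mathfrak N_1$, any successor $z_1$ in component $\ell'$ has a $\bis_{\ell'}$-partner $z_2$ by totality, and $x_2$, also a root, $R_2$-sees $z_2$ unless $z_2=x_2$, in which case inspection of the three types shows $x_2$ is reflexive, so $x_2R_2x_2$ still holds. Thus $x_1\bis x_2$, giving $\mathfrak N_1,x_1\sim_\sigma\mathfrak N_2,x_2$.

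\emph{Expected main obstacle.} The crux is the cross-component \textbf{(\move)} argument, which forces totality of each $\bis_\ell$: everything hinges on (i) the ordered-sum fact that earlier-component points $R$-see all later-component points, and (ii) arranging each block-bisimulation so that every successor type arising in a later block can be answered. A more delicate but localised point is the type \bmp{c} propagation of atomic-type agreement from the finite root cluster $A_k$ to the infinite tail of $\chain{k}{\ast}$, together with the reflexivity bookkeeping needed for the root pair.
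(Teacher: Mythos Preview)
Your approach is essentially the paper's: build a global $\sigma$-bisimulation on each component (identity for type \bmp{a}; ``match into the final cluster'' for type \bmp{b}; identity on the shared $\chain{k}{\ast}$ plus ``match into $A_k$'' for type \bmp{c}), observe the propagation of atomic $\sigma$-types from $A_k$ to the tail in type \bmp{c}, and then take the union together with $(x_1,x_2)$. The cross-component \textbf{(\move)} argument via totality and the ordered-sum structure is exactly right.

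There is one small but genuine slip in your treatment of the root pair. You claim that if the chosen $z_2$ happens to equal $x_2$, then ``inspection of the three types shows $x_2$ is reflexive''. This fails for types \bmp{b} and \bmp{c}: $\mathfrak N_2^0$ may be a nontrivial ordered sum whose first \atomic{} $\lhd$-component is, say, $m^{<}$, so $x_2$ is irreflexive, yet your $\bis_0$ can legitimately contain a pair $(z_1,x_2)$ with $z_1\in C_1$ (type \bmp{b}) or $z_1\in A_k$ (type \bmp{c}). The fix is easy and is how the paper handles it: use globality to pick some $u_2$ with $x_1\,\bis_0\,u_2$, apply \textbf{(\move)} for $\bis_0$ to the step $x_1 R_1 z_1$ to obtain $z_2$ with $u_2 R_2 z_2$ and $z_1\,\bis_0\,z_2$, and then conclude $x_2 R_2 z_2$ because $x_2$ is a root (either $x_2=u_2$, or $x_2 R_2 u_2 R_2 z_2$ and transitivity applies). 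Equivalently, in types \bmp{b} and \bmp{c} you can simply always choose $z_2$ inside $C_2$ (resp.\ $A_k$), which is a non-degenerate cluster seen by $x_2$. With this correction, your proof goes through and matches the paper's.
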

\begin{proof}
First, we show that, for every $\ell<N$, there is a \globalb{} $\sigma$-bisimulation between $\mathfrak N_1^\ell$ and $\mathfrak N_2^\ell$. 
This is clear for $(\mathfrak N_1^\ell,\mathfrak N_2^\ell)$ of type \bmp{a}, in which case  
the identity function on $\mathfrak H$ is a $\sigma$-bisimulation. 

If $(\mathfrak N_1^\ell,\mathfrak N_2^\ell)$ is of type \bmp{b},
then the final clusters $C_i$ of $\mathfrak N_i^\ell$, $i=1,2$, are  non-degenerate. Thus, $\bis_1\cup\bis_2$
is a \globalb{} $\sigma$-bisimulation between $\mathfrak N_1^\ell$ and $\mathfrak N_2^\ell$,
\begin{align*}
& \bis_1=\bigl\{(y_1,y_2) \mid \mbox{$y_1$ in $\mathfrak N_1^\ell$, $y_2$ in $C_2$,  $\at^\sigma_{\mathfrak N_1^\ell}(y_1)=\at^\sigma_{\mathfrak N_2^\ell}(y_2)$} \bigr\},\\
& \bis_2=\bigl\{(y_1,y_2) \mid \mbox{$y_2$ in $\mathfrak N_2^\ell$, $y_1$ in $C_1$,
$\at^\sigma_{\mathfrak N_1^\ell}(y_1)=\at^\sigma_{\mathfrak N_2^\ell}(y_2)$} \bigr\}.
\end{align*}

If $(\mathfrak N_1^\ell,\mathfrak N_2^\ell)$ is of type \bmp{c},
suppose  
$\mathfrak N_i^\ell=\mathfrak N_i^{0,\ell}\lhd\dots\lhd\mathfrak N_i^{\nn_i-1,\ell}$, for 
$0<\nn_i<\omega$ and $i=1,2$. 
By \bmp{c}.1, $\mathfrak N_1^{\nn_1-1,\ell}$ and $\mathfrak N_2^{\nn_2-1,\ell}$ are \simple{} models based on the same \atomic{} frame of the form $\chain{k}{\bullet}$ or $\chain{k}{\circ}$.
As in  Example~\ref{k-omega}, let $A_k=\{a_s\mid s<k\}$ and $W_k=A_k\cup\{\yy_n\mid n<\omega\}$ (containing all the points of $\chain{k}{\ast}$).
We claim that
\begin{equation}\label{tailsame}
\at^\sigma_{\mathfrak N_1^\ell}(\yy_n)=\at^\sigma_{\mathfrak N_2^\ell}(\yy_n),\quad\mbox{for all $n<\omega$}.
\end{equation}
Indeed, suppose $n<\omega$ and let $s<k$ be such that $n\equiv s$ (mod $k$).
As $\mathfrak N_i^{\nn_i-1,\ell}$ is a \simple{} model, we have 
\[
\at^\sigma_{\mathfrak N_i^\ell}(\yy_n)=\at^\sigma_{\mathfrak N_i^{\nn_i-1,\ell}}(\yy_n)=
\at^\sigma_{\mathfrak N_i^{\nn_i-1,\ell}}(a_s)=\at^\sigma_{\mathfrak N_i^\ell}(a_s),\quad\mbox{for $i=1,2$,}
\]
and so  \eqref{tailsame} follows from \bmp{c}.2.
Now let 
\begin{align*}
& \bis_1=\bigl\{(y_1,y_2) \mid \mbox{$y_1$ in $\mathfrak N_1^{0,\ell}\lhd\dots\lhd\mathfrak N_1^{\nn_1-2,\ell}$, 
$y_2\in A_k$, $\at^\sigma_{\mathfrak N_1^\ell}(y_1)=\at^\sigma_{\mathfrak N_2^\ell}(y_2)$} \bigr\},\\
& \bis_2=\bigl\{(y_1,y_2) \mid \mbox{$y_2$ in $\mathfrak N_2^{0,\ell}\lhd\dots\lhd\mathfrak N_2^{\nn_2-2,\ell}$, $y_1\in A_k$, $\at^\sigma_{\mathfrak N_1^\ell}(y_1)=\at^\sigma_{\mathfrak N_2^\ell}(y_2)$} \bigr\}.
\end{align*}
By \eqref{tailsame} and \bmp{c}.2--3, 
$\bis_1\cup\bis_2\cup \bigl\{(\yy_n,\yy_n)\mid n<\omega\bigr\}$ 
is a \globalb{} $\sigma$-bisimulation between $\mathfrak N_1^\ell$ and $\mathfrak N_2^\ell$. 
Finally, if $\bis^0$ is a \globalb{} $\sigma$-bisimulation between $\mathfrak N_1^0$ and $\mathfrak N_2^0$, then $\bis^0\cup\{(x_1,x_2)\}$ is also a \globalb{} $\sigma$-bisimulation between $\mathfrak N_1^0$ and $\mathfrak N_2^0$ because  
$\at^\sigma_{\mathfrak N_1}(x_1) = \at^\sigma_{\mathfrak N_2}(x_2)$.
The union of the constructed \globalb{} bisimulations is a (\globalb)  bisimulation $\bis$ between $\mathfrak N_1$ and $\mathfrak N_2$ with $x_1\bis x_2$, 
as required.
\end{proof}

The following strengthening of Theorem~\ref{t:smallmodel}
will be proved in \S\ref{ss:modular}--\ref{ss:small}:

\begin{theorem}\label{t:structmodel}
For any logic $L \supseteq \KFT$ and formulas $\varphi_1$, $\varphi_2$ without an interpolant in $L$,  
there are rooted $\delta$-models $\mathfrak N_1,x_1$ and $\mathfrak N_2,x_2$ 
satisfying $(a)$--$(d)$ below, for $\delta = \sig(\varphi_1) \cup \sig(\varphi_2)$ and $\sigma = \sig(\varphi_1) \cap \sig(\varphi_2)$\textup{:}
\begin{itemize}
\item[$(a)$] 
$\mathfrak N_1,x_1\models\varphi_1$ and $\mathfrak N_2,x_2\models\neg\varphi_2$\textup{;}

\item[$(b)$] 
each $\mathfrak N_i$, $i=1,2$, is based on a frame for $L$\textup{;}

\item[$(c)$] 
$\at^\sigma_{\mathfrak N_1}(x_1) = \at^\sigma_{\mathfrak N_2}(x_2)$\textup{;}

\item[$(d)$]
there is $N=\mathcal{O}\bigl(\max(|\varphi_1|,|\varphi_2|)\bigr)$ such that 
$\mathfrak N_i=\mathfrak N_i^0\lhd\dots\lhd\mathfrak N_i^{N-1}$, $i=1,2$, and, for any $\ell<N$,
%
\begin{enumerate}
\item
each $\mathfrak N_i^\ell$ is the ordered sum of 
$\mathcal{O}\bigl(\max(|\varphi_1|,|\varphi_2|)\bigr)$-many \simple{} $\delta$-models based on \atomic{} frames\textup{;} 
\item
the pair $(\mathfrak N_1^\ell,\mathfrak N_2^\ell)$ is \match.
\end{enumerate}
\end{itemize}
\end{theorem}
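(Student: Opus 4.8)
The plan is to take the bisimilar witnesses handed to us by the interpolation criterion and carve them into a synchronised, linearly-long sequence of blocks, each of which collapses to a short ordered sum of \simple{} pieces. Concretely, by Theorem~\ref{criterion} and Lemma~\ref{l:rootnolimit} I would fix models $\mathfrak M_i,x_i$, $i=1,2$, on rooted finitely $\mathfrak M_i$-generated descriptive frames for $L$ with $\mathfrak M_1,x_1\models\varphi_1$, $\mathfrak M_2,x_2\models\neg\varphi_2$, root clusters $C(x_i)$ non-limit, and $\bis=\equiv_\sigma$ the largest $\sigma$-bisimulation relating $x_1$ and $x_2$; since each root $R_i$-sees every other point, the witnessing $\bis$ (and hence $\equiv_\sigma$) is \globalb{}, so exactly the same $\sigma$-types are realised in the two models (Lemma~\ref{bisim-lemma}). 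The target shape $\mathfrak N_i=\mathfrak N_i^0\lhd\dots\lhd\mathfrak N_i^{N-1}$ is built so that Lemma~\ref{intpart} lets me argue block-by-block and Lemma~\ref{l:gbisall} reassembles the global bisimulation at the very end.

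\emph{Landmarks and synchronised cutting.} Imitating the recipe behind Lemma~\ref{cofinsubfr}, for every $\tau\in\sub(\varphi_1)\cup\sub(\varphi_2)$ realised in $\mathfrak M_i$ I would mark its unique $\{\tau\}$-maximal cluster---it exists by Lemma~\ref{maxpoints} and the converse well-foundedness of Lemma~\ref{lem:descr0}, and is definable by Lemma~\ref{lem:descr'}. Passing such a cluster is exactly where the set of subformulas realised strictly further along $<_R$ shrinks; since this set is monotone and there are only linearly-many subformulas, these landmarks split each $\mathfrak M_i$ into $N=\mathcal O\bigl(\max(|\varphi_1|,|\varphi_2|)\bigr)$ closed internal intervals on which it stays constant. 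The cuts must be synchronised across the two models: equal $\sigma$-types at bisimilar points together with monotonicity of the future profile let me line up the landmark clusters of $\mathfrak M_1$ and $\mathfrak M_2$ by an order- and $\sigma$-type-preserving correspondence, yielding a common $N$ and aligned interval pairs $(I_1^\ell,I_2^\ell)$; Lemma~\ref{intpart} then gives $\mathfrak M_i=\rest{\mathfrak M_i}{I_i^0}\lhd\dots\lhd\rest{\mathfrak M_i}{I_i^{N-1}}$.

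\emph{Compressing each block.} Each block $\rest{\mathfrak M_i}{I_i^\ell}$ I would compress to an ordered sum of $\mathcal O\bigl(\max(|\varphi_1|,|\varphi_2|)\bigr)$-many \simple{} pieces on \atomic{} frames. A finite block folds into the \atomic{} shapes $m^{<}$, $\cluster{1}\lhd m^{<}$ and $\cluster{k}$, retaining one representative of each realised atomic $\delta$-type and of each definable cluster, just as the maximal-point argument of Lemma~\ref{cofinsubfr}$(a)$ preserves the relevant subformulas. An infinite block I would replace by a \simple{} model on $\chain{k}{\ast}$: its non-degenerate root $A_k$ mirrors the block's limit \source{} (non-degenerate by Lemma~\ref{lem:descr'}), its tail $\yy_n$ is driven periodically by a period $k\le 2^{|\delta|}$ read off from the atomic $\delta$-types occurring cofinally in the block, $\yy_n$ copying the atomic type of $a_{n\bmod k}$, and $\ast\in\{\bullet,\circ\}$ matching the block's reflexivity pattern (as in Example~\ref{k-omega}). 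Throughout, $L$-validity must survive: using the canonical representation \eqref{canon} and conditions \textbf{(cf$_1$)}--\textbf{(cf$_4$)}, I would verify that the compressed cluster pattern admits no injection from any $\mathfrak G_j$ already excluded by $\mathfrak F_i$, the delicate point being the \source{} of an infinite block, whose singleton is not internal (Lemma~\ref{lem:descr'}) and which therefore blocks precisely the embeddings already blocked in $\mathfrak M_i$, exactly as in the proof of Lemma~\ref{l:rootnolimit}.

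\emph{Matching and the main obstacle.} For aligned blocks I would then certify one of \bmp{a}--\bmp{c} of Definition~\ref{d:match}: a rigid finite pair becomes type \bmp{a} once atomic $\sigma$-types are equalised pointwise, a finite pair whose synchronised final cluster is non-degenerate and absorbs every realised $\sigma$-type becomes type \bmp{b}, and a pair of blocks ending in a common $\chain{k}{\ast}$ sharing $k$ and $\ast$ becomes type \bmp{c} (with \eqref{tailsame}-style periodicity making \bmp{c}.2--3 routine). Condition $(c)$ of the theorem is inherited from $x_1\bis x_2$ and condition $(a)$ from retaining all subformula-maximal points, so Lemma~\ref{l:gbisall} finally upgrades the block-wise bisimulations to $\mathfrak N_1,x_1\sim_\sigma\mathfrak N_2,x_2$. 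I expect the real difficulty to live in the infinite blocks: one must realise, inside one and the same $\chain{k}{\ast}$, both the $L$-validity constraints (which cluster shapes and which reflexivity pattern $\ast$ are admissible, as seen through the canonical formulas) and the demand that the two models present literally the same \atomic{} frame with matching atomic $\sigma$-types, so that \bmp{c} applies. Forcing the two infinite blocks to coincide as a single \atomic{} frame is the crux, and it is exactly here that the limit-cluster analysis of Lemmas~\ref{lem:descr'} and~\ref{l:rootnolimit} carries the weight.
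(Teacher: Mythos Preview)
Your synchronisation step does not work as stated. You mark, in each $\mathfrak M_i$, the $\{\tau\}$-maximal clusters for all $\tau\in\sub(\varphi_1)\cup\sub(\varphi_2)$, and then claim that ``equal $\sigma$-types at bisimilar points together with monotonicity of the future profile'' let you line these landmarks up across the two models. But the formulas $\tau$ are not $\sigma$-formulas in general: $\sub(\varphi_1)$ contains formulas in variables from $\sig(\varphi_1)\setminus\sigma$, and the $\sigma$-bisimulation $\equiv_\sigma$ says nothing about them. The $\{q_1\}$-maximal cluster in $\mathfrak M_1$ of Example~\ref{ex:GL.3} corresponds to no distinguished position in $\mathfrak M_2$; the ``future profile'' you appeal to is likewise not $\sigma$-invariant. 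So the number and positions of your landmark intervals can differ arbitrarily between $\mathfrak M_1$ and $\mathfrak M_2$, and there is no reason the resulting $I_1^\ell$ and $I_2^\ell$ should realise the same set of $\sigma$-types---yet that is exactly what the \match{} conditions \bmp{b} and \bmp{c} demand (every atomic $\sigma$-type in $\mathfrak N_1^\ell$ must recur in the final cluster of $\mathfrak N_2^\ell$ and vice versa).

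The paper fixes this with a different partitioning object, the $\sigma$-\emph{block} $\bl^\sigma_{\mathfrak M}(x)$: a maximal interval on which the set of realised $\sigma$-types coincides with that of its end-cluster (Lemma~\ref{int-prop}). Because $\sigma$-blocks are defined purely from $\sigma$-types, the bisimulation carries them bijectively and order-preservingly between $\mathfrak M_1$ and $\mathfrak M_2$ (Lemma~\ref{bis-blocks}); this is what makes synchronisation possible. Your subformula landmarks $\mset{i}$ and the derived $\sset{i}$ do still appear, but only to single out polynomially many \emph{relevant} $\sigma$-blocks (Lemma~\ref{l:maxbisblock}); Definition~\ref{d:ints} then turns these into the common partition $\{I_i^\ell\mid\ell<N\}$, with extra care for relevant blocks whose end-cluster is a limit cluster. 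The subsequent compression (Lemma~\ref{l:replacement}) is governed by an explicit list of ``$(I,i)$-\nice'' conditions \eqref{sub}--\eqref{hmax} and a parent map $\parent$; these are what actually carry the induction showing $\mathfrak N_i,x_i\models\varphi_i/\neg\varphi_2$ and the transfer of the canonical-formula refutation test \textbf{(cf$_1$)}--\textbf{(cf$_4$)} back to $\mathfrak F_i$ (see~\eqref{samef} in Lemma~\ref{l:final}). Your instincts about retaining maximal points and checking no new $\mathfrak G_j$-injections are right, but without the $\sigma$-block machinery and the \nice{} conditions the argument does not go through.
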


Observe that the models provided by Theorem~\ref{t:structmodel}
are ordered sums of poly\-nomially-many simple models.
However, the sizes of these \simple{} models are not necessarily polynomial in $\max(|\varphi_1|,|\varphi_2|)$. 
%
Our second main result shows that all \emph{finitely axiomatisable} logics $L \supseteq \KFT$ have the 
stronger \emph{\qpbmp}: the lack of an interpolant can be witnessed by a pair \qfin{} models 
of polynomial size.
More precisely, suppose $L$ is given by its canonical axioms as
$L = \KFT \oplus \{\alpha(\mathfrak G_j,\mathfrak D_j,\bot) \mid j\in J_L\}$, for some finite set $J_L$ and $\mathfrak G_j=(V_j,S_j)$.
Let $\cbound=\max_{j\in J_L}|V_j|$.
An \atomic{} frame in \eqref{atomicF} is called \emph{\bounded} if 
it is of the form $m^{<}$ or $\cluster{1} \lhd m^{<}$ with $m\le\cbound+1$, or it has one of the three remaining forms with
%
\[
k\le\pbound:= 2(\kbound-1)\cdot\max\bigl(\cbound+2,\kbound\bigr)+\kbound,
\]
for the polynomial number $\kbound$ defined in \eqref{kbound}. 
In \S~\ref{ss:finax}, we prove:

\begin{theorem}\label{t:finaxstruct}
For any finitely axiomatisable 
logic $L \supseteq \KFT$ and formulas $\varphi_1$, $\varphi_2$ without an interpolant in $L$,  
there are rooted $\delta$-models $\mathfrak N_1,x_1$ and $\mathfrak N_2,x_2$ 
satisfying $(a)$--$(d)$ from Theorem~\ref{t:structmodel},
in which condition $(d).1$ is strengthened to
%
%
%
%
%
%
\begin{enumerate}
\item
each $\mathfrak N_i^\ell$, $i=1,2$, is the ordered sum of 
$\mathcal{O}\bigl(\max(|\varphi_1|,|\varphi_2|)\bigr)$-many \simple{} $\delta$-models based on \bounded{} \atomic{} frames. 
\end{enumerate}
%
%
%
\end{theorem}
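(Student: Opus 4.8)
The plan is to fine-tune the construction behind Theorem~\ref{t:structmodel} with two size-reduction steps keyed to the finite axiomatisation of $L$, so that in the end every \simple{} summand is based on an \bounded{} \atomic{} frame, which is the only strengthening over Theorem~\ref{t:structmodel}. Fix the canonical axioms $\{\alpha(\mathfrak G_j,\mathfrak D_j,\bot)\mid j\in J_L\}$ and recall that $\cbound=\max_{j\in J_L}|V_j|$, so that each \textbf{(cf$_1$)}--\textbf{(cf$_4$)}-refutation of an axiom is an injection of a frame with at most $\cbound$ points; such an injection can only inspect a window of at most $\cbound$ consecutive clusters together with the final cluster and the immediate predecessors demanded, through \textbf{(cf$_3$)}, by the points of $\mathfrak D_j$. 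I would start from the pair $\mathfrak N_1,x_1$, $\mathfrak N_2,x_2$ supplied by Theorem~\ref{t:structmodel}, for which $(a)$--$(d)$ already hold, and shrink only the oversized \atomic{} summands while keeping $(a)$, $(b)$, $(c)$ and the \match{} type of each pair $(\mathfrak N_1^\ell,\mathfrak N_2^\ell)$ intact; by Lemma~\ref{l:gbisall} the global $\sigma$-bisimulation then survives automatically.

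For a summand that is a chain $m^{<}$ or $\cluster{1}\lhd m^{<}$ of degenerate clusters with $m>\cbound+1$, I would first collapse along $\delta$-bisimilarity: any sub-run of degenerate clusters all of whose atomic $\delta$-types recur below the run (in particular in the periodic tail of the final tadpole, when present) may be shortened, since every nested modal requirement is re-witnessed below, so that only the boundedly many clusters carrying a $\{\tau\}$-maximal witness of a subformula of $\varphi_1$ or $\varphi_2$ need survive; this leaves a chain of length $\mathcal{O}(\max(|\varphi_1|,|\varphi_2|))$. I would then rechunk that chain into consecutive \simple{} units of at most $\cbound+1$ clusters each, which touches neither the valuation nor $R$, so $(a)$ is preserved. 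Validity is preserved because a contraction only removes clusters, and any injection refuting some $\alpha(\mathfrak G_j,\mathfrak D_j,\bot)$ after contraction uses $\le\cbound<\cbound+1$ clusters and hence lifts back to the uncontracted frame---there is room to separate its images and to realise every immediate-predecessor relation required by \textbf{(cf$_3$)}---contradicting $\mathfrak N_i\models L$. Since such a pair of summands is necessarily of type \bmp{a}, carrying out identical contractions and chunkings on both sides keeps it \match{}.

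For a summand $\cluster{k}$, $\chain{k}{\bullet}$ or $\chain{k}{\circ}$ with $k>\pbound$, the two last $\lhd$-components sit on a common such frame with root cluster $A_k=\{a_0,\dots,a_{k-1}\}$ and aligned $\sigma$-types, $\at^\sigma_{\mathfrak N_1}(a_i)=\at^\sigma_{\mathfrak N_2}(a_i)$ (by \bmp{c}.2, or \bmp{a}/\bmp{b}). I would select one index set $S\subseteq\{0,\dots,k-1\}$ \emph{shared} by both models and restrict each tadpole to $A_S$ with its tail cycling through $A_S$ with period $|S|$. Three demands shape $S$: $(i)$ it retains, for every subformula of $\varphi_1$ and of $\varphi_2$ realised in the component, an index witnessing its $\{\tau\}$-maximality in $\mathfrak N_1$ resp.\ $\mathfrak N_2$---at most $2(\kbound-1)$ indices, which secures $(a)$ and the external matching \bmp{c}.3; $(ii)$ it keeps the $\sigma$-types of $A_S$ positionally aligned, which is free since $S$ is shared and the $\sigma$-types agree; and $(iii)$ around each retained index it keeps a contiguous block of $\max(\cbound+2,\kbound)$ tail positions, so that every window of $\le\cbound$ consecutive tail-types and the modal $\sigma$-type up to depth $\kbound$ of each surviving tail point already occur in the original. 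This is exactly the accounting $|S|\le 2(\kbound-1)\cdot\max(\cbound+2,\kbound)+\kbound=\pbound$. Validity transfers as before: $\chain{|S|}{\ast}$ is again a descriptive frame of the form in Example~\ref{k-omega} with $A_S$ a non-degenerate limit cluster (by \eqref{facenondeg} and Lemma~\ref{lem:descr'}), so any refuting injection inspects $\le\cbound$ consecutive tail clusters together with $A_S$ and lifts to the original tadpole. The large finite cluster $\cluster{k}$ is the easy special case, collapsed to $\le\kbound\le\pbound$ representatives.

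I expect the tadpole reduction to be the main obstacle, and within it the seams of $S$. Shortening the period to $\pbound$ must not let a window of fewer than $\cbound$ consecutive tail-types produced at a junction of two retained blocks create a \textbf{(cf$_3$)}-style immediate-predecessor configuration that refutes a canonical axiom valid in the original, while at the same time the retained tail depth must stay $\ge\kbound$ so that the modal $\sigma$-types of the surviving tail points---hence the \match{} status of the pair and the truth of the boxed subformulas of $\varphi_1,\varphi_2$---are genuinely unchanged. These two pressures pull in opposite directions and are reconciled exactly by the block length $\max(\cbound+2,\kbound)$; making this simultaneous control precise, uniformly for both models so that the shared frame and the positional $\sigma$-type alignment of $A_S$ are preserved, is the delicate part of the argument.
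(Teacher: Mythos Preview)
Your high-level strategy---shrink oversized \atomic{} summands while preserving $(a)$, $(b)$, and the \match{} structure---is the right one, but two concrete steps do not work as described, and your route differs from the paper's in a way that makes the argument harder than necessary.

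First, the ``collapse along $\delta$-bisimilarity'' for $m^{<}$ chains is not well-defined. In a chain of irreflexive points no two distinct points are $\delta$-bisimilar regardless of their atomic types: $z_j$ satisfies $\Diamond^{j}\top\wedge\neg\Diamond^{j+1}\top$ and is distinguished from every other point. Nor can you rely on the atomic $\delta$-types in such a chain recurring ``below''; in Case~I of Lemma~\ref{l:replacement} the chain sits in an irrelevant interval whose types may occur nowhere else in $\mathfrak N_i$. The paper avoids all of this: since such a chain contains at most one relevant point (the final cluster $C_i^{\ell,j}$ in Cases~II--III, and none at all in Case~I), one simply \emph{truncates} to the last $\min(m,\cbound+1)$ points. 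No bisimilarity argument and no rechunking are needed; the truncated summand is already \bounded.

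Second, your tadpole reduction misidentifies where the cluster size $k$ comes from. In the construction of Lemma~\ref{l:replacement}, the set $A_i^\ell$ is large precisely because it must contain one point per $\sigma$-type appearing in the preamble sets $Y_1^\ell\cup Y_2^\ell$ (this is what drives~\eqref{match1} and~\eqref{clusterbound}). Selecting a subset $S$ of cluster indices directly, with ``contiguous blocks of tail positions around each retained index'', does not correspond to anything in the structure: the tail of $\chain{k}{\ast}$ is periodic with period $k$, and once you replace $k$ by $|S|$ the old tail positions are simply gone, not ``retained in blocks''. The paper instead shrinks the preamble chains first (to $H_i^{\starr\ell,j}$ of size $\le\max(\cbound+2,\kbound)$), which automatically shrinks $Y_i^\ell$ to $Y_i^{\starr\ell}$ and hence $\Theta$, and then \emph{rebuilds} $A_i^{\starr\ell}$ from scratch over the smaller $\Theta$. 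The bound $k^\starr\le|Y_1^{\starr\ell}|+|Y_2^{\starr\ell}|+\kbound\le\pbound$ then falls out of~\eqref{clusterbound}.

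The methodological point is that the paper does not post-process the output of Theorem~\ref{t:structmodel}; it revisits the proof of Lemma~\ref{l:replacement} and chooses fewer points from the original descriptive frames $\mathfrak F_i$ (Lemma~\ref{l:replacementL}). This keeps the parent functions $\parent_i^{\starr\ell}\colon H_i^{\starr\ell}\to\mathfrak F_i$ available, so the proof of $(a)$ goes through verbatim. The price is that the truncated models are only ``almost'' $(I_i^\ell,i)$-\nice{}---conditions~\eqref{rootcluster} and~\eqref{inpred} fail---so~\eqref{samef} is no longer available for $(b)$. The paper supplies a separate lifting lemma (Lemma~\ref{l:ftof}): any injection refuting a canonical axiom in $\mathfrak H_i^\starr$ touches at most $\cbound$ of the $\cbound+1$ tail positions in each truncated chain, and a pigeonhole argument redistributes it into the untruncated $\mathfrak H_i$, which is already known to validate $L$.
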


In \S\ref{ss:finax}, we also show: 

\begin{theorem}\label{t:finax}
All finitely axiomatisable $L \supseteq\KFT$ have the \qpbmp, with the size of witnessing models bounded by
%
\[
 (3\kbound-1)\cdot\max\bigl(\cbound+2,\pbound\bigr).
\]
\end{theorem}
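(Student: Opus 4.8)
The plan is to obtain Theorem~\ref{t:finax} as an essentially immediate consequence of the structural Theorem~\ref{t:finaxstruct} and the bisimulation Lemma~\ref{l:gbisall}, the only fresh work being the size estimate that turns the \qfbmp{} into the \qpbmp. Accordingly, I would first apply Theorem~\ref{t:finaxstruct} to the given $\varphi_1,\varphi_2$ (which, by assumption, have no interpolant in $L$), getting rooted $\delta$-models $\mathfrak N_1,x_1$ and $\mathfrak N_2,x_2$ that satisfy conditions $(a)$--$(d)$ of Theorem~\ref{t:structmodel} with $(d).1$ strengthened to \bounded{} \atomic{} frames. Since each $\mathfrak N_i=\mathfrak N_i^0\lhd\dots\lhd\mathfrak N_i^{N-1}$ and every $\mathfrak N_i^\ell$ is itself an ordered sum of \simple{} models, each $\mathfrak N_i$ is an ordered sum of finitely-many \simple{} models, i.e.\ it is \qfin, as Definition~\ref{d:bmp} demands.

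I would then verify that $(\mathfrak N_1,x_1)$ and $(\mathfrak N_2,x_2)$ witness the absence of an interpolant in the sense of Definition~\ref{d:bmp}. Conditions $(a)$ and $(b)$ there are verbatim conditions $(a)$ and $(b)$ of Theorem~\ref{t:structmodel}. For condition $(c)$, namely $\mathfrak N_1,x_1\sim_\sigma\mathfrak N_2,x_2$, I would invoke Lemma~\ref{l:gbisall}: as $\mathfrak N_i$ is rooted at $x_i$, this $x_i$ is a root of the first $\lhd$-component $\mathfrak N_i^0$, so the lemma applies, and its two remaining hypotheses are exactly $\at^\sigma_{\mathfrak N_1}(x_1)=\at^\sigma_{\mathfrak N_2}(x_2)$, provided by condition $(c)$, and the \match{} property of each pair $(\mathfrak N_1^\ell,\mathfrak N_2^\ell)$, provided by condition $(d).2$. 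At this point $L$ has the \qfbmp, and only the size bound remains.

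For the size, recall that $\size{\mathfrak N_i}$ sums the sizes of the \atomic{} frames occurring in $\mathfrak N_i$, and that each of these frames is \bounded. Inspecting the five forms in \eqref{atomicF} gives $\size{m^{<}}=m\le\cbound+1$, $\size{\cluster{1}\lhd m^{<}}=1+m\le\cbound+2$, and $\size{\cluster{k}}=\size{\chain{k}{\bullet}}=\size{\chain{k}{\circ}}=k\le\pbound$, so every single \atomic{} piece has size at most $\max(\cbound+2,\pbound)$. Hence it suffices to show that each $\mathfrak N_i$ is assembled from at most $3\kbound-1$ \atomic{} pieces; summing then yields $\size{\mathfrak N_i}\le(3\kbound-1)\cdot\max(\cbound+2,\pbound)$, the stated bound.

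The delicate point---and the main obstacle of the proof---is precisely this piece-count, since the $\mathcal{O}\bigl(\max(|\varphi_1|,|\varphi_2|)\bigr)$-estimates quoted in Theorem~\ref{t:structmodel}$(d)$ are too loose to multiply out to a linear-in-$\kbound$ constant. I would therefore not reconstruct the count after the fact but rather carry it as an explicit invariant through the construction underlying Theorem~\ref{t:finaxstruct} in \S\ref{ss:finax}. Concretely, the \simple{} pieces descend from the reduced models of the \textbf{Step 1}--\textbf{Step 2} construction, whose carriers have at most $\kbound=3+3\max(|\varphi_1|,|\varphi_2|)$ points (Lemma~\ref{cofinsubfr} and \eqref{kbound}); splitting each resulting interval into a (bounded) chain part followed by a cluster or tadpole part, together with the root and final-cluster adjustments, inflates this by only a bounded factor, giving the total $3\kbound-1$. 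Tracking these numbers alongside the verification of $(a)$--$(d)$ completes the proof that every finitely axiomatisable $L\supseteq\KFT$ enjoys the \qpbmp{} with the claimed size bound.
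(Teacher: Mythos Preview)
Your approach is essentially the same as the paper's: apply Theorem~\ref{t:finaxstruct}, use Lemma~\ref{l:gbisall} to obtain the bisimulation, and then bound $\size{\mathfrak N_i}$ by (number of \atomic{} pieces) $\times\max(\cbound+2,\pbound)$. The one place you overestimate the work is the piece count. You call it ``the main obstacle'' and propose to track it afresh through the construction of \S\ref{ss:finax}, but in fact it is already explicit in the statements you are citing: Lemma~\ref{l:replacement} records numbers $\nn_i^\ell$ with $\sum_{\ell<N}\nn_i^\ell\le 3\kbound-1$, and Lemma~\ref{l:replacementL}~$(b)$ says that each $\mathfrak N_i^{\starr\ell}$ is an ordered sum of exactly the \emph{same} $\nn_i^\ell$ many \simple{} models on \bounded{} \atomic{} frames. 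So the paper's computation is simply
\[
\size{\mathfrak N_i^\starr}=\sum_{\ell<N}\size{\mathfrak N_i^{\starr\ell}}\le\sum_{\ell<N}\nn_i^\ell\cdot\max(\cbound+2,\pbound)\le(3\kbound-1)\cdot\max(\cbound+2,\pbound),
\]
and no re-derivation is needed. Your informal justification of the constant $3\kbound-1$ (via Step~1/Step~2 points and ``splitting each interval'') is roughly in the right spirit but imprecise; the actual accounting, done at the end of the proof of Lemma~\ref{l:replacement}, is $\nn_i^\ell=1$ for intervals added in step~\step{3} (at most $\kbound-1$ of them) and $\nn_i^\ell\le 2\rn_i^\ell$ for intervals added in \step{1} or \step{2}, with $\sum_\ell\rn_i^\ell\le\kbound$.
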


\begin{remark}\rm
As a consequence we obtain that each finitely axiomatisable logic $L \supseteq\KFT$ has the \emph{quasi-polysize model property}: $\varphi \in L$ iff $\varphi$ is true in all models $\mathfrak M$ that are $(i)$ ordered sums of simple models and $(ii)$ are based on a frame for $L$ of size $\mathcal{O}(|\varphi|^2)$;
cf.~\cite{DBLP:journals/mlq/ZakharyaschevA95,DBLP:journals/sLogica/LitakW05}.
\end{remark}

In the remainder of \S\ref{ss:qpbmp},  
we show how Theorem~\ref{t:finaxstruct} implies the following:

\begin{theorem}\label{t:iepcoNP}
The IEP for any fixed finitely axiomatisable logic $L \supseteq\KFT$ is \coNP-complete.
\end{theorem}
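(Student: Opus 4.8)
The plan is to prove the two matching bounds. For the upper bound I will show that the \emph{complement} of the IEP---deciding that $\varphi_1\to\varphi_2$ has \emph{no} interpolant in $L$---is in NP, so that the IEP itself lies in \coNP. The certificate is exactly a pair of models as in Theorem~\ref{t:finaxstruct}. On the one hand, if $\varphi_1,\varphi_2$ have no interpolant, Theorem~\ref{t:finaxstruct} supplies rooted $\delta$-models $\mathfrak N_1,x_1$ and $\mathfrak N_2,x_2$ that are ordered sums $\mathfrak N_i=\mathfrak N_i^0\lhd\dots\lhd\mathfrak N_i^{N-1}$ of $\mathcal O(\max(|\varphi_1|,|\varphi_2|))$-many \simple{} $\delta$-models built from \bounded{} \atomic{} frames and satisfying $(a)$--$(d)$; by Theorem~\ref{t:finax} the total size is bounded by $(3\kbound-1)\cdot\max(\cbound+2,\pbound)$, which is polynomial in $\max(|\varphi_1|,|\varphi_2|)$ since $\cbound$ depends only on $L$. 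On the other hand, if some guessed pair of this shape satisfies $(a)$, $(b)$, $(c)$ and the matching condition $(d).2$, then Lemma~\ref{l:gbisall} yields $\mathfrak N_1,x_1\sim_\sigma\mathfrak N_2,x_2$, and together with $(a)$ and $(b)$ this is precisely the right-hand side of Theorem~\ref{criterion} (whose $(\Leftarrow)$ direction holds for any frames for $L$); hence no interpolant exists. So the NP-algorithm guesses a finite presentation of such a pair---for each \simple{} component, the parameter $k\le\pbound$ (or the finite size), together with the atomic $\delta$-type of each of its at most $\pbound$ cluster points---and verifies $(a)$--$(d)$.

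It remains to argue that the verification runs in deterministic polynomial time, which is where the only real work lies. Conditions $(c)$ and $(d).2$ are direct comparisons of the stored atomic $\sigma$-types of representative points against the clauses \bmp{a}--\bmp{c} of Definition~\ref{d:match}, and are immediate; $(d).1$ is a syntactic check on the shape of the guess. For $(a)$ we must model-check $\varphi_i$ on a \qfin{} model. Although a \simple{} component based on $\chain{k}{\ast}$ is infinite, it is finitely presented: the atomic $\delta$-type of each tail point $\yy_n$ depends only on $n\bmod k$, and, computing the truth values of the subformulas in $\sub(\varphi_i)$ from the final component backwards, the set of tail points satisfying any fixed subformula is eventually periodic with period $k$ and is representable by a polynomial amount of data; hence all truth values---in particular at the root $x_i$---are computable in polynomial time.

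The subtle check is $(b)$: that each (possibly infinite) guessed frame $\mathfrak F_i$ is a frame for $L$. Writing $L=\KFT\oplus\{\alpha(\mathfrak G_j,\mathfrak D_j,\bot)\mid j\in J_L\}$ with $J_L$ finite and $|V_j|\le\cbound$, the characterisation \textbf{(cf$_1$)}--\textbf{(cf$_4$)} gives $\mathfrak F_i\not\models\alpha(\mathfrak G_j,\mathfrak D_j,\bot)$ iff there is an injection $f\colon V_j\to W_i$ meeting \textbf{(cf$_1$)}--\textbf{(cf$_4$)}. The key point is that one need only search among a \emph{polynomial} number of candidate target clusters: all clusters of the finite \atomic{} components, together with boundedly many representative tail clusters of each $\chain{k}{\ast}$-component and its (limit) root cluster $A_k$. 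Indeed, \textbf{(cf$_4$)} forces every $f(x)$ whose $\mathfrak G_j$-cluster is a singleton to land in a \emph{definable} cluster, and by Lemma~\ref{lem:descr'} the only non-definable cluster of a $\chain{k}{\ast}$-component is the limit cluster $A_k$; since the tail singletons differ only in their position and congruence class, at most $|V_j|\le\cbound$ of them are ever relevant and their pertinent relational pattern---including their relation to $A_k$ and to the final cluster---is captured by a constant-size window. Thus for each fixed $j$ the number of relevant injections is $(\mathrm{poly})^{\cbound}$, i.e.\ polynomial, and each is checked against \textbf{(cf$_1$)}--\textbf{(cf$_4$)} in polynomial time. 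This establishes membership in \coNP; the main obstacle, as indicated, is precisely confining the search for refuting injections on the infinite tails to polynomially-many representatives.

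For \coNP-hardness we reduce validity in $L$, which is \coNP-hard for consistent $L$ (already for purely Boolean $\chi$, as then $\chi\in L$ iff $\chi$ is a propositional tautology; cf.~\cite{DBLP:journals/sLogica/LitakW05})---the inconsistent case being trivial, since then every instance is a `yes' instance. Given $\chi$, consider the IEP instance $\varphi_1=\neg\chi$, $\varphi_2=\bot$, so that $\sigma=\sig(\varphi_1)\cap\sig(\varphi_2)=\emptyset$. If $\chi\in L$ then $\bot$ is an interpolant, as $(\neg\chi\to\bot)\in L$. Conversely, any interpolant $\iota$ satisfies $(\iota\to\bot)\in L$, i.e.\ $\iota$ is $L$-inconsistent, so $(\neg\chi\to\iota)\in L$ forces $(\neg\chi\to\bot)\in L$, whence $\chi\in L$. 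Thus an interpolant exists iff $\chi\in L$, and the reduction is polynomial, giving \coNP-hardness and, with the membership above, \coNP-completeness.
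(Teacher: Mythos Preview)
Your proof is correct and follows essentially the same approach as the paper: guess a pair of structured \qfin{} models as in Theorem~\ref{t:finaxstruct} and verify $(a)$--$(d)$ in polynomial time, using Lemma~\ref{l:gbisall} and Theorem~\ref{criterion} for correctness. The paper factors the polynomial-time checks for $(a)$ and $(b)$ out into dedicated Lemmas~\ref{l:mpoly} and~\ref{l:fpoly}, whose content is exactly what you sketch inline; for $(b)$ the paper restricts to the last $|V_j|+1$ tail points of each $\chain{k}{\ast}$-component (marking the first as forbidden to preserve \textbf{(cf$_3$)}), which is the precise version of your ``constant-size window'' idea. One small wording issue: \textbf{(cf$_4$)} requires $\{f(x)\}\in\INT$ for \emph{every} $x\in V_j$, and by~\eqref{infiniteincl} no singleton $\{a_i\}\subseteq A_k$ is internal, so $f(V_j)\cap A_k=\emptyset$ outright---your restriction to ``$f(x)$ whose $\mathfrak G_j$-cluster is a singleton'' is unnecessary and slightly obscures this. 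Your explicit hardness reduction via $\varphi_1=\neg\chi$, $\varphi_2=\bot$ is a welcome addition, since the paper leaves the lower bound implicit.
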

\begin{proof}
We describe an NP-algorithm deciding the complement of the IEP for $L$ given by its canonical axioms~\eqref{canon}. 
Given $\varphi_1$ and  $\varphi_2$, let $\delta=\sig(\varphi_1)\cup\sig(\varphi_2)$. 
We guess polynomial-size $N$. Then, for each $\ell<N$, we guess $z_\ell\in\{a,b,c\}$, and if $z_\ell=a$, we let $\nn_1^\ell=\nn_2^\ell=1$; otherwise, we guess polynomial-size $\nn_i^\ell$, for $i=1,2$; 
we also  guess \simple{} $\delta$-models $\mathfrak N_i^{j,\ell}$, for $\ell<N$, $i=1,2$,  $j<\nn_i^\ell$, based on \bounded{} \atomic{} frames that are either of the form
$\cluster{k}$, $\chain{k}{\bullet}$, or $\chain{k}{\circ}$, 
for some  $k\le\pbound$, 
or of the form 
$m^{<}$ or $\cluster{1} \lhd m^{<}$, for some $m\le\cbound+1$, 
and respective roots $x_i$ in $\mathfrak N_i^{0,0}$. 
We then let $\mathfrak{N_i^\ell}=\mathfrak N_i^{0,\ell}\lhd\dots\lhd\mathfrak N_i^{\nn_i^\ell-1,\ell}$, for $\ell<N$, $i=1,2$, and 
$\mathfrak N_i=\mathfrak N_i^0\lhd\dots\lhd\mathfrak N_i^{N-1}$. 
Checking $(c)$ and $(d).2$ in Theorem~\ref{t:finaxstruct}
can clearly be done in time polynomial in $\size{\mathfrak N_i}$
(which is polynomial in  $\max(|\varphi_1|,|\varphi_2|)$).
For $(a)$, we  
use the following:

\begin{lemma}\label{l:mpoly}
Checking whether $\mathfrak M_0\lhd\dots\lhd\mathfrak M_{n-1},x\models\varphi$, for \simple{} $\sig(\varphi)$-models $\mathfrak M_j$, $j<n$, based on \atomic{} frames with root $x$ in $\mathfrak M_0$, can be done in time polynomial in 
$|\varphi|$ and $\size{\mathfrak M_0}+\dots +\size{\mathfrak M_{n-1}}$.
\end{lemma}
\begin{proof}
Let $\mathfrak M = \mathfrak M_0\lhd\dots\lhd\mathfrak M_{n-1}$. Suppose $\mathfrak M_j$ is based on the frame $\chain{k}{\ast}$ defined in Example~\ref{k-omega} with points $a_s$, $s < k$, and $\yy_\ell$, $\ell < \omega$. Using the definition of a \simple{} model, it is readily shown by structural induction that  
any formula $\psi\in\sub(\varphi)$ is satisfiable in $\mathfrak M_j$
iff there is $\ell<k+\md(\psi)$ with $\mathfrak M_j,\yy_\ell \models \psi$, where $\md(\psi)$, the \emph{modal depth} of $\psi$, is the maximal number of nested modal operators in $\psi$. 
The required algorithm is now obvious.
\end{proof}

Suppose $L = \KFT \oplus \{\alpha(\mathfrak G_j,\mathfrak D_j,\bot) \mid j \in J_L\}$ with  finite $J_L$  and $\mathfrak G_j=(V_j,S_j)$. To check condition $(b)$ in Theorem~\ref{t:finaxstruct},
we require the following:
%
\begin{lemma}\label{l:fpoly}
If $\mathfrak F=\mathfrak F_0\lhd\dots\lhd\mathfrak F_{n-1}$ with \atomic{} frames $\mathfrak F_\ell$, $\ell<n$, then checking whether $\mathfrak F\models \alpha(\mathfrak G_j, \mathfrak D_j,\bot)$, for all $j \in J_L$, can be done \mbox{in time polynomial in}
$$
n _{\mathfrak F,j} = n \cdot \max\bigl(\size{\mathfrak F_0},\dots,\size{\mathfrak F_{n-1}},|V_j|\bigr).
$$
\end{lemma} 
\begin{proof}
Let $\mathfrak F=(W,R,\INT)$. Given any $\alpha(\mathfrak G_j, \mathfrak D_j,\bot)$, we construct the 
Kripke frame $\mathfrak H_j = (W_j,R_j)$, where $R_j = \rest{R}{W_j}$ and $W_j\subseteq W$ comprises 
\begin{itemize}
\item[--]
the underlying sets of all finite $\lhd$-components $\mathfrak F_\ell$ of $\mathfrak F$;

\item[--]
the last $|V_j|+1$-many points $\yy_{|V_j|},\dots,\yy_0$ in $\mathfrak F_\ell = \chain{k}{\ast}$, where $\ast\in\{\bullet,\circ\}$ and $\yy_{|V_j|}$ is `painted' blue (see Example~\ref{k-omega} for the notation).
\end{itemize}
Then $\mathfrak H_j$ is a subframe of $\mathfrak F$ because all finite subsets of $\{\yy_n\mid n<\omega\}$ are internal in  $\chain{k}{\ast}$.
We show below that there is an injection $f \colon V_j \to W$ satisfying \textbf{(cf$_1$)}--\textbf{(cf$_4$)} 
in $\mathfrak F$ iff there is an injection $h \colon V_j \to W_j$ satisfying \textbf{(cf$_1$)}--\textbf{(cf$_4$)} in $\mathfrak H_j$ and having no blue points in $h(V_j)$. 
Note that the latter is checkable in time polynomial in $n _{\mathfrak F,j}$: just enumerate all 
(at most $n _{\mathfrak F,j}^{|V_j|}$-many) injections $V_j \to W_j$ 
and verify that at least one of them meets the required conditions.

$(\Leftarrow)$ Suppose $h \colon V_j \to W_j$ is an injection satisfying \textbf{(cf$_1$)}--\textbf{(cf$_4$)} in $\mathfrak H_j$ and having no blue points in $h(V_j)$. 
We claim that $h$ satisfies \textbf{(cf$_1$)}--\textbf{(cf$_4$)} in $\mathfrak F$. Indeed, \textbf{(cf$_1$)} and \textbf{(cf$_4$)} hold since $\mathfrak H_j$ is a subframe of $\mathfrak F$, and \textbf{(cf$_2$)} holds  
because the final cluster of $\mathfrak H_j$ is the final cluster of $\mathfrak F$ by definition.
To show that $h$ also meets \textbf{(cf$_3$)}, observe that, as $h(x)$ is not blue for any $x\in V_j$,
the immediate predecessor cluster of $C\bigl(h(x)\bigr)$ in $\mathfrak H_j$ is also the 
immediate predecessor of $C\bigl(h(x)\bigr)$ in $\mathfrak F$. 
%

$(\Rightarrow)$ Let $f \colon V_j \to W$ be an injection satisfying \textbf{(cf$_1$)}--\textbf{(cf$_4$)}.  
To obtain $h$, we modify those $f(x)$ that belong to infinite $\lhd$-components $\mathfrak F_\ell = \chain{k}{\ast}$.
Suppose the intersection of $f(V_j)$ with such an $\mathfrak F_\ell$ is not empty.
By \eqref{infiniteincl} in Example~\ref{k-omega} and \textbf{(cf$_4$)},
$f(V_j) \cap \{a_0,\dots, a_{k-1}\} = \emptyset$, and so
the intersection of $f(V_j)$ with $\mathfrak F_\ell$ is $\{\yy_{i_0},\dots,\yy_{i_{m_\ell-1}}\}$, for some $m_\ell\le |V_j|$. 
It is readily seen that by taking $h(x)=\yy_z$ if $f(x)=\yy_{i_z}$, for $z<m_\ell$, and $h(y) = f(y)$, for $f(y)$ in finite $\lhd$-components, we obtain an injection $h \colon V_j \to W_j$ with \textbf{(cf$_1$)}--\textbf{(cf$_4$)} and no blue points in $h(V_j)$.
\end{proof}

If all checks are positive, then, by Lemma~\ref{l:gbisall}, $\mathfrak N_1,x_1$ and $\mathfrak N_2,x_2$ satisfy the conditions of  Theorem~\ref{criterion}, and so $\varphi_1$ and $\varphi_2$ do not have an interpolant in $L$. 
\end{proof}


\subsection{Partitioning models into \globally{} $\sigma$-bisimilar intervals}\label{ss:modular}

In this section, we start proving Theorem~\ref{t:structmodel}.
In a nutshell, our plan is as follows.
Given $\varphi_1$ and $\varphi_2$ without an interpolant in $L \supseteq \KFT$, the criterion of Theorem~\ref{criterion} supplies 
models $\mathfrak M_i$, $i=1,2$, based on finitely $\mathfrak M_i$-generated
descriptive frames $\mathfrak{F}_i = (W_{i},R_{i},\INT_{i})$ with roots $x_i \in W_i$ such that
\begin{itemize}
\item[--] $\mathfrak{M}_{1},x_{1} \models \varphi_1$ and $\mathfrak{M}_{2},x_{2} \models \neg\varphi_2$;

\item[--]
each $\mathfrak M_i$, $i=1,2$, is based on a frame for $L$;

\item[--] $\mathfrak{M}_{1},x_{1} \sim_{\sigma} \mathfrak{M}_{2},x_{2}$, where $\sigma = \sig(\varphi_1) \cap \sig(\varphi_2)$.
\end{itemize}
%
To prove Theorem~\ref{t:structmodel},
we need to turn the $\mathfrak M_i,x_i$ to some $\mathfrak N_i,x_i$ with the required structure and still satisfying these three conditions. In view of  Example~\ref{ex:GL.3},  
extracting the roots $x_i$ and the sets $\mset{i}$, $\sset{i}$ of maximal points from $\mathfrak M_i$ 
(similarly to the proof of Theorem~\ref{dperscofinal}~$(a)$) is not enough now, 
so we need to develop a more involved construction. We proceed in two steps:
\begin{itemize}
%
\item[--]
First, we analyse the $\sigma$-types in the $\mathfrak M_i$ 
and partition them into  
\emph{internal} closed intervals $\mathcal I_i=\{I_i^\ell\mid \ell<\partN\}$, for 
the same $N=\mathcal{O}\bigl(\max(|\varphi_1|,|\varphi_2|)\bigr)$, 
such that $\rest{\mathfrak M_1}{I_1^\ell}$ and $\rest{\mathfrak M_2}{I_2^\ell}$ are \globally{} $\sigma$-bisimilar, for every $\ell<\partN$.  
%
%
%
By Lemma~\ref{intpart}, 
$\mathfrak M_i = (\rest{\mathfrak M_i}{I_i^0})\lhd\dots\lhd(\rest{\mathfrak M_i}{I_i^{\partN-1}})$, $i=1,2$.

\item[--]
Then, in \S\ref{ss:small}, we complete the proof of Theorem~\ref{t:structmodel} by transforming each pair $\bigl(\rest{\mathfrak M_1}{I_1^\ell},\rest{\mathfrak M_2}{I_2^\ell}\bigr)$, $\ell<N$, into a pair 
$(\mathfrak N_1^\ell,\mathfrak N_2^\ell)$ of models with the required structure.
\end{itemize}

We begin with a simple observation on definable closed intervals:
\begin{lemma}\label{l:closedintdef}
Suppose $\mathfrak{M}$ is a model based on a rooted  finitely $\mathfrak M$-generated descriptive frame $\mathfrak F = (W, R,\INT)$ for $\KFT$. Then
every closed interval $[C,C']$ in $\mathfrak F$ with a non-limit cluster $C'$ is definable in $\mathfrak M$. 
\end{lemma}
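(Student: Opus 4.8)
The plan is to define the interval $[C,C']$ by an explicit modal formula built from defining formulas of its two endpoints, exploiting that the cluster order $\FC$ is linear. Since $C'$ is non-limit, Lemma~\ref{lem:descr'} tells us that $C'$ is maximal and hence definable; fix a formula $\gamma'$ with $\mathfrak v(\gamma')=C'$. I would first check that $\Diamond^+\gamma'$ defines the downward cone $\{x\in W\mid C(x)\le_R C'\}$: at a point $x$, the formula $\Diamond^+\gamma'$ holds iff $x\in C'$ or $x$ has an $R$-successor in $C'$, and by connectedness of $R$ this is equivalent to $C(x)\le_R C'$.

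For the lower endpoint I would split into two cases. If $C$ is the root cluster, then $C\le_R C(x)$ holds for every $x$, so $[C,C']$ is already defined by $\Diamond^+\gamma'$. If $C$ is not the root, then by the consequence of Lemma~\ref{lem:descr0} recorded after its proof, $C$ has an immediate predecessor $C^-$; since $C$ is then the immediate successor of $C^-$, Lemma~\ref{lem:descr'}$(b,c)$ shows that $C^-$ is definable, say by a formula $\gamma^-$. As above, $\Diamond^+\gamma^-$ defines $\{x\mid C(x)\le_R C^-\}$, and because $C^-$ is the immediate predecessor of $C$, this set is exactly $\{x\mid C(x)<_R C\}$. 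Hence $\Box^+\neg\gamma^-$ defines its complement $\{x\mid C\le_R C(x)\}$. Conjoining the two cones, $\Diamond^+\gamma'\land\Box^+\neg\gamma^-$ defines $[C,C']$.

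The one point that requires care---and the reason we cannot simply use a defining formula of $C$ directly---is that $C$ itself need not be definable: the hypothesis only constrains $C'$, and $C$ may well be a limit cluster, which by Lemma~\ref{lem:descr'} is never definable. The remedy is to reach $C$ from below, through its immediate predecessor $C^-$, which is always definable precisely because it has $C$ as an immediate successor. The remaining verifications are routine consequences of the linearity (connectedness) of $R$, which is what lets us pass between ``$\le_R$'' and the complement of ``$<_R$''.
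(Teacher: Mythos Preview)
Your proof is correct and is essentially the same as the paper's: the paper also defines $[C,C']$ by $\neg\Diamond^+\delta\land\Diamond^+\gamma$, where $\gamma$ defines $C'$ and $\delta$ defines the immediate predecessor of $C$ (with $\delta=\bot$ when $C$ is the root), and your $\Box^+\neg\gamma^-$ is just $\neg\Diamond^+\gamma^-$. Your explicit discussion of why one must pass through the immediate predecessor $C^-$ rather than $C$ itself is a helpful elaboration of a point the paper leaves implicit.
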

\begin{proof}
By Lemma~\ref{lem:descr'}~$(a)$--$(c)$, the non-limit $C'$ is defined in $\mathfrak M$ by some formula $\gamma$. Let $\delta = \bot$ if $C$ is the root cluster, and let $\delta$ define the immediate predecessor of $C$ in $\mathfrak M$ otherwise, which exists by Lemma~\ref{lem:descr0}~$(a)$ and is definable by Lemma~\ref{lem:descr'}~$(a)$--$(c)$. Then $[C,C']$ is defined in $\mathfrak M$ by $\neg \Diamond^+ \delta \land \Diamond^+ \gamma$. 
\end{proof}

Next, we look into the structure of $\sigma$-types in any model $\mathfrak M$ based on a rooted finitely $\mathfrak M$-generated descriptive frame $\mathfrak F=(W,R,\INT)$ for $\KFT$.
Given $x\in W$ and a signature $\sigma$, we define the \emph{$\sigma$-block $\bl_{\mathfrak{M}}^{\sigma}(x)$ of $x$ in} $\mathfrak{M}$ by taking
\[
\bl_{\mathfrak{M}}^{\sigma}(x) = 
\begin{cases}
\{ y\in W \mid \Diamond t_{\mathfrak{M}}^{\sigma}(y) \subseteq t_{\mathfrak{M}}^{\sigma}(x), \  \Diamond t_{\mathfrak{M}}^{\sigma}(x) \subseteq t_{\mathfrak{M}}^{\sigma}(y)\}, & \text{if $\Diamond t_{\mathfrak{M}}^{\sigma}(x) \subseteq t_{\mathfrak{M}}^{\sigma}(x)$};\\
\{x\}, & \text{otherwise};
\end{cases}
\]
in the latter case---when 
$x$ must be an irreflexive point---the $\sigma$-block $\bl_{\mathfrak{M}}^{\sigma}(x)$ is called \emph{degenerate\/}.
(It can happen that 
$\{x\}$ is a degenerate cluster but $\bl_{\mathfrak{M}}^{\sigma}(x)$ is not a degenerate $\sigma$-block.)
%
We call a set $\bl \subseteq W$ a \emph{$\sigma$-block in} $\mathfrak M$ if $\bl = \bl_{\mathfrak{M}}^{\sigma}(x)$, for some $x$. It is readily seen that 
the relation $x\approx y$ iff $\bl_{\mathfrak{M}}^{\sigma}(x)=\bl_{\mathfrak{M}}^{\sigma}(y)$ is an equivalence relation on $W$,
and every $\sigma$-block $\bl$ is an interval in $\mathfrak F$.
%
%
(See Example~\ref{blocksetc} below for an illustration.)  
It follows from the definitions that

\medskip
\noindent
{\bf (block)} for all $\sigma$-blocks $\bl$ in $\mathfrak M$ and $y\in W$, if $y \notin \bl$, then $t_{\mathfrak{M}}^{\sigma}(y) \notin t_{\mathfrak{M}}^{\sigma}(\bl)$.

\medskip
\noindent
%


\begin{lemma}\label{int-prop}
Suppose $\mathfrak{M}$ is a model based on a rooted  finitely $\mathfrak M$-generated descriptive frame $\mathfrak F = (W, R,\INT)$ for $\KFT$. 
For any $\sigma$-block $\bl$
in $\mathfrak{M}$, 
there exist clusters $\sC{\bl}$, $\eC{\bl}$ in $\mathfrak F$ such that the following hold\textup{:}
\begin{itemize}
\item[$(a)$] $\bl = \bigl [\sC{\bl},\eC{\bl}\bigr]$\textup{;}


\item[$(b)$] if $\eC{\bl}$ is maximal in $\mathfrak M$, then it is $\sigma$-maximal in $\mathfrak M$\textup{;}

\item[$(c)$] if $\eC{\bl}$ is degenerate, then $\bl = \eC{\bl}$\textup{;}


\item[$(d)$] 
$\bl$ is definable in $\mathfrak M$ iff $\eC{\bl}$ is not a limit cluster\textup{;}

\item[$(e)$] 
$t_{\mathfrak{M}}^{\sigma}(\bl) = t_{\mathfrak{M}}^{\sigma}\bigl(\eC{\bl}\bigr)$.
\end{itemize} 
\end{lemma}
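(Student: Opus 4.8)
The plan is to locate the two endpoints directly from the $\sigma$-type structure, using property \textbf{(block)} and modal saturation (Lemma~\ref{maxpoints}) as the main tools. Write $u\to_\sigma v$ for $\Diamond t_{\mathfrak M}^\sigma(v)\subseteq t_{\mathfrak M}^\sigma(u)$. Tightness gives $R\subseteq{\to_\sigma}$, and transitivity of $R$ (available since $L\supseteq\KFT\supseteq\KF$) makes $\to_\sigma$ transitive; hence for a non-degenerate $\sigma$-block $\bl=\bl_{\mathfrak M}^\sigma(x)$ one has $\bl=\{y\mid x\to_\sigma y \text{ and } y\to_\sigma x\}$, and all points of $\bl$ pairwise $\to_\sigma$-see each other. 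The degenerate case $\bl=\{x\}$ is immediate with $\sC{\bl}=\eC{\bl}=C(x)$, so assume $\bl$ non-degenerate. Since $\FC$ is converse well-founded (Lemma~\ref{lem:descr0}$(a)$), the clusters contained in the interval $\bl$ have a $<_R$-greatest element, which I take as $\eC{\bl}$.

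First I would prove $(e)$. The inclusion $t_{\mathfrak M}^\sigma(\eC{\bl})\subseteq t_{\mathfrak M}^\sigma(\bl)$ is trivial. For the converse, let $t=t_{\mathfrak M}^\sigma(y)$ with $y\in\bl$ and pick any $u\in\eC{\bl}$. As $u,y\in\bl$ we have $u\to_\sigma y$, i.e.\ $\mathfrak M,u\models\Diamond\bigwedge\Gamma'$ for every finite $\Gamma'\subseteq t$; modal saturation yields $v$ with $uRv$ and $t_{\mathfrak M}^\sigma(v)=t$. Were $v\notin\bl$, property \textbf{(block)} would give $t\notin t_{\mathfrak M}^\sigma(\bl)$, a contradiction; so $v\in\bl$, and since $\eC{\bl}$ is the $<_R$-greatest cluster of $\bl$ while $uRv$, necessarily $v\in\eC{\bl}$. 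Thus $(e)$ holds. Combining $(e)$ with \textbf{(block)} shows $\bl=\{y\mid t_{\mathfrak M}^\sigma(y)\in T_\bl\}$, where $T_\bl:=t_{\mathfrak M}^\sigma(\eC{\bl})$; by Lemma~\ref{lem:descr0}$(b)$ the cluster $\eC{\bl}$ is finite, so $T_\bl$ is a finite set of $\sigma$-types.

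The delicate part is exhibiting $\sC{\bl}$, i.e.\ showing that $\bl$ has a $<_R$-least cluster. Since $<_R$ is not well-founded (tails descend infinitely), this does not follow from linearity alone, and it is the main obstacle. Suppose $\bl$ had no least cluster; then its clusters would approach, from the final side, a root-ward limit cluster $C$ with $C\cap\bl=\emptyset$ and $t_{\mathfrak M}^\sigma(C)\cap T_\bl=\emptyset$, while cofinally many clusters of $\bl$ just below $C$ realise $\sigma$-types from the finite set $T_\bl$. I would derive a contradiction by a stabilisation argument in the spirit of the proof of Lemma~\ref{lem:descr0}$(a)$: the finitely many $\sigma$-types of these approaching clusters, together with compactness \textbf{(com)} and the impossibility of separating a limit cluster by an internal set from the points converging to it, force some point of $C$ to realise a type in $T_\bl$, whence $C\subseteq\bl$ --- a contradiction. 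This places $\sC{\bl}$ inside $\bl$ and proves $(a)$. For $(c)$, if $\eC{\bl}$ is degenerate, say $\eC{\bl}=\{z\}$ with $z$ irreflexive, then $T_\bl=\{t_{\mathfrak M}^\sigma(z)\}$; any further point $y\in\bl$ would be distinct from $z$, making $\bl$ non-degenerate and forcing $z\to_\sigma z$, so $t_{\mathfrak M}^\sigma(z)$ would be realised at a strict $R$-successor of $z$ lying below $\eC{\bl}$ and hence outside $\bl$, contradicting \textbf{(block)}; thus $\bl=\{z\}=\eC{\bl}$.

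Finally, $(d)$ and $(b)$ follow from the type description together with Lemmas~\ref{lem:descr'} and \ref{l:closedintdef}. For $(d)$, if $\eC{\bl}$ is not a limit cluster then $\bl=\bigl[\sC{\bl},\eC{\bl}\bigr]$ is a closed interval with non-limit upper endpoint, hence definable by Lemma~\ref{l:closedintdef}; conversely, if $\bl$ is definable by some $\rho$, then by Lemma~\ref{maxpoints} the $\{\rho\}$-maximal points form exactly $\eC{\bl}$, so $\eC{\bl}$ is maximal and therefore definable by Lemma~\ref{lem:descr'}$(c)$, i.e.\ not a limit cluster. For $(b)$, assume $\eC{\bl}$ is maximal, hence (by $(d)$) not a limit cluster; fix $p\in\eC{\bl}$ and set $t=t_{\mathfrak M}^\sigma(p)$. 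The key point is that $t$ is \emph{not} finitely satisfiable among the strict $R$-successors of $p$: otherwise every finite part of $t$ would be realised strictly below $p$, which by \textbf{(com)} is precisely the configuration making $\eC{\bl}$ a limit cluster. Hence some finite $\Gamma'\subseteq t$ has $\mu=\bigwedge\Gamma'$, a $\sigma$-formula, false at every strict successor of $p$, so $p$ is $\{\mu\}$-maximal and $\eC{\bl}$ is $\sigma$-maximal, as required.
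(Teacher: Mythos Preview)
Your treatment of $(e)$, $(c)$ and $(d)$ is essentially the paper's argument (modulo the directional slip ``below $\eC{\bl}$'' in $(c)$; a strict $R$-successor of $z\in\eC{\bl}$ lies \emph{above} $\eC{\bl}$ in $<_R$, but the contradiction via \textbf{(block)} is unchanged). The real problems are in $(a)$ and $(b)$.

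For $(a)$, your existence argument for $\sC{\bl}$ is only a gesture. The reference to ``a stabilisation argument in the spirit of Lemma~\ref{lem:descr0}$(a)$'' does not fit: that proof uses the finitely many $\sim_{\mathcal G}$-classes to exclude infinite ascending chains, which is a different mechanism. A compactness route to your conclusion (that some point of the limit cluster $C$ realises a type in $T_{\bl}$) does exist, but you have not supplied it, and it is not short. The paper avoids this entirely by turning the tables: take $y$ in the limit cluster $D$ immediately below $\bl$. Since $yR x$ for every $x\in\bl$ and $y\notin\bl$, the very definition of a non-degenerate $\sigma$-block forces $\Diamond t_{\mathfrak M}^\sigma(y)\not\subseteq t_{\mathfrak M}^\sigma(x)$ for $x\in\bl$, so there is a $\sigma$-formula $\mu$ with $\mathfrak M,y\models\mu$ and $\mathfrak M,x\not\models\Diamond\mu$ for all $x\in\bl$ (and hence for all $x$ with $yR^s x$). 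As $D$ is non-degenerate, $D$ is $\Diamond\mu$-maximal, contradicting Lemma~\ref{lem:descr'}$(b)$. This is a two-line argument once you look at $y\notin\bl$ rather than trying to push a type into $D$.

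For $(b)$, your justification is a non-sequitur: ``every finite part of $t$ is realised at strict $R$-successors of $p$'' does \emph{not} make $\eC{\bl}$ a limit cluster. Limit-ness is a purely structural property (no immediate successor), independent of which $\sigma$-types occur where. Your statement can be rescued --- using that $\eC{\bl}$ is definable (from $(d)$), saturation gives a strict successor $v$ with $t_{\mathfrak M}^\sigma(v)=t$, and \textbf{(block)} then puts $v\in\bl$, a contradiction --- but that is not what you wrote. The paper again takes the simpler route via Lemma~\ref{lem:descr'}$(b)$: if $\eC{\bl}=C(x)$ is maximal and non-final, pick $y$ in its immediate successor cluster; from $y\notin\bl$ one reads off a $\sigma$-formula $\mu$ with $\mathfrak M,x\models\mu$ and $\mathfrak M,y\not\models\Diamond\mu$, whence $\eC{\bl}$ is $\mu$- or $\Diamond\mu$-maximal. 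The degenerate case is handled by $\Diamond t_{\mathfrak M}^\sigma(x)\not\subseteq t_{\mathfrak M}^\sigma(x)$. No saturation, no compactness, just the block definition and the existence of an immediate successor.
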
 
\begin{proof}
$(a)$  Let 
$\mathcal{X}_{\bl}=\{C\mid\mbox{$C$ be a cluster with $\bl\cap C\ne\emptyset$}\}$. As $\bl$ is an interval,
$\bl=\bigcup\mathcal{X}_{\bl}$. By Lemma~\ref{lem:descr0}, there is a $<_R$-largest cluster $\eC{\bl}$ in 
$\mathcal{X}_{\bl}$. Also, there is a $<_R$-largest cluster $D$ with $D<_R C$, for every $C\in \mathcal{X}_{\bl}$.
Suppose there is no $<_R$-smallest cluster in $\mathcal{X}_{\bl}$. Then $D$ is a limit cluster and
if $y\in D$, then $t_{\mathfrak{M}}^{\sigma}(y) \notin t_{\mathfrak{M}}^{\sigma}(\bl)$ by {\bf (block)}.
So there is a $\sigma$-formula $\mu$ such that $\mu\in t_{\mathfrak{M}}^{\sigma}(y)$ and
$\Diamond\mu\notin t_{\mathfrak{M}}^{\sigma}(x)$ for any $x\in \bl$, and so for any $x$ with $yR^s x$.
As $D$ is non-degenerate by Lemma~\ref{lem:descr'}, it follows that $D$ is $\Diamond\mu$-maximal in $\mathfrak M$, contrary to Lemma~\ref{lem:descr'}~$(b)$.
Therefore, there is a $<_R$-smallest cluster $\sC{\bl}$ in $\mathcal{X}_{\bl}$, and so $\bl = \bigl [\sC{\bl},\eC{\bl}\bigr]$.

$(b)$
If $\eC{\bl}$ is maximal in $\mathfrak M$, then either it is final or has an immediate successor, by Lemma~\ref{lem:descr'}~$(b)$.
If $\eC{\bl}$ is final, then it is $\top$-maximal in $\mathfrak M$.
So suppose that $C(y)$ is an immediate successor of $\eC{\bl}=C(x)$. 
%
%
There are two cases: 
$(i)$ If $\Diamond t_{\mathfrak{M}}^{\sigma}(x) \subseteq t_{\mathfrak{M}}^{\sigma}(x)$,
then
$\Diamond t_{\mathfrak{M}}^{\sigma}(x) \not\subseteq t_{\mathfrak{M}}^{\sigma}(y)$ follows from $y\notin \bl$ and $xRy$. So there is a $\sigma$-formula $\mu$ such that $\mathfrak M,x\models\mu$ and $\mathfrak M,y\not\models\Diamond\mu$. 
If $\mathfrak M,y\models\mu$, then $\eC{\bl}$ is $\Diamond\mu$-maximal in $\mathfrak M$. 
And if $\mathfrak M,y\not\models\mu$, then $\eC{\bl}$ is $\mu$-maximal in $\mathfrak M$.
$(ii)$ If $\Diamond t_{\mathfrak{M}}^{\sigma}(x) \not\subseteq t_{\mathfrak{M}}^{\sigma}(x)$, then
there is $\sigma$-formula $\mu$ such that $\mathfrak M,x\models\mu$ and $\mathfrak M,x\not\models\Diamond\mu$. Therefore,  $\eC{\bl}$ is $\mu$-maximal in $\mathfrak M$.


	
$(c)$ 	
Suppose on the contrary that $\eC{\bl}=\{x\}\ne \bl$. Then $|\bl|>1$, and so 
$\Diamond t_{\mathfrak{M}}^{\sigma}(x) \subseteq t_{\mathfrak{M}}^{\sigma}(x)$ follows from $\bl=\bl^\sigma_{\mathfrak M}(x)$. So, for every $\sigma$-formula $\mu$, if $\mathfrak M,x\models\mu$ then $\mathfrak M,x\models\Diamond\mu$.
On the other hand, $\eC{\bl}$ is maximal in $\mathfrak M$ by Lemma~\ref{lem:descr'}~$(a)$, and so $\sigma$-maximal in $\mathfrak M$ by $(b)$, which is 
a contradiction.
	
	

$(d,\Leftarrow)$
This is by $(a)$ and Lemma~\ref{l:closedintdef}.

$(d,\Rightarrow)$
Suppose that $\bl$ is defined in $\mathfrak M$ by some $\psi$.
Then $\eC{\bl}$ is $\psi$-maximal in $\mathfrak M$, and so cannot be a limit cluster by Lemma~\ref{lem:descr'}~$(b)$.

$(e)$
If $\eC{\bl}$ is degenerate, then this is obvious by $(c)$. So suppose $\eC{\bl}=C(y)$ is non-degenerate and $x\in\bl$.
Then $\Diamond t_{\mathfrak{M}}^{\sigma}(x) \subseteq t_{\mathfrak{M}}^{\sigma}(y)$, and so
$\Diamond\bigwedge\Gamma\in t_{\mathfrak{M}}^{\sigma}(y)$ for every finite $\Gamma\subseteq t_{\mathfrak{M}}^{\sigma}(x)$. By Lemma~\ref{maxpoints}, there is $z$ such that $yRz$ and $t_{\mathfrak{M}}^{\sigma}(z)=t_{\mathfrak{M}}^{\sigma}(x)$. By {\bf (block)}, we have $z\in\bl$, and so $z\in\eC{\bl}$.
\end{proof}

\begin{example}\label{blocksetc}\em
The model $\mathfrak M_1$ in Fig.~\ref{GL3gf} from Example~\ref{ex:GL.3}~$(a)$ is partitioned into the following $\sigma$-blocks (indicated by the 
brackets), for three different $\sigma$: 
%
%
\begin{center}
\begin{tikzpicture}[>=latex,line width=0.5pt,xscale = 1.2,yscale = .65]
\node[right]  at (-1,4.5) {$\sigma=\emptyset$};
\node[point,fill=black,scale = 0.7,label=above:{\footnotesize $\varphi_1$},label=below:{\footnotesize $x_1$}] (x1) at (0,3) {};
\node[point,fill=black,scale = 0.7,label=above:{\footnotesize $p_1,\neg q_1$},label=below:{\footnotesize $y_1$}] (y1) at (1,3) {};
\node[point,scale = 0.7,label=below:{\footnotesize $a_1^0$},label=above:{\footnotesize $p_2$}] (a10) at (2.5,3) {};
\node[point,scale = 0.7,label=below:{\footnotesize $a_1^1$},label=above:{\footnotesize $p_1$}] (a11) at (3.5,3) {};
\draw[] (3,2.9) ellipse (1 and .9);
\node[scale = 0.9]  at (3,3.5) {$q_1$};
\node[]  at (4.5,3) {$\dots$};
\node[point,fill=black,scale = 0.7,label=below:{\footnotesize $\yy_1^3$},label=above:{\footnotesize $p_1,q_1$}] (b13) at (5,3) {};
\node[point,fill=black,scale = 0.7,label=below:{\footnotesize $\yy_1^2$},label=above:{\footnotesize $p_2,q_1$}] (b12) at (6,3) {};
\node[point,fill=black,scale = 0.7,label=below:{\footnotesize $\yy_1^1$},label=above:{\footnotesize $p_1,q_1$}] (b11) at (7,3) {};
\node[point,fill=black,scale = 0.7,label=below:{\footnotesize $\yy_1^0$},label=above:{\footnotesize $p_2,q_1$}] (b10) at (8,3) {};
\draw[->] (x1) to (y1);
\draw[->] (y1) to (2,3);
\draw[->] (b13) to (b12);
\draw[->] (b12) to (b11);
\draw[->] (b11) to (b10);
\draw[-,thick] (-.2,3.7) -- (-.2,3.9) -- (4,3.9) -- (4,3.7);
\draw[-,thick] (4.7,3.7) -- (4.7,3.9) -- (5.3,3.9) -- (5.3,3.7);
\draw[-,thick] (5.7,3.7) -- (5.7,3.9) -- (6.3,3.9) -- (6.3,3.7);
\draw[-,thick] (6.7,3.7) -- (6.7,3.9) -- (7.3,3.9) -- (7.3,3.7);
\draw[-,thick] (7.7,3.7) -- (7.7,3.9) -- (8.3,3.9) -- (8.3,3.7);
\end{tikzpicture}
\begin{tikzpicture}[>=latex,line width=0.5pt,xscale = 1.2,yscale = .65]
\node[right]  at (-1,4.5) {$\sigma=\{p_1,p_2\}$};
\node[point,fill=black,scale = 0.7,label=above:{\footnotesize $\varphi_1$},label=below:{\footnotesize $x_1$}] (x1) at (0,3) {};
\node[point,fill=black,scale = 0.7,label=above:{\footnotesize $p_1,\neg q_1$},label=below:{\footnotesize $y_1$}] (y1) at (1,3) {};
\node[point,scale = 0.7,label=below:{\footnotesize $a_1^0$},label=above:{\footnotesize $p_2$}] (a10) at (2.5,3) {};
\node[point,scale = 0.7,label=below:{\footnotesize $a_1^1$},label=above:{\footnotesize $p_1$}] (a11) at (3.5,3) {};
\draw[] (3,2.9) ellipse (1 and .9);
\node[scale = 0.9]  at (3,3.5) {$q_1$};
\node[]  at (4.5,3) {$\dots$};
\node[point,fill=black,scale = 0.7,label=below:{\footnotesize $\yy_1^3$},label=above:{\footnotesize $p_1,q_1$}] (b13) at (5,3) {};
\node[point,fill=black,scale = 0.7,label=below:{\footnotesize $\yy_1^2$},label=above:{\footnotesize $p_2,q_1$}] (b12) at (6,3) {};
\node[point,fill=black,scale = 0.7,label=below:{\footnotesize $\yy_1^1$},label=above:{\footnotesize $p_1,q_1$}] (b11) at (7,3) {};
\node[point,fill=black,scale = 0.7,label=below:{\footnotesize $\yy_1^0$},label=above:{\footnotesize $p_2,q_1$}] (b10) at (8,3) {};
\draw[->] (x1) to (y1);
\draw[->] (y1) to (2,3);
\draw[->] (b13) to (b12);
\draw[->] (b12) to (b11);
\draw[->] (b11) to (b10);
\draw[-,thick] (-.2,3.7) -- (-.2,3.9) -- (.2,3.9) -- (.2,3.7);
\draw[-,thick] (.6,3.7) -- (.6,3.9) -- (4,3.9) -- (4,3.7);
\draw[-,thick] (4.7,3.7) -- (4.7,3.9) -- (5.3,3.9) -- (5.3,3.7);
\draw[-,thick] (5.7,3.7) -- (5.7,3.9) -- (6.3,3.9) -- (6.3,3.7);
\draw[-,thick] (6.7,3.7) -- (6.7,3.9) -- (7.3,3.9) -- (7.3,3.7);
\draw[-,thick] (7.7,3.7) -- (7.7,3.9) -- (8.3,3.9) -- (8.3,3.7);
\end{tikzpicture}
\begin{tikzpicture}[>=latex,line width=0.5pt,xscale = 1.2,yscale = .65]
\node[right]  at (-1,4.5) {$\sigma=\{p_1,p_2,q_1,q_2\}$};
\node[point,fill=black,scale = 0.7,label=above:{\footnotesize $\varphi_1$},label=below:{\footnotesize $x_1$}] (x1) at (0,3) {};
\node[point,fill=black,scale = 0.7,label=above:{\footnotesize $p_1,\neg q_1$},label=below:{\footnotesize $y_1$}] (y1) at (1,3) {};
\node[point,scale = 0.7,label=below:{\footnotesize $a_1^0$},label=above:{\footnotesize $p_2$}] (a10) at (2.5,3) {};
\node[point,scale = 0.7,label=below:{\footnotesize $a_1^1$},label=above:{\footnotesize $p_1$}] (a11) at (3.5,3) {};
\draw[] (3,2.9) ellipse (1 and .9);
\node[scale = 0.9]  at (3,3.5) {$q_1$};
\node[]  at (4.5,3) {$\dots$};
\node[point,fill=black,scale = 0.7,label=below:{\footnotesize $\yy_1^3$},label=above:{\footnotesize $p_1,q_1$}] (b13) at (5,3) {};
\node[point,fill=black,scale = 0.7,label=below:{\footnotesize $\yy_1^2$},label=above:{\footnotesize $p_2,q_1$}] (b12) at (6,3) {};
\node[point,fill=black,scale = 0.7,label=below:{\footnotesize $\yy_1^1$},label=above:{\footnotesize $p_1,q_1$}] (b11) at (7,3) {};
\node[point,fill=black,scale = 0.7,label=below:{\footnotesize $\yy_1^0$},label=above:{\footnotesize $p_2,q_1$}] (b10) at (8,3) {};
\draw[->] (x1) to (y1);
\draw[->] (y1) to (2,3);
\draw[->] (b13) to (b12);
\draw[->] (b12) to (b11);
\draw[->] (b11) to (b10);
\draw[-,thick] (-.2,3.7) -- (-.2,3.9) -- (.2,3.9) -- (.2,3.7);
\draw[-,thick] (.6,3.7) -- (.6,3.9) -- (1.4,3.9) -- (1.4,3.7);
\draw[-,thick] (2,3.7) -- (2,3.9) -- (4,3.9) -- (4,3.7);
\draw[-,thick] (4.7,3.7) -- (4.7,3.9) -- (5.3,3.9) -- (5.3,3.7);
\draw[-,thick] (5.7,3.7) -- (5.7,3.9) -- (6.3,3.9) -- (6.3,3.7);
\draw[-,thick] (6.7,3.7) -- (6.7,3.9) -- (7.3,3.9) -- (7.3,3.7);
\draw[-,thick] (7.7,3.7) -- (7.7,3.9) -- (8.3,3.9) -- (8.3,3.7);
\end{tikzpicture}
\end{center}
To show this for $\sigma=\emptyset$, observe that, for every $n >0$, we have $\Diamond^n \top \in t_{\mathfrak{M}_1}^{\sigma}(\yy_1^n)$, $\Diamond^{n+1} \top \notin t_{\mathfrak{M}_1}^{\sigma}(\yy_1^n)$, $\neg\Diamond \top \in t_{\mathfrak{M}_1}^{\sigma}(\yy_1^0)$, and $\Diamond^n \top \in t_{\mathfrak{M}_1}^{\sigma}(a_1^0)$. The cluster $C(a_1^0)$ is not maximal in $\mathfrak M_1$ as any formula $\alpha$ that is true at $a_1^0$ or $a_1^1$ is also true at $\yy_1^n$, for some $n < \omega$ (which is seen by induction on the structure of $\alpha$). 
%
The model $\mathfrak M_1$ in Example~\ref{ex:GL.3}~$(b)$
 has only one $\emptyset$-block comprising all of its points.
\hfill $\dashv$ 
\end{example}

We now return to our models $\mathfrak M_i$, $i=1,2$, witnessing the lack of interpolants for $\varphi_1$ 
and $\varphi_2$. By Lemma~\ref{int-prop}~$(a)$, $\sigma$-blocks in each $\mathfrak{M}_i$
are closed intervals that form a partition of $W_i$ (with not all of them being necessarily definable in $\mathfrak M_i$). 
We show that  
there is a $\intord{\mathfrak F_i}$-respecting bijection between the $\sigma$-blocks of the two models.
Indeed, suppose that $W_1$ is partitioned as $\{\bl^j\mid j\in F\}$ into $\sigma$-blocks in $\mathfrak M_1$, for some countable set $F$. 
For each $j\in F$, we let
%
\[
\bis(\bl^j)=\{y\in W_2\mid t^\sigma_{\mathfrak M_2}(y)\in t^\sigma_{\mathfrak M_1}(\bl^j)\}.
\]
%
\begin{lemma}\label{bis-blocks}
For all $j\in F$, the following hold\textup{:}
\begin{itemize}
\item[$(a)$]
$t^\sigma_{\mathfrak M_1}(\bl^j)=t^\sigma_{\mathfrak M_2}\bigl(\bis(\bl^j)\bigr)$\textup{;}

\item[$(b)$]
$\bis(\bl^j)$ is a $\sigma$-block in $\mathfrak M_2$, and $\bl^j$ is degenerate iff $\bis(\bl^j)$ is degenerate\textup{;}

\item[$(c)$]
$\{\bis(\bl^j)\mid j\in F\}$ is a partition of $W_2$\textup{;}

\item[$(d)$]
$\bl^j\intord{\mathfrak F_1} \bl^k$ iff $\bis(\bl^j)\intord{\mathfrak F_2}\bis(\bl^k)$, for $j,k\in F$\textup{;}

\item[$(e)$]
$\bl^j$ is definable in $\mathfrak M_1$ iff $\bis(\bl^j)$ is definable in $\mathfrak M_2$.
\end{itemize}
\end{lemma}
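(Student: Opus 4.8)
The plan is to reduce all five parts to one \emph{intrinsic} description of $\sigma$-blocks in terms of $\sigma$-types, so that the two models, which realise the same $\sigma$-types, are partitioned ``in the same way''. First I would record that the bisimulation is \globalb{}: since $\mathfrak M_1,x_1\sim_\sigma\mathfrak M_2,x_2$ and both frames are descriptive, Lemma~\ref{bisim-lemma} makes $\equiv_\sigma$ a $\sigma$-bisimulation with $x_1\equiv_\sigma x_2$; as the $x_i$ are roots, $x_1R_1^+y_1$ for every $y_1$, so one application of {\bf (\move)} shows that each point of $W_1$ is $\equiv_\sigma$-related to a point of $W_2$ and vice versa. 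Consequently the two models realise exactly the same set $T=t^\sigma_{\mathfrak M_1}(W_1)=t^\sigma_{\mathfrak M_2}(W_2)$ of $\sigma$-types.

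The key step is an \emph{intrinsic block lemma}: for $\mathfrak M\in\{\mathfrak M_1,\mathfrak M_2\}$ and any point $x$ with $t=t^\sigma_{\mathfrak M}(x)$,
\[
\bl^\sigma_{\mathfrak M}(x)=\{y\mid t^\sigma_{\mathfrak M}(y)\in\Theta(t)\},\qquad t^\sigma_{\mathfrak M}\bigl(\bl^\sigma_{\mathfrak M}(x)\bigr)=\Theta(t),
\]
where $\Theta(t)=\{s\in T\mid \Diamond t\subseteq s,\ \Diamond s\subseteq t\}$ if $\Diamond t\subseteq t$, and $\Theta(t)=\{t\}$ otherwise. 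The non-degenerate case is read off directly from the definition of a $\sigma$-block; the degenerate case uses property {\bf (block)}, which forces the unique point of a degenerate $\sigma$-type to be the whole block. The crucial point is that $\Theta(t)$ depends only on $T$ and on $\Diamond$-inclusion of $\sigma$-types, hence is the \emph{same} for $\mathfrak M_1$ and $\mathfrak M_2$; and since the blocks of $\mathfrak M_1$ partition $W_1$, the cells $\{\Theta(t)\mid t\in T\}$ partition $T$.

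With this in hand, $(a)$--$(c)$ are almost immediate. Writing $T^j:=t^\sigma_{\mathfrak M_1}(\bl^j)=\Theta(t^j)$ for a representative type $t^j$, we have $\bis(\bl^j)=\{y\in W_2\mid t^\sigma_{\mathfrak M_2}(y)\in T^j\}$, so $t^\sigma_{\mathfrak M_2}(\bis(\bl^j))=\Theta(t^j)=T^j$ because $\Theta(t^j)\subseteq T=T_{\mathfrak M_2}$; this is $(a)$ and shows $\bis(\bl^j)\neq\emptyset$. By the intrinsic block lemma, $\bis(\bl^j)$ is exactly the $\sigma$-block of any of its points, giving $(b)$; degeneracy matches because ``$\Theta(t^j)=\{t^j\}$'' is the condition $\Diamond t^j\not\subseteq t^j$ on $t^j$ alone, and here one checks, using the transitivity of $\KFT$, that every type in a non-degenerate cell $\Theta(t^j)$ is itself non-degenerate. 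Part $(c)$ follows since $\{T^j\}$ partitions $T=T_{\mathfrak M_2}$ and $\bis$ is injective on blocks (distinct cells are disjoint).

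For $(d)$ I would avoid a type-level analysis of $\intord{}$ and argue by \emph{reachability transfer plus linearity}: distinct blocks are disjoint closed intervals, hence $\intord{}$-comparable, and an edge between points of distinct blocks already forces their order. Assuming $\bl^j\intord{\mathfrak F_1}\bl^k$, pick $y\in\bis(\bl^j)$, realise its type by some $x\in\bl^j$ and a type of $\bl^k$ by some $x'\in\bl^k$ with $xR_1x'$; then $y\equiv_\sigma x$, so {\bf (\move)} yields $z$ with $yR_2z$ and $z\equiv_\sigma x'$, whence $z\in\bis(\bl^k)$, and since $\bis(\bl^j)\neq\bis(\bl^k)$ this single edge between disjoint intervals forces $\bis(\bl^j)\intord{\mathfrak F_2}\bis(\bl^k)$; the converse is symmetric. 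Finally, for $(e)$ I would invoke Lemma~\ref{int-prop}~$(d)$: $\bl^j$ is definable iff $\eC{\bl^j}$ is not a limit cluster, i.e.\ iff $\eC{\bl^j}$ is final or has an immediate successor. Because consecutive blocks leave no cluster between them, this is equivalent to the purely order-theoretic condition ``$\bl^j$ is the $\intord{}$-greatest block or has an immediate successor block''; by $(a)$--$(d)$, $\bis$ is an isomorphism of the linearly ordered block structures, and such an isomorphism preserves maximality and the existence of immediate successors. The main obstacle is the intrinsic block lemma in the degenerate case: one must use {\bf (block)} to pin a degenerate $\sigma$-type to a single point in each model and use transitivity to make ``non-degenerate'' a property of the whole cell rather than of a chosen representative; once this is settled, the rest is the transport of the block order along the global bisimulation.
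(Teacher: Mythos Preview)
Your proposal is correct and follows essentially the same route as the paper. The paper's proof is terser: it gets $(a)$ from the bisimulation and Lemma~\ref{bisim-lemma}; proves $(b)$ by a direct case split on whether $\Diamond t^\sigma_{\mathfrak M_2}(y)\subseteq t^\sigma_{\mathfrak M_2}(y)$ (in the degenerate case it exhibits a $\sigma$-formula $\mu$ with $\mu\in t$, $\Diamond\mu\notin t$ to force uniqueness, which is exactly your use of {\bf (block)}); deduces $(c)$ from $(a)$ and {\bf (block)}; $(d)$ from the bisimulation, $(a)$ and {\bf (block)}; and $(e)$ from $(b)$--$(d)$ together with Lemma~\ref{int-prop}~$(a),(d)$.

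The one presentational difference worth noting is your \emph{intrinsic block lemma}: packaging the $\sigma$-block of a point as $\{y\mid t^\sigma_{\mathfrak M}(y)\in\Theta(t)\}$ for a model-independent cell $\Theta(t)\subseteq T$ is a clean abstraction the paper leaves implicit. It makes $(a)$--$(c)$ fall out uniformly and clarifies why degeneracy transfers (it is the type-level condition $\Diamond t\not\subseteq t$). Your $(d)$ and $(e)$ arguments are just the paper's, spelled out: the paper's appeal to {\bf (block)} in $(d)$ is your ``single edge between disjoint closed intervals forces the order'', and your translation of ``$\eC{\bl}$ is non-limit'' into the block-order condition and transport along the order-isomorphism is precisely what the paper's one-line citation of $(b)$--$(d)$ and Lemma~\ref{int-prop}~$(d)$ is doing.
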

\begin{proof}
$(a)$
This follows from $\mathfrak M_1,x_1\sim_\sigma\mathfrak M_2,x_2$ and Lemma~\ref{bisim-lemma}.

$(b)$ Let $j\in F$. As $\mathfrak M_1,x_1\sim_\sigma\mathfrak M_2,x_2$, $\bis(\bl^j) \ne \emptyset$.
Take some $y\in \bis(\bl^j)$. We show that $\bis(\bl^j)=\bl^\sigma_{\mathfrak M_2}(y)$. Indeed, this is straightforward from the definitions if $\Diamond t^\sigma_{\mathfrak M_2}(y)\subseteq t^\sigma_{\mathfrak M_2}(y)$.
If $\Diamond t^\sigma_{\mathfrak M_2}(y)\not\subseteq t^\sigma_{\mathfrak M_2}(y)$, then 
$\bl^\sigma_{\mathfrak M_2}(y)=\{y\}$. Take some $x\in\bl^j$ with $t^\sigma_{\mathfrak M_1}(x)=t^\sigma_{\mathfrak M_2}(y)$. Then $\Diamond t^\sigma_{\mathfrak M_1}(x)\not\subseteq t^\sigma_{\mathfrak M_1}(x)$, and so $\bl^j=\{x\}$. Thus, $\bis(\bl^j)=\{z\in W_2\mid t^\sigma_{\mathfrak M_2}(z)=t^\sigma_{\mathfrak M_2}(y)\}$, and so there is
a $\sigma$-formula $\mu$ such that $\mu\in  t^\sigma_{\mathfrak M_2}(z)=t^\sigma_{\mathfrak M_2}(y)$ and
$\Diamond\mu\notin  t^\sigma_{\mathfrak M_2}(z)=t^\sigma_{\mathfrak M_2}(y)$.
Suppose there is $z\in\bis(\bl^j)$, $z\ne y$. Then either $zR_2y$ or $yR_2z$, which is a contradiction.

$(c)$
As $\bis(\bl^j)$ and $\bis(\bl^{k})$ are disjoint for $j\ne k$ by $(a)$ and {\bf (block)}, the relation
`$y\approx y'$ iff there is $j\in F$ with $y,y'\in \bis(\bl^j)$' is an equivalence relation on $W_2$. 

$(d)$
This follows from $\mathfrak M_1,x_1\sim_\sigma\mathfrak M_2,x_2$, $(a)$ and {\bf (block)}.

$(e)$ This follows from $(b)$--$(d)$ and Lemma~\ref{int-prop} $(a)$ and $(d)$.
%
\end{proof}

So, from now on we assume that we have a strict linear order $(F,\prec)$ such that
each $W_i$, $i=1,2$, is partitioned as $\{\bl_i^j\mid j\in F\}$ into $\sigma$-blocks in $\mathfrak M_i$ 
with $j\prec k$ iff $\bl_1^j\intord{\mathfrak F_1}\bl_1^k$ iff $\bl_2^j\intord{\mathfrak F_2}\bl_2^k$, for $j,k\in F$. 
(We write $j\preceq k$ whenever $j\prec k$ or $j=k$.)
Observe that, by Lemmas~\ref{lem:descr0}~$(a)$ and \ref{l:countable},
$(F,\succ)$ is isomorphic to a countable ordinal. We say that $j\in F$ is a $\succ$-\emph{limit} iff it corresponds to a limit ordinal under this isomorphism. Thus, 
every $j\in F$ has an immediate $\prec$-predecessor, and  if $j$ is not a $\succ$-limit, then it also has an immediate $\prec$-successor. Also, $j$ is a $\succ$-limit iff $\eC{\bl_i^j}$ is a limit cluster, for $i=1,2$. 
%
%


Next, we analyse some properties of special $\sigma$-blocks. Recall that 
%
\textbf{Steps 1} and \textbf{2} in the proof of Theorem~\ref{dperscofinal}~$(a)$
give us the sets $\mset{i}$ containing 
the $\{\psi\}$-maximal points in $\mathfrak M_i$ that satisfy each formula $\psi$ in $\sub(\varphi_i)$ that is satisfiable in $\mathfrak M_i$;
the set $T$ of the $\sigma$-types of points in $\{x_1,x_2\}\cup\mset{1} \cup\mset{2}$ (cf.\ \eqref{typesT}); and also the sets $\sset{i} \subseteq W_i$ of $t$-maximal points in $\mathfrak M_i$ satisfying the $\sigma$-types $t$ from $T$.
Points in $\{x_i\}\cup \mset{i}\cup \sset{i}$ are called \emph{relevant in} $\mathfrak M_i$.
A cluster or an interval is \emph{relevant in} $\mathfrak M_i$ if it contains a relevant point,
and \emph{irrelevant} otherwise.
The number of relevant clusters (and of relevant $\sigma$-blocks) in $\mathfrak M_i$ is 
clearly bounded by the number of relevant points, that is, by $\kbound$ (defined in \eqref{kbound}).
Note that the root  and final clusters of $\mathfrak M_i$ are always relevant (the latter because $\sub(\varphi_i)$ is closed under negation, so the final cluster always  intersects with $\mset{i}$).

%

\begin{example}\label{e:relevant}\em 
For the models $\mathfrak M_i$ shown in Fig.~\ref{GL3gf} from Example~\ref{ex:GL.3}~$(a)$ and $\sigma = \{p_1,p_2\}$, we have $\mset{i} = \{x_i,y_i,\yy_i^1,\yy_i^0\}$, 
$\sset{1} = \{x_1,y_1,a_1^1,\yy_1^1,\yy_1^0\}$, and $\sset{2} = \{x_2,y_2,a_2^0,\yy_2^1,\yy_2^0\}$,
so only the first two and the last two $\sigma$-blocks in the $\mathfrak M_i$ are relevant (cf.\ Example~\ref{blocksetc} for the $\sigma$-blocks). 
\hfill $\dashv$
\end{example}


The next lemma lists a few important properties of relevant $\sigma$-blocks:

\begin{lemma}\label{l:maxbisblock}
For all $j\in F$ and $i=1,2$, the following hold\textup{:}
\begin{itemize}
\item[$(a)$]
$\sset{i}\cap\bl_i^j=\sset{i}\cap \eC{\bl_i^j}$\textup{;}


\item[$(b)$]
if $x_i\in \eC{\bl_i^j}$, then $x_i\in\sset{i}$\textup{;}

\item[$(c)$]
$t_{\mathfrak M_i}^\sigma\bigl((\{x_i\}\cup \mset{i}\cup\sset{i})\cap \eC{\bl_i^j}\bigr)=t_{\mathfrak M_i}^\sigma\bigl(\sset{i}\cap  \eC{\bl_i^j}\bigr)$\textup{;}

\item[$(d)$]
$\bl_i^j$ is relevant iff $\sset{i}\cap\eC{\bl_i^j}\ne\emptyset$\textup{;}

\item[$(e)$]
there is a bijection $\iso^-\colon\bigl(\sset{1}\cap\eC{\bl_1^j}\bigr)\to\bigl(\sset{2}\cap\eC{\bl_2^j}\bigr)$ with $t_{\mathfrak M_1}^\sigma(y)=t_{\mathfrak M_2}^\sigma\bigl(\iso^-(y)\bigr)$, for every $y\in\sset{1}\cap\eC{\bl_1^j}$\textup{;}


\item[$(f)$]
$\bl_1^j$ is relevant iff $\bl_2^j$ is relevant.
\end{itemize}
\end{lemma}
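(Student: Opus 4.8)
The unifying idea is that, by Lemma~\ref{int-prop}, each $\sigma$-block $\bl_i^j=\bigl[\sC{\bl_i^j},\eC{\bl_i^j}\bigr]$ realises exactly the $\sigma$-types realised in its end cluster $\eC{\bl_i^j}$, while by {\bf (block)} these $\sigma$-types are confined to the block; the two models are then tied together through the global $\sigma$-bisimulation via Lemma~\ref{bis-blocks}. The plan is to prove $(a)$ first, together with the companion fact that every point of an end cluster is maximal for its own $\sigma$-type, and then to bootstrap $(b)$--$(e)$ from these two observations.

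For $(a)$ the inclusion $\sset{i}\cap\eC{\bl_i^j}\subseteq\sset{i}\cap\bl_i^j$ is immediate since $\eC{\bl_i^j}\subseteq\bl_i^j$. For the converse I would take $z=z_t\in\sset{i}\cap\bl_i^j$, so $z$ is $t$-maximal for $t=t^\sigma_{\mathfrak{M}_i}(z)$, and suppose $C(z)<_R\eC{\bl_i^j}$. By Lemma~\ref{int-prop}~$(e)$ there is $z'\in\eC{\bl_i^j}$ with $t^\sigma_{\mathfrak{M}_i}(z')=t$; then $zR_i^s z'$ and $\mathfrak{M}_i,z'\models t$, so $t$-maximality forces $z'R_iz$, contradicting $C(z)<_R C(z')$. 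Hence $z\in\eC{\bl_i^j}$. The same cluster-order argument gives the companion fact: if $y\in\eC{\bl_i^j}$, $yR_iw$ and $t^\sigma_{\mathfrak{M}_i}(w)=t^\sigma_{\mathfrak{M}_i}(y)$, then $w\in\bl_i^j$ by {\bf (block)} and, being $R_i$-reachable from the $<_R$-largest cluster of the block, $w\in\eC{\bl_i^j}$, so $wR_iy$; thus every point of an end cluster is maximal for its own $\sigma$-type.

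Statement $(b)$ then reduces to $(\{x_i\}\cup\mset{i})\cap\eC{\bl_i^j}\subseteq\sset{i}$, the reverse inclusion being trivial. Any such relevant point $y$ has $t:=t^\sigma_{\mathfrak{M}_i}(y)\in T$ and, by the companion fact, is $t$-maximal; since $\sset{i}$ is only required to contain \emph{some} $t$-maximal point, I would fix the representatives $z_t$ so that $z_t$ is itself a relevant point whenever $t$ is realised by a relevant point inside an end cluster. This is the delicate bookkeeping step, and I expect it to be the main obstacle: one must verify that these choices can be made simultaneously and consistently across all blocks---in particular that a single end cluster does not host two distinct relevant points of the same $\sigma$-type, which would block the literal set equality---so that $\sset{i}$ genuinely absorbs the relevant points sitting in end clusters. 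Granting $(a)$ and the companion fact, statement $(c)$ follows directly: if $\bl_i^j$ contains a relevant point that already lies in $\sset{i}$, then by $(a)$ it lies in $\eC{\bl_i^j}$; otherwise that point lies in $\{x_i\}\cup\mset{i}$, its $\sigma$-type is in $T$, and by {\bf (block)} and $(a)$ the representative $z_t$ lies in $\eC{\bl_i^j}$---so in either case $\sset{i}\cap\eC{\bl_i^j}\neq\emptyset$; the converse is trivial.

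For $(d)$ I would exploit that $\sset{i}$ holds exactly one representative per $\sigma$-type in $T$. Under the identification $\bl_2^j=\bis(\bl_1^j)$, Lemma~\ref{bis-blocks}~$(a)$ gives $t^\sigma_{\mathfrak{M}_1}(\bl_1^j)=t^\sigma_{\mathfrak{M}_2}(\bl_2^j)$, and Lemma~\ref{int-prop}~$(e)$ identifies these with $t^\sigma_{\mathfrak{M}_1}\bigl(\eC{\bl_1^j}\bigr)$ and $t^\sigma_{\mathfrak{M}_2}\bigl(\eC{\bl_2^j}\bigr)$; hence the two end clusters realise the same set of $\sigma$-types, and in particular the same subset $T\cap t^\sigma_{\mathfrak{M}_1}\bigl(\eC{\bl_1^j}\bigr)$ of $T$. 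Sending the representative $z_t\in\sset{1}\cap\eC{\bl_1^j}$ to the representative of the same $\sigma$-type in $\sset{2}\cap\eC{\bl_2^j}$ yields the required $\sigma$-type-preserving bijection $\iso^-$. Finally, $(e)$ combines $(c)$ and $(d)$: $\bl_1^j$ is relevant iff $\sset{1}\cap\eC{\bl_1^j}\neq\emptyset$ iff, by the bijection $\iso^-$, $\sset{2}\cap\eC{\bl_2^j}\neq\emptyset$ iff $\bl_2^j$ is relevant.
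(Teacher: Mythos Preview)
Your approach matches the paper's. The paper organises the argument by first recording four properties of the $\sset{i}$ coming from Step~2: (1)~every $\sset{i}$-point is maximal for its own $\sigma$-type; (2)~every point of $\{x_i\}\cup\mset{i}$ that is maximal for its own $\sigma$-type already lies in $\sset{i}$; (3)~every relevant $\sigma$-type is realised in $\sset{i}$; and (4)~there is a $\sigma$-type-preserving bijection $\sset{1}\to\sset{2}$. Your ``companion fact'' is how the paper passes from (1) to (2), and the ``delicate bookkeeping step'' you flag as the main obstacle for $(b)$ is exactly property~(2): the paper simply asserts it as a feature of the Step~2 choice of $\sset{i}$ (prefer $t$-maximal points already in $\{x_i\}\cup\mset{i}$ as representatives) rather than something to be argued, so the difficulty you anticipate is dissolved by convention. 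Your derivation of $(c)$ directly from $(a)$ and {\bf (block)} is slightly more economical than the paper's, which routes through $(b)$; for $(d)$ the paper restricts the global bijection of property~(4) to the end clusters via $(a)$, {\bf (block)} and Lemma~\ref{bis-blocks}~$(a)$, which is what you do, and $(e)$ is identical.
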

\begin{proof}
Recall the following properties of the $\sset{i}$ defined in \textbf{Step 2} of the proof of Theorem~\ref{dperscofinal}\textup{:}
\begin{enumerate}
\item
if $x\in\sset{i}$, then $x$ is $t_{\mathfrak M_i}^\sigma(x)$-maximal in $\mathfrak M_i$;


\item
if $x_i$ is $t_{\mathfrak M_i}^\sigma(x_i)$-maximal, then $x_i\in\sset{i}$;

\item
if $x\in\mset{i}$ and $x$ is $t_{\mathfrak M_i}^\sigma(x)$-maximal in $\mathfrak M_i$,
then there is $z\in C(x)\cap\sset{i}$ with $t_{\mathfrak M_i}^\sigma(z)=t_{\mathfrak M_i}^\sigma(x)$;

\item
$t_{\mathfrak M_i}^\sigma\bigl(\{x_i\}\cup \mset{i}\cup\sset{i}\bigr)\subseteq 
t_{\mathfrak M_i}^\sigma(\sset{i})$;

\item
there is a bijection $\iso\colon\sset{1}\to\sset{2}$ with $t_{\mathfrak M_1}^\sigma(y)=t_{\mathfrak M_2}^\sigma\bigl(\iso(y)\bigr)$, for every $y\in\sset{1}$.

\end{enumerate}

$(a)$
Let $x\in\sset{i}\cap\bl_i^j$. By Lemma~\ref{int-prop}~$(e)$, there is $y\in \eC{\bl_i^j}$ with  
$t_{\mathfrak M_i}^\sigma(y)=t_{\mathfrak M_i}^\sigma(x)$. Then $C(x)=C(y)$ follows from 1.,
and so $x\in \eC{\bl_i^j}$.

$(b)$
If $x_i\in \eC{\bl_i^j}$ then $x_i$ is $t_{\mathfrak M_i}^\sigma(x_i)$-maximal by {\bf (block)}, and so 
$x_i\in\sset{i}$ by 2.

$(c)$
Take $x\in \bigl(\{x_i\}\cup \mset{i}\bigr)\cap \eC{\bl_i^j}$.
By 4., there is $y\in\sset{i}$ with $t_{\mathfrak M_i}^\sigma(y)=t_{\mathfrak M_i}^\sigma(x)$. 
By 1., $y$ is $t_{\mathfrak M_i}^\sigma(y)$-maximal in $\mathfrak M_i$. Thus, by {\bf (block)} 
and Lemma~\ref{int-prop}~$(e)$, $y\in \eC{\bl_i^j}$.
It follows that $x$ is $t_{\mathfrak M_i}^\sigma(x)$-maximal in $\mathfrak M_i$,
and so 
either $x=x_i\in\sset{i}$ by 2., or
there is $z\in C(x)\cap\sset{i}$ with $t_{\mathfrak M_i}^\sigma(z)=t_{\mathfrak M_i}^\sigma(x)$
by 3.

$(d)$
We show that 
$t_{\mathfrak M_i}^\sigma\bigl(\bigl(\{x_i\}\cup \mset{i}\cup\sset{i}\bigr)\cap\bl_i^j\bigr)\subseteq
t_{\mathfrak M_i}^\sigma\bigl(\bigl(\{x_i\}\cup \mset{i}\cup\sset{i}\bigr)\cap \eC{\bl_i^j}\bigr)$.
Then $(d)$ follows from $(c)$.
To this end, take $x\in \bigl(\{x_i\}\cup \mset{i}\cup\sset{i}\bigr)\cap\bl_i^j$.
By 4., there is $y\in\sset{i}$ with $t_{\mathfrak M_i}^\sigma(y)=t_{\mathfrak M_i}^\sigma(x)$. 
By 1., $y$ is $t_{\mathfrak M_i}^\sigma(y)$-maximal in $\mathfrak M_i$. Thus, by {\bf (block)} 
and Lemma~\ref{int-prop}~$(e)$, $y\in \eC{\bl_i^j}$.

$(e)$
Let $\iso^-=\rest{\iso}{\sset{1}\cap\eC{\bl_1^j}}$ for the bijection $f$ provided by 5. Then, for every
$x\in \sset{1}\cap\eC{\bl_1^j}$,
$\iso^-(x)=\iso(x)\in\sset{2}$ with $t_{\mathfrak M_2}^\sigma(\iso(x))=t_{\mathfrak M_1}^\sigma(x)$. 
By Lemma~\ref{bis-blocks}~$(a)$, $t_{\mathfrak M_2}^\sigma(\iso(x))\in t_{\mathfrak M_2}^\sigma(\bl_2^j)$, so $\iso(x)\in\bl_2^j$ follows by {\bf (block)}. Thus, $\iso(x)\in\eC{\bl_2^j}$ by $(a)$.  

$(f)$ follows from $(d)$ and $(e)$.
\end{proof}

We are now in a position to partition each of the $\mathfrak M_i$  into the same polynomial number $N$ of closed intervals  
$\mathcal I_i=\{I_i^\ell\in\INT_i\mid \ell<\partN\}$ 
such that $\rest{\mathfrak M_1}{I_1^\ell}$ and $\rest{\mathfrak M_2}{I_2^\ell}$ are \globally{} $\sigma$-bisimilar, for every $\ell<\partN$, 
even if there are infinitely many $\sigma$-blocks in each $\mathfrak M_i$ and
not all of them are definable in $\mathfrak M_i$.

\begin{definition}\label{d:ints}
\rm
We define the partitions $\mathcal I_i$ of $\mathfrak M_i$, $i=1,2$, in three steps.
In each step, we add interval-pairs $(I_1,I_2)$ to $\mathcal I_1\times\mathcal I_2$ in such a way that
\begin{itemize} 
\item[$(a)$]
$I_i$ is a closed interval whose final cluster is a non-limit cluster, for $i=1,2$;
\item[$(b)$]
there are $j,j'\in F$ such that $I_i=\bigcup_{j\preceq k\preceq j'}\bl_i^k$, for $i=1,2$. 
\end{itemize}
It follows then from $(a)$ and Lemma~\ref{l:closedintdef} that all intervals in $\mathcal I_i$ are definable in 
$\mathfrak M_i$. Also, it follows from $(b)$ and Lemma~\ref{bis-blocks}~$(a)$ that 
%
\begin{multline}\label{Int}
\mbox{$\bigl\{(y_1,y_2)\in I_1^\ell\times I_2^\ell\mid t^\sigma_{\mathfrak M_1}(y_1)=t^\sigma_{\mathfrak M_2}(y_2)\bigr\}$ is a \globalb{} $\sigma$-bisimulation}\\
\mbox{between $\rest{\mathfrak M_1}{I_1^\ell}$ and $\rest{\mathfrak M_2}{I_2^\ell}$, for every $\ell<\partN$.}
\end{multline}
The three steps are as follows:
\begin{itemize}
\item[\step{1}]
First, suppose $\bl_1^j$, $j\in F$, is a relevant $\sigma$-block that is definable in $\mathfrak M_1$.
By Lemmas~\ref{l:maxbisblock}~$(f)$ and \ref{bis-blocks}~$(e)$,
$\bl_2^j$ is also a relevant $\sigma$-block definable in $\mathfrak M_2$.
We put into $\mathcal{I}_i$ all those 
relevant $\sigma$-blocks $\bl_i^j$ that are definable in $\mathfrak M_i$, for $i=1,2$.
Then $(b)$ clearly holds, and $(a)$ holds by Lemma~\ref{int-prop}~$(a),(d)$.

\item[\step{2}]
Next, suppose $\bl_1^j$, $j\in F$, is a relevant $\sigma$-block that is not definable in $\mathfrak M_1$.
By Lemmas~\ref{l:maxbisblock}~$(f)$ and \ref{bis-blocks}~$(e)$,
$\bl_2^j$ is also a relevant $\sigma$-block that is not definable in $\mathfrak M_2$.
By Lemma~\ref{int-prop}~$(d)$, each $\eC{\bl_i^j}$ is a limit cluster in $\mathfrak F_i$,
and so $j$ is a $\succ$-limit.
We pick some $\ell\succ j$ such that the $\sigma$-blocks $\bl_i^k$, for $j\prec k\preceq \ell$, are all irrelevant, for $i=1,2$.
Such an $\ell$ must exist as $j$ is a $\succ$-limit 
and the number of relevant points is finite, but this $\ell$ is not unique. 
Let $F^-=\{k\in F\mid j\preceq k\preceq\ell\}$ and $\succ^-=\rest{\succ}{F^-}$.
By Lemmas~\ref{lem:descr0}~$(a)$ and \ref{l:countable}, there is an isomorphism $\iso$ from
some countable ordinal $\gamma$ to $(F^-,\succ^-)$.
As $j$ is a $\succ$-limit, $\gamma\ge\omega$. Take $\iso(n)$, $n<\omega$.
There are two cases:
\begin{enumerate}
\item
There exists $m$, $0<m<\omega$, such that $\bl_1^{\iso(n)}$ is a degenerate $\sigma$-block
for every $n$ with $m\le n<\omega$.
Then, by Lemma~\ref{bis-blocks}~$(b)$, $\bl_2^{\iso(n)}$ is a degenerate $\sigma$-block,
for every $n$ with $m\le n<\omega$. We set $j'=\iso(m)$.

\item
For every $n<\omega$, there is $m_n$, $n\le m_n<\omega$, such that 
$\bl_1^{\iso(m_n)}$ is a non-degenerate $\sigma$-block.
Then, by Lemma~\ref{bis-blocks}~$(b)$, $\bl_2^{\iso(m_n)}$ is a non-degenerate $\sigma$-block as well. Note that if $n\ge 1$, then $\iso(m_n)$ is not a $\succ$-limit. Thus, $\eC{\bl_i^{\iso(m_n)}}$ is not a limit cluster, and so it is definable in $\mathfrak M_i$ by Lemma~\ref{lem:descr'}.
We set $j'=\iso(m_1)$.
\end{enumerate}
In both cases, we 
put the intervals $\bigcup_{j\preceq k\preceq j'}\bl_i^{k}$ into $\mathcal I_i$,  $i=1,2$, and 
say that they \emph{extend} the relevant non-definable $\sigma$-blocks $\bl_i^j$.
Then $(a)$, $(b)$ hold.

\item[\step{3}]
Finally, suppose that, for $i=1,2$, the intervals $I_i=\bigcup_{n_1\preceq k\preceq n_2}\bl_i^k$ and $J_i=\bigcup_{j_1\preceq k\preceq j_2}\bl_i^k$ are such that there is $k$ with $n_2\prec k\prec j_1$, $I_i,J_i\in\mathcal I_i$, and 
 there is no interval in $\mathcal I_i$ intersecting the `gap' between $I_i$ and $J_i$ (that is,
 any $\bl_i^k$ with $n_2\prec k\prec j_1$). By $I_i\in\mathcal I_i$ and $(i)$, $n_2$ is not a $\succ$-limit.
Let  $n_2^+$ be the immediate $\prec$-successor of $n_2$ and $j_1^-$ the immediate $\prec$-predecessor of $j_1$.
 Then we put the (irrelevant) interval $\bigcup_{n_2^+\preceq k\preceq j_1^-}\bl_i^{k}$ into $\mathcal I_i$, for $i=1,2$.
 Then $(b)$ clearly holds, and $(a)$ holds as $j_1^-$ is not a $\succ$-limit.
 By doing this for all the gaps, we end up with the required partition $\mathcal I_i$ of $\mathfrak M_i$. 
\end{itemize}
The number of intervals added in steps \step{1} and \step{2} 
together cannot exceed the number of relevant $\sigma$-blocks, and so it is bounded by $\kbound$. 
As the $\intord{\mathfrak F_i}$-smallest and $\intord{\mathfrak F_i}$-largest $\sigma$-blocks are relevant, the number of intervals added in step \step{3}
is bounded by $\kbound-1$, so altogether the (same) number $N$ of intervals in each $\mathcal I_i$ does not exceed $2\kbound$. 
\end{definition}


The following example illustrates Definition~\ref{d:ints}.

\begin{example}\label{ex:inter:GL.3}\em
For models $\mathfrak M_i$, $i=1,2$, from Example~\ref{ex:GL.3}~$(a)$ and 
$\sigma$-blocks from Example~\ref{blocksetc} for
$\sigma = \{p_1,p_2\}$, we can pick the intervals $I_i^j$, $j \le 4$, shown below, where $I_i^2$ are irrelevant and all other intervals are relevant (cf.\ Example~\ref{e:relevant}). 
The choice of the infinite intervals $I_i^1$ extending the non-definable $\sigma$-blocks till $\yy_i^4$ is arbitrary. We could make them  shorter or, on the contrary, extend until $\yy_i^2$, in which case there would be no gap between these intervals (extending relevant non-definable $\sigma$-blocks) and the next relevant interval. \hfill $\dashv$
\end{example}

\begin{center}
\begin{tikzpicture}[>=latex,line width=0.5pt,xscale = 1.1,yscale = .65]
\node[]  at (-.8,0) {$\mathfrak M_2$};
\node[point,fill=black,scale = 0.7,label=below:{\footnotesize $\neg\varphi_2$},label=above:{\footnotesize $x_2$}] (x2) at (0,0) {};
\node[point,fill=black,scale = 0.7,label=below:{\footnotesize $p_2,\neg q_2$},label=above:{\footnotesize $y_2$}] (y2) at (1,0) {};
\node[point,scale = 0.7,label=above:{\footnotesize $a_2^0$},label=below:{\footnotesize $p_2$}] (a20) at (2.5,0) {};
\node[point,scale = 0.7,label=above:{\footnotesize $a_2^1$},label=below:{\footnotesize $p_1$}] (a21) at (3.5,0) {};
\draw[] (3,.1) ellipse (1 and .9);
\node[scale = 0.9]  at (3,-.45) {$q_2$};
\node[]  at (4.5,0) {$\dots$};
\node[point,fill=black,scale = 0.7,label=above right:{\footnotesize $\yy_2^4$},label=below:{\footnotesize $p_1,q_2$}] (b24) at (5,0) {};
\node[point,fill=black,scale = 0.7,label=above right:{\footnotesize $\yy_2^3$},label=below:{\footnotesize $p_2,q_2$}] (b23) at (6,0) {};
\node[point,fill=black,scale = 0.7,label=above right:{\footnotesize $\yy_2^2$},label=below:{\footnotesize $p_1,q_2$}] (b22) at (7,0) {};
\node[point,fill=black,scale = 0.7,label=above right:{\footnotesize $\yy_2^1$},label=below:{\footnotesize $p_2,q_2$}] (b21) at (8,0) {};
\node[point,fill=black,scale = 0.7,label=above right:{\footnotesize $\yy_2^0$},label=below:{\footnotesize $p_2,q_2$}] (b20) at (9,0) {};
\draw[->] (x2) to (y2);
\draw[->] (y2) to (2,0);
\draw[->] (b24) to (b23);
\draw[->] (b23) to (b22);
\draw[->] (b22) to (b21);
\draw[->] (b21) to (b20);
\draw[-,thick] (-.2,-.7) -- (-.2,-.9) -- (.2,-.9) -- (.2,-.7);
\draw[-,thick] (.6,-.7) -- (.6,-.9) -- (4,-.9) -- (4,-.7);
\draw[-,thick] (4.7,-.7) -- (4.7,-.9) -- (5.3,-.9) -- (5.3,-.7);
\draw[-,thick] (5.7,-.7) -- (5.7,-.9) -- (6.3,-.9) -- (6.3,-.7);
\draw[-,thick] (6.7,-.7) -- (6.7,-.9) -- (7.3,-.9) -- (7.3,-.7);
\draw[-,thick] (7.7,-.7) -- (7.7,-.9) -- (8.3,-.9) -- (8.3,-.7);
\draw[-,thick] (8.7,-.7) -- (8.7,-.9) -- (9.3,-.9) -- (9.3,-.7);
\draw[-,gray] (-.2,-1.1) -- (-.2,-1.3) -- (.2,-1.3) -- (.2,-1.1);
\node[gray] at (0,-1.7) {\footnotesize $I_2^0$};
\draw[-,gray] (.6,-1.1) -- (.6,-1.3) -- (5.3,-1.3) -- (5.3,-1.1);
\node[gray] at (3,-1.7) {\footnotesize $I_2^1$};
\draw[-,gray] (5.7,-1.1) -- (5.7,-1.3) -- (7.3,-1.3) -- (7.3,-1.1);
\node[gray] at (6.5,-1.7) {\footnotesize $I_2^2$};
\draw[-,gray] (7.7,-1.1) -- (7.7,-1.3) -- (8.3,-1.3) -- (8.3,-1.1);
\node[gray] at (8,-1.7) {\footnotesize $I_2^3$};
\draw[-,gray] (8.7,-1.1) -- (8.7,-1.3) -- (9.3,-1.3) -- (9.3,-1.1);
\node[gray] at (9,-1.7) {\footnotesize $I_2^4$};
\node[]  at (-.8,3) {$\mathfrak M_1$};
\node[point,fill=black,scale = 0.7,label=above:{\footnotesize $\varphi_1$},label=below:{\footnotesize $x_1$}] (x1) at (0,3) {};
\node[point,fill=black,scale = 0.7,label=above:{\footnotesize $p_1,\neg q_1$},label=below:{\footnotesize $y_1$}] (y1) at (1,3) {};
\node[point,scale = 0.7,label=below:{\footnotesize $a_1^0$},label=above:{\footnotesize $p_2$}] (a10) at (2.5,3) {};
\node[point,scale = 0.7,label=below:{\footnotesize $a_1^1$},label=above:{\footnotesize $p_1$}] (a11) at (3.5,3) {};
\draw[] (3,2.9) ellipse (1 and .9);
\node[scale = 0.9]  at (3,3.5) {$q_1$};
\node[]  at (4.5,3) {$\dots$};
\node[point,fill=black,scale = 0.7,label=below right:{\footnotesize $\yy_1^4$},label=above:{\footnotesize $p_2,q_1$}] (b14) at (5,3) {};
\node[point,fill=black,scale = 0.7,label=below right:{\footnotesize $\yy_1^3$},label=above:{\footnotesize $p_1,q_1$}] (b13) at (6,3) {};
\node[point,fill=black,scale = 0.7,label=below right:{\footnotesize $\yy_1^2$},label=above:{\footnotesize $p_2,q_1$}] (b12) at (7,3) {};
\node[point,fill=black,scale = 0.7,label=below right:{\footnotesize $\yy_1^1$},label=above:{\footnotesize $p_1,q_1$}] (b11) at (8,3) {};
\node[point,fill=black,scale = 0.7,label=below right:{\footnotesize $\yy_1^0$},label=above:{\footnotesize $p_2,q_1$}] (b10) at (9,3) {};
\draw[->] (x1) to (y1);
\draw[->] (y1) to (2,3);
\draw[->] (b14) to (b13);
\draw[->] (b13) to (b12);
\draw[->] (b12) to (b11);
\draw[->] (b11) to (b10);
\draw[-,thick] (-.2,3.7) -- (-.2,3.9) -- (.2,3.9) -- (.2,3.7);
\draw[-,thick] (.6,3.7) -- (.6,3.9) -- (4,3.9) -- (4,3.7);
\draw[-,thick] (4.7,3.7) -- (4.7,3.9) -- (5.3,3.9) -- (5.3,3.7);
\draw[-,thick] (5.7,3.7) -- (5.7,3.9) -- (6.3,3.9) -- (6.3,3.7);
\draw[-,thick] (6.7,3.7) -- (6.7,3.9) -- (7.3,3.9) -- (7.3,3.7);
\draw[-,thick] (7.7,3.7) -- (7.7,3.9) -- (8.3,3.9) -- (8.3,3.7);
\draw[-,thick] (8.7,3.7) -- (8.7,3.9) -- (9.3,3.9) -- (9.3,3.7);
\draw[-,gray] (-.2,4.1) -- (-.2,4.3) -- (.2,4.3) -- (.2,4.1);
\node[gray] at (0,4.7) {\footnotesize $I_1^0$};
\draw[-,gray] (.6,4.1) -- (.6,4.3) -- (5.3,4.3) -- (5.3,4.1);
\node[gray] at (3,4.7) {\footnotesize $I_1^1$};
\draw[-,gray] (5.7,4.1) -- (5.7,4.3) -- (7.3,4.3) -- (7.3,4.1);
\node[gray] at (6.5,4.7) {\footnotesize $I_1^2$};
\draw[-,gray] (7.7,4.1) -- (7.7,4.3) -- (8.3,4.3) -- (8.3,4.1);
\node[gray] at (8,4.7) {\footnotesize $I_1^3$};
\draw[-,gray] (8.7,4.1) -- (8.7,4.3) -- (9.3,4.3) -- (9.3,4.1);
\node[gray] at (9,4.7) {\footnotesize $I_1^4$};
\node[gray]  at (-.5,1.5) {$\bis$};
\draw[gray,thick,dotted] (x1) to (x2);
\draw[gray,thick,dotted] (y1) to (a21);
\draw[gray,thick,dotted] (y2) to (a10);
\draw[gray,thick,dotted] (a10) to (a20);
\draw[gray,thick,dotted] (a11) to (a21);
\draw[gray,thick,dotted] (b14) to (b24);
\draw[gray,thick,dotted] (b13) to (b23);
\draw[gray,thick,dotted] (b12) to (b22);
\draw[gray,thick,dotted] (b11) to (b21);
\draw[gray,thick,dotted] (b10) to (b20);
\end{tikzpicture}
\end{center}
%


\subsection{Simplifying interval-based models}\label{ss:small}

Consider again our $\sigma$-bisimilar $\delta$-models $\mathfrak M_i$, $i=1,2$, that are based on finitely $\mathfrak M_i$-generated
descriptive frames $\mathfrak{F}_i = (W_{i},R_{i},\INT_{i})$ for $L$ with roots $x_i \in W_i$ 
and witness the lack of an interpolant for $\varphi_1$ and $\varphi_2$, where
$\delta = \sig(\varphi_1) \cup \sig(\varphi_2)$ and $\sigma = \sig(\varphi_1) \cap \sig(\varphi_2)$.
In Definition~\ref{d:ints}, we determined 
$N<2\kbound$, for the polynomial number $\kbound$ from \eqref{kbound}, 
and constructed the partitions $\mathcal I_i=\{I_i^\ell\in\INT_i\mid \ell<\partN\}$ of $\mathfrak M_i$
with $I_i^0\intord{\mathfrak F_i}\cdots\intord{\mathfrak F_i}I_i^{N-1}$  satisfying \eqref{Int}. 
%
We now use these partitions to prove Theorem~\ref{t:structmodel}.
First, in Lemma~\ref{l:replacement},
we transform each pair $(\rest{\mathfrak M_1}{I_1^\ell},\rest{\mathfrak M_2}{I_2^\ell})$, $\ell<N$, 
into a pair $(\mathfrak N_1^\ell,\mathfrak N_2^\ell)$ of  models 
meeting the list of requirements in Definition~\ref{d:nice}. 
Then, in Lemma~\ref{l:final},
we show that these requirements ensure that 
$\mathfrak N_i=\mathfrak N_i^0\lhd\dots\lhd\mathfrak N_i^{N-1}$, $i=1,2$,
satisfy all conditions in Theorem~\ref{t:structmodel}.

For all $i=1,2$ and $\ell<N$, the  
frame $\mathfrak H^\ell_i=(H^\ell_i,R_i^\ell,\INT^\ell_i)$ underlying $\mathfrak N_i^\ell$ is such that
$H^\ell_i\subseteq I_i^\ell$ is definable in $\mathfrak M_i$ and $R_i^\ell=\rest{R_i}{H^\ell_i}$, but $\mathfrak H_i^\ell$ is not necessarily a subframe of $\rest{\mathfrak F_i}{I_i^\ell}$. However, we require  
each $\mathfrak H^\ell_i$ to meet some conditions making sure that the canonical formulas refuted in $\mathfrak H_i=\mathfrak H_i^0\lhd\dots\lhd\mathfrak H_i^{N-1}$
are also refuted in $\mathfrak F_i$ (and so $L\subseteq \Log(\mathfrak F_i) \subseteq \Log(\mathfrak H_i)$).
Another feature of the construction is that the atomic type of some points in $\mathfrak N_i$ could be different 
from their atomic type in $\mathfrak M_i$.
We prove $\mathfrak N_1,x_1\models\varphi_1$ and $\mathfrak N_2,x_2\models\neg\varphi_2$ by
ensuring that no new (compared to $\mathfrak M_i$) atomic types are introduced in $\mathfrak N_i$, and the distribution of old atomic types in $\mathfrak N_i$ properly matches their distribution in $\mathfrak M_i$. We achieve this
by introducing functions $\parent_i^\ell$ that assign to each point $x$ in $\mathfrak N_i^\ell$ a unique `parent' point in $\rest{\mathfrak M_i}{I_i^\ell}$ whose $\mathfrak M_i$-behaviour $x$ is intended to mimic in $\mathfrak N_i$.

\begin{definition}\label{d:nice}\em
Suppose $I\in\INT_i$, $i=1,2$, is an interval in $\mathfrak F_i$.
We say that a model $\mathfrak N$ based on a frame $\mathfrak H=(H,S,\INT')$ is 
$(I,i)$-\emph{\nice} if the following hold: 
\begin{align}
\label{sub}
& \mbox{$H\subseteq I$ and $S=\rest{R_i}{H}$}\textup{;}\\
 \label{relsin}
 & \bigl(\{x_i\}\cup \mset{i}\cup \sset{i}\bigr)\cap I\subseteq H\textup{;}\\
 \label{fincluster}
& \mbox{the final cluster in $\mathfrak H$ is a subset of the final cluster in $\rest{\mathfrak F_i}{I}$}\textup{;}\\
\label{rootcluster}
& \mbox{if the root cluster $C$ in $\mathfrak H$ is degenerate,}\\[-2pt]
\nonumber
& \hspace*{1cm}\mbox{then $C$ is the root cluster in $\rest{\mathfrak F_i}{I}$\textup{;}}
\\
 \label{inpred}
 & \mbox{for every $x\in H$, if $\{x\}$ is a degenerate non-root cluster in $\mathfrak H$}\\[-2pt]
 \nonumber
 & \hspace*{1cm}\mbox{and $C\subseteq I$ is the immediate predecessor of $\{x\}$ in $\mathfrak F_i$, then}\\[-2pt]
 \nonumber
 & \hspace*{1cm}\mbox{$C\cap H$ is the immediate predecessor of $\{x\}$ in $\mathfrak H$;}
  \end{align}
\begin{align}
 \label{defdef}
 & \mbox{for every $x\in H$, if $\{x\}\in\INT'$, then $\{x\}\in\INT_i$;}
\end{align}
there is a function $\parent\colon H\to H$ such that
\begin{align}
 \label{relh}
 & \mbox{$\parent(x)=x$ for all $x\in \bigl(\{x_i\}\cup \mset{i}\cup \sset{i}\bigr)\cap H$;}\\
 \label{hatfix}
 & \mbox{$\at_{\mathfrak N}(x)=\at_{\mathfrak N}\bigl(\parent(x)\bigr)=\at_{\mathfrak M_i}\bigl(\parent(x)\bigr)$ for all $x\in H$;}\\
 \label{hhom}
 & \mbox{if $xR_i y$, then $\parent(x)R_i\parent(y)$ for all $x,y\in H$;}\\
 \label{hmax}
 & \mbox{if $\parent(x)R_i y$, then $xR_i y$ for all $x\in H$,  $y\in \mset{i}\cap H$.}
 \end{align}
\end{definition}

%
%





\begin{lemma}\label{l:replacement}
For all $i=1,2$ and $\ell<N$, there exist 
models $\mathfrak N_i^\ell$ based on frames $\mathfrak H_i^\ell=(H_i^\ell,S_i^\ell,\INT_i^\ell)$, 
and numbers $\nn_i^\ell>0$ with $\sum_{\ell<N}\nn_i^\ell\le3\kbound-1$ 
such that the following hold\textup{:}
%
 %
 \begin{itemize}
 \item[$(a)$]
$\mathfrak N_i^\ell$ is $(I_i^\ell,i)$-\nice\textup{;}

\item[$(b)$]
$\mathfrak N_i^\ell$ is the ordered sum of $\nn_i^\ell$-many \simple{} $\delta$-models based on \atomic{} frames\textup{;}

\item[$(c)$]
the pair $(\mathfrak N_1^\ell,\mathfrak N_2^\ell)$ is \match.
\end{itemize}
\end{lemma}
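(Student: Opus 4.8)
The plan is to construct each $\mathfrak N_i^\ell$ by a case analysis on how the interval $I_i^\ell$ arose in Definition~\ref{d:ints}, in each case realising $\mathfrak H_i^\ell$ as a submodel whose universe $H_i^\ell\subseteq I_i^\ell$ is definable in $\mathfrak M_i$ and carries the induced relation $S_i^\ell=\rest{R_i}{H_i^\ell}$, but a possibly \emph{reassigned} valuation governed by a map $\parent\colon H_i^\ell\to H_i^\ell$ as in Definition~\ref{d:nice}. The recurring idea is to keep every relevant point of $I_i^\ell$ (as forced by \eqref{relsin}), collapse the irrelevant structure, and lay the survivors out as an ordered sum of \atomic{} frames, choosing $\parent$ so that the $\delta$-atomic types of $\mathfrak M_i$ are faithfully copied onto the new points. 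Because conditions \bmp{a}--\bmp{c} only ever compare \emph{atomic $\sigma$-types} position-by-position, and otherwise ask that the $\sigma$-types of a whole component be absorbed into a single distinguished cluster, the matching requirement will reduce to a bookkeeping of $\sigma$-atomic types, to be read off from the $\sigma$-type-preserving bijection $\iso^-$ of Lemma~\ref{l:maxbisblock}~$(d)$ between $\sset{1}\cap\eC{\bl_1^j}$ and $\sset{2}\cap\eC{\bl_2^j}$, and from Lemma~\ref{bis-blocks}~$(a)$.

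The two cases coming from \step{1} are light. A relevant \emph{degenerate} $\sigma$-block is a single irreflexive point by Lemma~\ref{int-prop}~$(c)$; taking $\mathfrak H_i^\ell=\bullet$ with $\parent$ the identity and invoking Lemma~\ref{bis-blocks}~$(b)$ gives a pair of type \bmp{a} with $\nn_i^\ell=1$. For a relevant definable \emph{non-degenerate} block I would retain its relevant points together with the final cluster $\eC{\bl_i^j}$ and collapse everything else; since by Lemma~\ref{int-prop}~$(e)$ every $\sigma$-type of the block is already realised in $\eC{\bl_i^j}$, and $\eC{\bl_i^j}$ is a non-degenerate $\cluster{k'}$ with $k'\le 2^{|\delta|}$, the resulting ordered sum ends in a non-degenerate cluster into which all $\sigma$-types are absorbed, i.e.\ the pair is of type \bmp{b}. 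The common parameter for the harder case is $k=|t^\sigma_{\mathfrak M_1}(\eC{\bl_1^j})|$: inside a cluster full and atomic $\sigma$-types coincide, so $k$ is the number of atomic $\sigma$-types realised in $\eC{\bl_i^j}$---the same for $i=1,2$ by Lemma~\ref{bis-blocks}~$(a)$---and $k\le 2^{|\sigma|}\le 2^{|\delta|}$; fixing an enumeration $\rho_0,\dots,\rho_{k-1}$ of these types (with $\iso^-$ aligning the relevant positions) lets each model place at position $s$ a $\delta$-type refining $\rho_s$, keeping the relevant $\delta$-types.

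The crux is \step{2}, where $\eC{\bl_i^j}$ is a limit cluster---non-degenerate by Lemma~\ref{lem:descr'}---and $I_i^\ell$ extends $\bl_i^j$ upwards through irrelevant blocks. Here I would discard the part of the block strictly below $\eC{\bl_i^j}$ and let the limit cluster become the root $\cluster{k}$-cluster $A_k$ of a tadpole, its $k$ positions populated as above. The tail is supplied by the \step{2} dichotomy: in Case~1 the clusters above $\eC{\bl_i^j}$ are eventually degenerate and consecutive, so the induced subframe on $A_k$ together with an infinite cofinal run of them is a relabelling of $\chain{k}{\bullet}$; in Case~2 cofinally many are non-degenerate, and selecting one reflexive point from each such cluster yields $\chain{k}{\circ}$. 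In both cases I reassign the tail by $\parent(\yy_n)=$ the position-$(n\bmod k)$ point of $A_k$ and $\at_{\mathfrak N_i^\ell}(\yy_n)=\at_{\mathfrak M_i}(\parent(\yy_n))$, turning $\mathfrak N_i^\ell$ into a \simple{} model on $\chain{k}{\ast}$ whose tail $\sigma$-types are periodic and, by the shared enumeration $\rho_0,\dots,\rho_{k-1}$, agree position-for-position across the two models; this is precisely \eqref{tailsame}, so the pair is of type \bmp{c}. Relevant points lying below $\eC{\bl_i^j}$ are kept as preceding \atomic{} components, which is why a relevant interval may cost two \simple{} models. The gaps from \step{3} carry no relevance constraints and are simplified by the same repertoire (a tadpole wherever an internal limit cluster forces infinite descent, clusters and degenerate chains otherwise); a careful count, using that there are at most $\kbound$ relevant $\sigma$-blocks and at most $\kbound-1$ gaps, yields $\sum_{\ell<N}\nn_i^\ell\le 3\kbound-1$.

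It then remains to check niceness and that each $\mathfrak H_i^\ell$ still refutes whatever canonical formulas $\mathfrak F_i$ does. Conditions \eqref{sub}, \eqref{relsin}, \eqref{defdef} are immediate from the choice of $H_i^\ell$ and the definability of tail singletons (Lemmas~\ref{lem:descr'} and \ref{l:defpoints}); \eqref{fincluster} and \eqref{rootcluster} hold because each final cluster (resp.\ each degenerate root cluster) of $\mathfrak H_i^\ell$ sits inside the corresponding cluster of $\rest{\mathfrak F_i}{I_i^\ell}$; \eqref{inpred} is secured by taking the degenerate tail of $\chain{k}{\bullet}$ to be a \emph{consecutive} run of $\mathfrak F_i$-clusters (and is vacuous for the reflexive tail of $\chain{k}{\circ}$); and the parent conditions \eqref{relh}--\eqref{hmax} are built into the reassignment, the only delicate one being \eqref{hhom}, which survives because in $\chain{k}{\ast}$ no tail point sees the root $A_k$. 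I expect the main obstacle to be exactly this \step{2} construction: simultaneously (i) choosing $A_k$, the periodic tail, and the Case~1/Case~2 reflexivity so that the niceness conditions hold and hence---via the forthcoming Lemma~\ref{l:final}---$\Log(\mathfrak F_i)\subseteq\Log(\mathfrak H_i)$, the reflexivity of the tail being what separates the logics that admit $\chain{k}{\circ}$ from those (like $\GLT$) that do not; and (ii) arranging both tadpoles over the \emph{same} $\chain{k}{\ast}$ with matching $\sigma$-types, as \bmp{c} demands. Fitting the irrelevant gaps, and any relevant points below $\eC{\bl_i^j}$, under the $3\kbound-1$ budget is the secondary nuisance.
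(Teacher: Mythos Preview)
Your overall plan matches the paper's: the same three-case split on \step{1}/\step{2}/\step{3}, the same assignment of matching types \bmp{a}/\bmp{b}/\bmp{c}, and the same tadpole-with-periodic-parent construction for the limit-cluster case. Two places, however, need substantially more care than your sketch gives them.

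First, your treatment of a relevant non-degenerate block is too coarse. You write ``retain its relevant points together with the final cluster and collapse everything else'', but a single block can contain several relevant clusters $C_i^{\ell,0}<_{R_i}\cdots<_{R_i}C_i^{\ell,\rn_i^\ell-1}$ (the $\mset{i}$-points live in various non-final clusters), and between two consecutive relevant clusters there may sit an \emph{infinite} descending chain of irreflexive points. Simply collapsing such a chain breaks \eqref{inpred}: the degenerate relevant cluster above it would acquire a wrong immediate predecessor in $\mathfrak H_i^\ell$, and the argument for $(b)$ in Lemma~\ref{l:final} fails. The paper therefore partitions each relevant interval into sub-intervals $J_i^{\ell,j}=[D_i^{\ell,j},C_i^{\ell,j}]$, one per relevant cluster, and handles each by a four-way case split on its tail: the finite-tail cases give $\cluster{k}$, $m^<$, or $\cluster{1}\lhd m^<$; an infinite tail before a degenerate $C_i^{\ell,j}$ gives $\chain{1}{\bullet}\lhd 1^<$. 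This is why a relevant interval contributes up to $2\rn_i^\ell$ atomic components, not ``two''; your counting is off, and the bound $3\kbound-1$ comes from $\sum_\ell 2\rn_i^\ell\le 2\kbound$ over the relevant intervals plus at most $\kbound-1$ gap-intervals, each costing $1$.

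Second, for the \step{3} gaps you do not explain why the two models land on the \emph{same} atomic frame, as \bmp{a} requires. The paper uses the global $\sigma$-bisimulation \eqref{Int} to show that the tails of $\rest{\mathfrak F_1}{I_1^\ell}$ and $\rest{\mathfrak F_2}{I_2^\ell}$ have the same length $m\le\omega$, and that their heads (if present) have matching $\sigma$-type; this is what synchronises the two sides onto a common $m^<$, $\cluster{1}\lhd m^<$, or $\chain{1}{\bullet}$ with $\nn_i^\ell=1$.

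One genuine difference in approach: you take $k$ to be the number of $\sigma$-atomic types realised in $\eC{\bl_i^j}$, whereas the paper takes $k=|A_i^\ell|$ for a specifically constructed $A_i^\ell\subseteq\eC{\bl_i^j}$ containing the relevant points together with witnesses for the finitely many $\sigma$-types arising in the earlier components $H_i^{\ell,j}$, $j<\rn_i^\ell-1$. Both choices satisfy $k\le 2^{|\delta|}$ and make \bmp{c} go through, but only the paper's yields the polynomial bound \eqref{clusterbound} that is reused in the proof of Theorem~\ref{t:finaxstruct}; your $k$ is a priori only bounded by $2^{|\sigma|}$.
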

\begin{proof}
 We consider three \textbf{\emph{Cases}} I--III, depending on the step the pair $(I_1^\ell,I_2^\ell)$
 is added to $\mathcal I_1\times\mathcal I_2$ in Definition~\ref{d:ints}.
 
 \textbf{\emph{Case}} I: $(I_1^\ell,I_2^\ell)$ is added in step \step{3}, so $I_i^\ell$ are irrelevant intervals.
 We let $\nn_1^\ell=\nn_2^\ell=1$ and define $\mathfrak N_1^\ell$ and $\mathfrak N_2^\ell$
 as follows. Let $Z_i^\ell=\{z_i^j\mid j< m_i\}$, for $i=1,2$, be the \tail{} of $\rest{\mathfrak F_i}{I_i^\ell}$, for some $m_i\leq\omega$, with $z_{i}^jR_i^s z_i^{j-1}$,  $0<j<m_i$. 
By \eqref{Int},
$\bigl\{(y_1,y_2)\in I_1^\ell\times I_2^\ell\mid t^\sigma_{\mathfrak M_1}(y_1)=t^\sigma_{\mathfrak M_2}(y_2)\bigr\}$
is a \globalb{} $\sigma$-bisimulation between $\rest{\mathfrak M_1}{I_1^\ell}$ and $\rest{\mathfrak M_2}{I_2^\ell}$.
It is straightforward to see that
because of this
we must have $|Z_1^\ell|=|Z_2^\ell|=m$, for some $m\leq\omega$, and $Z_1^\ell=I_1^\ell$ iff $Z_2^\ell=I_2^\ell$.
Also, if $Z_i^\ell\ne I_i^\ell$, then there exist $w_i^\ell$ in the \source{} of $Z_i^\ell$ with $t^\sigma_{\mathfrak M_1}(w_1^\ell)=t^\sigma_{\mathfrak M_2}(w_2^\ell)$. For $i=1,2$, let 
\[
H_i^\ell=\left\{
\begin{array}{ll}
Z_i^\ell, & \mbox{ if $Z_i^\ell=I_i^\ell$},\\[3pt]
\{w_i^\ell\}\cup Z_i^\ell, & \mbox{ otherwise},
\end{array} 
\right.
\]
$S_i^\ell=\rest{R_i}{H_i^\ell}$,
and let $\INT_i^\ell$  consist of all finite subsets of $Z_i^\ell$ and their complements in $H_i^\ell$.
Then $\iso\colon H_1^\ell\to H_2^\ell$ defined by $\iso(z_1^j)=z_2^j$, $j<m$, and $\iso(w_1^\ell)=w_2^\ell$ is an isomorphism between the resulting frames $\mathfrak H_1^\ell$ and $\mathfrak H_2^\ell$, which  are isomorphic to 
\begin{itemize}
\item[$(i)$]
$m^<$, when $Z_i^\ell=I_i^\ell$\textup{;}

\item[$(ii)$]
$\cluster{1}\lhd m^<$, when $Z_i^\ell\ne I_i^\ell$ and $m<\omega$\textup{;}

\item[$(iii)$]
$\chain{1}{\bullet}$, when $Z_i^\ell$ is infinite (as $w_i^\ell R_i w_i^\ell$ by \eqref{facenondeg}).

\end{itemize}
This gives \eqref{sub}--\eqref{defdef} for $\mathfrak H=\mathfrak H_i^\ell$ and $I=I_i^\ell$ (we have \eqref{defdef} because of \eqref{infiniteincl} and Lemma~\ref{l:defpoints}).
%
%
For $p\in\delta$, let $\mathfrak{w}_i^\ell(p)=\mathfrak{v}_i(p)\cap H_i^\ell$ in cases $(i)$ and $(ii)$, and
\[
\mathfrak w_i^\ell(p)=\left\{
\begin{array}{ll}
H_i^\ell, & \mbox{ if $w_i^\ell\in\mathfrak v_i(p)$},\\[3pt]
\emptyset, & \mbox{ otherwise}
\end{array} 
\right.
\]
in case $(iii)$. 
In all cases, 
$\mathfrak w_i^\ell(p)\in\INT_i^\ell$ and $(b)$ holds
for $\mathfrak N_i^\ell= (\mathfrak H_i^\ell,\mathfrak w_i^\ell)$.
 Also, the pair $(\mathfrak N_1^\ell,\mathfrak N_2^\ell)$ is of type $(a)$ in 
Definition~\ref{d:match}, and so $(c)$ of the lemma holds.
Finally, for $x\in H_i^\ell$, we let $\parent_i^\ell(x)=x$ in cases $(i)$ and $(ii)$, and $\parent_i^\ell(x)=w_i^\ell$ in case $(iii)$.
It is straightforward to check that \eqref{hatfix} and \eqref{hhom} hold for $\parent=\parent_i^\ell$.
Note that
%
%
\eqref{relh} and \eqref{hmax} hold vacuously, as $H_i^\ell\subseteq I_i^\ell$ and
$\bigl (\{x_i\}\cup \mset{i}\cup \sset{i}\bigr) \cap I_i^\ell=\emptyset$, $i=1,2$. Thus, we have $(a)$ of the lemma.

\smallskip
\textbf{\emph{Case}} II: $(I_1^\ell,I_2^\ell)$ is added in step \step{1}. 
For $i=1,2$, let $\bl_i$ be the relevant $\sigma$-blocks such that 
$t^\sigma_{\mathfrak M_1}(\bl_1)=t^\sigma_{\mathfrak M_2}(\bl_2)$ and
$I_i^\ell=\bl_i$ is definable in $\mathfrak M_i$.
For $\ell<N$, let $\rn_i^\ell$ denote the number of relevant clusters in $I_i^\ell$,  and
let $C_i^{\ell,j}$, $j < \rn_i^\ell$, be the sequence (ordered by $<_{R_i}$) of all relevant clusters in $\bl_i$  (that intersect with $\{x_i\}\cup \mset{i}\cup \sset{i}$). Then $C_i^{\ell,\rn_i^\ell-1}$ is the final cluster $\eC{\bl_i}$ of $\bl_i$.

\textbf{\emph{Case}} II.1: 
Observe that, by Lemmas~\ref{int-prop}~$(c)$ and \ref{bis-blocks}~$(b)$,
$C_1^{\ell,\rn_1^\ell-1}=\eC{\bl_1}$ is degenerate iff
$C_2^{\ell,\rn_2^\ell-1}=\eC{\bl_2}$ is degenerate iff
both $\bl_1=\eC{\bl_1}$ and $\bl_2=\eC{\bl_2}$ are degenerate $\sigma$-blocks, and 
so $\rn_i^\ell=1$.
So, in this case, we just set $\nn_i^\ell=1$, $\mathfrak H_i^\ell=\rest{\mathfrak F_i}{\bl_i}$,
$\mathfrak N_i^\ell=\rest{\mathfrak M_i}{\bl_i}$ and $\parent_i(z_i)=z_i$ for the only point $z_i$
in $\bl_i$, $i=1,2$.
It is straightforward to check that $(a)$--$(c)$ of the lemma hold. In particular,
$(c)$ holds because
the pair $(\mathfrak N_1^\ell,\mathfrak N_2^\ell)$ is of type $(a)$ in Definition~\ref{d:match}.

\textbf{\emph{Case}} II.2: 
So, let $C_i^{\ell,\rn_i^\ell-1}=\eC{\bl_i}$ be non-degenerate, for $i=1,2$.
We may assume that, for any $j< \rn_i^\ell-1$, $C_i^{\ell,j}$ is a non-limit cluster.
(For $j>0$, this follows from Lemma~\ref{lem:descr'}, as 
$C_i^{\ell,j}\cap\mset{i}\ne\emptyset$ by Lemma~\ref{l:maxbisblock}~$(a)$.
However, if $C_i^0$ is the root cluster in $\mathfrak F_i$, it can happen that 
$(\{x_i\}\cup\mset{i})\cap C_i^0=\{x_i\}$, $x_i\notin\mset{i}$ and $C_i^0$ is a limit cluster.
We may exclude this case by Lemma~\ref{l:rootnolimit}.)
%
Also, as $\bl_i$ is definable in $\mathfrak M_i$, $C_i^{\ell,\rn_i^\ell-1}=\eC{\bl_i}$  is a non-limit cluster by Lemma~\ref{int-prop}~$(d)$.
Below, we define sets $A_i^\ell\subseteq\eC{\bl_i}$,
intervals $J_i^{\ell,j}\subseteq I_i^\ell$, and models $\mathfrak N_i^{\ell,j}=(\mathfrak H_i^{\ell,j},\mathfrak w_i^{\ell,j})$ with $\mathfrak H_i^{\ell,j}=(H_i^{\ell,j},\rest{R_i}{H_i^{\ell,j}},\INT_i^{\ell,j})$, for $i=1,2$ and $j<\rn_i^\ell$, such that the following hold:
\begin{align}
\label{nice}
& \mbox{$\mathfrak N_i^{\ell,j}$ is $\bigl(J_i^{\ell,j},i\bigr)$-\nice, for $j<\rn_i^\ell$\textup{;}}\\
\label{simplenumber}
& \mbox{$\mathfrak N_i^{\ell,j}$ is the ordered sum of at most two \simple{} $\delta$-models}\\[-2pt]
\nonumber
& \hspace*{1cm}\mbox{based on \atomic{} frames, for $j<\rn_i^\ell$}\textup{;}\\
\label{parti}
 & \mbox{$\{J_i^{\ell,j}\mid j<\rn_i^\ell\}$ is a partition of $I_i^\ell$ with $J_i^{\ell,0}\intord{\mathfrak F_i}\cdots\intord{\mathfrak F_i}J_i^{\ell,\rn_i^\ell-1}$;}\\
  \label{match1}
 & \mbox{$\displaystyle t_{\mathfrak M_1}^\sigma\Bigl(\bigcup_{j<\rn_1^\ell-1}H_1^{\ell,j}\Bigr)\subseteq
t_{\mathfrak M_2}^\sigma\bigl(A_2^\ell\bigr)$ and $\displaystyle t_{\mathfrak M_2}^\sigma\Bigl(\bigcup_{j<\rn_2^\ell-1}H_2^{\ell,j}\Bigr)\subseteq t_{\mathfrak M_1}^\sigma\bigl(A_1^\ell\bigr)$\textup{;}}\\
  \label{match2}
& t_{\mathfrak M_1}^\sigma\bigl(A_1^\ell\bigr)=t_{\mathfrak M_2}^\sigma\bigl(A_2^\ell\bigr).
\end{align}
Then we  show that \eqref{nice}--\eqref{match2} imply $(a)$--$(c)$ for 
$\mathfrak N_i^\ell=\mathfrak N_i^{\ell,0}\lhd\dots\lhd\mathfrak N_i^{\ell,\rn_i^\ell-1}$
 and some $\nn_i^\ell\le 2\rn_i^\ell$.
 In particular, $(c)$ because $(\mathfrak N_1^\ell,\mathfrak N_2^\ell)$  is of type $(b)$ in Definition~\ref{d:match}.

%

To this end,
we cover first the cases when $j<\rn_i^\ell-1$ and then, separately, the case $j=\rn_i^\ell-1$.
So suppose first that $j<\rn_i^\ell-1$, and let $J_i^{\ell,j}=[D_i^{\ell,j},C_i^{\ell,j}]$, where $D_i^{\ell,0}$ is the root cluster in $\rest{\mathfrak F_i}{I_i^\ell}$ and $D_i^{\ell,j}$ is the immediate successor of the non-limit cluster $C_i^{\ell,j-1}$,  $0<j<\rn_i^\ell-1$.
Observe that
$\bigl (\{x_i\}\cup \mset{i}\cup \sset{i}\bigr)\cap J_i^{\ell,j}\subseteq C_i^{\ell,j}$.
We consider four subcases $(i)$--$(iv)$, depending on the \tail{} $Z_i^{\ell,j}$ of $\rest{\mathfrak F_i}{J_i^{\ell,j}}$.
\begin{itemize}
\item[$(i)$]
$Z_i^{\ell,j}=\emptyset$, so $C_i^{\ell,j}$ is non-degenerate. Let 
$H_i^{\ell,j}=\bigl(\{x_i\}\cup \mset{i}\cup \sset{i}\bigr)\cap C_i^{\ell,j}$ and $\INT_i^{\ell,j}=2^{H_i^{\ell,j}}$. Then $\mathfrak H_i^j$ is isomorphic to $\cluster{k}$, for $k=|H_i^{\ell,j}|$.
We set $\parent_i^{\ell,j}(x)=x$, for $x\in H_i^{\ell,j}$, and
$\mathfrak{w}_i^{\ell,j}(p)=\mathfrak{v}_i(p)\cap H_i^{\ell,j}$, for $p\in\delta$.

\item[$(ii)$]
If $0<|Z_i^{\ell,j}|=m<\omega$ and $Z_i^{\ell,j}=J_i^{\ell,j}$, then
by taking $H_i^{\ell,j}=J_i^{\ell,j}$ and $\INT_i^{\ell,j}=2^{H_i^{\ell,j}}$ we obtain $\mathfrak H_i^{\ell,j}$ isomorphic to $m^<$.
We set
$\parent_i^{\ell,j}(x)=x$, for $x\in H_i^{\ell,j}$,
and $\mathfrak{w}_i^{\ell,j}(p)=\mathfrak{v}_i(p)\cap H_i^{\ell,j}$, for $p\in\delta$.

\item[$(iii)$]
If $0<|Z_i^{\ell,j}|=m<\omega$
and $Z_i^{\ell,j}\ne J_i^{\ell,j}$, then setting $H_i^{\ell,j}=\{w_i^{\ell,j}\}\cup Z_i^{\ell,j}$, for any $w_i^{\ell,j}$ in the \source{} of $Z_i^{\ell,j}$, and $\INT_i^{\ell,j}=2^{H_i^{\ell,j}}$ gives $\mathfrak H_i^{\ell,j}$ isomorphic to $\cluster{1}\lhd m^<$.
Let $\parent_i^{\ell,j}(x)=x$, for $x\in H_i^{\ell,j}$\!, 
and $\mathfrak{w}_i^{\ell,j}(p)=\mathfrak{v}_i(p)\cap H_i^{\ell,j}$\!, for $p\in\delta$.

\item[$(iv)$]
If $Z_i^{\ell,j}$ is infinite, then let $H_i^{\ell,j}=\{w_i^{\ell,j}\}\cup Z_i^{\ell,j}$, for any $w_i^{\ell,j}$ in the \source{} of $Z_i^{\ell,j}$, and 
$\INT_i^{\ell,j}$ consist of all finite subsets of $H_i^{\ell,j}$ and their complements in $H_i^{\ell,j}$.
By \eqref{facenondeg},
the resulting $\mathfrak H_i^{\ell,j}$ is isomorphic to $\chain{1}{\bullet}\lhd 1^<$.
In this case, $C_i^{\ell,j}=\{y_i^{\ell,j}\}$ is a degenerate cluster for some 
$y_i^{\ell,j}\in \{x_i\}\cup \mset{i}\cup \sset{i}$.
We set
$\parent_i^{\ell,j}(y_i^{\ell,j})=y_i^{\ell,j}$ and
 $\parent_i^{\ell,j}(x)=w_i^{\ell,j}$ for all $x\in H_i^{\ell,j}\setminus\{y_i^{\ell,j}\}$.
%
%
For $p\in\delta$, let
\[
\mathfrak w_i^{\ell,j}(p)=\left\{
\begin{array}{ll}
\bigl(H_i^{\ell,j}\setminus\{y_i^{\ell,j}\}\bigr)\cup\bigl(\mathfrak v_i(p)\cap\{y_i^{\ell,j}\}\bigr),
 & \mbox{ if $w_i^{\ell,j}\in\mathfrak v_i(p)$},\\
 \mathfrak v_i(p)\cap\{y_i^{\ell,j}\},  
 & \mbox{ otherwise}.
\end{array} 
\right.
\]
\end{itemize}
Then it is not hard to check that, in all $(i)$--$(iv)$, we have $\mathfrak w_i^{\ell,j}(p)\in\INT_i^{\ell,j}$, \eqref{simplenumber} for $\mathfrak N_i^{\ell,j}= (\mathfrak H_i^{\ell,j},\mathfrak w_i^{\ell,j})$, and \eqref{sub}--\eqref{hmax} hold for $\mathfrak H=\mathfrak H_i^{\ell,j}$, $\mathfrak N=\mathfrak N_i^{\ell,j}$, $\parent=\parent_i^{\ell,j}$, and $I=J_i^{\ell,j}$. In particular, in $(i)$--$(iii)$, we have \eqref{defdef} by Lemma~\ref{l:defpoints}. In $(iv)$, we also need \eqref{infiniteincl} to obtain \eqref{defdef},
and the fact that $\mset{i}\cap H_i^{\ell,j}=\{y_i^{\ell,j}\}$ to obtain \eqref{hmax}.
 Therefore, we have \eqref{nice} for $j<\rn_i^\ell-1$. 


Now, consider $j=\rn_i^\ell-1$.
First, we let $J_i^{\ell,\rn_i^\ell-1}=[D_i^{\ell,\rn_i^\ell-1},C_i^{\ell,\rn_i^\ell-1}]$, where
$D_i^{\ell,\rn_i^\ell-1}=C_i^{\ell,\rn_i^\ell-1}$ if $\rn_i^\ell=1$ and $D_i^{\ell,\rn_i^\ell-1}$ 
is the immediate successor of the non-limit cluster $C_i^{\ell,\rn_i^\ell-2}$ otherwise.
Then we have \eqref{parti}.
We let $Y_i^\ell=\bigcup_{j<\rn_1^{\ell}-1}Y_i^{\ell,j}$, where
$Y_i^{\ell,j}=H_i^{\ell,j}$ in
cases $(i)$--$(iii)$ above, and $Y_i^{\ell,j}=\{w_i^{\ell,j},y_i^{\ell,j}\}$ in case $(iv)$. 
So $Y_i^\ell$ is finite.
%
%
Set $\Theta=\bigl\{ t_{\mathfrak{M}_{1}}^{\sigma}(x) \mid x\in Y_1^\ell\bigr\} 
\cup
\bigl\{ t_{\mathfrak{M}_{2}}^{\sigma}(x) \mid x\in Y_2^\ell\bigr\}$.
%
Let $A_i^\ell$ be the smallest set such that
$\bigl(\{x_i\}\cup \mset{i}\cup \sset{i}\bigr)\cap C_i^{\ell,\rn_i^\ell-1}\subseteq A_i^\ell\subseteq C_i^{\ell,\rn_i^\ell-1}$ and $A_i^\ell$ contains
a point $z_t$ with $t^\sigma_{\mathfrak M_i}(z_t)=t$,  for each $t\in \Theta$.
As $Y_i^\ell\subseteq I_i^\ell=\bl_i$, $C_i^{\ell,\rn_i^\ell-1}=\eC{\bl_i}$, and $t^\sigma_{\mathfrak M_1}(\bl_1)=t^\sigma_{\mathfrak M_2}(\bl_2)$, such $A_i^\ell$ exist by Lemma~\ref{int-prop}~$(e)$
%
%
%
%
and clearly satisfy \eqref{match1}.
By Lemma~\ref{l:maxbisblock}~$(c)$ and $(e)$, we have $t^\sigma_{\mathfrak M_1}(A_1^\ell)=t^\sigma_{\mathfrak M_2}(A_2^\ell)$, and so \eqref{match2} holds.
 Let $k_i=\bigl|A_i^\ell\bigr|$. Then $k_i\le 2^{|\delta|}$ by Lemma~\ref{lem:descr0}~$(b)$ and 
\begin{equation}\label{clusterbound}
k_i\le |Y_1^\ell|+|Y_2^\ell|+\kbound.
\end{equation}
By taking $H_i^{\ell,\rn_i^\ell-1}=A_i^\ell$ and $\INT_i^{\ell,\rn_i^\ell-1}=2^{A_i^\ell}$, we obtain  
$\mathfrak H_i^{\ell,\rn_i^\ell-1}$ isomorphic to $\clusterki$.
(The sets $A_i^\ell$ are used differently in Case III.)
For $p\in\delta$, set $\mathfrak{w}_i^{\ell,\rn_i^\ell-1}(p)=\mathfrak{v}_i(p)\cap H_i^{\ell,\rn_i^\ell-1}$ and 
\mbox{$\parent_i^{\ell,\rn_i^\ell-1}(x)=x$} for all $x\in H_i^{\ell,\rn_i^\ell-1}$.
Then we clearly have $\mathfrak w_i^{\ell,\rn_i^\ell-1}(p)\in\INT_i^{\ell,\rn_i^\ell-1}$, 
\eqref{simplenumber} for $\mathfrak N_i^{\ell,\rn_i^\ell-1}= (\mathfrak H_i^{\ell,\rn_i^\ell-1},\mathfrak w_i^{\ell,\rn_i^\ell-1})$, and \eqref{sub}--\eqref{hmax} hold for $\mathfrak H=\mathfrak H_i^{\ell,\rn_i^\ell-1}$, $\mathfrak N=\mathfrak N_i^{\ell,\rn_i^\ell-1}$, $\parent=\parent_i^{\ell,\rn_i^\ell-1}$ and $I=J_i^{\ell,\rn_i^\ell-1}$ (\eqref{defdef} is by Lemma~\ref{l:defpoints}). This gives \eqref{nice} for $j=\rn_i^\ell-1$.


Finally, we claim that $(a)$--$(c)$ hold for 
$\mathfrak N_i^\ell=\mathfrak N_i^{\ell,0}\lhd\dots\lhd\mathfrak N_i^{\ell,\rn_i^\ell-1}$ 
 and some $\nn_i^\ell$ with $0<\nn_i^\ell\le 2\rn_i^\ell$. Indeed,
 $(b)$ is by the definition of $\lhd$ and \eqref{simplenumber}.
 For $(c)$: The final cluster in $\mathfrak N_i^\ell=$ final cluster in
 $\mathfrak N_i^{\ell,\rn_i^\ell-1}=$ the non-degenerate cluster $A_i^\ell$. 
 So the requirements in Definition~\ref{d:match}~$(b)$ follow from \eqref{match1} and \eqref{match2}.
 %
 For $(a)$: By \eqref{nice}, each $\mathfrak N_i^{\ell,j}$ is $\bigl(J_i^{\ell,j},i\bigr)$-\nice, for $j<\rn_i^\ell$, that is, conditions \eqref{sub}--\eqref{hmax} are  satisfied for $\mathfrak N=\mathfrak N_i^{\ell,j}$,
 $\mathfrak H=\mathfrak H_i^{\ell,j}$,
$I=J_i^{\ell,j}$, and $\parent=\parent_i^{\ell,j}$ (as defined above).
We claim that \eqref{sub}--\eqref{hmax} are  satisfied for $\mathfrak N=\mathfrak N_i^\ell$,
$\mathfrak H=$ the frame $\mathfrak H_i^\ell$ underlying $\mathfrak N_i^\ell$,
$I=I_i^\ell$ and $\parent_i^\ell=\bigcup_{j<\rn_i^\ell}\parent_i^{\ell,j}$.
Indeed, 
\eqref{sub},  \eqref{relsin},  \eqref{defdef}--\eqref{hatfix},
clearly follow from \eqref{parti}, the definition of $\lhd$, and the corresponding properties for 
$\mathfrak N_i^{\ell,j}$, $\mathfrak H_i^{\ell,j}$, $J_i^{\ell,j}$, and $\parent_i^{\ell,j}$, $j<\rn_i^\ell$;
%
 %
 %
 \eqref{fincluster} follows from \eqref{fincluster} for $\mathfrak H_i^{\ell,\rn_i^\ell-1}$ and $J^{\ell,\rn_i^\ell-1}$ ;
 and \eqref{rootcluster} follows from \eqref{rootcluster} for $\mathfrak H_i^{\ell,0}$ and $J^{\ell,0}$.
For \eqref{inpred}:
 Suppose $x\in H_i^\ell$, $\{x\}$ is a degenerate non-root cluster in $\mathfrak H_i^\ell$ and $C\subseteq I_i^\ell$ is the immediate predecessor of $\{x\}$ in $\mathfrak F_i$.  Let $j<\rn_i^\ell$ be such that $x\in H_i^{\ell,j}$.
 If $\{x\}$ is the root cluster in $\mathfrak H_i^{\ell,j}$, then $j>0$ and $\{x\}$ is
 the root cluster in $\rest{\mathfrak F_i}{J_i^{\ell,j}}$ by \eqref{rootcluster} for 
 $\mathfrak H_i^{\ell,j}$ and $J_i^{\ell,j}$.
 Thus, the final cluster  $C^-\subseteq H_i^{\ell,j-1}\subseteq H_i^\ell$ of $\mathfrak H_i^{\ell,j-1}$ is a subset of $C$ by \eqref{fincluster} for $\mathfrak H_i^{\ell,j-1}$ and $J_i^{\ell,j-1}$.
 If $\{x\}$ is a non-root cluster in $\mathfrak H_i^{\ell,j}$, then $C\subseteq J_i^{\ell,j}$ and \eqref{inpred}  
 for $\mathfrak H_i^\ell$ and $I_i^\ell$ follows from $H_i^{\ell,j}\subseteq H_i^\ell$ and
 \eqref{inpred} for $\mathfrak H_i^{\ell,j}$ and $J_i^{\ell,j}$.
%
 %
%
%
For \eqref{hhom}:
Suppose $x,y\in H_i^\ell$, $xR_i y$ and let $j\le j'<\rn_i^\ell$ be such that $x\in H_i^{\ell,j}$ and 
$y\in H_i^{\ell,j'}$. Then  $\parent_i^\ell(x)R_i\parent_i^\ell(y)$ follows by \eqref{hhom} for 
$\parent_i^{\ell,j}$
when $j=j'$,
and by the definition of $\lhd$ when $j<j'$.
For \eqref{hmax}:
Suppose $x,y\in H_i^\ell$, $y\in\mset{i}$, $\parent_i^\ell(x)R_i y$,  and let $j\le j'<\rn_i^\ell$ be such that $x\in H_i^{\ell,j}$ and 
$y\in H_i^{\ell,j'}$. Then  $xR_i y$ follows by \eqref{hmax} for $\parent_i^{\ell,j}$ when $j=j'$, and by the definition of $\lhd$ when $j<j'$.

\textbf{\emph{Case}} III: $(I_1^\ell,I_2^\ell)$ is added in step \step{2}. 
For $i=1,2$, let $\bl_i$ be the relevant $\sigma$-blocks such that 
$t^\sigma_{\mathfrak M_1}(\bl_1)=t^\sigma_{\mathfrak M_2}(\bl_2)$ and
$I_i^\ell$ is extending $\bl_i$ that is not definable in $\mathfrak M_i$.
We use the notation from Case II.
As explained in Case II, we may again assume that,
for every $j< \rn_i^\ell-1$,
$C_i^{\ell,j}$ is a non-limit cluster. However, as now
$\bl_i$ is not definable in $\mathfrak M_i$, $C_i^{\ell,\rn_i^\ell-1}=\eC{\bl_i}$  is a limit cluster by
Lemma~\ref{int-prop}~$(d)$.
We again define sets $A_i^\ell\subseteq\eC{\bl_i}$,
intervals $J_i^{\ell,j}\subseteq I_i^\ell$, and models $\mathfrak N_i^{\ell,j}=(\mathfrak H_i^{\ell,j},\mathfrak w_i^{\ell,j})$ with $\mathfrak H_i^{\ell,j}=(H_i^{\ell,j},\rest{R_i}{H_i^{\ell,j}},\INT_i^{\ell,j})$ such that 
\eqref{nice}--\eqref{match1} 
and the strengthening \eqref{match2s} of \eqref{match2} (to be defined below)
hold.
Then we show that $(a)$--$(c)$ of the lemma hold for 
$\mathfrak N_i^\ell=\mathfrak N_i^{\ell,0}\lhd\dots\lhd\mathfrak N_i^{\ell,\rn_i^\ell-1}$.
%
This time, $(\mathfrak N_1^\ell,\mathfrak N_2^\ell)$ is \match{} because it is of type
$(c)$ in Definition~\ref{d:match}. 

To this end, for any $i=1,2$ and $j<\rn_i^\ell-1$, we define everything like in II.2.
For $j=\rn_i^\ell-1$, we set $J_i^{\ell,\rn_i^\ell-1}=[D_i^\ell,E_i^\ell]$, where 
$D_i^\ell$ is the root cluster in $\rest{\mathfrak F_i}{I_i^\ell}$ if $\rn_i^\ell=1$
and the immediate successor of the non-limit cluster $C_i^{\ell,\rn_i^\ell-2}$ if $\rn_i^\ell>1$,
and $E_i^\ell$ is the final cluster in $I_i^\ell$. 
We clearly have \eqref{parti} and can define the sets $Y_i^\ell$ and $A_i^\ell$ in  the same way as in II.2. 
However, for property \eqref{defdef} to hold for $\mathfrak H=\mathfrak H_i^{\ell,\rn_i^\ell-1}$, 
we need to define $\mathfrak H_i^{\ell,\rn_i^\ell-1}$ differently.
We consider the two cases in step \step{2} of Definition~\ref{d:ints}:
%
\begin{enumerate}
\item
The \tail{} of $\rest{\mathfrak M_i}{I_i^\ell}$ is $\{\yy_i^n\in I_i^\ell\setminus\bl_i\mid n<\omega\}$ with
$\yy_i^n R_i \yy_i^{n-1}$, $0<n<\omega$.
(Using the notation of Definition~\ref{d:ints}: $\{\yy_i^n\}=\bl_i^{\iso(m+n)}$, $n<\omega$.)

 \item
 There is a sequence of non-degenerate clusters $D_i^n\subseteq I_i^\ell\setminus\bl_i$ definable in $\mathfrak M_i$, $n<\omega$, with $D_i^0$ being the final cluster in $\rest{\mathfrak M_i}{I_i^\ell}$ and $D_i^n<_{R_i} D_i^{n-1}$, $0<n<\omega$.
 (Using the notation of Definition~\ref{d:ints}:  $D_i^n=\eC{\bl_i^{\iso(m_{n+1})}}$.)
 For $n<\omega$, we pick some $\yy_i^n\in D_i^n$.
\end{enumerate}
In both cases, we set $H_i^{\ell,\rn_i^\ell-1}=A_i^\ell\cup\{\yy_i^m\mid m<\omega\}$.
Then \eqref{match1} continues to hold. Moreover,
observe that now we not only have \eqref{match2}, but the stronger property 
\begin{equation}
\label{match2s}
\mbox{there is a $\sigma$-type preserving bijection between $A_1^\ell$ and $A_2^\ell$.}
\end{equation}
Indeed, as $C_i^{\ell,\rn_i^\ell-1}=\eC{\bl_i}$  is a limit cluster, we have 
$\mset{i}\cap\eC{\bl_i}=\emptyset$ by Lemma~\ref{lem:descr'}~$(b)$, and so
$\bigl(\{x_i\}\cup \mset{i}\cup\sset{i}\bigr)\cap \eC{\bl_i^j}=\sset{i}\cap  \eC{\bl_i^j}$ by
Lemma~\ref{l:maxbisblock}~$(b)$. Thus,
\eqref{match2s} holds by the definition of $A_i^\ell$ and Lemma~\ref{l:maxbisblock}~$(e)$. Now
take the $k<\omega$ with  $|A_1^\ell|=|A_2^\ell|=k$, and
suppose $A_i^\ell=\{a_i^0,\dots,a_i^{k-1}\}$, $i=1,2$.
We let $\INT_i^{\ell,\rn_i^\ell-1}$ be generated in $(H_i^{\ell,\rn_i^\ell-1},\rest{R_i}{H_i^{\ell,\rn_i^\ell-1}})$ by the sets $\{\yy_i^n\}$, $n<\omega$, and $X_i^s$, $s<k$,
where $X_i^s=\{a_i^s\} \cup \{\yy_i^n\mid n < \omega,\ n \equiv s \ (\text{mod}\ k)\}$
(see Example~\ref{k-omega}).
The resulting $\mathfrak H_i^{\ell,\rn_i^\ell-1}$ are both isomorphic to 
$\chain{k}{\bullet}$ in case 1., and to $\chain{k}{\circ}$ in case 2.
%
For $p\in\delta$, we set $\mathfrak w_i^{\ell,\rn_i^\ell-1}(p)=\bigcup_{a_i^s\in\mathfrak v_i(p)}X_i^s$.
For every $x\in H_i^{\ell,\rn_i^\ell-1}$, we set 
\[
\parent_i^{\ell,\rn_i^\ell-1}(x)=\left\{
\begin{array}{ll}
x, & \mbox{if $x=a_i^s$, for $s<k$},\\[3pt]
a_i^s & \mbox{if $x=\yy_i^n$, $n<\omega$ and $n\equiv s$ (mod $k$)}.
\end{array} 
\right.
\]
Then clearly $\mathfrak w_i^{\ell,\rn_i^\ell-1}(p)\in\INT_i^{\ell,\rn_i^\ell-1}$ and \eqref{simplenumber} holds for $\mathfrak N_i^{\ell,\rn_i^\ell-1}= (\mathfrak H_i^{\ell,\rn_i^\ell-1},\mathfrak w_i^{\ell,\rn_i^\ell-1})$. It is not hard to check that \eqref{nice} holds for $\mathfrak N_i^{\ell,\rn_i^\ell-1}$ as well. In particular, \eqref{defdef} for $\mathfrak H=\mathfrak H_i^{\ell,\rn_i^\ell-1}$ follows from \eqref{infiniteincl} and Lemma~\ref{l:defpoints}.
Also, as $A_i^\ell\subseteq\eC{\bl_i}$ and 
$\eC{\bl_i}$ is a limit cluster, $\mset{i}\cap H_i^{\ell,\rn_i^\ell-1}=\mset{i}\cap A_i^\ell=\emptyset$ follows by Lemma~\ref{lem:descr'}, and so
we also have \eqref{hmax} for $H=H_i^{\ell,\rn_i^\ell-1}$ and $\parent=\parent_i^{\ell,\rn_i^\ell-1}$.

Next, the arguments showing that $(a)$ and $(b)$ of the lemma hold for the models
$\mathfrak N_i^\ell=\mathfrak N_i^{\ell,0}\lhd\dots\lhd\mathfrak N_i^{\ell,\rn_i^\ell-1}$
 and some $\nn_i^\ell$ with $0<\nn_i^\ell\le 2\rn_i^\ell$
 are the same as in Case II.2.
 To establish $(c)$, we show that the pair $(\mathfrak N_1^\ell,\mathfrak N_2^\ell)$ is of type $(c)$ in Definition~\ref{d:match}.
 Indeed, observe that the last $\lhd$-components of $\mathfrak N_i^\ell$ are $\mathfrak N_i^{\ell,\rn_i^\ell-1}$ whose underlying frames $\mathfrak H_i^{\ell,\rn_i^\ell-1}$ are both isomorphic to the same \atomic{} frame of the form $\chain{k}{\bullet}$ or $\chain{k}{\circ}$,
with $0<k\le 2^{|\delta|}$. Also, the $\cluster{k}$-cluster in $\mathfrak H_i^{\ell,\rn_i^\ell-1}$ is
 $A_i^\ell$, and so the requirements in Definition~\ref{d:match}~$(c)$ follow from \eqref{match1} and \eqref{match2s}.
 
 Finally,
 observe that $\nn_i^\ell=1$ if $(I_1^\ell,I_2^\ell)$ is added in step \step{3} of Definition~\ref{d:ints} (see Case I), and $\nn_i^\ell\le 2\rn_i^\ell$ if $(I_1^\ell,I_2^\ell)$ is added in steps \step{1} or \step{2} (see II and III).
 So  $\sum_{\ell<N}\nn_i^\ell\le(\kbound-1) + \sum_{\ell<N}2\rn_i^\ell\le  3\kbound-1$, as required.
 \end{proof}


We now complete the proof of Theorem~\ref{t:structmodel}. 
In Definition~\ref{d:ints}, we partitioned the models $\mathfrak M_1,x_1$ and $\mathfrak M_2,x_2$ witnessing the lack of interpolants for $\varphi_1$, $\varphi_2$  into the same 
polynomial number $N$ of intervals. 
For each $\ell<N$, 
Lemma~\ref{l:replacement} gave us a pair of models $(\mathfrak N_1^\ell,\mathfrak N_2^\ell)$.
Let $\mathfrak N_i=\mathfrak N_i^0\lhd\dots\lhd\mathfrak N_i^{\partN-1}$, for $i=1,2$.
%
\begin{lemma}\label{l:final} 
Conditions $(a)$--$(d)$ in Theorem~\ref{t:structmodel}
 hold for $\mathfrak N_1,x_1$ and $\mathfrak N_2,x_2$.
\end{lemma}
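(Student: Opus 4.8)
The plan is to verify conditions $(a)$--$(d)$ of Theorem~\ref{t:structmodel} one at a time, reading the structural items straight off Lemma~\ref{l:replacement} and concentrating the real work on the truth-preservation $(a)$ and the frame condition $(b)$. Conditions $(c)$ and $(d)$ come almost for free. For $(d)$, Definition~\ref{d:ints} gives $N<2\kbound$, hence $N=\mathcal O\bigl(\max(|\varphi_1|,|\varphi_2|)\bigr)$, while $(d).1$ and $(d).2$ are exactly parts $(b)$ and $(c)$ of Lemma~\ref{l:replacement}, the number of $\lhd$-components of each $\mathfrak N_i^\ell$ being controlled through $\sum_{\ell<N}\nn_i^\ell\le 3\kbound-1$. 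For $(c)$, note that $x_i$ is a root of $\mathfrak N_i^0$ and lies in $\{x_i\}\cup\mset{i}\cup\sset{i}$, so \eqref{relh} yields $\parent_i^0(x_i)=x_i$ and \eqref{hatfix} gives $\at_{\mathfrak N_i}(x_i)=\at_{\mathfrak M_i}(x_i)$; since $\mathfrak M_1,x_1\sim_\sigma\mathfrak M_2,x_2$ forces $\at^\sigma_{\mathfrak M_1}(x_1)=\at^\sigma_{\mathfrak M_2}(x_2)$, condition $(c)$ follows.

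The core is $(a)$. Writing $\mathfrak H_i$ for the frame of $\mathfrak N_i$, with universe $H_i=\bigcup_{\ell<N}H_i^\ell$ and accessibility relation $S_i$, and $\parent_i=\bigcup_{\ell<N}\parent_i^\ell\colon H_i\to H_i$ for the combined parent map, I would first record that $S_i=\rest{R_i}{H_i}$: inside each $\lhd$-component this is \eqref{sub}, and across components it is forced by the $\intord{\mathfrak F_i}$-ordering of the intervals $I_i^\ell$. This lets me lift \eqref{hhom} and \eqref{hmax} to global statements on $H_i$, namely $x\,S_i\,y\Rightarrow\parent_i(x)\,R_i\,\parent_i(y)$, and $\parent_i(x)\,R_i\,y$ with $y\in\mset{i}\cap H_i$ implies $x\,S_i\,y$ (the only subtle case, a backward edge relating two distinct intervals, is ruled out because a single $\mathfrak F_i$-cluster cannot meet two disjoint closed intervals). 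The main claim is then proved by induction on $\tau\in\sub(\varphi_i)$: for every $x\in H_i$, one has $\mathfrak N_i,x\models\tau$ iff $\mathfrak M_i,\parent_i(x)\models\tau$. Atoms use \eqref{hatfix}, Booleans are routine, and for $\tau=\Diamond\psi$ the forward direction uses global \eqref{hhom}, while the backward direction takes a $\{\psi\}$-maximal point $y_\psi\in\mset{i}$ (Lemma~\ref{maxpoints}), which lies in $\mathfrak N_i$ by \eqref{relsin} and is $S_i$-reachable from $x$ by global \eqref{hmax}, with $\parent_i(y_\psi)=y_\psi$ by \eqref{relh}. Evaluating the claim at $x=x_i$, where $\parent_i(x_i)=x_i$, gives $\mathfrak N_1,x_1\models\varphi_1$ and $\mathfrak N_2,x_2\models\neg\varphi_2$.

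For $(b)$, each $\mathfrak H_i\models\KFT$ since ordered sums of $\KFT$-frames are $\KFT$-frames. For each canonical axiom $\alpha(\mathfrak G_j,\mathfrak D_j,\bot)$ of $L$, I would argue contrapositively: an injection $f\colon V_j\to H_i$ witnessing $\mathfrak H_i\not\models\alpha(\mathfrak G_j,\mathfrak D_j,\bot)$ via \textbf{(cf$_1$)}--\textbf{(cf$_4$)} also witnesses $\mathfrak F_i\not\models\alpha(\mathfrak G_j,\mathfrak D_j,\bot)$ when read as a map into $W_i$. Indeed \textbf{(cf$_1$)} transfers because $S_i=\rest{R_i}{H_i}$, \textbf{(cf$_4$)} by \eqref{defdef}, \textbf{(cf$_2$)} by \eqref{fincluster} for the last component together with the fact that $I_i^{N-1}$ contains the final cluster of $\mathfrak F_i$, and \textbf{(cf$_3$)} by \eqref{inpred} when the relevant degenerate point is non-root in its component, and by \eqref{rootcluster} plus \eqref{fincluster} when it is the root of a non-initial component, its $\mathfrak F_i$-immediate predecessor then being the final cluster of the preceding interval. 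This contradicts $\mathfrak F_i\models L$, so each $\mathfrak H_i$ is a frame for $L$, giving $(b)$.

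I expect the two delicate points to be the backward $\Diamond$-step of $(a)$---ensuring the maximal witness $y_\psi$ is simultaneously present in and $S_i$-reachable within $\mathfrak N_i$, where the interval bookkeeping and the global form of \eqref{hmax} are essential---and the cross-component verification of \textbf{(cf$_3$)} in $(b)$, where one must distinguish whether the degenerate image point is internal to a component or sits at a component boundary.
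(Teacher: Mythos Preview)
Your proposal is correct and follows essentially the same approach as the paper's proof: the same induction on $\tau\in\sub(\varphi_i)$ via the parent maps for $(a)$, the same appeal to \eqref{relsin}, \eqref{relh}, \eqref{hatfix} for $(c)$, the same reading-off of Lemma~\ref{l:replacement} for $(d)$, and the same ``same $f$ works in $\mathfrak F_i$'' argument via \eqref{sub}, \eqref{fincluster}, \eqref{rootcluster}, \eqref{inpred}, \eqref{defdef} for $(b)$. The only cosmetic difference is that you first globalise $\parent_i$, $S_i=\rest{R_i}{H_i}$, \eqref{hhom} and \eqref{hmax} across all $\lhd$-components, whereas the paper keeps the case split $k=\ell$ versus $k>\ell$ explicit in the $\Diamond$-step; your handling of the impossible $k<\ell$ case and of the boundary case in \textbf{(cf$_3$)} is exactly what the paper does.
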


 %
%
\begin{proof}
We use the notation of the proof of Lemma~\ref{l:replacement}.
By Lemma~\ref{l:replacement}~$(a)$, each $\mathfrak N_i^\ell$ is $(I_i^\ell,i)$-\nice, that is, conditions \eqref{sub}--\eqref{hmax} are  satisfied for $\mathfrak N=\mathfrak N_i^\ell$, $\mathfrak H=\mathfrak H_i^\ell$, $I=I_i^\ell$, and $\parent=\parent_i^\ell$.

$(a)$ 
We show by induction that $\mathfrak M_i,\parent_i^\ell(x)\models\tau$ iff $\mathfrak N_i,x\models\tau$, for any $i=1,2$, $\ell<\partN$, $\tau\in\sub(\varphi_i)$, and \mbox{$x\in H_i^\ell$}. Then 
$\mathfrak{N}_{1},x_{1}\models\varphi_1$ and $\mathfrak{N}_{2},x_{2}\models\neg \varphi_2$ follow
from $\mathfrak{M}_{1},x_{1}\models\varphi_1$ and $\mathfrak{M}_{2},x_{2}\models\neg \varphi_2$,
as we have $x_i\in H_i^0$ and $\parent_i^0(x_i)=x_i$ by \eqref{relsin} and \eqref{relh}.
%
For $\tau=p \in\delta$, the statement follows from \eqref{hatfix}.
The Boolean cases are straightforward, so suppose $\tau=\Diamond\psi$.

$(\Rightarrow)$
If $\mathfrak M_i,\parent_i^\ell(x)\models\Diamond\psi$, then there are  $k\geq \ell$ and $y_\psi\in \mset{i}\cap I_i^k$ with 
$\parent_i^\ell(x)R_i y_\psi$ and $\mathfrak M_i,y_\psi\models\psi$. 
We have $y_\psi\in H_i^k$ by \eqref{relsin},
and so $\mathfrak N_i,y_\psi\models\psi$ by \eqref{relh}
and IH.
We claim that $xR_i y_\psi$, and so $\mathfrak N_i,x\models\Diamond\psi$. Indeed, for $k>\ell$, this follows 
from the definition of $\lhd$, and for $\ell=k$, by \eqref{hmax}.

$(\Leftarrow)$
If $\mathfrak N_i,x\models\Diamond\psi$, then there are $k\geq \ell$ and $y\in H_i^k$ with 
$xR_i y$ and $\mathfrak N_i,y\models\psi$.
We have $\mathfrak M_i,\parent_i^k(y)\models\psi$ by IH, and
$\parent_i^\ell(x)R_i\parent_i^k(y)$ by the definition of $\lhd$ when $k>\ell$, and by \eqref{hhom}
when $k=\ell$. Hence  
$\mathfrak M_i,\parent_i(x)\models\Diamond\psi$. 

$(c)$ follows from \eqref{relsin}, \eqref{relh}, \eqref{hatfix}
and $\mathfrak M_1,x_1\sim_\sigma\mathfrak M_2,x_2$.

$(d)$ It is shown in Definition~\ref{d:ints} that $0<N< 2\kbound$. The rest of $(d)$ follows from
Lemma~\ref{l:replacement}~$(b)$,$(c)$.

$(b)$ We use the refutation criteria for the canonical formulas to show that the frame $\mathfrak H_i$ underlying $\mathfrak N_i$ is a frame for $L$, $i=1,2$.
To this end, we prove that, 
\begin{align}\label{samef}
& \mbox{for any canonical formula $\alpha(\mathfrak G, \mathfrak D,\bot)$,
if $f$ is an injection}\\[-2pt]
\nonumber
& \hspace*{1cm}\mbox{from $\mathfrak G$ to $\mathfrak H_i$ satisfying \textbf{(cf$_1$)}--\textbf{(cf$_4$)},
then the same $f$}\\[-2pt]
\nonumber
& \hspace*{1cm}\mbox{is an injection from $\mathfrak G$ to $\mathfrak F_i$ satisfying \textbf{(cf$_1$)}--\textbf{(cf$_4$)}.}
\end{align}
Indeed, 
\textbf{(cf$_1$)} holds by \eqref{sub}
and the definition of $\lhd$; 
\textbf{(cf$_2$)} holds, as
the final cluster in $\mathfrak H_i=$ final cluster in $\mathfrak H_i^{N-1}\subseteq$
final cluster in $\rest{\mathfrak F_i}{I_i^{N-1}}=$ final cluster in $\mathfrak M_i$, by \eqref{fincluster}.
%
Condition \textbf{(cf$_4$)} holds by \eqref{defdef}
and the definition of $\lhd$.
For \textbf{(cf$_3$)}, suppose $x \in \mathfrak D$, $C(y)$ is the immediate predecessor of $C(x) = \{x\}$ in $\mathfrak G$ and $C(f(y))$ is the immediate predecessor of $C(f(x))=\{f(x)\}$ in $\mathfrak H_i$.
Let $\ell<N$ be such that $x\in H_i^\ell$.
 If $\{x\}$ is the root cluster in $\mathfrak H_i^\ell$, then $\ell>0$ and $\{x\}$ is
 the root cluster in $\rest{\mathfrak F_i}{I_i^\ell}$ by \eqref{rootcluster}.
 Thus, 
 $C(f(y))$ in $\mathfrak H_i=$ final cluster in $\mathfrak H_i^{\ell-1}\subseteq $ final cluster in
 $\rest{\mathfrak F_i}{I_i^{\ell-1}}=C(f(y))$ in $\mathfrak F_i$, by \eqref{fincluster}.
 If $\{x\}$ is a non-root cluster in $\mathfrak H_i^\ell$, then 
 $C(f(y))$ in $\mathfrak H_i=C(f(y))$ in $\mathfrak H_i^\ell\subseteq C(f(y))$ in $\rest{\mathfrak F_i}{I_i^\ell}=C(f(y))$ in $\mathfrak F_i$, by \eqref{inpred}.
%
Now, \eqref{samef} implies 
that $L\subseteq\Log(\mathfrak F_i)\subseteq\Log(\mathfrak H_i)$,  as required.
\end{proof}


\subsection{Proofs of Theorems~\ref{t:finaxstruct} and \ref{t:finax}.}\label{ss:finax}
%
%
Suppose the finitely axiomatisable logic $L$
is given by its canonical axioms as
$L = \KFT \oplus \{\alpha(\mathfrak G_j,\mathfrak D_j,\bot) \mid j\in J_L\}$, for some finite index set $J_L$ and $\mathfrak G_j=(V_j,S_j)$, $j\in J_L$.
Let $\cbound=\max_{j\in J_L}|V_j|$.
%
Given formulas $\varphi_1$, $\varphi_2$ without an interpolant in $L$,  
let $0<N<2\kbound$ and $\mathfrak N_i=\mathfrak N_i^0\lhd\dots\lhd\mathfrak N_i^{N-1}$ with root $x_i$ $i=1,2$,
be the models 
satisfying the conditions of Theorem~\ref{t:structmodel} and obtained via Lemma~\ref{l:replacement}.
In particular, the underlying frame $\mathfrak H_i$ of each $\mathfrak N_i$ is a frame for $L$.
We show in Lemma~\ref{l:replacementL} below that the proof of Lemma~\ref{l:replacement}
can be refined to yield polynomial-size models $\mathfrak N_i^{\starr\ell}$,  $\ell<N$.
However, $\mathfrak N_i^{\starr\ell}$ is no longer $(I_i^\ell,i)$-\nice, as 
conditions \eqref{rootcluster} and \eqref{inpred} in Definition~\ref{d:nice} do not necessarily hold
for $\mathfrak H = \mathfrak H_i^{\starr\ell}$ 
underlying $\mathfrak N_i^{\starr\ell}$ and $I=I_i^\ell$. Thus,  
we do not have \eqref{samef} in the proof of Lemma~\ref{l:final}
for the frames $\mathfrak H_i^\starr$ underlying $\mathfrak N_i^\starr=\mathfrak N_i^{\starr 0}\lhd\dots\lhd\mathfrak N_i^{\starr N-1}$.
We prove that $\mathfrak H_i^\starr$, $i=1,2$, are frames for $L$ (as required by 
Theorem~\ref{t:finaxstruct}~$(b)$) by using Lemma~\ref{l:ftof} below instead.

Take the number $N$, $0<N<2\kbound$, 
provided by Definition~\ref{d:ints}, the numbers $\nn_i^\ell>0$ with $\sum_{\ell<N}\nn_i^\ell\le3\kbound-1$, and sets $H_i^\ell$, $i=1,2$, $\ell<N$,
from Lemma~\ref{l:replacement}.
%
\begin{lemma}\label{l:replacementL}
If $L\supseteq\KFT$ is finitely axiomatisable, then,
for $i=1,2$, $\ell<N$, there exist  
sets $H_i^{\starr\ell}\subseteq H_i^\ell$ and
models $\mathfrak N_i^{\starr\ell}$ based on frames $\mathfrak H_i^{\starr\ell}=(H_i^{\starr\ell},S_i^{\starr\ell},\INT_i^{\starr\ell})$ 
%
such that the following hold\textup{:}
 \begin{itemize}
 \item[$(a)$]
$\mathfrak N_i^{\starr\ell}$ is `almost' $(I_i^\ell,i)$-\nice{} in the sense that \eqref{sub}--\eqref{fincluster}, \eqref{defdef}--\eqref{hmax} hold
for $\mathfrak N=\mathfrak N_i^{\starr\ell}$ and $I=I_i^\ell$\textup{;}

\item[$(b)$]
$\mathfrak N_i^{\starr\ell}$ is the ordered sum of $\nn_i^\ell$-many \simple{} $\delta$-models based on \underline{\bounded} \atomic{} frames\textup{;}

\item[$(c)$]
the pair $(\mathfrak N_1^{\starr\ell},\mathfrak N_2^{\starr\ell})$ is \match.
\end{itemize}
\end{lemma}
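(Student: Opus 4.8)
The plan is to recycle the entire combinatorial skeleton of Lemma~\ref{l:replacement} --- the same partition number $N$, the same component counts $\nn_i^\ell$ (so that $(b)$ and the bound $\sum_{\ell<N}\nn_i^\ell\le 3\kbound-1$ are inherited verbatim), and the same case analysis I--III --- and to perform two purely local size reductions on the \simple{} pieces produced there, checking afterwards that only \eqref{rootcluster} and \eqref{inpred} of Definition~\ref{d:nice} are sacrificed. The only \atomic{} frames in \eqref{atomicF} that can be too large are the finite $\bullet$-chains $m^{<}$ and $\cluster{1}\lhd m^{<}$ (when the \tail{} $Z_i^{\ell,j}$ is long) and the final clusters $\cluster{k}$, $\chain{k}{\bullet}$, $\chain{k}{\circ}$ (for which so far we only know $k\le 2^{|\delta|}$). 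Note that the infinite-\tail{} components are already based on $\chain{1}{\bullet}$ or $\chain{1}{\bullet}\lhd 1^{<}$, which are \bounded{} since $k=1\le\pbound$, so they need no surgery.

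First I would truncate the long finite chains. Whenever a component is based on $m^{<}$ or $\cluster{1}\lhd m^{<}$ with $m>\cbound+1$, I discard interior \tail{} points so that at most $\cbound+1$ of them remain, retaining the relevant final point (so \eqref{relsin} and \eqref{fincluster} survive) and the $\cluster{1}$-head when present, and I set $\parent=\mathrm{id}$ on the retained points. Because every retained point is a genuine point of $\mathfrak M_i$ whose \tail{}-singletons are internal, conditions \eqref{sub}--\eqref{fincluster} and \eqref{defdef}--\eqref{hatfix} hold unchanged, while \eqref{hhom} and \eqref{hmax} hold trivially under the identity parent; only the root cluster and the immediate predecessors change, which is exactly why the resulting $\mathfrak N_i^{\starr\ell}$ is merely ``almost'' $(I_i^\ell,i)$-\nice. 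This makes every \tail-based component \bounded.

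The main work is then the symmetric shrinking of the final-cluster components, where I must keep $|A_1^{\starr\ell}|=|A_2^{\starr\ell}|$ so that $(\mathfrak N_1^{\starr\ell},\mathfrak N_2^{\starr\ell})$ stays \match. After truncation the relevant-type collection $Y_i^{\starr\ell}=\bigcup_{j<\rn_i^\ell-1}Y_i^{\starr\ell,j}$ has $\rn_i^\ell-1\le\kbound-1$ summands, each of size at most $\max(\cbound+2,\kbound)$ (a truncated \tail{} contributes $\le\cbound+2$, the relevant points of a $\cluster{k}$-component $\le\kbound$), so $|Y_1^{\starr\ell}|+|Y_2^{\starr\ell}|\le 2(\kbound-1)\max(\cbound+2,\kbound)$. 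Rebuilding the final component exactly as in Case~II.2 and Case~III, but from the smaller symmetric type set $\Theta^{\starr}=\{t_{\mathfrak M_1}^\sigma(x)\mid x\in Y_1^{\starr\ell}\}\cup\{t_{\mathfrak M_2}^\sigma(x)\mid x\in Y_2^{\starr\ell}\}$, I pick $A_i^{\starr\ell}\subseteq A_i^\ell$ as the smallest subsets of the final cluster containing the matched relevant points and one realiser per type of $\Theta^{\starr}$. Lemmas~\ref{int-prop}~$(e)$ and \ref{l:maxbisblock}~$(b),(d)$ supply these realisers in both models together with the $\sigma$-type-preserving bijection, so \eqref{match2} and \eqref{match1} persist, and the analogue of \eqref{clusterbound} gives $k=|A_i^{\starr\ell}|\le|Y_1^{\starr\ell}|+|Y_2^{\starr\ell}|+\kbound\le\pbound$. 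Hence every final component is $\cluster{k}$ or $\chain{k}{\ast}$ with $k\le\pbound$, i.e.\ \bounded, and $(\mathfrak N_1^{\starr\ell},\mathfrak N_2^{\starr\ell})$ is of type $(a)$, $(b)$ or $(c)$ of Definition~\ref{d:match} exactly as before, giving $(c)$.

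I expect the delicate point to be precisely the coordination of the two surgeries across the two models: the truncations of $\mathfrak N_1^{\starr\ell}$ and $\mathfrak N_2^{\starr\ell}$ must be carried out so that the surviving \tail{} positions still carry equal $\sigma$-atomic types (needed for the type-$(a)$ matching of irrelevant intervals), and the independent shrinking of $A_1^\ell$ and $A_2^\ell$ must land on sets of equal cardinality --- both resting on the symmetry of $\Theta^{\starr}$ and the bijection of Lemma~\ref{l:maxbisblock}~$(d)$. I would stress that this lemma deliberately does \emph{not} assert that the assembled frames $\mathfrak H_i^{\starr}$ validate $L$: since \eqref{rootcluster} and \eqref{inpred} are lost, the refutation transfer \eqref{samef} used in Lemma~\ref{l:final} is no longer available, and the $L$-frame property of $\mathfrak H_1^{\starr}$ and $\mathfrak H_2^{\starr}$ will instead be recovered afterwards through Lemma~\ref{l:ftof}.
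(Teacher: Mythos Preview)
Your proposal is correct and follows essentially the same approach as the paper: truncate each finite $\bullet$-chain component to its final $\le\cbound+1$ \tail{} points (plus the $\cluster{1}$-head when present), then rebuild the final-cluster component from the smaller symmetric type set $\Theta^{\starr}$ to obtain $|A_i^{\starr\ell}|\le\pbound$, noting that only \eqref{rootcluster} and \eqref{inpred} are lost and that the $L$-frame property is deferred to Lemma~\ref{l:ftof}. The paper phrases the truncation slightly more concretely (keeping exactly $\{z^a\mid a<\min(m,\cbound+1)\}$, i.e.\ the contiguous block nearest the final end), which makes the coordination of the two models in Case~I automatic via the existing isomorphism, but your identification of this as the delicate point and your bound computation $k^{\starr}\le|Y_1^{\starr\ell}|+|Y_2^{\starr\ell}|+\kbound\le\pbound$ match the paper exactly.
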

\begin{proof}
We go through Cases I--III in the proof of Lemma~\ref{l:replacement} and make the necessary
modifications.

\textbf{\emph{Case}} I: $(I_1^\ell,I_2^\ell)$ is added in step \step{3} of Definition~\ref{d:ints}. 
An inspection of this part of the proof of Lemma~\ref{l:replacement}
reveals that $m^<$ or $\cluster{1}\lhd m^<$ is used in cases $(i)$ and $(ii)$, and in both cases
\emph{all} the $m$ elements of the finite non-empty \tails{} $Z_i^\ell$ of 
$\rest{\mathfrak F_i}{I_i^\ell}$ are put into the chosen subset $H_i^\ell$ of $I_i^\ell$.
Now, we choose a subset $H_i^{\starr\ell}\subseteq H_i^\ell$ with $|H_i^{\starr\ell}|\le\cbound+2$ as follows.
Suppose $Z_i^\ell=\{z_i^a\mid a< m\}$ with $z_i^aR_i^s z_i^{a-1}$,  $0<a<m$,
and let $m'=\min(m,\cbound+1)$. We set $H_i^{\starr\ell}=\{z_i^a\mid a< m'\}$ in case $(i)$,
and $H_i^{\starr\ell}=\{w_i^\ell\}\cup\{z_i^a\mid a< m'\}$, for the chosen $w_i^\ell$ from the
\source{} of $Z_i^\ell$ in case $(ii)$.
In case $(iii)$, we let $H_i^{\starr\ell}=H_i^\ell$.
Then, in all cases $(i)$--$(iii)$, 
we let $\mathfrak H_i^{\starr\ell}=\rest{\mathfrak H_i^\ell}{H_i^{\starr\ell}}$ and
$\mathfrak N_i^{\starr\ell}=\rest{\mathfrak N_i^\ell}{H_i^{\starr\ell}}$.
Observe that we have $\parent_i^\ell(x)\in H_i^{\starr\ell}$, for every $x\in H_i^{\starr\ell}$, and so 
$\parent_i^{\starr\ell}=\rest{\parent_i^\ell}{H_i^{\starr\ell}}$ is a $H_i^{\starr\ell}\to H_i^{\starr\ell}$ function.
It is straightforward to check that \eqref{sub}--\eqref{fincluster}, \eqref{defdef}--\eqref{hmax} hold
for $\mathfrak N=\mathfrak N_i^{\starr\ell}$, $\mathfrak H=\mathfrak H_i^{\starr\ell}$, $\parent=\parent_i^{\starr\ell}$, and $I=I_i^\ell$.
Note that all non-degenerate clusters in $\mathfrak H_i^{\starr\ell}$ are of the form $\cluster{1}$ this case, and so  $\mathfrak N_i^{\starr\ell}$ is a \simple{} $\delta$-model based on an \bounded{} \atomic{} frame.

\textbf{\emph{Case}} II and III: 
$(I_1^\ell,I_2^\ell)$ is added in steps \step{1} or \step{2}. 
An inspection of these parts of the proof of Lemma~\ref{l:replacement}
reveals that $m^<$ or $\cluster{1}\lhd m^<$ is used only when $\bl_i$ is non-degenerate,
in cases $(ii)$ and $(iii)$ of the definition of
$\mathfrak H_i^{\ell,j}$ for some $j<\rn_i^\ell-1$.
(Recall that $\rn_i^\ell$ denotes the number of relevant clusters in $I_i^\ell$.)
In both cases $(ii)$ and $(iii)$,
\emph{all} the $m$ elements of the finite non-empty \tail{} $Z_i^{\ell,j}$ of 
$\rest{\mathfrak F_i}{J_i^{\ell,j}}$ are put into the chosen subset $H_i^{\ell,j}$ of $J_i^{\ell,j}$,
for some subinterval $J_i^{\ell,j}$ of $I_i^\ell$.
We repeat the trick from Case I above. 
Suppose $Z_i^{\ell,j}=\{z^a\mid a< m\}$ with $z^aR_i^s z^{a-1}$,  $0<a<m$,
and let $m'=\min(m,\cbound+1)$. We set $H_i^{\starr\ell,j}=\{z^a\mid a< m'\}$ in case $(ii)$,
and $H_i^{\starr\ell,j}=\{w_i^{\ell,j}\}\cup\{z^a\mid a< m'\}$, for the chosen $w_i^{\ell,j}$ from the
\source{} of $Z_i^{\ell,j}$ in case $(iii)$.
In cases $(i)$ and $(iv)$ of II and III, we let $H_i^{\starr\ell,j}=H_i^{\ell,j}$.
Then, in all cases $(i)$--$(iv)$, 
we let $\mathfrak H_i^{\starr\ell,j}=\rest{\mathfrak H_i^{\ell,j}}{H_i^{\starr\ell,j}}$ and
$\mathfrak N_i^{\starr\ell,j}=\rest{\mathfrak N_i^{\ell,j}}{H_i^{\starr\ell,j}}$, for $j<\rn_i^\ell-1$.
One can see that $\parent_i^{\ell,j}(x)\in H_i^{\starr\ell,j}$, for every $x\in H_i^{\starr\ell,j}$, and so 
$\parent_i^{\starr\ell,j}=\rest{\parent_i^{\ell,j}}{H_i^{\starr\ell,j}}$ is an $H_i^{\starr\ell,j}\to H_i^{\starr\ell,j}$ function.
It is straightforward to check that, for all $j<\rn_i^\ell-1$, \eqref{sub}--\eqref{fincluster}, \eqref{defdef}--\eqref{hmax} hold
for $\mathfrak N=\mathfrak N_i^{\starr\ell,j}$, $\mathfrak H=\mathfrak H_i^{\starr\ell,j}$, $\parent=\parent_i^{\starr\ell,j}$, and $I=J_i^{\ell,j}$ (but \eqref{rootcluster} and \eqref{inpred} do not necessarily hold). 
Note that the size of non-degenerate clusters in these $\mathfrak H_i^{\starr\ell,j}$ is bounded by $\kbound$,
and so $\mathfrak N_i^{\starr\ell,j}$ is the ordered sum of at most two \simple{} $\delta$-models based on \bounded{} \atomic{} frames.

We also need to adjust the definitions of $\mathfrak H_i^{\ell,\rn_i^\ell-1}$, $\mathfrak N_i^{\ell,\rn_i^\ell-1}$ and $\parent_i^{\ell,\rn_i^\ell-1}$. 
We define the sets $Y_i^{\starr\ell}\subseteq Y_i^\ell$ and $A_i^{\starr\ell}\subseteq A_i^\ell$ from $H_i^{\starr\ell,j}$, $j<\rn_i^\ell-1$, in 
the same way as $Y_i^\ell$ and $A_i^\ell$ were defined from $H_i^{\ell,j}$, $j<\rn_i^\ell-1$,
%
%
%
%
resulting in \eqref{match1} and \eqref{match2} in Case II, and in \eqref{match1} and \eqref{match2s} in Case III. In Case II, for $i=1,2$, we let $k_i^\starr=\bigl|A_i^{\starr\ell}\bigr|$. 
By \eqref{clusterbound},
\begin{multline*}
k_i^\starr\le |Y_1^{\starr\ell|}+|Y_2^{\starr\ell}|+\kbound\le\\
2(\kbound-1)\cdot\max\bigl(\cbound+2,\kbound\bigr)+\kbound=\pbound.
\end{multline*}
We set $\mathfrak H_i^{\starr\ell,\rn_i^\ell-1}=\rest{\mathfrak H_i^{\ell,\rn_i-1}}{H_i^{\starr\ell,\rn_i^\ell-1}}$, $\mathfrak N_i^{\starr\ell,\rn_i^\ell-1}=\rest{\mathfrak N_i^{\ell,\rn_i-1}}{H_i^{\starr\ell,\rn_i^\ell-1}}$, and $\parent_i^{\starr\ell,\rn_i^\ell-1}=\rest{\parent_i^{\ell,\rn_i-1}}{H_i^{\starr\ell,\rn_i^\ell-1}}$.
In Case III, the definitions of $\mathfrak H_i^{\ell,\rn_i^\ell-1}$, $\mathfrak N_i^{\ell,\rn_i^\ell-1}$, $\parent_i^{\ell,\rn_i^\ell-1}$
need to be mimicked for 
$k^\starr=\bigl|A_1^{\starr\ell}\bigr|=\bigl|A_2^{\starr\ell}\bigr|$
 in place of $k$ to obtain
$\mathfrak H_i^{\starr\ell,\rn_i^\ell-1}$, $\mathfrak N_i^{\starr\ell,\rn_i^\ell-1}$, $\parent_i^{\starr\ell,\rn_i^\ell-1}$.
It is straightforward to check now that \eqref{sub}--\eqref{fincluster}, \eqref{defdef}--\eqref{hmax} hold
for $\mathfrak N=\mathfrak N_i^{\starr\ell,\rn_i^\ell-1}$, $\mathfrak H=\mathfrak H_i^{\starr\ell,\rn_i^\ell-1}$, $\parent=\parent_i^{\starr\ell,\rn_i^\ell-1}$, and $I=J_i^{\ell,\rn_i^\ell-1}$. 
Note that the size $k^\starr$ of the root cluster in $\mathfrak H_i^{\starr\ell,\rn_i^\ell-1}$ is bounded 
by $\pbound$ and every other non-degenerate cluster in it is of the form $\cluster{1}$,
so $\mathfrak N_i^{\starr\ell,\rn_i^\ell-1}$ is a \simple{} $\delta$-model based on an \bounded{} \atomic{} frame. 

Therefore, $\mathfrak N_i^{\starr\ell}=\mathfrak N_i^{\starr\ell,0}\lhd\dots\lhd\mathfrak N_i^{\starr\ell,\rn_i^\ell-1}$, for $i=1,2$, $\ell<N$, 
is the ordered sum of $\nn_i^\ell$-many \simple{} $\delta$-models based on \bounded{} \atomic{} frames, for the same $\nn_i^\ell$ as in Lemma~\ref{l:replacement}, and so we have $(b)$ of the lemma.
Finally, by the same arguments as in the proof of Lemma~\ref{l:replacement}, we obtain $(a)$ and $(c)$.
\end{proof}

\begin{lemma}\label{l:ftof}
For $i=1,2$, $\ell<N$, take the frames $\mathfrak H_i^{\ell}$ and $\mathfrak H_i^{\starr\ell}$ provided by Lemmas~\ref{l:replacement} and  \ref{l:replacementL}. Let 
$\mathfrak H_i=\mathfrak H_i^{0}\lhd\dots\lhd\mathfrak H_i^{\partN-1}$ and
\mbox{$\mathfrak H_i^\starr=\mathfrak H_i^{\starr 0}\lhd\dots\lhd\mathfrak H_i^{\starr\partN-1}$}.
Then, for any $j\in J_L$, if there is an injection $f$ from $\mathfrak G_j$ to $\mathfrak H_i^\starr$ satisfying
{\rm \textbf{(cf$_1$)}--\textbf{(cf$_4$)}} for $\alpha(\mathfrak G_j,\mathfrak D_j,\bot)$, then there is an injection $f^\dag$ from $\mathfrak G_j$ to $\mathfrak H_i$ also 
satisfying {\rm \textbf{(cf$_1$)}--\textbf{(cf$_4$)}} for $\alpha(\mathfrak G_j,\mathfrak D_j,\bot)$. Thus, $\mathfrak H_i\models L$ implies $\mathfrak H_i^\starr\models L$.
\end{lemma}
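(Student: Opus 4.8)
The plan is to prove the injection-transfer statement directly and then read off the validity claim by contraposition. Recall that, by the refutation criterion for canonical formulas, an injection $f\colon V_j\to \mathfrak H_i^\starr$ satisfying \textbf{(cf$_1$)}--\textbf{(cf$_4$)} witnesses $\mathfrak H_i^\starr\not\models\alpha(\mathfrak G_j,\mathfrak D_j,\bot)$; so producing an injection $f^\dagger\colon V_j\to\mathfrak H_i$ with the same properties witnesses $\mathfrak H_i\not\models\alpha(\mathfrak G_j,\mathfrak D_j,\bot)$. Contraposing over all $j\in J_L$ then gives: if $\mathfrak H_i\models\alpha(\mathfrak G_j,\mathfrak D_j,\bot)$ for every $j$, i.e.\ $\mathfrak H_i\models L$, then $\mathfrak H_i^\starr\models L$. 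Hence the entire task is to construct $f^\dagger$ from $f$.

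First I would pin down exactly how $\mathfrak H_i^\starr$ differs from $\mathfrak H_i$, reading off the construction in Lemma~\ref{l:replacementL}. The point set $H_i^\starr$ is a subset of $H_i$ and the accessibility relation of $\mathfrak H_i^\starr$ is the restriction of that of $\mathfrak H_i$ (ordered sums keep all cross-component edges in both); the only changes are of two kinds. $(i)$ Finite \tails{} $z^{m-1},\dots,z^0$ occurring in the non-final \atomic{} pieces (the $m^{<}$ and $\cluster{1}\lhd m^{<}$ summands of Cases~I--III) are truncated from $m$ to $m'=\min(m,\cbound+1)$ points, retaining the bottom segment $z^{m'-1},\dots,z^0$ and deleting the $m-m'$ top points just below the non-degenerate \source. $(ii)$ The final non-degenerate pieces are shrunk: $\cluster{k}$ to $\cluster{k^\starr}$ (Case~II) and $\chain{k}{\ast}$ to $\chain{k^\starr}{\ast}$ (Case~III), where only the root cluster loses points while the infinite $\omega$-\tail{} $\{\yy_n\mid n<\omega\}$ is kept pointwise. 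Accordingly, I would set $f^\dagger=f$ on every point mapped into a shrunk piece, and \emph{shift} the images inside each truncated finite \tail{} upward by $m-m'$, i.e.\ replace $f(v)=z^{a}$ by $f^\dagger(v)=z^{a+(m-m')}$.

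Next I would verify \textbf{(cf$_1$)}--\textbf{(cf$_4$)} for $f^\dagger$. On shrunk pieces this is routine: \textbf{(cf$_4$)} forbids $f$ from hitting points with non-internal singletons, in particular the limit cluster $A_{k^\starr}$ of $\chain{k^\starr}{\ast}$, so $f$ uses only the $\yy_n$ there, which are literally the same points, with the same immediate predecessors and the same final cluster $\{\yy_0\}$ in both frames; and for $\cluster{k^\starr}\subseteq\cluster{k}$ the cluster stays non-degenerate while final-cluster membership, reflexivity and internality of singletons are inherited, and no $\mathfrak D_j$-point (being irreflexive) can land in a non-degenerate cluster, so \textbf{(cf$_3$)} is vacuous there. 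For a truncated \tail, the uniform shift preserves every strict-order relation $z^{a}R z^{b}\iff a>b$, hence \textbf{(cf$_1$)}, and it introduces no spurious edges to neighbouring components since a shifted point accesses exactly the same strictly-later points as before. It also preserves each immediate-predecessor adjacency \emph{inside} the \tail: if $v\in\mathfrak D_j$ has $f(v)=z^{a}$ with $a<m'-1$, then \textbf{(cf$_3$)} already forces the $\mathfrak G_j$-predecessor of $v$ to map to $z^{a+1}$, which is therefore also used and also shifted, so the adjacency between $z^{a+(m-m')}$ and $z^{a+1+(m-m')}$ is retained.

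The main obstacle is the topmost used \tail{} slot $z^{m'-1}$: there the immediate predecessor in $\mathfrak H_i^\starr$ is the \source{} $w$ (or none, if $z^{m'-1}$ is a root), whereas in $\mathfrak H_i$ the point $z^{m'-1}$ has the \emph{deleted} point $z^{m'}$ as its immediate predecessor. This is precisely why $f^\dagger=f$ fails and why the whole \tail{} must be shifted together: the shift sends $z^{m'-1}$ to $z^{m-1}$, whose immediate predecessor in $\mathfrak H_i$ is exactly the \source{} $w$, so if $z^{m'-1}$ carried a $\mathfrak D_j$-image its \textbf{(cf$_3$)} requirement is restored, and otherwise the inserted degenerate clusters below $w$ are harmless. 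Finally I would note that the shift never disturbs \textbf{(cf$_2$)}: every truncated finite \tail{} lies in a piece that is not the last $\lhd$-component of the last interval, whereas the global final cluster of $\mathfrak H_i^\starr$ (the one that \textbf{(cf$_2$)}-images must hit) lies in that last component, which is a shrunk piece, not a truncated \tail; so no final-cluster image is ever moved. Injectivity of $f^\dagger$ is immediate, as the shift is injective within each \tail{} and the images of distinct pieces remain disjoint. Collecting these checks yields the required $f^\dagger$, and the concluding implication $\mathfrak H_i\models L\Rightarrow\mathfrak H_i^\starr\models L$ follows by the contraposition noted at the outset.
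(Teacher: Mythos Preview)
Your construction has a genuine gap at the \emph{bottom} end of each truncated tail. You correctly observe that the uniform shift $z^a\mapsto z^{a+(m-m')}$ restores the adjacency between the top slot $z^{m'-1}$ and the head $w$ (or the preceding component). But the same shift destroys the adjacency at the other end: $z^0$ is moved to $z^{m-m'}$, while in $\mathfrak H_i$ the immediate predecessor of the first cluster of the next component is still $\{z^0\}$. Now suppose some $v'\in\mathfrak D_j$ has $f(v')$ equal to that first cluster (which is perfectly possible: the next atomic piece can be an $m''{}^{<}$ in subcase~(ii) of Case~II/III, or a single irreflexive point in Case~II.1). Then \textbf{(cf$_3$)} for $f$ forces its $\mathfrak G_j$-predecessor $v$ to satisfy $f(v)=z^0$, and your $f^\dagger(v)=z^{m-m'}$ is no longer the immediate predecessor of $f^\dagger(v')=f(v')$ in $\mathfrak H_i$. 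Your proof sketch simply does not address this case; it only discusses internal adjacencies and the top boundary.

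The paper's approach avoids this by \emph{not} shifting uniformly. Since $|V_j|\le\cbound$ and the truncated tail has $\cbound+1$ slots, pigeonhole gives an unused slot $y_i$; the images $v_0,\dots,v_{s-1}$ split at the largest $a$ with $f(v_a)$ strictly after $y_i$. The bottom block $v_0,\dots,v_a$ is compacted down to $y_0,\dots,y_a$ (so $f^\dagger(v_0)=y_0$, preserving the bottom adjacency), and the top block $v_{a+1},\dots,v_{s-1}$ is pushed up to $y_{m-(s-a-1)},\dots,y_{m-1}$ (preserving the top adjacency). The only new non-adjacency introduced is between $f^\dagger(v_a)$ and $f^\dagger(v_{a+1})$, and the gap $y_i$ between $f(v_a)$ and $f(v_{a+1})$ in $\mathfrak H_i^\starr$ already shows (via \textbf{(cf$_3$)} for $f$) that $v_a\notin\mathfrak D_j$, so no constraint is violated there. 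Your treatment of the shrunk $\cluster{k}$ and $\chain{k}{\ast}$ pieces is fine and agrees with the paper; the correction needed is only in the tails.
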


\begin{proof}
Fix some $i \in \{1,2\}$ and $j\in J_L$. Suppose that $f$ is an injection from $\mathfrak G_j=(V_j,S_j)$ to $\mathfrak H_i^\starr$ satisfying \textbf{(cf$_1$)}--\textbf{(cf$_4$)}  for $\alpha(\mathfrak G_j,\mathfrak D_j,\bot)$.
For every \atomic{} $\lhd$-component $\mathfrak F^\starr=(H^\starr,\rest{R_i}{H^\starr})$ in $\mathfrak H_i^\starr$ such that 
\begin{enumerate}
\item
$\mathfrak F^\starr$ is obtained from the \atomic{}  $\lhd$-component $\mathfrak F=(H,\rest{R_i}{H})$ in $\mathfrak H_i$ of the form $m^<$ or $\cluster{1}\lhd m^<$, and

\item
there is $v\in V_j$ such that $f(v_j)$ is an irreflexive point in $\mathfrak F^\starr$,
\end{enumerate}
we proceed as follows.
Suppose $H=\{y_0,\dots,y_{m-1}\}$ or $H=\{y,y_0,\dots,y_{m-1}\}$ with
$yR_i y R_i y_{m-1}R_i\dots R_i y_0$, and so 
$H^\starr=\{y_0,\dots,y_{\cbound}\}$ or $H^\starr=\{y,y_0,\dots,y_{\cbound}\}$.
Let $V_j^-=\bigl\{v\in V_j\mid f(v)\in H^\starr\setminus\{y\}\bigr\}$. Then $|V_j^-|\le |V_j|\le\cbound$.
Thus, by the pigeonhole principle, there is $i\le\cbound$ with $y_i\notin f(V_i)\cap  (H^\starr\setminus\{y\})$. Suppose $V_j^-=\{v_0,\dots,v_{s-1}\}$, for some $s\le\cbound$  with
$v_{s-1}S_j\dots S_j v_0$.
Let $a$ be the largest $k< s$ with $y_iR_i f(v_k)$. As $f$ satisfies \textbf{(cf$_3$)},
$v_a\notin\mathfrak D_j$. Now, for $k<s$, we set
\[
f^\dag(v_k)=\left\{
\begin{array}{ll}
y_k, & \mbox{ if $k\le a$},\\[3pt]
y_{m-(s-k)}, & \mbox{ if $a+1\le k<s$.}
\end{array} 
\right.
\]
\begin{center}
\begin{tikzpicture}[>=latex,line width=0.5pt,xscale = 1,yscale = .6]
\node[point,scale = 0.7,label=below:{\footnotesize $y$}] (h0) at (0,0) {};
\node[point,fill,scale = 0.6,label=below:{\footnotesize $y_{m-1}$}] (h1) at (1,0) {};
\node[point,fill,scale = 0.6] (h2) at (2,0) {};
\node[point,fill,scale = 0.6] (h3) at (3,0) {};
\node[point,fill,scale = 0.6] (h4) at (4,0) {};
\node[point,fill,scale = 0.6] (h5) at (5,0) {};
\node[point,fill,scale = 0.6] (h6) at (6,0) {};
\node[point,fill,scale = 0.6] (h7) at (7,0) {};
\node[point,fill,scale = 0.6] (h8) at (8,0) {};
\node[point,fill,scale = 0.6] (h9) at (9,0) {};
\node[point,fill,scale = 0.6,label=below:{\footnotesize $y_0$},label=right:{$\ \ H$}] (h10) at (10,0) {};
\node[point,fill,scale = 0.6] (m0) at (2.5,1.5) {};
\node[point,fill,scale = 0.6] (m1) at (4,1.5) {};
\node[point,fill,scale = 0.6,label=below:{\footnotesize $v_a\ \ $}] (m2) at (5.5,1.5) {};
\node[point,fill,scale = 0.6] (m3) at (7,1.5) {};
\node[point,fill,scale = 0.6,label=right:{$\ \ V_j^-$}] (m4) at (8.5,1.5) {};
\node[point,scale = 0.7,label=above:{\footnotesize $y$}] (hh0) at (2,3) {};
\node[point,fill,scale = 0.6,label=above:{\footnotesize $y_{\cbound}$}] (hh3) at (3,3) {};
\node[point,fill,scale = 0.6] (hh4) at (4,3) {};
\node[point,fill,scale = 0.6,label=above:{\footnotesize $y_i$}] (hh5) at (5,3) {};
\node[point,fill,scale = 0.6] (hh6) at (6,3) {};
\node[point,fill,scale = 0.6] (hh7) at (7,3) {};
\node[point,fill,scale = 0.6] (hh8) at (8,3) {};
\node[point,fill,scale = 0.6] (hh9) at (9,3) {};
\node[point,fill,scale = 0.6,label=above:{\footnotesize $y_0$},label=right:{$\ \ H^\starr$}] (hh10) at (10,3) {};
\draw[->,shorten >=1pt,shorten <=2pt] (h0) to (h1);
\draw[->,shorten >=1pt,shorten <=2pt] (h1) to (h2);
\draw[->,shorten >=1pt,shorten <=2pt] (h2) to (h3);
\draw[->,shorten >=1pt,shorten <=2pt] (h3) to (h4);
\draw[->,shorten >=1pt,shorten <=2pt] (h4) to (h5);
\draw[->,shorten >=1pt,shorten <=2pt] (h5) to (h6);
\draw[->,shorten >=1pt,shorten <=2pt] (h6) to (h7);
\draw[->,shorten >=1pt,shorten <=2pt] (h7) to (h8);
\draw[->,shorten >=1pt,shorten <=2pt] (h8) to (h9);
\draw[->,shorten >=1pt,shorten <=2pt] (h9) to (h10);
\draw[->,shorten >=1pt,shorten <=2pt] (m0) to (m1);
\draw[->,shorten >=1pt,shorten <=2pt] (m1) to (m2);
\draw[->,shorten >=1pt,shorten <=2pt] (m2) to (m3);
\draw[->,shorten >=1pt,shorten <=2pt] (m3) to (m4);
\draw[->,shorten >=1pt,shorten <=2pt] (hh0) to (hh3);
\draw[->,shorten >=1pt,shorten <=2pt] (hh3) to (hh4);
\draw[->,shorten >=1pt,shorten <=2pt] (hh4) to (hh5);
\draw[->,shorten >=1pt,shorten <=2pt] (hh5) to (hh6);
\draw[->,shorten >=1pt,shorten <=2pt] (hh6) to (hh7);
\draw[->,shorten >=1pt,shorten <=2pt] (hh7) to (hh8);
\draw[->,shorten >=1pt,shorten <=2pt] (hh8) to (hh9);
\draw[->,shorten >=1pt,shorten <=2pt] (hh9) to (hh10);
\draw[dashed,gray,->,shorten >=1pt,shorten <=2pt] (m4) to (hh9);
\draw[dashed,gray,->,shorten >=1pt,shorten <=2pt] (m4) to (h10);
\draw[dashed,gray,->,shorten >=1pt,shorten <=2pt] (m3) to (hh7);
\draw[dashed,gray,->,shorten >=1pt,shorten <=2pt] (m3) to (h9);
\draw[dashed,gray,->,shorten >=1pt,shorten <=2pt] (m2) to (hh6);
\draw[dashed,gray,->,shorten >=1pt,shorten <=2pt] (m2) to (h8);
\draw[dashed,gray,->,shorten >=1pt,shorten <=2pt] (m1) to (hh4);
\draw[dashed,gray,->,shorten >=1pt,shorten <=2pt] (m1) to (h2);
\draw[dashed,gray,->,shorten >=1pt,shorten <=2pt] (m0) to (hh3);
\draw[dashed,gray,->,shorten >=1pt,shorten <=2pt] (m0) to (h1);
\node[gray]  at (2.2,2.2) {$f$};
\node[gray]  at (1.2,.8) {$f^\dag$};
\end{tikzpicture}
\end{center}
We do this for every $\mathfrak F^\starr$ having 1.\ and 2.\ above, and set $f^\dag(x)=f(x)$, for any
other $x\in V_j$. It is straightforward to check that the resulting $f^\dag$ is an injection from $\mathfrak G_j$ to $\mathfrak H_i$ satisfying {\rm \textbf{(cf$_1$)}--\textbf{(cf$_4$)}} for $\alpha(\mathfrak G_j,\mathfrak D_j,\bot)$. 
\end{proof}
%

%
%

This completes the proof of Theorem~\ref{t:finaxstruct}. We obtain Theorem~\ref{t:finax}
using Lemma~\ref{l:gbisall} as we have, for $i=1,2$:
\begin{multline*}
\size{\mathfrak N_i^\starr}=\size{\mathfrak N_i^{\starr 0}}+\cdots +\size{\mathfrak N_i^{\starr N-1}}
\le\sum_{\ell<N}\nn_i^\ell\cdot\max\bigl(\cbound+2,\pbound\bigr)\le\\
(3\kbound-1)\cdot\max\bigl(\cbound+2,\pbound\bigr).
\end{multline*}


\subsection{Cofinal subframe logics}\label{ss:cofinalsfr}

By Theorem~\ref{dperscofinal} $(a)$, all d-persistent cofinal subframe logics $L \supseteq \KFT$ have the polysize bisimilar model property, with the polynomial $\kbound$ (defined in \eqref{kbound})
not dependent on $L$. We show now that, for arbitrary, not necessarily d-persistent cofinal subframe $L$, it is enough to replace polysize in Theorem~\ref{dperscofinal} $(a)$ by quasi-polysize:

\begin{theorem}\label{t:cofinalsfr}
All cofinal subframe logics $L \supseteq\KFT$ have the \qpbmp, with the size of witnessing models bounded by $\kbound$.
\end{theorem}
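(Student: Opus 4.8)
The plan is to re-enter the construction of Lemma~\ref{l:replacement} (and its refinement, Lemma~\ref{l:replacementL}) starting from the partition of Definition~\ref{d:ints}, and to exploit the one feature distinguishing cofinal subframe logics: by the characterisation recalled in \S\ref{can-for}, such an $L$ has a canonical axiomatisation \eqref{canon} in which \emph{every} $\mathfrak D_j=\emptyset$, so that condition \textbf{(cf$_3$)} is vacuous for all of its axioms. The key observation is that the whole dependence of the size bound on the logic-specific constant $\cbound$ in Theorems~\ref{t:finaxstruct} and~\ref{t:finax}---namely, keeping finite \tails{} of length up to $\cbound+1$ and inflating clusters up to $\pbound$---serves solely to preserve \textbf{(cf$_3$)}, through conditions \eqref{rootcluster} and \eqref{inpred} of Definition~\ref{d:nice} and the lifting of Lemma~\ref{l:ftof}. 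Once \textbf{(cf$_3$)} is vacuous these devices become unnecessary, and I would instead delete every irrelevant \tail{} together with its \source{} (and every gap interval), keeping each cluster equal to the set of relevant points it contains.

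Concretely, rerunning Cases~I--III of the proof of Lemma~\ref{l:replacement} with $H_i^{\starr\ell,j}$ reduced to the relevant points of each cluster and all finite \tails{} $Z_i^{\ell,j}$ and \sources{} $w_i^{\ell,j}$ discarded, the auxiliary sets $Y_i^\ell$ become empty; hence the inflated cluster $A_i^\ell$ of Cases~II and~III collapses to $\bigl(\{x_i\}\cup\mset{i}\cup\sset{i}\bigr)\cap\eC{\bl_i}$ and the bound \eqref{clusterbound} sharpens to $k\le\kbound$. The infinite final component of each non-definable block is still built as $\chain{k}{\bullet}$ or $\chain{k}{\circ}$, but now with $k$ equal to the number of relevant points of the limit cluster. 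Every point counted by $\size{\cdot}$ in the resulting models $\mathfrak N_i^\starr$ is then a relevant point, and there are at most $\kbound$ of these per model, so $\size{\mathfrak N_i^\starr}\le\kbound$.

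The three clauses of the \qpbmp{} are verified as follows. Satisfaction of $\varphi_1,\neg\varphi_2$ and the global $\sigma$-bisimulation carry over from the proofs of Lemmas~\ref{l:final}~$(a)$ and~\ref{l:gbisall}: the $\parent$-function argument still applies since $\mset{i}$ is retained, and the \match{} conditions of Definition~\ref{d:match} survive the shrinking because, by Lemmas~\ref{int-prop}~$(e)$ and~\ref{l:maxbisblock}, every $\sigma$-type occurring in a relevant $\sigma$-block is realised by a relevant point of its final cluster $\eC{\bl_i}$, so that \bmp{b} and \bmp{c} persist between the reduced clusters $A_1^\ell$ and $A_2^\ell$. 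For membership in $L$, I would establish the transfer \eqref{samef} \emph{directly} from the frame $\mathfrak H_i^\starr$ underlying $\mathfrak N_i^\starr$ to $\mathfrak F_i$: every point of $\mathfrak H_i^\starr$ is a point of $\mathfrak F_i$, and an injection $f$ refuting $\alpha(\mathfrak G_j,\emptyset,\bot)$ in $\mathfrak H_i^\starr$ satisfies \textbf{(cf$_1$)}, \textbf{(cf$_2$)} in $\mathfrak F_i$ at once, while by \textbf{(cf$_4$)} and \eqref{infiniteincl}---exactly as in the proof of Lemma~\ref{l:fpoly}---it avoids the non-definable cluster points of each $\chain{k}{\ast}$-component and maps everything else to \emph{definable} points of $\mathfrak F_i$, securing \textbf{(cf$_4$)}; since $\mathfrak D_j=\emptyset$ makes \textbf{(cf$_3$)} vacuous, the same $f$ refutes $\alpha(\mathfrak G_j,\emptyset,\bot)$ in $\mathfrak F_i$, contradicting $\mathfrak F_i\models L$.

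The main obstacle is the bookkeeping of this last step: one must check that deleting the \tails{} leaves \textbf{(cf$_1$)}--\textbf{(cf$_2$)} intact for the transfer while simultaneously the shrunken clusters still carry all matched $\sigma$-types demanded by Definition~\ref{d:match}, and that the handful of degenerate configurations (an irrelevant infinite \tail{}, a limit root cluster) are disposed of---the latter via Lemma~\ref{l:rootnolimit}. It is worth noting that, unlike Theorems~\ref{t:finaxstruct}--\ref{t:finax}, the argument nowhere uses finite axiomatisability, since \eqref{samef} is applied to each $j\in J_L$ separately; the resulting bound $\kbound$ matches exactly the one obtained for the d-persistent case in Theorem~\ref{dperscofinal}.
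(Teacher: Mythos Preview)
Your proposal is correct and follows essentially the same route as the paper's proof (Theorem~\ref{cof-str}): exploit that $\mathfrak D_j=\emptyset$ renders \textbf{(cf$_3$)} vacuous, discard all gap intervals and \tails/\sources, retain only the relevant points in each relevant cluster (plus the $\yy_i^n$-chains for non-definable blocks), and verify \eqref{samef} directly via \textbf{(cf$_1$)}, \textbf{(cf$_2$)}, \textbf{(cf$_4$)}; the paper likewise obtains $\size{\mathfrak N_i}=|\{x_i\}\cup\mset{i}\cup\sset{i}|\le\kbound$ and notes that finite axiomatisability plays no role.
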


This follows from the following special case of the `structural' Theorem~\ref{t:structmodel}:

\begin{theorem}\label{cof-str}
For any cofinal subframe logic $L \supseteq \KFT$ and formulas $\varphi_1$, $\varphi_2$ without an interpolant in $L$,  
there are rooted $\delta$-models $\mathfrak N_1,x_1$ and $\mathfrak N_2,x_2$ 
satisfying $(a)$--$(c)$ from Theorem~\ref{t:structmodel} as well as conditions $(d)$ and $(e)$ below\textup{:}
%
\begin{itemize}
%
%
%
\item[$(d)$]
there is $M \leq\kbound$ such that 
$\mathfrak N_i=\mathfrak N_i^0\lhd\dots\lhd\mathfrak N_i^{M-1}$ and, \mbox{for all $j<M$,}
\begin{enumerate}
\item
$\mathfrak N_i^j$ is the ordered sum of \simple{} $\delta$-models based on \atomic{} frames\textup{;}
\item
the pair $(\mathfrak N_1^j,\mathfrak N_2^j)$ is \match\textup{;}
\end{enumerate}

\item[$(e)$]
$\{x_i\}\cup\mset{i}\cup\sset{i}$ coincides with  
the set of points in $\mathfrak N_i$, $i=1,2$ that are not in the $\{\yy_i^n\mid n<\omega\}$-part of some
$\lhd$-component based on a $\chain{k}{\ast}$.
\end{itemize}
It follows from $(e)$ that $\size{\mathfrak N_i}=|\{x_i\}\cup\mset{i}\cup\sset{i}|\le\kbound$.
\end{theorem}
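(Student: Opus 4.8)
The plan is to run the proof of Theorem~\ref{t:structmodel} again, exploiting the one feature that characterises cofinal subframe logics: by definition such an $L$ is axiomatised by canonical formulas $\alpha(\mathfrak G_j,\emptyset,\bot)$ with $\mathfrak D_j=\emptyset$, so the refutation condition \textbf{(cf$_3$)} holds vacuously and only \textbf{(cf$_1$)}, \textbf{(cf$_2$)}, \textbf{(cf$_4$)} remain. This is precisely what renders the nice-ness conditions \eqref{rootcluster} and \eqref{inpred}, whose sole purpose is to preserve immediate predecessors, unnecessary here, and it is what will let us both throw away the `gap' intervals and discard all finite padding, keeping only relevant points and the tadpole tails demanded by infinite descending chains.

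First I would take models $\mathfrak M_i,x_i$ from Theorem~\ref{criterion} together with the $\sigma$-block partition and the interval partition of Definition~\ref{d:ints}, but run only steps \step{1} and \step{2}, omitting \step{3}. The retained intervals are then exactly the relevant $\sigma$-blocks, so their number is $M\le\kbound$. They no longer cover $W_i$, yet the gaps are harmless: in a transitive connected frame $I_i^\ell\intord{\mathfrak F_i}I_i^{\ell'}$ still holds across a gap whenever $\ell<\ell'$, so the ordered-sum relation on the retained points coincides with the restriction of $R_i$.

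Next I would rerun Lemma~\ref{l:replacement} in an economical form, keeping in each relevant interval only the relevant clusters and, where an infinite descending chain forces it, the infinite tails, while discarding every finite tail and every auxiliary `source' point. Case~I vanishes with step~\step{3}. In Case~II the block $\bl_i$ is definable, so I link the relevant clusters directly by $\lhd$ and take as final component $\cluster{k}$ with $A_i^\ell=\sset{i}\cap\eC{\bl_i}$, which by Lemma~\ref{l:maxbisblock}$(a),(b),(d)$ consists of relevant points and already carries all required $\sigma$-types; the pair is then of type \bmp{a} or \bmp{b}. In Case~III the block is non-definable, $\eC{\bl_i}$ is a limit cluster that is nonetheless relevant (Lemma~\ref{l:maxbisblock}$(c)$), and the last component is the tadpole $\chain{k}{\ast}$ rooted at $A_i^\ell=\sset{i}\cap\eC{\bl_i}$ with the extension as its $\{\yy_i^n\}$-tail; the pair is of type \bmp{c}. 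Any interior infinite tail sitting above a relevant degenerate cluster is simply dropped, which is a cofinal-subframe move. The functions $\parent_i^\ell$ and the verifications of \eqref{sub}--\eqref{hmax} (minus \eqref{rootcluster} and \eqref{inpred}) transfer from Lemma~\ref{l:replacement} almost verbatim, so $(a)$, $(c)$, $(d)$ follow as in Lemma~\ref{l:final}, with $\mathfrak N_1\sim_\sigma\mathfrak N_2$ delivered by Lemma~\ref{l:gbisall}. Condition $(e)$ holds by construction, and since each atomic frame contributes only its root and finite part to $\size{\mathfrak N_i}$, we obtain $\size{\mathfrak N_i}=|\{x_i\}\cup\mset{i}\cup\sset{i}|\le\kbound$, which gives Theorem~\ref{t:cofinalsfr}.

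For $(b)$ I would argue with the refutation criterion directly, avoiding the $f^\dag$-surgery of Lemma~\ref{l:ftof}: given an injection $f\colon V_j\to\mathfrak H_i$ satisfying \textbf{(cf$_1$)}, \textbf{(cf$_2$)}, \textbf{(cf$_4$)}, the inclusion of the carrier of $\mathfrak H_i$ into $W_i$ together with \eqref{sub}, \eqref{fincluster}, and \eqref{defdef} shows that the same $f$ refutes $\alpha(\mathfrak G_j,\emptyset,\bot)$ in $\mathfrak F_i$, contradicting $\mathfrak F_i\models L$; here \textbf{(cf$_3$)} is vacuous exactly because $\mathfrak D_j=\emptyset$. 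The main obstacle I anticipate is the bookkeeping around infinite descending chains: I must ensure that the surviving tadpoles are only the Case~III ones, whose root is the relevant limit cluster $\eC{\bl_i}$, and that dropping every interior infinite tail is simultaneously compatible with $(e)$, with $\mathfrak H_i\models L$, and with the pair $(\mathfrak N_1^\ell,\mathfrak N_2^\ell)$ being \match{}. The last of these is the delicate point, since the $\sigma$-types occurring along a discarded chain must still be witnessed in $\mathfrak N_i$ where needed; this is secured by the maximal-point machinery, as every relevant $\Diamond$-witness lies in $\mset{i}$ and is retained, while the required coincidence of $A_1^\ell$ and $A_2^\ell$ is exactly the type-preserving bijection of Lemma~\ref{l:maxbisblock}$(d)$.
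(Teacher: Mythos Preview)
Your proposal is correct and follows essentially the same route as the paper's proof: skip step~\step{3}, keep in each relevant $\sigma$-block only the relevant points of each relevant cluster (plus, in Case~III, the tadpole tail with root $A_i^\ell=\sset{i}\cap\eC{\bl_i}$), and verify $(b)$ by observing that \textbf{(cf$_3$)} is vacuous when $\mathfrak D_j=\emptyset$, so \eqref{samef} holds using only \eqref{sub}, \eqref{fincluster}, \eqref{defdef}. One small imprecision: after steps~\step{1}--\step{2} the retained intervals are not literally the relevant $\sigma$-blocks---in \step{2} they are extensions of non-definable relevant blocks---but you handle Case~III correctly, so this is harmless.
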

\begin{proof}
As in the proof of Theorem~\ref{t:structmodel}, we take any $\sigma$-bisimilar witness models $\mathfrak M_i,x_i$, $i=1,2$, based on frames $\mathfrak{F}_i = (W_{i},R_{i},\INT_{i})$ for $L$.
Let $M$ be the number of relevant $\sigma$-blocks in $\mathfrak M_1$ (or $\mathfrak M_2$, by Lemma~\ref{l:maxbisblock}~$(f)$). For $i=1,2$,
consider the partitions $\mathcal I_i=\{I_i^\ell\in\INT_i\mid \ell<\partN\}$ of $\mathfrak M_i$ given by Definition~\ref{d:ints}, and let $0=\ell_0<\dots <\ell_{M-1}=N-1$ be the list of indices such that the pair $(I_1^{\ell_j},I_2^{\ell_j})$ is added to $\mathcal I_1\times\mathcal I_2$ in 
steps \step{1} or \step{2},
and $I_i^{\ell_0}\intord{\mathfrak F_i}\cdots\intord{\mathfrak F_i}I_i^{\ell_{M-1}}$. 
 We define $\mathfrak N_i^{\ell_z}$, $z<M$, by choosing \emph{fewer} points from $I_i^{\ell_z}$
than in Cases II.2 and III in the proof of Lemma~\ref{l:replacement},
and we also define functions $\parent_i^{\ell_z}$.
Let $\ell=\ell_z$, for $z<M$, let $C_i^{\ell,j}$, $j < \rn_i^\ell$, be the sequence (ordered by $<_{R_i}$) of all relevant clusters in $I_i^\ell$, and $D_i^{\ell,j}=C_i^{\ell,j}\cap\bigl(\{x_i\}\cup\mset{i}\cup\sset{i}\bigr)$. Three cases are possible now, the first of which coincides with Case II.1, while the other two select fewer points for  $\mathfrak N_i^{\ell_z}$ than Cases II.2 and III:
\begin{itemize}
\item[$(i)$]
If  $(I_1^{\ell},I_2^{\ell})$ is added in step \step{1} and $I_i^{\ell}$ consists of a dege\-nerate cluster, then, like in Case II.1, we let $\mathfrak N_i^\ell=\rest{\mathfrak M_i}{I_i^\ell}$
and $\parent_i^\ell$ be the identity \mbox{function on $\mathfrak N_i^\ell$.}

\item[$(ii)$]
If $C_i^{\ell,\rn_i^\ell-1}$ is non-degenerate and $(I_1^{\ell},I_2^{\ell})$ is added in step \step{1} as in Case II.2, then $C_i^{\ell,\rn_i^\ell-1}$ is definable in $\mathfrak M_i$. 
We let 
$\mathfrak N_i^\ell=\rest{\mathfrak M_i}{D_i^{\ell,0}}\lhd\dots\lhd \rest{\mathfrak M_i}{D_i^{\ell,\rn_i^\ell-1}}$ and 
$\parent_i^\ell$ be the identity function on $\mathfrak N_i^\ell$.

\item[$(iii)$]
If $(I_1^{\ell},I_2^{\ell})$ is added in \step{2} like in Case III,  
then $C_i^{\ell,\rn_i^\ell-1}$ is a not definable in $\mathfrak M_i$. 
As shown in Case III, there is an infinite sequence of irrelevant points 
$\{\yy_i^n\in I_i^\ell\mid n<\omega\}$ such that  $\yy_i^n R_i \yy_i^{n-1}$, 
$C_i^{\ell,\rn_i^\ell-1}<_{R_i} C(\yy_n^i)$ and $C(\yy_n^i)\in\INT_i$, $n<\omega$, and  
the $\yy_i^n$ are either 1)  all irreflexive 
or 2) all reflexive.
%
%
Also, as $C_i^{\ell,\rn_i^\ell-1}$  is a limit cluster by Lemma~\ref{int-prop}~$(d)$, we have $D_i^{\ell,\rn_i^\ell-1}=C_i^{\ell,\rn_i^\ell-1}\cap\sset{i}$ by
Lemmas~\ref{lem:descr'}~$(b)$ and \ref{l:maxbisblock}~$(b)$.
By Lemma~\ref{l:maxbisblock}~$(e)$, there is a $\sigma$-type preserving bijection between $D_1^{\ell,\rn_1^\ell-1}$ and $D_2^{\ell,\rn_2^\ell-1}$, and so
there is $k\le\kbound$ with $|D_1^{\ell,\rn_1^\ell-1}|=|D_2^{\ell,\rn_2^\ell-1}|=k$.
Suppose $D_i^{\ell,\rn_i^\ell-1}=\{a_i^0,\dots,a_i^{k-1}\}$.
We let \mbox{$H_i^{\ell,\rn_i^\ell-1}=D_i^\ell\cup\{\yy_i^n\mid n<\omega\}$} and 
$\INT_i^{\ell,\rn_i^\ell-1}$ be generated in $(H_i^{\ell,\rn_i^\ell-1},\rest{R_i}{H_i^{\ell,\rn_i^\ell-1}})$ by the sets $\{\yy_i^n\}$, $n<\omega$, and 
$X_i^s=\{a_i^s\} \cup \{\yy_i^n\mid n < \omega,\ n \equiv s \ (\text{mod}\ k)\}$, $s<k$ 
(see Example~\ref{k-omega}).
The resulting $\mathfrak H_i^{\ell,\rn_i^\ell-1}$ are both isomorphic to 
$\chain{k}{\bullet}$ in case 1), and to $\chain{k}{\circ}$ in case 2).
We then set $\mathfrak w_i^{\ell,\rn_i^\ell-1}(p)=\bigcup_{a_i^s\in\mathfrak v_i(p)}X_i^s$ 
and also
$
\mathfrak N_i^{\ell,\rn_i^\ell-1}=\bigl((H_i^{\ell,\rn_i^\ell-1},\rest{R_i}{H_i^{\ell,\rn_i^\ell-1}},
\INT^{\ell,\rn_i^\ell-1}),\mathfrak w_i^{\ell,\rn_i^\ell-1}\bigr).
$
Finally, we set
\[
\mbox{$
\mathfrak N_i^\ell=\rest{\mathfrak M_i}{D_i^{\ell,0}}\lhd\dots\lhd \rest{\mathfrak M_i}{D_i^{\ell,\rn_i^\ell-2}}\mathop{\lhd}\mathfrak N_i^{\ell,\rn_i^\ell-1}$}
\]
 and
define $\parent_i^\ell$ as the identity on relevant points in $\mathfrak N_i^\ell$ and
$\parent_i^\ell(\yy_i^n)=a_i^s$, for $n<\omega$ with $n\equiv s$ (mod $k$).
\end{itemize}
Clearly, $(c)$, $(d).1$ and $(e)$ hold for 
$\mathfrak N_i=\mathfrak N_i^{\ell_0}\lhd\dots\lhd\mathfrak N_i^{\ell_{M-1}}$. 
Condition $(a)$ is shown like in Lemma~\ref{l:final} using that 
\eqref{relh}--\eqref{hmax} hold for $\parent=\parent_i^{\ell_z}$ and  
$H=H_i^{\ell_z}$, $z<M$. 
Condition $(b)$ is proved via \eqref{samef}: 
\textbf{(cf$_1$)} clearly holds; \textbf{(cf$_2$)} holds as the final cluster in $\mathfrak F_i$ is always relevant; and \textbf{(cf$_4$)} holds, as  $\{x\}$ being definable in $\mathfrak N_i^{\ell_z}$  implies $\{x\}\in\INT_i$,
for all $z<M$ and $x$ in $\mathfrak N_i^{\ell_z}$.
As $L$ is a cofinal subframe logic, $\mathfrak D=\emptyset$, so \textbf{(cf$_3$)} holds vacuously.
Finally, to show $(d).2$, observe that $(\mathfrak N_1^{\ell_z},\mathfrak N_2^{\ell_z})$ is \match{} as it always meets one of the conditions in Definition~\ref{d:match}:
in case $(i)$, it meets $(a)$;
in case $(ii)$, it meets $(b)$; and
in case $(iii)$, it meets $(c)$.
\end{proof}

\begin{example}\label{ex:GL.3full}\em 
By Example~\ref{e:canform}~$(a)$,
given any formulas $\varphi_1$ and $\varphi_2$ without an interpolant in $\GLT$, one can always find witnessing models $\mathfrak N_i$, $i=1,2$, of size $\le\kbound$ that are ordered sums of \simple{} models based on $m^<$ or $\chain{k}{\bullet}$ (see e.g.\ the models depicted in Fig.~\ref{GL3gf} in Example~\ref{ex:GL.3}~$(a)$).
\hfill $\dashv$
\end{example}

We emphasise that the construction in the proof of Theorem~\ref{cof-str} does not work for non-cofinal subframe logics, in which case $\mathfrak D \ne \emptyset$; see also the special treatment of the density axiom in the proof of Theorem~\ref{temp-selection} below.


\section{The IEP for standard Priorean temporal logics}\label{sec:temporal}
\emph{Priorean temporal logics}~\cite{Prior1968-PRIPOT-2} deal
with the operators `sometime in the future' denoted $\Df$, `sometime in the past' denoted $\Dp$, and their duals `always in the future' $\Bf$ and `always in the past' $\Bp$. \emph{Temporal formulas}---propositional bimodal formulas with these operators---are interpreted over general \emph{temporal  frames} of the form  $\mathfrak F = (W,R,R^-,\INT)$ representing various \emph{flows of time} in such a way that $(W,R)$ is transitive and connected~\eqref{connected},
%
%
$R$ is the `future-time' accessibility relation for $\Df$, $\Bf$, its inverse $R^-$ is the `past-time'  accessibility relation for $\Dp$, $\Bp$, and the internal sets $\INT \subseteq 2^W$ are closed under the Booleans and the operators 
\begin{equation*}
\Df^{\mathfrak F} X = \{ x \in W \mid \exists y\in X \, x R y  \}, \quad 
\Dp^{\mathfrak F} X = \{ x \in W \mid \exists y\in X \, x R^- y  \}.
\end{equation*} 
To simplify notation, we omit $R^-$ and write $\mathfrak F = (W,R,\INT)$. Also, as before, if $\INT = 2^W$, we call $\mathfrak F$ a \emph{Kripke frame} and write $\mathfrak F = (W,R)$. 
%
The \emph{universal modality} `always'  can be introduced as an abbreviation
%
$\Box \varphi = \varphi \land \Bf \varphi \land \Bp \varphi$. 
%
Descriptive temporal frames are defined in the same way as in \S\ref{prelims}. Note that tightness condition \textbf{(\tight)} for $R^-$ actually follows from \textbf{(\tight)} for $R$.

In fact, many results from \S\ref{prelims}, \ref{sec:warming} straightforwardly generalise to the temporal setting. 
%
Let $\mathfrak{M}$ be a \emph{temporal model\/}---that is, a model based on some temporal frame $\mathfrak{F} = (W,R,\INT)$---and let $\Gamma$ be a set of temporal formulas. A point $x\in W$ is $\Gamma$-\emph{minimal in} $\mathfrak{M}$ if $\mathfrak{M},x \models \Gamma$ and whenever $x'R x$ and $\mathfrak{M},x'\models \Gamma$, then $x R x'$. Denote by $\min_{\mathfrak{M}} \Gamma$ the set of all $\Gamma$-minimal points in $\mathfrak{M}$. (The definition of $\max_{\mathfrak{M}} \Gamma$ remains the same.) 
In the temporal case, Lemma~\ref{maxpoints} generalises to

\begin{lemma}\label{minpoints}
Suppose $\Gamma$ is a set of temporal formulas and $\mathfrak M$ a model based on a 
descriptive temporal frame $\mathfrak{F}= (W,R,\INT)$.
Then the following hold\textup{:}

\medskip
\noindent
{\rm\bf (temporal saturation)}
If $\mathfrak M, x\models \Df \bigwedge\Gamma'$ for every finite $\Gamma' \subseteq \Gamma$, then there is $y$ with $xRy$ and $\mathfrak M, y \models \Gamma$.
If $\mathfrak M, x\models \Dp \bigwedge\Gamma'$ for every finite $\Gamma' \subseteq \Gamma$, then there is $y$ with $xR^-y$ and $\mathfrak M, y \models \Gamma$.

\medskip
\noindent
{\rm\bf (maximal and minimal points)}
If there is $x$ with $\mathfrak M, x \models \Gamma$, then $\max_{\mathfrak{M}}\Gamma \ne \emptyset$ and $\min_{\mathfrak{M}}\Gamma \ne \emptyset$.
\end{lemma}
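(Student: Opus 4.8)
The statement to prove is Lemma~\ref{minpoints}, the temporal generalisation of Lemma~\ref{maxpoints}. My plan is to reduce everything to the unimodal case already established, exploiting the symmetry between $R$ and $R^-$ in a descriptive temporal frame.

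For the \textbf{temporal saturation} part, I would treat the two halves independently. The first half---concerning $\Df$ and $R$---is essentially identical to the \textbf{modal saturation} clause of Lemma~\ref{maxpoints}, since the relevant data (a descriptive frame, the operator $\Df^{\mathfrak F}$, the relation $R$) match the unimodal setting verbatim; the proof of Lemma~\ref{maxpoints} uses only \textbf{(com)} and \textbf{(\tight)} for $R$, both of which hold in a descriptive temporal frame. The second half---concerning $\Dp$ and $R^-$---follows by the same argument applied to the \emph{converse} frame: since the excerpt notes that \textbf{(\tight)} for $R^-$ follows from \textbf{(\tight)} for $R$, and \textbf{(com)} is a condition on $\INT$ that is insensitive to which relation we single out, the frame $(W,R^-,\INT)$ is again descriptive, so the identical reasoning yields a $y$ with $xR^-y$ and $\mathfrak M,y\models\Gamma$. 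Concretely, I would take $\Gamma$ finitely $\Dp$-satisfiable from $x$, set $\Delta=\{\varphi\mid\Bp\varphi\in t_{\mathfrak M}(x)\}$, observe $\Gamma\cup\Delta$ is finitely satisfiable via \textbf{(\tight)} for $R^-$, invoke \textbf{(com)} to get a realising point $y$, and check $xR^-y$.

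For the \textbf{maximal and minimal points} part, I would similarly split. That $\max_{\mathfrak M}\Gamma\neq\emptyset$ whenever $\Gamma$ is satisfied is exactly the \textbf{maximal points} clause of Lemma~\ref{maxpoints}, which I may cite directly. For $\min_{\mathfrak M}\Gamma\neq\emptyset$, I would again pass to the converse frame $(W,R^-,\INT)$: a $\Gamma$-\emph{minimal} point in $\mathfrak M$ is precisely a $\Gamma$-\emph{maximal} point with respect to $R^-$, because the defining condition ``$x'Rx$ and $\mathfrak M,x'\models\Gamma$ imply $xRx'$'' becomes ``$xR^-x'$ and $\mathfrak M,x'\models\Gamma$ imply $x'R^-x$'' after relabelling. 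Hence the existence of a minimal point reduces to the existence of a maximal point in the converse descriptive frame, which Lemma~\ref{maxpoints} supplies.

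The only genuine subtlety---and the step I would flag as the main obstacle---is justifying that the converse frame $(W,R^-,\INT)$ is itself a \emph{rooted} descriptive frame for $\KFT$ to which Lemma~\ref{maxpoints} literally applies, since that lemma is stated for rooted $\KFT$-frames. In the temporal setting the underlying order need not be converse well-founded and need not be rooted under $R^-$, so I cannot invoke the rooted-frame hypotheses directly; instead I would re-prove the two clauses from the descriptive-frame conditions alone (as the proof of Lemma~\ref{maxpoints} in fact does, via \textbf{(com)}, \textbf{(\tight)}, and \textbf{(dif)}), checking that rootedness and converse well-foundedness are not actually used in the saturation argument and that the maximal/minimal point argument needs only \textbf{(com)}. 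Verifying this dependency---that the real content of Lemma~\ref{maxpoints} rests purely on compactness and tightness, which are symmetric in $R$ and $R^-$---is the crux; once it is settled, both halves of Lemma~\ref{minpoints} drop out by the converse-frame symmetry with no further calculation.
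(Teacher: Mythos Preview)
Your proposal is correct and matches the paper's treatment: the paper gives no proof of Lemma~\ref{minpoints} at all, presenting it simply as the temporal generalisation of Lemma~\ref{maxpoints} (itself stated without proof and attributed to Fine~1974). Your reduction to the unimodal case via the converse frame, together with the observation that the underlying argument uses only \textbf{(com)} and \textbf{(\tight)} (both symmetric in $R$ and $R^-$), is exactly the kind of routine verification the paper leaves implicit; your flagged subtlety about rootedness is also well-handled, since temporal frames are connected by definition and the compactness-based argument for maximal points does not actually need rootedness.
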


%
%
%

A relation $\bis \subseteq W_1 \times W_2$ is a \emph{temporal $\sigma$-bisimulation} between temporal models $\mathfrak M_1$ and $\mathfrak M_2$ based on respective frames $\mathfrak F_i = (W_i,R_i,\INT_i)$, $i=1,2$, if it satisfies {\bf (\atom)}, {\bf (\move)} and its past-time couterpart: 
whenever $x_1\bis x_2$, then  

%

\medskip
\noindent
{\bf (\move$^-$)} $x_1 R^-_1 y_1$ implies $y_1\bis y_2$, for some $y_2 \in W_2$ with $x_2 R^-_2 y_2$; conversely,\\
\mbox{}\hspace*{0.7cm}$x_2 R^-_2 y_2$ implies $y_1\bis y_2$, for some $y_1 \in W_1$ with $x_1 R^-_1 y_1$.

\medskip
\noindent
The relation $\mathfrak{M}_1,x_1 \equiv_{\sigma} \mathfrak{M}_2,x_2$, saying that temporal models $\mathfrak{M}_1$ and $\mathfrak{M}_2$ satisfy the same temporal $\sigma$-formulas at $x_1$ and $x_2$, respectively, is characterised in terms of temporal $\sigma$-bisimulations: it is readily seen that, with this modification, Lemma~\ref{bisim-lemma} and Theorem~\ref{criterion} continue to hold for all Priorean temporal logics.
(As temporal frames are transitive and 
connected, any of their points can be regarded as a root with respect to the relation $R\cup R^-$.)


%
%

In this article, we consider the Priorean temporal logics of five most popular classes of temporal Kripke frames~\cite{DBLP:journals/jsyml/Bull68}:
\begin{itemize}
\item[] $\Lin = \{ \varphi \mid \mathfrak F \models \varphi, \ \mathfrak F = (W,R) \text{ is any temporal Kripke frame} \}$\\
\hspace*{0.34cm} ${} = \KF_2 \oplus p \to \Bf\Dp p \oplus p \to \Bp\Df p \oplus \Df\Dp p \lor \Dp\Df p \to p \lor \Df p \lor \Dp p$;

\item[] $\LinQ = \{ \varphi \mid (\mathbb Q,<) \models \varphi\}$\\ 
\hspace*{0.55cm} ${}= \Lin \oplus \Df \top \oplus \Dp \top \oplus \Df p \to \Df\Df p$;

\item[] $\LinR = \{ \varphi \mid (\mathbb R,<) \models \varphi\}$\\
\hspace*{0.55cm} ${}= \LinQ \oplus \Box(\Bp p \to \Df\Bp p) \to (\Bp p \to \Bf p)$;

\item[] $\Linf = \{ \varphi \mid \mathfrak{F} \models \varphi, \  \mathfrak{F} = (W,<)\text{ any \emph{finite} strict linear order} \}$\\
\hspace*{0.78cm} ${} = \Lin \oplus \bigl\{ \Bt (\Bt p \to p) \to \Bt p \mid \mathsf{X} \in \{\mathsf{F}, \mathsf{P}\}\bigr\}$;

\item[] $\LinZ = \{ \varphi \mid (\mathbb Z,<) \models \varphi\}$\\ 
\hspace*{0.53cm} ${} = \Lin \oplus \Df \top \oplus \Dp \top \oplus \bigl\{ \Bt(\Bt p \to p) \to (\Dt\Bt p \to \Bt p) \mid \mathsf{X} \in \{\mathsf{F}, \mathsf{P}\}\bigr\}$,
\end{itemize}
where $\KF_2$ is the bimodal version of $\KF$ (with $\Df$ and $\Dp$). 
None of these five logics (and any other temporal logic with frames of  unbounded depth) has the CIP~\cite{MGabbay2005-MGAIAD,DBLP:books/daglib/0030819}, and our aim in this section is to prove that the IEP for each of them is decidable in \coNP. The following example illustrates the new semantic phenomena of temporal logics compared to modal logics containing $\KFT$ that we need to address in order to achieve this aim.

\begin{example}\label{ex:GL.3-temporal}\em
$(a)$ Consider the formulas $\varphi_1$ and $\varphi_2$ from Example~\ref{ex:GL.3}~$(a)$ in the context of $\Linf$ in place of $\GLT$, reading $\Diamond$ as $\Df$ and $\Box$ as $\Bf$:
\begin{align*}
& \varphi_1 = \Df( p_{1} \wedge  \Df^+ \neg q_{1}) \wedge \Bf (p_{2} \rightarrow \Bf^+ q_{1}) \wedge  \Bf(p_{1} \rightarrow \neg p_{2}),\\ 
& \varphi_2 = \neg [ \Df( p_{2} \wedge \Df^+\neg q_{2}) \land \Bf (p_{1} \rightarrow \Bf^+ q_{2}) ].
\end{align*}
We clearly have $(\varphi_1 \rightarrow \varphi_2) \in \Linf$. Using Theorem~\ref{criterion}, we show that $\varphi_1$ and $\varphi_2$ have no interpolant in $\Linf$. The argument from Example~\ref{ex:GL.3}~$(a)$ shows that any models $\mathfrak M_i$ meeting the criterion of Theorem~\ref{criterion} cannot be based on a Kripke frame for $\Linf$. 
%
However, the descriptive frame $\bullet \lhd \bullet \lhd \mathfrak C(\clustert, \bullet)$ we employed for $\GLT$ in Example~\ref{ex:GL.3}~$(a)$ does not help now, because it refutes $\Bp (\Bp p \to p) \to \Bp p$ at any point save the first two under the valuation below:
%
%
\begin{center}
\begin{tikzpicture}[>=latex,line width=0.5pt,xscale = 1,yscale = 1]
\node[point,fill=black,scale = 0.6,label=above:{\footnotesize $p$}] (x2) at (0,0) {};
\node[point,fill=black,scale = 0.6,label=above:{\footnotesize $p$}] (y2) at (1,0) {};
\node[point,scale = 0.7,label=above:{\footnotesize $p$},label=below:{\footnotesize $a_0$}] (a20) at (2.5,0) {};
\node[point,scale = 0.7,label=below:{\footnotesize $a_1$}] (a21) at (3.5,0) {};
\draw[] (3,0) ellipse (1 and .5);
\node[]  at (4.5,0) {$\dots$};
\node[point,fill=black,scale = 0.6,label=below:{\footnotesize $\yy_3$}] (b23) at (5,0) {};
\node[point,fill=black,scale = 0.6,label=above:{\footnotesize $p$},label=below:{\footnotesize $\yy_2$}] (b22) at (6,0) {};
\node[point,fill=black,scale = 0.6,label=below:{\footnotesize $\yy_1$}] (b21) at (7,0) {};
\node[point,fill=black,scale = 0.6,label=above:{\footnotesize $p$},label=below:{\footnotesize $\yy_0$}] (b20) at (8,0) {};
\draw[->] (x2) to (y2);
\draw[->] (y2) to (2,0);
\draw[->] (b23) to (b22);
\draw[->] (b22) to (b21);
\draw[->] (b21) to (b20);
\end{tikzpicture}
\end{center}
To fix this issue, we modify $\mathfrak C(\clustert, \bullet)$ by making it symmetric in both directions. Consider the frame $\mathfrak F_k = (W_k',R_{\bullet k\bullet },\INT_k')$, $k > 0$, in which the points in 
$$
W_k' = \{a_0,\dots, a_{k-1} \} \cup \{\yy_n^{L},\yy_n^{R} \mid n < \omega \}
$$
are ordered as shown in the picture below
%
%
\begin{center}
\begin{tikzpicture}[>=latex,line width=0.5pt,xscale = 1,yscale = 1]
\node[point,fill=black,scale = 0.6,label=below:{\footnotesize $\yy_3^{L}$}]  (y3) at (1,0) {};
\node[point,fill=black,scale = 0.6,label=below:{\footnotesize $\yy_2^{L}$}]  (y2) at (0,0) {};
\node[point,fill=black,scale = 0.6,label=below:{\footnotesize $\yy_1^{L}$}]  (y1) at (-1,0) {};
\node[point,fill=black,scale = 0.6,label=below:{\footnotesize $\yy_0^{L}$}]  (y0) at (-2,0) {};
\node[]  at (1.55,0) {$\dots$};
\node[point,scale = 0.7,label=below:{\footnotesize $a_0$}] (a20) at (2.4,0) {};
\node[point,scale = 0.7,label=below:{\footnotesize $a_{k-1}$}] (a21) at (3.5,0) {};
\node[]  at (3,0) {$\dots$};
\draw[] (3,-.1) ellipse (1 and .5);
\node[]  at (4.5,0) {$\dots$};
\node[point,fill=black,scale = 0.6,label=below:{\footnotesize $\yy_3^{R}$}] (b23) at (5,0) {};
\node[point,fill=black,scale = 0.6,label=below:{\footnotesize $\yy_2^{R}$}] (b22) at (6,0) {};
\node[point,fill=black,scale = 0.6,label=below:{\footnotesize $\yy_1^{R}$}] (b21) at (7,0) {};
\node[point,fill=black,scale = 0.6,label=below:{\footnotesize $\yy_0^{R}$}] (b20) at (8,0) {};
\draw[->] (y0) to (y1);
\draw[->] (y1) to (y2);
\draw[->] (y2) to (y3);
\draw[->] (b23) to (b22);
\draw[->] (b22) to (b21);
\draw[->] (b21) to (b20);
\end{tikzpicture}
\end{center}
or, more formally, $xR_{\bullet k\bullet} y$ iff ($x=\yy_n^{L}$, $y=\yy_m^{L}$ for $n<m$), 
or ($x=\yy_n^{L}$, $y=a_{i}$),
or ($x=\yy_n^{L}$, $y=\yy_m^{R}$),
or ($x=a_{i}$, $y=a_{j}$) 
or $(x=a_{i}$, $y= \yy_n^{R}$), 
or ($x = \yy_n^{R}$, $y = \yy_m^{R}$, for $n>m$). 
The internal sets in $\INT_k$ are generated by 
\begin{equation}\label{internalS}
X_i = \{a_i\} \cup \{\yy_n^{L},\yy_n^{R} \mid n < \omega,\ n \equiv i \ (\text{mod}\ k)  \}, \quad \text{for $i<k$.}
\end{equation}
Observe that $\{\yy_n^{L}\},\{\yy_n^{R}\}\in\INT_k'$, for all $n<\omega$.
It is not hard to see that $\mathfrak F_k$ is a descriptive frame; we denote it by $\mathfrak{C}(\bullet,\clusterk,\bullet)$. 
As an exercise, the reader can check that, for any natural numbers $k, l, \dots, m, n > 0$,
\begin{align}\label{gff}
& \mathfrak{C}(\bullet,\clusterk,\bullet)\lhd \cdots \lhd \mathfrak{C}(\bullet,\clustern,\bullet) \models \Linf,\\ \label{gfz}
& \mathfrak{C}(\clusterk,\bullet)\lhd \mathfrak{C}(\bullet,\clusterl,\bullet)\lhd \cdots \lhd \mathfrak{C}(\bullet,\clusterm,\bullet)\lhd \mathfrak{C}(\bullet,\clustern)
\models \LinZ,
\end{align}
where $\mathfrak{C}(\bullet,\clustern)$ is the mirror image of $\mathfrak{C}(\clustern,\bullet)$; see also Lemma~\ref{l:tempdframes}.

The picture below shows models $\mathfrak M_1$ and $\mathfrak M_2$ based on $\mathfrak{C}(\bullet,\clustert,\bullet)$ and satisfying the conditions of Theorem~\ref{criterion} for $\varphi_1$ and $\varphi_2$:
%
%
\begin{center}
\begin{tikzpicture}[>=latex,line width=0.5pt,xscale = 1.05,yscale = .5]
\node[]  at (-2.6,0) {{\small $\mathfrak M_2$}};
\node[point,fill=black,scale = 0.6,label=below:{\footnotesize $p_2,q_2$}] (y23) at (1,0) {};
\node[point,fill=black,scale = 0.6,label=below:{\footnotesize $p_1,q_2$}] (y22) at (0,0) {};
\node[point,fill=black,scale = 0.6,label=below:{\footnotesize $p_2$}] (y21) at (-1,0) {};
\node[point,fill=black,scale = 0.6,label=below:{\footnotesize $\neg \varphi_2$}] (y20) at (-2,0) {};
\node[]  at (1.55,0) {$\dots$};
\node[point,scale = 0.7,label=below:{\footnotesize $p_2$}] (a20) at (2.5,0) {};
\node[point,scale = 0.7,label=below:{\footnotesize $p_1$}] (a21) at (3.5,0) {};
\draw[] (3,-.1) ellipse (1 and .9);
\node[scale = 0.9]  at (3,.3) {$q_2$};
\node[]  at (4.5,0) {$\dots$};
\node[point,fill=black,scale = 0.6,label=below:{\footnotesize $p_1,q_2$}] (b23) at (5,0) {};
\node[point,fill=black,scale = 0.6,label=below:{\footnotesize $p_2,q_2$}] (b22) at (6,0) {};
\node[point,fill=black,scale = 0.6,label=below:{\footnotesize $p_1,q_2$}] (b21) at (7,0) {};
\node[point,fill=black,scale = 0.6,label=below:{\footnotesize $p_2,q_2$}] (b20) at (8,0) {};
\draw[->] (y20) to (y21);
\draw[->] (y21) to (y22);
\draw[->] (y22) to (y23);
\draw[->] (b23) to (b22);
\draw[->] (b22) to (b21);
\draw[->] (b21) to (b20);
\node[]  at (-2.6,3) {{\small $\mathfrak M_1$}};
\node[point,fill=black,scale = 0.6,label=above:{\footnotesize $p_1,q_1$}] (y13) at (1,3) {};
\node[point,fill=black,scale = 0.6,label=above:{\footnotesize $p_2,q_1$}] (y12) at (0,3) {};
\node[point,fill=black,scale = 0.6,label=above:{\footnotesize $p_1$}] (y11) at (-1,3) {};
\node[point,fill=black,scale = 0.6,label=above:{\footnotesize $\varphi_1$}] (y10) at (-2,3) {};
\node[]  at (1.55,3) {$\dots$};
\node[point,scale = 0.7,label=above:{\footnotesize $p_2$}] (a10) at (2.5,3) {};
\node[point,scale = 0.7,label=above:{\footnotesize $p_1$}] (a11) at (3.5,3) {};
\draw[] (3,3.1) ellipse (1 and .9);
\node[scale = 0.9]  at (3,2.7) {$q_1$};
\node[]  at (4.5,3) {$\dots$};
\node[point,fill=black,scale = 0.6,label=above:{\footnotesize $p_1,q_1$}] (b13) at (5,3) {};
\node[point,fill=black,scale = 0.6,label=above:{\footnotesize $p_2,q_1$}] (b12) at (6,3) {};
\node[point,fill=black,scale = 0.6,label=above:{\footnotesize $p_1,q_1$}] (b11) at (7,3) {};
\node[point,fill=black,scale = 0.6,label=above:{\footnotesize $p_2,q_1$}] (b10) at (8,3) {};
\draw[->] (y10) to (y11);
\draw[->] (y11) to (y12);
\draw[->] (y12) to (y13);
\draw[->] (b13) to (b12);
\draw[->] (b12) to (b11);
\draw[->] (b11) to (b10);
\node[gray]  at (-2.3,1.5) {$\bis$};
\draw[gray,thick,dotted] (y10) to (y20);
\draw[gray,thick,dotted] (y11) to (y22);
\draw[gray,thick,dotted] (y12) to (y21);
\draw[gray,thick,dotted] (y12) to (y23);
\draw[gray,thick,dotted] (y13) to (y22);
\draw[gray,thick,dotted] (a10) to (a20);
\draw[gray,thick,dotted] (a11) to (a21);
\draw[gray,thick,dotted] (b13) to (b23);
\draw[gray,thick,dotted] (b12) to (b22);
\draw[gray,thick,dotted] (b11) to (b21);
\draw[gray,thick,dotted] (b10) to (b20);
\end{tikzpicture}
\end{center}
By~\eqref{gff}, $\mathfrak{C}(\bullet,\clustert,\bullet) \models \Linf$, so $\varphi_1$ and $\varphi_2$ do not have an interpolant in $\Linf$.

\smallskip
$(b)$ Consider next the temporal version of the implication $\varphi_1'\rightarrow\varphi_2$ from Example~\ref{ex:GL.3}~$(b)$, which is clearly valid in $\LinZ$. To demonstrate that $\varphi_1'$ and $\varphi_2$ have no interpolant in $\LinZ$, we can use  
$\mathfrak{C}(\clustero,\bullet) \lhd \mathfrak{C}(\bullet,\clustert,\bullet)\lhd \mathfrak{C}(\bullet,\clustero)$, which is a frame for $\LinZ$ by~\eqref{gfz}. The models $\mathfrak M_1$ and $\mathfrak M_2$ depicted below
%
%
\begin{center}
\begin{tikzpicture}[>=latex,line width=0.5pt,xscale = .7,yscale = .4]
\node[]  at (-4.6,-.7) {{\small $\mathfrak M_2$}};
\node[point,fill=black,scale = 0.5,label=below:{\tiny $p_2,q_2$}] (y23) at (1,0) {};
\node[point,fill=black,scale = 0.5,label=below:{\tiny $p_1,q_2$}] (y22) at (0,0) {};
\node[point,fill=black,scale = 0.5,label=below:{\tiny $p_2$}] (y21) at (-1,0) {};
\node[point,fill=black,scale = 0.5,label=below:{\tiny $\neg \varphi_2$}] (y20) at (-2,0) {};
\node[point,fill=black,scale = 0.5] (w20) at (-3,0) {};
\node[point,fill=black,scale = 0.5] (w21) at (-4,0) {};
\node[]  at (-4.6,0) {$\dots$};
\node[point,scale = 0.6] (ww2) at (-5.2,0) {};
\node[]  at (1.55,0) {$\dots$};
\node[point,scale = 0.6,label=below:{\tiny $p_2$}] (a20) at (2.5,0) {};
\node[point,scale = 0.6,label=below:{\tiny $p_1$}] (a21) at (3.5,0) {};
\draw[] (3,-.1) ellipse (1 and 1);
\node[scale = 0.7]  at (3,.3) {$q_2$};
\node[]  at (4.5,0) {$\dots$};
\node[point,fill=black,scale = 0.5,label=below:{\tiny $p_1,q_2$}] (b23) at (5,0) {};
\node[point,fill=black,scale = 0.5,label=below:{\tiny $p_2,q_2$}] (b22) at (6,0) {};
\node[point,fill=black,scale = 0.5,label=below:{\tiny $p_1,q_2$}] (b21) at (7,0) {};
\node[point,fill=black,scale = 0.5,label=below:{\tiny $p_2,q_2$}] (b20) at (8,0) {};
\node[point,fill=black,scale = 0.5,label=below:{\tiny $q_2$}] (z20) at (9,0) {};
\node[point,fill=black,scale = 0.5,label=below:{\tiny $q_2$}] (z21) at (10,0) {};
\node[]  at (10.6,0) {$\dots$};
\node[point,scale = 0.6,label=below:{\tiny $q_2$}] (zz2) at (11.2,0) {};
\draw[->] (w21) to (w20);
\draw[->] (w20) to (y20);
\draw[->] (y20) to (y21);
\draw[->] (y21) to (y22);
\draw[->] (y22) to (y23);
\draw[->] (b23) to (b22);
\draw[->] (b22) to (b21);
\draw[->] (b21) to (b20);
\draw[->] (b20) to (z20);
\draw[->] (z20) to (z21);
\node[]  at (-4.6,3.7) {{\small $\mathfrak M_1$}};
\node[point,fill=black,scale = 0.5,label=above:{\tiny $p_1,q_1$}] (y13) at (1,3) {};
\node[point,fill=black,scale = 0.5,label=above:{\tiny $p_2,q_1$}] (y12) at (0,3) {};
\node[point,fill=black,scale = 0.5,label=above:{\tiny $p_1$}] (y11) at (-1,3) {};
\node[point,fill=black,scale = 0.5,label=above:{\tiny $\varphi_1'$}] (y10) at (-2,3) {};
\node[point,fill=black,scale = 0.5] (w10) at (-3,3) {};
\node[point,fill=black,scale = 0.5] (w11) at (-4,3) {};
\node[]  at (-4.6,3) {$\dots$};
\node[point,scale = 0.6] (ww1) at (-5.2,3) {};
\node[]  at (1.55,3) {$\dots$};
\node[point,scale = 0.6,label=above:{\tiny $p_2$}] (a10) at (2.5,3) {};
\node[point,scale = 0.6,label=above:{\tiny $p_1$}] (a11) at (3.5,3) {};
\draw[] (3,3.1) ellipse (1 and 1);
\node[scale = 0.7]  at (3,2.7) {$q_1$};
\node[]  at (4.5,3) {$\dots$};
\node[point,fill=black,scale = 0.5,label=above:{\tiny $p_1,q_1$}] (b13) at (5,3) {};
\node[point,fill=black,scale = 0.5,label=above:{\tiny $p_2,q_1$}] (b12) at (6,3) {};
\node[point,fill=black,scale = 0.5,label=above:{\tiny $p_1,q_1$}] (b11) at (7,3) {};
\node[point,fill=black,scale = 0.5,label=above:{\tiny $p_2,q_1$}] (b10) at (8,3) {};
\node[point,fill=black,scale = 0.5,label=above:{\tiny $r,q_1$}] (z10) at (9,3) {};
\node[point,fill=black,scale = 0.5,label=above:{\tiny $q_1$}] (z11) at (10,3) {};
\node[]  at (10.6,3) {$\dots$};
\node[point,scale = 0.6,label=above:{\tiny $q_1$}] (zz1) at (11.2,3) {};
\draw[->] (w11) to (w10);
\draw[->] (w10) to (y10);
\draw[->] (y10) to (y11);
\draw[->] (y11) to (y12);
\draw[->] (y12) to (y13);
\draw[->] (b13) to (b12);
\draw[->] (b12) to (b11);
\draw[->] (b11) to (b10);
\draw[->] (b10) to (z10);
\draw[->] (z10) to (z11);
\node[gray]  at (-1.5,1.5) {{\small $\bis$}};
\draw[gray,thick,dotted] (ww1) to (ww2);
\draw[gray,thick,dotted] (w10) to (w20);
\draw[gray,thick,dotted] (w11) to (w21);
\draw[gray,thick,dotted] (y10) to (y20);
\draw[gray,thick,dotted] (y11) to (y22);
\draw[gray,thick,dotted] (y12) to (y21);
\draw[gray,thick,dotted] (y12) to (y23);
\draw[gray,thick,dotted] (y13) to (y22);
\draw[gray,thick,dotted] (a10) to (a20);
\draw[gray,thick,dotted] (a11) to (a21);
\draw[gray,thick,dotted] (b13) to (b23);
\draw[gray,thick,dotted] (b12) to (b22);
\draw[gray,thick,dotted] (b11) to (b21);
\draw[gray,thick,dotted] (b10) to (b20);
\draw[gray,thick,dotted] (zz1) to (zz2);
\draw[gray,thick,dotted] (z10) to (z20);
\draw[gray,thick,dotted] (z11) to (z21);
\end{tikzpicture}
\end{center}
satisfy the conditions of Theorem~\ref{criterion} for $\varphi_1'$ and $\varphi_2$. 
\hfill $\dashv$
\end{example}

As illustrated by Example~\ref{ex:GL.3-temporal}, the temporal frames $\mathfrak F = (W,R,\INT)$ we need for checking the criterion of Theorem~\ref{criterion}  may contain both infinite descending and ascending chains of clusters (and so the $\FC^{-1}$ are not necessarily isomorphic to ordinals). Accordingly, we now have $R$-\emph{final} and $R^-$-\emph{final clusters} as well as two types of limit clusters: 
an $R$-\emph{limit cluster} is a non-$R^-$-final cluster without an immediate $R^-$-successor; an $R^-$-\emph{limit cluster} is a non-$R$-final cluster without an immediate $R$-successor. Some clusters can be both $R$- and $R^-$-limit clusters. 


We say that a set $S \ne \emptyset$ of clusters in $\mathfrak F$ is \emph{$R$-unbounded} (\emph{$R^-$-unbounded}) if there is no $C \in S$ such that $C' \le_R C$ (respectively, $C \le_R  C'$), for all $C' \in S$. 
A cluster $C$ is the $R$-\emph{limit of} an $R$-unbounded set $S$ if $C' <_R C$ for all $C' \in S$ and there is no cluster $C''$ with $C' <_R C''<_R C$ for all $C' \in S$; the $R^-$-\emph{limit of} an $R^-$-unbounded set $S$ is defined symmetrically by replacing $R$ with $R^-$. 
It is straightforward to see that each $R$-limit cluster $C$ is the $R$-limit of the $R$-unbounded set
$\{C'\mid C'<_R C\}$, and each $R^-$-limit cluster $D$ is the $R^-$-limit of the $R^-$-unbounded set
$\{D'\mid D<_R D'\}$. 
For any cluster $C$, we let
$(C,+\infty)=\{x\mid C<_R C(x)\}$ and $(-\infty,C)=\{x\mid C(x)<_R C\}$.

%
\begin{lemma}\label{lem:descr00}
Suppose $\mathfrak F=(W,R,\INT)$ is a temporal $n$-generated descriptive frame, for some $n<\omega$. Then
%
\begin{itemize}
\item[$(a)$] every cluster in $\mathfrak F$ has at most $2^n$ points\textup{;}
\item[$(b)$] every $R$-unbounded \textup{(}$R^-$-unbounded\textup{)} set of clusters in $\mathfrak F$ has an $R$-limit \textup{(}$R^-$-limit\textup{)} in $\mathfrak F$, and so $\mathfrak F$ contains both $R$- and $R^-$-final clusters.
\end{itemize}
\end{lemma}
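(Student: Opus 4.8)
The plan is to follow the template of Lemma~\ref{lem:descr0}, but to replace the converse-well-foundedness argument used there by a compactness argument, since a temporal descriptive frame may carry infinite ascending \emph{and} descending chains of clusters and so its order of clusters need not be isomorphic to an ordinal.

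For part~$(a)$, fix a finite generating set $\mathcal G$ with $|\mathcal G|=n$ and, as in the proof of Lemma~\ref{lem:descr0}, write $x\sim_{\mathcal G}y$ when $x\in G\Leftrightarrow y\in G$ for every $G\in\mathcal G$. I would show, by induction on the construction of $X\in\INT$ from $\mathcal G$, that $C(x)=C(y)$ together with $x\sim_{\mathcal G}y$ forces $x\in X\Leftrightarrow y\in X$. The Boolean cases are immediate and the base case is $\sim_{\mathcal G}$; the only cases needing attention are $X=\Df^{\mathfrak F}Y$ and $X=\Dp^{\mathfrak F}Y$, and here the temporal argument is in fact easier than the modal one. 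Since $x,y$ lie in a single cluster we have $xRy$, $yRx$ (hence also $xR^-y$, $yR^-x$), so transitivity of $R$ and of $R^-$ lets one prepend the intra-cluster edge to any witnessing path: $x\in\Df^{\mathfrak F}Y\Leftrightarrow y\in\Df^{\mathfrak F}Y$, and symmetrically for $\Dp^{\mathfrak F}$. By \textbf{(dif)} this yields $x=y$, so $|C|\le 2^{|\mathcal G|}=2^n$. No "middle-point" stabilisation is needed, precisely because the two operators are only ever compared inside one cluster.

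For part~$(b)$ I would first note that finite generation makes $\INT$ countable, so $\mathfrak F$ is $\mathfrak M$-generated for some temporal model $\mathfrak M$ and we may reason with types. The core is the claim that every $R$-unbounded set $S$ of clusters has an $R$-limit; the existence of an $R$-final cluster then follows formally, for if $\mathfrak F$ had no maximum cluster the whole set $W_c$ would be $R$-unbounded and its $R$-limit $C$ would satisfy $C<_R C$, which is absurd. To prove the claim I would argue in two stages. \emph{Stage~1 (a strict upper bound exists).} Choose $y_C\in C$ for each $C\in S$ and put $\Gamma=\bigcup_{C\in S}\{\Df^{-}\text{-saturation conjuncts}\}$, precisely $\Gamma=\bigcup_{C\in S}\{\Dp\bigwedge\Delta'\mid \Delta'\subseteq t_{\mathfrak M}(y_C)\ \text{finite}\}$. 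A finite subset of $\Gamma$ mentions only finitely many clusters of $S$, and $R$-unboundedness supplies some $C^\ast\in S$ strictly above all of them, so any point of $C^\ast$ satisfies that finite subset; by \textbf{(com)} the whole of $\Gamma$ is realised. Writing $P=\{x\mid\mathfrak M,x\models\Gamma\}$, the temporal-saturation half of Lemma~\ref{minpoints} together with \textbf{(dif)} shows that $x\in P$ iff $y_C R x$ for all $C\in S$; and since $S$ has no greatest element, being weakly $R$-above every $C$ already forces $C<_R C(x)$ for all $C$, so $P$ is exactly the set of strict upper bounds of $S$, now known to be nonempty.

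\emph{Stage~2 (take the least one).} Since $P=\{x\mid\mathfrak M,x\models\Gamma\}$ and $\Gamma$ is satisfiable, the minimal-points half of Lemma~\ref{minpoints} gives an $R^-$-minimal $p\in P$. Then $C(p)$ is strictly above all of $S$, and any cluster $C''$ with $C'<_R C''<_R C(p)$ for every $C'\in S$ would itself lie in $P$ and strictly $R^-$-below $p$, contradicting minimality; hence $C(p)$ is the $R$-limit of $S$. The $R^-$-case is the mirror image, interchanging $R\leftrightarrow R^-$ and $\Df\leftrightarrow\Dp$ and using that \textbf{(\tight)} for $R^-$ comes for free from \textbf{(\tight)} for $R$, and the $R^-$-final cluster is extracted from the $R^-$-limit claim exactly as above. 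The main obstacle I anticipate is getting the encoding of $\Gamma$ right so that $P$ is \emph{exactly} the set of strict upper bounds: limit clusters need not be definable, so "strictly above $C$" cannot be captured by one formula, and the clean resolution is to express only the positive (conjunctive) condition "$y_C$ lies in my past" through the saturation conjuncts $\Dp\bigwedge\Delta'$ and then upgrade weak to strict via $R$-unboundedness. Once this is in place, the finite-satisfiability check and the limit/minimality verifications are routine applications of \textbf{(dif)}, \textbf{(\tight)} and Lemma~\ref{minpoints}.
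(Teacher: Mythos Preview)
Your proposal is correct and follows essentially the same approach as the paper: part~$(a)$ is identical, and for part~$(b)$ both you and the paper build a type $\Gamma$ whose realisers are the (weak) upper bounds of $S$, invoke \textbf{(com)} for satisfiability, and then apply the minimal-points half of Lemma~\ref{minpoints} to extract the $R$-limit. The only differences are cosmetic: the paper's $\Gamma$ additionally contains the conjuncts $\{\psi\mid\Bf\psi\in t_{\mathfrak M}(y_C)\}$, which are redundant since they already follow from the $\Dp$-conjuncts via \textbf{(\tight)}, and the paper runs the minimality argument in one pass rather than your two stages.
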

\begin{proof}
$(a)$ is proved similarly to Lemma~\ref{lem:descr0}~$(b)$. 

$(b)$ 
Suppose $\mathfrak F$ is $\mathfrak M$-generated, for some model $\mathfrak M$. 
Let $S$ be an $R$-unbounded set of clusters in $\mathfrak F$ with $y_C\in C$, $C\in S$, and let
\begin{equation*}
\Gamma = \bigcup_{C \in S}\Dp t_{\mathfrak M}(y_C) \cup  \bigcup_{C \in S} \{\psi \mid \Bf\psi \in t_{\mathfrak M}(y_C)\}.
\end{equation*}
Clearly, $\Gamma$ is finitely satisfiable in $\mathfrak M$, and so by {\bf (com)} and Lemma~\ref{minpoints}, there is a $\Gamma$-minimal point $x$ in $\mathfrak M$.
By {\bf (\tight)}, $y_C R x$ for all $C\in S$. Now suppose that $y$ is such that $y_CRy$, for all $C\in S$, and
$yRx$. Then $\Gamma\subseteq t_{\mathfrak M}(y)$, and so $xRy$ by the $\Gamma$-minimality of $x$.
Thus, $C(x)$ is the $R$-limit of $S$.
The existence of $R^-$-limits of $R^-$-unbounded $S$ is symmetric. 
%
%
%
\end{proof}


A cluster $C$ is called \emph{minimal} (\emph{maximal}) in a temporal model $\mathfrak{M}$ if there is a formula $\mu$ such that $C\cap\min_{\mathfrak M}\{\mu\}\ne\emptyset$ ($C\cap\max_{\mathfrak M}\{\mu\}\ne\emptyset$). If there is such a $\sigma$-formula $\mu$, for some signature $\sigma$, we call $C$ \emph{$\sigma$-minimal} (\emph{$\sigma$-maximal}) \emph{in} $\mathfrak{M}$.

\begin{lemma}\label{lem:descr''}
Suppose $\mathfrak{M}$ is a model based on a finitely $\mathfrak M$-generated temporal descriptive frame $\mathfrak F$.  
Then
\begin{itemize}


\item[$(a)$] every degenerate cluster in $\mathfrak F$ is both maximal and minimal in $\mathfrak{M}$\textup{;}

\item[$(b)$] a cluster is maximal \textup{(}minimal\textup{)} in $\mathfrak{M}$ iff either it is $R$-final \textup{(}respectively, $R^-$-final\textup{)} or has an immediate $R$-successor \textup{(}respectively, $R^-$-successor\textup{)}\textup{;}

\item[$(c)$] a cluster is definable in $\mathfrak{M}$ iff it is both maximal and minimal in $\mathfrak{M}$.
\end{itemize}
It follows that the $R$- and $R^-$-limit clusters are not definable and not degenerate\textup{;} all other clusters are definable in $\mathfrak M$. We also have that 
\begin{itemize}
\item[$(d)$] for any clusters $C <_R C'$ in $\mathfrak F$, the interval $[C, C']$ contains a
 maximal cluster and also a minimal one\textup{;} 


\item[$(e)$]
if $C$ is not an $R$-limit cluster and $C'$ is not an $R^-$-limit cluster, then the closed interval $[C,C']$  is definable in $\mathfrak M$. 
\end{itemize}

\end{lemma}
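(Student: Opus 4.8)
The plan is to prove Lemma~\ref{lem:descr''} as the temporal mirror of Lemma~\ref{lem:descr'} (for $(a)$--$(c)$) and of Lemma~\ref{l:closedintdef} (for $(e)$), carrying out every argument symmetrically in the future relation $R$ (with $\Df$, $\Bf$, $\max_{\mathfrak M}$) and the past relation $R^-$ (with $\Dp$, $\Bp$, $\min_{\mathfrak M}$). The one structural input that is no longer available is converse well-foundedness (Lemma~\ref{lem:descr0}$(a)$), since temporal frames admit infinite ascending chains; its role is taken over by the existence of $R$- and $R^-$-limits granted by Lemma~\ref{lem:descr00}$(b)$ together with compactness \textbf{(com)}. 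Throughout I would use the reformulations of \textbf{(\tight)} and \textbf{(com)} for $\mathfrak M$-generated frames and the fact that, as $\mathfrak F$ is finitely $\mathfrak M$-generated, every internal set is the value of a formula.

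For $(a)$, if $C(x)$ is degenerate then $x$ is irreflexive for both $R$ and $R^-$, so \textbf{(\tight)} (applied to $R$ and then to $R^-$) yields formulas $\mu,\mu'$ with $\mathfrak M,x\models\mu\wedge\neg\Df\mu$ and $\mathfrak M,x\models\mu'\wedge\neg\Dp\mu'$, making $C(x)$ both maximal and minimal. For $(b)$ I treat ``maximal'' and ``minimal'' separately and dually, describing only the maximal case. Direction $(\Leftarrow)$ mirrors Lemma~\ref{lem:descr'}$(b)$: an $R$-final cluster is $\top$-maximal, and if $C(y)$ is an immediate $R$-successor of $C(x)$ then, using $(a)$ when $C(y)$ is degenerate, one exhibits the witness $\Df(\mu\wedge\neg\Df\mu)$, while if $C(y)$ is non-degenerate one argues via \textbf{(\tight)} that $C(x)$ must be maximal. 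Direction $(\Rightarrow)$ --- equivalently, that an $R^-$-limit cluster $C(x)$ (non-$R$-final, no immediate $R$-successor) is never maximal --- is the technical heart. Assuming $C(x)$ were maximal with witness $\mu$ ($\mu$ true at $x$, false strictly $R$-above $C(x)$), I would set $S=\{C\mid C(x)<_R C\}$, pick $y_C\in C$, and form
\[
\Gamma=\bigcup_{C\in S}\Df t_{\mathfrak M}(y_C)\cup\{\psi\mid\Bf\psi\in t_{\mathfrak M}(x)\}\cup\{\Bf\neg\mu\}.
\]
Finite satisfiability holds because $S$ has no $<_R$-minimum, so any point strictly between $C(x)$ and the finitely many chosen clusters realises the finite fragment; then \textbf{(com)} gives a realising $y$, and \textbf{(\tight)} forces $xRy$ and $yRy_C$ for all $C\in S$, which (again since $S$ has no minimum) pins $C(y)=C(x)$. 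If $C(x)$ is non-degenerate this gives $yRx$, whence $\Bf\neg\mu$ contradicts $\mathfrak M,x\models\mu$; the degenerate sub-case is impossible outright, since $C(y)=C(x)=\{x\}$ would force $xRx$. The minimal case is the verbatim $R^-$/$\Dp$/$\Bp$ dual. The concluding remarks then fall out of $(a)$--$(c)$: a degenerate cluster is max and min by $(a)$, hence definable, so no limit is degenerate; an $R$-limit fails to be minimal and an $R^-$-limit fails to be maximal, so neither is definable by $(c)$; and a non-limit cluster is both max and min by $(b)$, hence definable.

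For $(c)$ and $(e)$ I would use a uniform two-sided defining trick. If $C$ is definable by $\psi$, then its top point is $\{\psi\}$-maximal and its bottom point $\{\psi\}$-minimal, giving $(c,\Rightarrow)$. Conversely, if $C$ is both maximal and minimal, take $\nu$ with $C$ $\nu$-maximal (so $\nu$ is false strictly $R$-above $C$) and $\mu$ with $C$ $\mu$-minimal (so $\mu$ is false strictly $R$-below $C$); then $\Df^+\nu$ defines the down-set $\{x\mid C(x)\le_R C\}$, $\Dp^+\mu$ defines the up-set $\{x\mid C(x)\ge_R C\}$, and their conjunction defines $C$. The same computation yields $(e)$: by $(b)$, ``$C$ not an $R$-limit'' means $C$ is minimal and ``$C'$ not an $R^-$-limit'' means $C'$ is maximal, so choosing $\mu$ with $C$ $\mu$-minimal and $\nu$ with $C'$ $\nu$-maximal, the formula $\Dp^+\mu\wedge\Df^+\nu$ defines exactly $[C,C']$. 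Finally, $(d)$ needs no finite-generation input: given $C<_R C'$, pick $x\in C$ and $x'\in C'$; since $xR^s x'$, \textbf{(\tight)} supplies a formula $\mu$ with $\mathfrak M,x\models\mu$ and $\mathfrak M,x'\models\Bf\neg\mu$, so $\mu$ holds in $C$ but at no cluster strictly $R$-above $C'$; by Lemma~\ref{minpoints} the set $\max_{\mathfrak M}\{\mu\}$ is non-empty, and linearity forces every $\{\mu\}$-maximal point into $[C,C']$, yielding a maximal cluster there, with a minimal one obtained by the $\Dp$/$\Bp$ dual. The main obstacle is the compactness argument for $(b,\Rightarrow)$: keeping the $R$/$R^-$ bookkeeping correct and, crucially, replacing the modal appeal to converse well-foundedness by the no-$<_R$-minimum property of $S$, so that the realising point is driven back into $C(x)$.
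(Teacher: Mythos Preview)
Your proposal is correct and follows essentially the same route as the paper, which simply says that $(a)$--$(c)$ are proved ``in the same way as Lemma~\ref{lem:descr'}'' and gives short arguments for $(d)$ and $(e)$ matching yours (the paper's $(d)$ uses $\Bf\varphi$-minimal and $\Bp\psi$-maximal points where you use $\mu$-maximal and its dual, but both are straightforward applications of \textbf{(\tight)} plus Lemma~\ref{minpoints}). One small framing correction: your concern about losing converse well-foundedness is misplaced---the modal proof of Lemma~\ref{lem:descr'}$(b,\Rightarrow)$ already proceeds via the compactness set $\Gamma$ and the ``no $<_R$-minimum'' property of $S$ (that is exactly what ``limit cluster'' means there), so nothing needs to be replaced in the temporal version, and Lemma~\ref{lem:descr00}$(b)$ is not actually invoked in your argument for $(a)$--$(e)$.
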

\begin{proof}
Items $(a)$--$(c)$ are proved in the same way as  Lemma~\ref{lem:descr'}. Item $(d)$ follows from {\bf (\tight)}, which gives formulas $\varphi$ and $\psi$ with $\mathfrak M,x \not\models \Bf\varphi$, $\mathfrak M,y \models \Bf\varphi$ and $\mathfrak M,x \models \Bp\psi$, $\mathfrak M,y \not\models \Bp\psi$, and so $[C(x), C(y)]$ contains a $\Bf\varphi$-minimal cluster and a $\Bp\psi$-maximal one.
Item $(e)$: by $(b)$, $C$ is $\lambda$-minimal and $C'$ is $\mu$-maximal for some $\lambda,\mu$.
Then $[C,C']$ is defined in $\mathfrak M$ by $\Dp^+\lambda\land\Df^+\mu$. 
\end{proof}

%
%


The following temporal analogue is harder to prove than Lemma~\ref{l:countable}:

\begin{lemma}\label{tem-count}
If $\mathfrak F = (W,R,\INT)$ is a finitely generated temporal descriptive frame, then 
$W$ is countable.
\end{lemma}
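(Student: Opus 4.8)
The plan is to exploit the fact that a finitely generated modal (here: tense) algebra is \emph{countable} as a Boolean algebra, and to combine this with descriptivity to reduce the statement to a topological dichotomy. Writing $\mathfrak F^+=(\INT,\cap,\neg,\emptyset,W,\Df^{\mathfrak F},\Dp^{\mathfrak F})$, finite generation means every element of $\INT$ is the value of a term over finitely many generators, so $\INT$ is countable. Since $\mathfrak F$ is descriptive, \textbf{(dif)}, \textbf{(\tight)} and \textbf{(com)} identify $W$ with the Stone space of the Boolean reduct of $\INT$, with the internal sets as its clopens; a countable Boolean algebra has a countable clopen basis, so this Stone space is compact and second countable, hence Polish. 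By the perfect set theorem for Polish spaces, $W$ is either countable or contains a nonempty perfect subset. Thus it suffices to show that $W$ is \emph{scattered}, i.e. that $\INT$ is superatomic; note that second countability already forbids an uncountable strictly increasing chain of clopen initial segments, so the only way to be uncountable is through a perfect (dense-in-itself) subset, which corresponds to a densely ordered family of clusters.

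Concretely, I would assume for contradiction that $W$ is uncountable and fix an $\mathfrak M$ generating $\mathfrak F$ (possible since $\INT$ is countable), so that \textbf{(dif)} reads: distinct points have distinct temporal types. An uncountable perfect subset yields clusters $C_q$, $q\in\mathbb Q$, all distinct and pairwise separated by internal sets, with $C_q<_R C_{q'}$ whenever $q<q'$. Now I would introduce the finite-generation colouring used in Lemma~\ref{lem:descr0}: with a finite generating set $\mathcal G$, assign to each point $x$ its colour $c(x)=\{G\in\mathcal G\mid x\in G\}$ and to each cluster the finite \emph{future profile} $\{c(y)\mid C<_R C(y)\text{ or }y\in C\}$ and \emph{past profile} $\{c(y)\mid C(y)<_R C\text{ or }y\in C\}$, both ranging over the finite set $\mathcal P(2^{\mathcal G})$. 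Using Ramsey's theorem on the densely ordered index set $\mathbb Q$, together with Lemma~\ref{lem:descr00}$(a)$ (clusters have at most $2^{|\mathcal G|}$ points), I would extract three clusters $C_1<_R C_2<_R C_3$, each with further $C_q$ strictly between them, whose full profiles and the colours of matching representatives agree. The goal is then to build, out of this homogeneity, a temporal $\mathcal V$-bisimulation between a point of $C_1$ and a point of $C_3$: the matching future and past profiles provide the \textbf{(\move)} and \textbf{(\move$^-$)} witnesses, while \textbf{(com)} and the temporal saturation of Lemma~\ref{minpoints} guarantee that the required successors and predecessors are actually realised in the descriptive frame. By the temporal analogue of Lemma~\ref{bisim-lemma}, bisimilar points have the same temporal type, so by \textbf{(dif)} they coincide, forcing $C_1=C_3$ and contradicting $C_1<_R C_2<_R C_3$.

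The main obstacle is exactly the last step: one-level agreement of the finite future/past profiles is \emph{not} by itself enough to force equality of full temporal types, since the internal sets probe arbitrarily deep nestings of $\Df^{\mathfrak F}$ and $\Dp^{\mathfrak F}$, and the matching must be propagated coherently in \emph{both} time directions at once. Making this rigorous is what turns the simple one-directional stabilisation argument of Lemma~\ref{lem:descr0}$(a)$ into the harder two-directional statement here: I expect to need a genuinely back-and-forth Ramsey-style extraction (homogenising not just single clusters but whole finite neighbourhoods of the dense family) so that the constructed relation is closed under both move conditions. As an alternative engine for scatteredness that bypasses the perfect set theorem, one could instead prove directly, by transfinite Cantor--Bendixson-style removal of the definable clusters (which inject into the countable $\INT$ by Lemma~\ref{lem:descr''}$(c)$) and the isolated limit clusters of Lemma~\ref{lem:descr00}$(b)$, that the derivative process terminates; but either route funnels through the same combinatorial core, namely ruling out a colour-homogeneous dense chain of mutually distinguishable clusters.
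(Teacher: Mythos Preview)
Your topological reduction via Stone duality and the perfect set theorem is a legitimate alternative framing, and the passage from an uncountable $W$ to a densely-ordered family of clusters can be made rigorous, though you gloss over it: a perfect subset of the Stone space is a \emph{topological} object and need not directly give an \emph{order}-dense chain of clusters. You would need a Cantor--Bendixson argument on the linear order of clusters (first removing the countably many non-limit clusters, which inject into the countable $\INT$ via Lemma~\ref{lem:descr''}$(c)$, then iteratively removing isolated limit clusters) to reach a dense-in-itself kernel.

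The genuine gap is the one you flag yourself: matching future and past profiles at $C_1$ and $C_3$ does \emph{not} give a bisimulation between representatives, because the $\Df$/$\Dp$ witnesses may land in the interval $(C_1,C_3)$, and you have no inductive control over what happens there. The paper closes this gap not by Ramsey but by a different device. First it proves a ``no twins'' lemma: if $C'<_R C''$ are non-degenerate with $\at(C')=\at(C'')$ and $\at(D)\subseteq\at(C')$ for \emph{every} cluster $D$ in $[C',C'']$, then some $x\in C'$ and $y\in C''$ have the same temporal type (by induction on formulas, replacing any stray witness in $(C',C'')$ by one in $C''$), contradicting \textbf{(dif)}. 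The crucial ingredient is control over the \emph{whole interval}, not just the endpoints---your Ramsey extraction on finitely many clusters cannot deliver this. Second, the paper shows that for a limit cluster $C$, the set $\at(C)$ coincides with the set of atomic types realised ``infinitely close'' to $C$ on either side (using compactness and min/max points). Third, it picks a limit cluster $C$ with \emph{minimal} $\at(C)$ among those having only uncountable neighbourhoods; in a suitable neighbourhood every other limit cluster $D$ then has $\at(D)\subsetneq\at(C)$ (by no-twins), so by minimality $D$ has a countable neighbourhood, forcing the whole neighbourhood of $C$ to be countable---a contradiction. This minimality argument is what replaces your hoped-for ``back-and-forth Ramsey''; it is not clear that any Ramsey colouring of finite tuples of clusters would ever yield the interval-wide containment the twin argument needs.
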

\begin{proof}
By Lemma~\ref{lem:descr00}~$(a)$, it suffices to show that $\mathfrak F_c = (W_c, <_R)$ is countable. 
Suppose $\mathfrak F$ is $\mathfrak{M}$-generated, for some $\delta$-model $\mathfrak M=(\mathfrak F,\mathfrak v)$ and finite signature $\delta$. 
First, observe that, by Lemma~\ref{lem:descr''}~$(b)$, each non-$R$-limit cluster $C$ is $\mu_C$-minimal in $\mathfrak M$ for some $\mu_C$. Thus,
the internal set $X_C=\mathfrak v(\Dp^+\mu_C)$ distinguishes $C$ from every $D$ with $D<_R C$, and so $X_C\ne X_D$ whenever $C\ne D$. As $\INT$ is countable, the number of non-$R$-limit clusters in $\mathfrak F_c$ is countable. Similarly, there are countably-many non-$R^-$-limit clusters in $\mathfrak F_c$.
So  it is enough to show that the number of clusters in $\mathfrak F_c$ that are both $R$- and $R^-$-limits is countable. We refer to such clusters as simply \emph{limit clusters\/}. 
Call an interval $[C^-,C^+]$ a \emph{neighbourhood} of a limit cluster $C$ if $C^-<_{R}C <_R C^+$. 
By Lemma~\ref{lem:descr''}, every limit cluster $C$ has a \emph{nice} neighbourhood $N_C=[C^-,C^+]$ with non-limit clusters 
$C^-$ and $C^+$.
As the number of different nice $N_C$ is countable, it follows that  
%
\begin{multline}
\label{uncountint}
\mbox{every uncountable interval $[D,D']$ contains a limit cluster $C$}\\
\mbox{all of whose neighbourhoods are uncountable}
\end{multline}
(otherwise all limit clusters in $[D,D']$ would belong to the countable union of the countable intervals $N_C$, and so $[D,D']$ were countable).

By an \emph{atomic type} we mean any $\at_{\mathfrak M}^\delta(x)$ with  $x \in W$.
For any cluster $C$, we set $\at(C)=\{\at_{\mathfrak M}^\delta(x)\mid x\in C\}$.
Let $C$ be an $R$-limit cluster. We say that an atomic type $\att$ 
\emph{occurs infinitely $R$-close to} $C$ if, for every $C'<_R C$, there is $C''$ such that $C'<_R C''<_R C$ and 
$\att\in\at(C'')$.
Similarly, $\att$ \emph{occurs infinitely $R^-$-close to} an $R^-$-limit cluster $C$ if whenever $C<_R C'$, then there is $C''$ such that $C<_R C''<_R C'$ and $\att\in\at(C'')$.
We claim that 
\begin{equation}
\label{infcloseto}
\mbox{if $\att$ occurs infinitely $R$-close to an $R$-limit cluster $C$, then $\att\in\at(C)$.}
\end{equation}
Indeed, let $S$ be an $R$-unbounded set of clusters with $R$-limit $C$ and $y_D\in D$, $D\in S$, and let
\begin{equation*}
\Gamma_{\att} = \att\cup\bigcup_{D \in S}\Dp t_{\mathfrak M}(y_D) \cup  \bigcup_{D \in S} \{\psi \mid \Bf\psi \in t_{\mathfrak M}(y_D)\}.
\end{equation*}
If $\att$ occurs infinitely $R$-close to $C$,
it can be shown similarly to the proof of Lemma~\ref{lem:descr00}~$(b)$ that there is a $\Gamma_{\att}$-minimal point $x\in C$, so $\att=\at_{\mathfrak M}^\delta(x)\in\at(C)$.

The converse of~\eqref{infcloseto} also holds:
\begin{equation}
\label{infclosefrom}
\mbox{if $\att\in\at(C)$, for an $R$-limit $C$, then $\att$ occurs infinitely $R$-close to $C$.}
\end{equation}
Indeed, suppose there is $C'<_R C$ with $\att\notin\at(C'')$, for any $C''$ in the interval $C'<_R C''<_R C$.
By Lemma~\ref{lem:descr''}~$(d)$, there is a cluster $C''$ in $[C',C]$ that is $\mu$-minimal in $\mathfrak M$ for some formula $\mu$. But then
$C$ is $\Dp\mu\land\bigwedge \att$-minimal, contrary to Lemma~\ref{lem:descr''}~$(b)$.
Symmetric variants of \eqref{infcloseto} and \eqref{infclosefrom} hold for $R^-$-limit clusters.

Call non-degenerate clusters $C' <_R C''$ \emph{twins} if $\at(C')=\at(C'')$ and, for every $C$ in $[C',C'']$, we have 
$\at(C)\subseteq \at(C')=\at(C'')$. We claim that
\begin{equation}
\label{notwins}
\mbox{there are no twins.}
\end{equation}
Indeed, suppose $C'$, $C''$ are twins. By induction on the construction of a $\delta$-formula $\alpha$, we see that if $x,y\in [C',C'']$ with $xRy$ and 
$\at_{\mathfrak M}^\delta(x) = \at_{\mathfrak M}^\delta(y)$, then $\mathfrak M,x \models \alpha$ iff $\mathfrak M,y \models \alpha$. 
We only consider one of the nontrivial cases.
Let $\mathfrak M,x \models \Df\alpha$. Then there is $z$ with $xRz$ and $\mathfrak M,z \models \alpha$. If $yRz$, then clearly $\mathfrak M,y \models \Df\alpha$. Otherwise, $z\in [C',C'']$,
so $\at_{\mathfrak M}^\delta(z) = \at_{\mathfrak M}^\delta(z')$, for some $z' \in C''$. Thus, by IH, $\mathfrak M,z' \models \alpha$, which implies $\mathfrak M,y \models \Df\alpha$ as $C''$ is non-degenerate. It follows that there are $x \in C'$ and $y \in C''$ with $t_{\mathfrak M}(x) = t_{\mathfrak M}(y)$, contrary to {\bf (dif)}.

We can now prove that $\mathfrak F_c$ is countable. Suppose $\mathfrak F_c$ is uncountable. By \eqref{uncountint} and Lemma~\ref{lem:descr00}~$(b)$,  $\mathfrak F_c$  contains a limit cluster $C$ whose neighbourhoods are all uncountable. Let $C$ be such a cluster with a minimal $\at(C)$.
As $\delta$ is finite, $C$ has a neighbourhood $N$ such that, 
for any $D\in N$ with $D<_R C$, every $\att\in\at(D)$ occurs infinitely $R$-close to $C$, and, 
for any $D\in N$ with $C<_R D$, every $\att\in\at(D)$ occurs infinitely $R^-$-close to $C$.
We call such $N$ a \emph{close proximity of} $C$.
As $N$ is uncountable, either $[C^-,C)$ or $(C,C^+]$ is uncountable. We only consider the former case, as the latter is similar.
We claim that
\begin{equation}
\label{limcountable}
\mbox{for every cluster $C'$ in $[C^-,C)$, the interval $[C^-,C']$ is countable.}
\end{equation}
Indeed, take such $C'$. As $[C^-,C']$ is contained in the close proximity $N$, for every limit cluster $D$ in $[C^-,C']$, we have $\at(D)\subsetneq \at(C)$, by \eqref{infcloseto} and \eqref{notwins}.
So by the $\at(C)$-minimality of $C$ among limit clusters with only uncountable neighbourhoods, every limit cluster $D$ in
$[C^-,C']$ has a countable neighbourhood. Thus, $[C^-,C']$ is countable by \eqref{uncountint}.

By \eqref{infclosefrom}, there is a countably infinite ascending chain $C_1 <_R C_2 <_R \dots$ of clusters in $[C^-,C)$ such that, 
for every $\att\in\at(C)$ and every $n<\omega$, there is $m$ with $n<m<\omega$ and $\att\in\at(C_m)$.
Let $C'$ be the $R$-limit of the $R$-unbounded set $\{C_n\mid n<\omega\}$ (which exists by Lemma~\ref{lem:descr00}~$(b)$). Then $C'\leq_R C$.
Also, every $\att\in\at(C)$ occurs infinitely $R$-close to $C'$, and so $\at(C)\subseteq\at(C')$ by \eqref{infcloseto}.
We cannot have $C'<_R C$ since otherwise (as $C'$ belongs to the close proximity $N$ of $C$) 
every $\att\in\at(C')$ occurred infinitely $R$-close to $C$, resulting in $\at(C)=\at(C')$ by \eqref{infcloseto}, and so
$C'$ and $C$ were twins, contrary to \eqref{notwins}.
It follows that $C'=C$, and so $[C^-,C)=\bigcup_{n<\omega} [C^-,C_n]$. As each $[C^-,C_n]$ is countable by \eqref{limcountable}, $[C^-,C)$ is also countable, which is a contradiction.
\end{proof}

Using Lemmas~\ref{lem:descr00} and \ref{lem:descr''}, 
we can also obtain elegant characterisations of descriptive frames for $\LinQ$, $\LinR$, $\Linf$ and $\LinZ$ (cf.~\cite{Burgess84,Gol,DBLP:journals/mlq/Wolter96a,DBLP:books/el/07/WolterZ07}):

\begin{lemma}\label{l:tempdframes}
Let $\mathfrak F = (W,R,\INT)$ be any finitely generated temporal descriptive frame. Then
\begin{description}
\item[$\LinQ$] $\mathfrak F \models \LinQ$ iff $\mathfrak F$ is \emph{serial} in both directions---i.e., the $R$- and $R^-$-final clusters in $\mathfrak F$ are both non-degenerate, 
and $\mathfrak F$ is \emph{dense}---i.e., there is a non-degenerate cluster between any two distinct degenerate ones\textup{;}  

\item[$\LinR$] 
%
$\mathfrak F \models \LinR$ iff $\mathfrak F$ is serial, dense, and Dedekind-complete in the sense that there is a degenerate cluster between any two distinct non-degenerate ones\textup{;} 

\item[$\Linf$] $\mathfrak F \models \Linf$ iff $\mathfrak F$ does not contain a non-degenerate cluster $C$ such that $(-\infty,C) \in \INT$ or $(C, +\infty) \in \INT$ $($in particular, the $R$- and $R^-$-final clusters in $\mathfrak F$ are degenerate$)$\textup{;}

\item[$\LinZ$] $\mathfrak F \models \LinZ$ iff $\mathfrak F$ is serial and does not contain a non-degenerate cluster $C$ with $\emptyset \ne (-\infty,C) \in \INT$ or $\emptyset \ne (C, +\infty) \in \INT$ $($a single non-degenerate cluster is a frame for $\LinZ$ but not for $\Linf$$)$.
\end{description}
\end{lemma}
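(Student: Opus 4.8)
The plan is to prove each of the four biconditionals by translating frame-validity of the defining axioms into structural conditions on the cluster order $(W_c,<_R)$ of $\mathfrak F$, using the temporal structure lemmas (\ref{lem:descr00} and \ref{lem:descr''}) as the main engine. Since every temporal frame is transitive and connected, it is already a frame for $\Lin$, so in each case it suffices to analyse the extra axioms. I would isolate four reusable \emph{blocks}, one per kind of axiom, and assemble the characterisations from them. \emph{Seriality block:} $\mathfrak F\models\Df\top$ iff the $R$-final cluster is non-degenerate, and dually for $\Dp\top$ and the $R^-$-final cluster. By Lemma~\ref{lem:descr00}(b), $\mathfrak F$ has an $R$-final cluster $C$; if $C=\{x\}$ is degenerate then, by connectedness and \textbf{(\tight)}, $x$ has no $R$-successor, refuting $\Df\top$ at $x$, whereas if $C$ is non-degenerate then every point sees a reflexive point of $C$ and $\Df\top$ holds everywhere. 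Read positively this yields the seriality clause for $\LinQ,\LinR,\LinZ$; read contrapositively (with $(C,+\infty)=\emptyset\in\INT$ for a final $C$) it forces the final clusters of an $\Linf$-frame to be degenerate.

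\emph{Density block.} For $\Df p\to\Df\Df p$ I would show that, over internal valuations, the axiom is refuted exactly when there are distinct degenerate clusters $\{x\}<_R\{y\}$ with no non-degenerate cluster strictly between them: taking $\mathfrak v(p)=\{y\}$, which is internal since degenerate clusters are definable by Lemma~\ref{lem:descr''}, witnesses $\Df p$ at $x$ but blocks $\Df\Df p$, while a non-degenerate cluster in every such gap supplies the intermediate witness. The subtlety is the treatment of limit clusters lying between $\{x\}$ and $\{y\}$, which I would handle via the definability of closed intervals with non-limit endpoints (Lemma~\ref{lem:descr''}(e)) and the non-definability of limit clusters. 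This gives the density clause for $\LinQ$ and $\LinR$.

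\emph{L\"ob and discreteness blocks.} The axioms $\Bf(\Bf p\to p)\to\Bf p$ and its past mirror are refuted precisely when $\mathfrak F$ contains a non-degenerate cluster $C$ with $(C,+\infty)\in\INT$ (resp.\ $(-\infty,C)\in\INT$): setting $\mathfrak v(p)=(C,+\infty)$ and evaluating at a reflexive point of $C$ gives a direct refutation, while the converse is a compactness argument (via \textbf{(com)} and Lemma~\ref{minpoints}) showing that a failure of L\"ob produces an internal top segment whose least cluster is non-degenerate. This is exactly the $\Linf$ condition. For $\LinZ$ the same analysis applies to the sharper axioms $\Bt(\Bt p\to p)\to(\Dt\Bt p\to\Bt p)$, which, as in Example~\ref{e:canform}(b), tolerate a non-degenerate final cluster but forbid a \emph{nonempty} internal segment strictly beyond a non-degenerate cluster; combined with the seriality block this yields the $\LinZ$ clause, and the single-non-degenerate-cluster frame explains why the ``$\emptyset\ne$'' is needed there but not in $\Linf$.

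\emph{Continuity block and assembly.} Finally, $\LinR=\LinQ\oplus\Box(\Bp p\to\Df\Bp p)\to(\Bp p\to\Bf p)$, and I would show that over serial dense frames this continuity axiom is valid iff there is a degenerate cluster between any two distinct non-degenerate ones. The hard part of the whole proof is this last equivalence: the refutation direction requires manufacturing an \emph{internally witnessed} Dedekind gap between two non-degenerate clusters, so that $\{y\mid\mathfrak M,y\models\Bp p\}$ behaves as a lower cut that is closed under $\Df$ (validating the antecedent) yet has no supremum point (falsifying $\Bp p\to\Bf p$); the validity direction requires using density, seriality, and the limit-cluster machinery of Lemma~\ref{lem:descr00}(b) to show that once degenerate clusters separate the non-degenerate ones, no such gap can be internal. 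Assembling the four blocks then delivers all four biconditionals simultaneously. I expect the continuity equivalence, together with the compactness (validity) directions of the L\"ob and discreteness blocks, to be where essentially all of the genuine work lies.
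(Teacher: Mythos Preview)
Your plan is essentially the paper's approach: translate each extra axiom over $\Lin$ into a structural condition on the cluster order, using definability of non-limit clusters and the existence of maximal/minimal points (Lemmas~\ref{lem:descr00}, \ref{lem:descr''}, \ref{minpoints}). Two points of comparison and one genuine imprecision are worth noting.

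First, the paper only proves the $(\Rightarrow)$ directions and explicitly leaves $(\Leftarrow)$ to the reader. So your ``validity'' halves of the L\"ob/discreteness and continuity blocks go beyond what the paper does; the paper's own arguments are all refutations (structural failure $\Rightarrow$ axiom fails), exactly as in your seriality, density, and L\"ob refutation sketches. For $\LinR$, the paper's $(\Rightarrow)$ argument is organised slightly differently from yours: rather than building a refutation from an arbitrary pair of non-degenerate clusters with no degenerate one between them, it first shows that \emph{consecutive} non-degenerate clusters already refute the continuity axiom (taking $p$ so that the first cluster is $p$-maximal), and then uses only \textbf{(tig)} and maximal points to locate an irreflexive point between any two non-consecutive non-degenerate clusters. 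This sidesteps the need to manufacture an internal Dedekind cut in the general case.

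Second, your density refutation has a real gap. With $\mathfrak v(p)=\{y\}$ you get $\Df p$ at $x$, but $\Df\Df p$ fails at $x$ only when there is \emph{no} cluster strictly between $\{x\}$ and $\{y\}$: any intermediate degenerate $\{z\}$ gives $xRzRy$ and hence $\Df\Df p$ at $x$. The hypothesis ``no non-degenerate cluster between $x$ and $y$'' does not exclude intermediate degenerate clusters. The fix is easy and is implicit in the paper's viewpoint: since $\{x\}$ is degenerate it is maximal in $\mathfrak M$ (Lemma~\ref{lem:descr''}(a)), hence has an immediate $R$-successor $z$; if density fails for $x,y$ then $z$ is also degenerate, and now $x,z$ are adjacent so your witness works with $\mathfrak v(p)=\{z\}$. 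Relatedly, the ``subtlety'' you flag---limit clusters between $x$ and $y$---is a red herring: limit clusters are non-degenerate (Lemma~\ref{lem:descr''}), so under the hypothesis there are none in $(x,y)$. The actual subtlety is the one just described.
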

\begin{proof}
We only show the $(\Rightarrow)$-directions, leaving the converses to the reader. Suppose $\mathfrak F$ is $\mathfrak M$-generated, for some model $\mathfrak M=(\mathfrak F,\mathfrak v)$. 

$\LinQ$: As $\mathfrak F\models\Df \top$ ($\mathfrak F\models\Dp\top$), Lemma~\ref{minpoints} gives a $\{\Df\top\}$-maximal ($\{\Dp\top\}$-minimal) point $x$ in $\mathfrak M$ with   
$R$-final ($R^-$-final) and non-degenerate $C(x)$. Thus, $\mathfrak F$ is serial. 
Suppose $\{x\}$, $\{y\}$ are degenerate clusters with $xRy$. Lemma~\ref{lem:descr''} gives formulas $\psi_x$ and $\psi_y$ defining $\{x\}$ and $\{y\}$ in 
$\mathfrak M$.
As $\mathfrak M,x\models\Df\psi_y$ and 
$\mathfrak F\models\Df\psi_y\to\Df\Df\psi_y$, the formula
$\Df\psi_y\land\Dp\psi_x$ is satisfiable in $\mathfrak M$. Let $z$ be $\{\Df\psi_y\land\Dp\psi_x\}$-maximal in $\mathfrak M$. 
Then $xRzRy$. As $\mathfrak M,z\models\Df(\Df\psi_y\land\Dp\psi_x)$ by $\mathfrak F\models\Df\psi_y\to\Df\Df\psi_y$, the cluster $C(z)$ is  non-degenerate.

$\LinR$: 
Non-degenerate $C(x) <_R C(y)$ cannot be $<_R$-consecutive because otherwise, by
Lemma~\ref{lem:descr''}, $C(x)$ were $\psi$-maximal in $\mathfrak M$ for some formula $\psi$, 
and so $\mathfrak M,x\not\models\Box(\Bp\Df\psi \to \Df\Bp \Df\psi) \to (\Bp \Df\psi \to \Bf \Df\psi)$, contrary to $\mathfrak F\models\LinR$.
Thus, there is $z$ with $C(x) <_R C(z) <_R C(y)$. If $z$ is irreflexive, we are done. Otherwise, by {\bf (\tight)}, 
there is some formula $\chi$ with $\Bf\chi\in t_{\mathfrak M}(y)$ and $\chi\notin t_{\mathfrak M}(z)$, and so
$\mathfrak M,z\models\Df\neg\chi$.
 Let $z'$ be a $\Df\neg\chi$-maximal point in $\mathfrak M$. Clearly, $C(x) <_R C(z') <_R C(y)$. If $z'$ is irreflexive, we are done. Otherwise, we take the immediate $R$-successor $z''$ of $z'$, which  exists by Lemma~\ref{lem:descr''}. 
 As $\mathfrak M,z''\models\Bf\chi\land\neg\chi$, point $z''$ is irreflexive and $C(z'')<_R C(y)$.

%

$\Linf$: 
If there existed a non-degenerate cluster $C(x)$ and a formula $\psi$ with $(-\infty,C(x))=\mathfrak v(\psi)$, then
$\mathfrak M,x\not\models\Bp (\Bp\psi \to \psi) \to \Bp \psi$, contrary to $\mathfrak F\models\Linf$. 

$\LinZ$:
If there existed a non-degenerate cluster $C(x)$ and some formula $\psi$ with 
$\emptyset\ne(C(x),+\infty)=\mathfrak v(\psi)$, then
$\mathfrak M,x\not\models\Bf(\Bf \psi \to \psi) \to (\Df\Bf \psi \to \Bf \psi)$, contrary to $\mathfrak F\models\LinZ$.
\end{proof}

Note that $\Lin$ and $\LinQ$ are d-persistent while the other three logics are not~\cite{DBLP:journals/mlq/Wolter96a}.   

\begin{example}\em\label{noncanR}
The descriptive frame $\mathfrak F = (W_2,R_{\circ\bullet},\INT_2)$ with $(W_2,R_{\circ\bullet})$ depicted below and $\INT_2$ defined in Example~\ref{k-omega} is serial, dense and Dedekind-complete, so $\mathfrak F \models \LinR$.
%
%
\begin{center}
\begin{tikzpicture}[>=latex,line width=0.5pt,xscale = 1,yscale = 1]
\node[point,scale = 0.7,label=below:{\footnotesize $\yy_4$}] (y4) at (1,0) {};
\node[point,fill=black,scale = 0.6,label=below:{\footnotesize $\yy_3$}] (y3) at (0,0) {};
\node[point,scale = 0.7,label=below:{\footnotesize $\yy_2$}] (y2) at (-1,0) {};
\node[point,fill=black,scale = 0.6,label=below:{\footnotesize $\yy_1$}] (y1) at (-2,0) {};
\node[point,scale = 0.7,label=below:{\footnotesize $\yy_0$}] (y0) at (-3,0) {};
\node[point,scale = 0.7,label=below:{\footnotesize $a_0$}] (a20) at (2.5,0) {};
\node[point,scale = 0.7,label=below:{\footnotesize $a_1$}] (a21) at (3.5,0) {};
\draw[] (3,-.1) ellipse (1 and .4);
\node[]  at (1.6,0) {$\dots$};
\draw[->] (y0) to (y1);
\draw[->] (y1) to (y2);
\draw[->] (y2) to (y3);
\draw[->] (y3) to (y4);
\end{tikzpicture}
\end{center}
It is readily seen, however, that $(W_2,R_{\circ\bullet})\not\models \LinR$, so $\LinR$ is not d-persistent. \hfill $\dashv$
\end{example}

The notion of $\sigma$-block from \S\ref{ss:modular} also needs a modification for temporal models. Namely, a set $\bl \subseteq W$ is a \emph{$\sigma$-block} in a temporal model $\mathfrak M$ based on $\mathfrak F = (W,R,\INT)$ if $\bl = \bl_{\mathfrak{M}}^{\sigma}(x)$, for some $x \in W$, where
\begin{multline*}
\bl_{\mathfrak{M}}^{\sigma}(x) = 
\big\{ y\in W \mid \Dt t_{\mathfrak{M}}^{\sigma}(y) \subseteq t_{\mathfrak{M}}^{\sigma}(x)\ \& \ \Dt t_{\mathfrak{M}}^{\sigma}(x) \subseteq t_{\mathfrak{M}}^{\sigma}(y), \text{ for }\mathsf{X} \in \{\mathsf{F}, \mathsf{P}\} \big\},
\end{multline*}
if both $\Df t_{\mathfrak{M}}^{\sigma}(x) \subseteq t_{\mathfrak{M}}^{\sigma}(x)$ and $\Dp t_{\mathfrak{M}}^{\sigma}(x) \subseteq t_{\mathfrak{M}}^{\sigma}(x)$ hold; otherwise $\bl_{\mathfrak{M}}^{\sigma}(x) = \{x\}$. 
Then we have the following temporal analogue of Lemma~\ref{int-prop}:

\begin{lemma}\label{int-prop'}
Suppose $\mathfrak{M}$ is a model based on a finitely $\mathfrak M$-generated temporal descriptive frame $\mathfrak F = (W, R,\INT)$. Then, for any $\sigma$-block $\bl$ in $\mathfrak{M}$,  
there exist clusters $\sC{\bl}$ and $\eC{\bl}$ in $\mathfrak F$ such that the following hold\textup{:}
\begin{itemize}
\item[$(a)$] $\bl = \bigl [\sC{\bl}, \eC{\bl}\bigr]$\textup{;}


\item[$(b)$] if cluster $\sC{\bl}$ \textup{(}cluster $\eC{\bl}$\textup{)} is minimal \textup{(}respectively, maximal\textup{)} in $\mathfrak M$, then it is $\sigma$-minimal \textup{(}respectively, $\sigma$-maximal\textup{)} in $\mathfrak M$\textup{;} 

\item[$(c)$] if $\bl$ is non-degenerate, then both $\sC{\bl}$ and $\eC{\bl}$ are non-degenerate\textup{;}

\item[$(d)$] 
$\bl$ is definable in $\mathfrak M$ iff $\sC{\bl}$ is not an $R$-limit cluster and $\eC{\bl}$ is not an $R^-$-limit cluster\textup{;}

\item[$(e)$] 
$t_{\mathfrak{M}}^{\sigma}(\bl) = t_{\mathfrak{M}}^{\sigma}\bigl(\sC{\bl}) = t_{\mathfrak{M}}^{\sigma}\bigl(\eC{\bl}\bigr)$.
\end{itemize} 
\end{lemma}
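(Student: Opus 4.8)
The plan is to prove Lemma~\ref{int-prop'} by closely following the proof of its modal counterpart, Lemma~\ref{int-prop}, while inserting the symmetric past-time arguments wherever the future-time case is treated. The overall strategy is to let $\mathcal{X}_{\bl}=\{C\mid C$ a cluster with $\bl\cap C\ne\emptyset\}$ and, since every $\sigma$-block is an interval, observe $\bl=\bigcup\mathcal{X}_{\bl}$. The first task, establishing $(a)$, is to show that $\mathcal{X}_{\bl}$ has both a $<_R$-largest cluster $\eC{\bl}$ and a $<_R$-smallest cluster $\sC{\bl}$; this is where the temporal case genuinely differs from the modal one.

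First I would establish $(a)$. In the modal proof, converse well-foundedness (Lemma~\ref{lem:descr0}~$(a)$) immediately gives a $<_R$-largest element of $\mathcal{X}_{\bl}$, and the existence of a $<_R$-smallest element is argued by contradiction using {\bf (block)} and the fact that limit clusters are not maximal. In the temporal setting there is no well-foundedness in either direction, so I would argue symmetrically for both ends. For the largest cluster: suppose $\mathcal{X}_{\bl}$ is $R$-unbounded; by Lemma~\ref{lem:descr00}~$(b)$ it has an $R$-limit $C^+$, which is non-degenerate by Lemma~\ref{lem:descr''}, and using the temporal {\bf (block)} analogue I would find a $\sigma$-formula $\mu$ witnessing that $C^+$ is $\sigma$-maximal (or minimal) in $\mathfrak M$, contradicting Lemma~\ref{lem:descr''}~$(b)$ since an $R$-limit cluster can be neither maximal nor minimal. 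The symmetric argument (replacing $R$ by $R^-$, using $R^-$-limits and the $\Dp$-side of the block definition) handles the smallest cluster $\sC{\bl}$. Thus $\mathcal{X}_{\bl}$ has both extreme clusters and $\bl=[\sC{\bl},\eC{\bl}]$.

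Next I would prove $(b)$ and $(c)$ by mirroring Lemma~\ref{int-prop}~$(b)$ and the non-degeneracy argument. For $(b)$, if $\eC{\bl}$ is maximal it is $R$-final or has an immediate $R$-successor $C(y)$ with $y\notin\bl$, and the case analysis on whether $\eC{\bl}$ is degenerate or not—together with the $\Df$-side of the block condition and {\bf (block)}—produces the required $\sigma$-formula $\mu$ making $\eC{\bl}$ $\sigma$-maximal; the minimality claim for $\sC{\bl}$ is the exact $R^-$-symmetric statement. For $(c)$, rather than the single-ended argument of the modal $(c)$, I would note that if $\bl$ is non-degenerate then $|\bl|>1$ forces both $\Df t^\sigma_{\mathfrak M}(x)\subseteq t^\sigma_{\mathfrak M}(x)$ and $\Dp t^\sigma_{\mathfrak M}(x)\subseteq t^\sigma_{\mathfrak M}(x)$ at each $x\in\bl$; a degenerate $\eC{\bl}$ or $\sC{\bl}$ would then contradict the maximality/minimality consequences exactly as in the modal $(c)$. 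Part $(d)$ follows from $(a)$ together with the temporal definability criterion Lemma~\ref{lem:descr''}~$(e)$ for the $(\Leftarrow)$ direction, and from $(b)$ plus Lemma~\ref{lem:descr''}~$(b)$ for $(\Rightarrow)$; part $(e)$ generalises the modal $(e)$ by running the {\bf (temporal saturation)} argument of Lemma~\ref{minpoints} in both directions to realise each $\sigma$-type of $\bl$ at a point of $\eC{\bl}$ and at a point of $\sC{\bl}$.

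The main obstacle I anticipate is part $(a)$, specifically the need to replace the well-foundedness shortcut with Lemma~\ref{lem:descr00}~$(b)$ and to handle both the $R$-unbounded and $R^-$-unbounded cases with genuine limit-cluster arguments; the delicate point is confirming that the $\sigma$-block condition (which quantifies over $\Df$ and $\Dp$ simultaneously) yields a \emph{$\sigma$-formula} witness rather than merely an arbitrary formula, so that the conclusions phrased in terms of $\sigma$-maximality and $\sigma$-minimality actually apply. Everything else is a faithful temporal doubling of the already-verified modal computations, so I would present those cases briefly and concentrate the detail on the two symmetric limit-cluster contradictions.
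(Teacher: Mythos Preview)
Your proposal is correct and matches the paper's approach exactly: the paper's own proof is a one-line reference stating that the lemma ``can be proved similarly to Lemma~\ref{int-prop}, using Lemmas~\ref{lem:descr00}, \ref{lem:descr''}, and \ref{minpoints}, in place of Lemmas~\ref{lem:descr0}, \ref{lem:descr'}, and \ref{maxpoints}, respectively,'' and your plan is precisely a detailed elaboration of this. You rightly isolate part~$(a)$ as the place where the argument genuinely changes (replacing converse well-foundedness by the $R$- and $R^-$-limit existence of Lemma~\ref{lem:descr00}~$(b)$), and your treatment of $(b)$--$(e)$ as symmetric doublings of the modal computations is exactly what the paper intends.

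One small sharpening: in your contradiction for the $R$-unbounded case, the $R$-limit $C^+$ is an $R$-limit cluster, hence by Lemma~\ref{lem:descr''}~$(b)$ it is specifically \emph{not minimal}; so the witnessing $\sigma$-formula you extract should make $C^+$ $\sigma$-\emph{minimal} (not ``$\sigma$-maximal or minimal'' as you write). Concretely, from $y\in C^+\setminus\bl$ and $xR y$ for $x\in\bl$, the two failing inclusions are $\Dp t^\sigma_{\mathfrak M}(y)\not\subseteq t^\sigma_{\mathfrak M}(x)$ or $\Df t^\sigma_{\mathfrak M}(x)\not\subseteq t^\sigma_{\mathfrak M}(y)$; in the first case $C^+$ is $\Dp\mu$-minimal, in the second it is $\Bf\neg\mu$-minimal. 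The $R^-$-unbounded end is the mirror image, producing $\sigma$-maximality of the $R^-$-limit. With this adjustment your plan is complete.
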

\begin{proof}
This can be proved similarly to Lemma~\ref{int-prop}, using Lemmas~\ref{lem:descr00}, \ref{lem:descr''}, 
and \ref{minpoints}, in place of Lemmas~\ref{lem:descr0}, \ref{lem:descr'},  
and \ref{maxpoints}, respectively. 
\end{proof}

Given $\sigma$-bisimilar models $\mathfrak M_i$, $i=1,2$, based on finitely $\mathfrak M_i$-generated temporal frames,
we can adapt Lemma~\ref{bis-blocks} to the temporal setting to show that
$\sigma$-blocks in $\mathfrak M_1$ and $\mathfrak M_2$ always come in $\sigma$-bisimilar pairs $\bl$, $\bis(\bl)$.
%
Being equipped with these modifications, we show first how to extend the selection procedure from the proof of Theorem~\ref{dperscofinal} 
to $\Lin$, $\LinQ$ and $\LinR$.

\begin{theorem}\label{temp-selection}
Each $L \in \{\Lin, \LinQ, \LinR\}$ has the polysize bisimilar model property, and the IEP for $L$ is \coNP-complete.
\end{theorem}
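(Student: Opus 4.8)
The plan is to transplant the two-step selection argument behind Theorem~\ref{dperscofinal}~$(a)$ into the temporal setting and then repair the resulting finite frames so that they validate the logic in question. Concretely, starting from $\sigma$-bisimilar witness models $\mathfrak M_i$, $i=1,2$, supplied by (the temporal version of) Theorem~\ref{criterion} and based on finitely $\mathfrak M_i$-generated temporal descriptive frames for $L$, I would run \textbf{Steps 1} and \textbf{2} of the proof of Theorem~\ref{dperscofinal} in a symmetric fashion: for each $\tau\in\sub(\varphi_i)$ satisfiable in $\mathfrak M_i$ select \emph{both} a $\{\tau\}$-maximal and a $\{\tau\}$-minimal point (both exist by Lemma~\ref{minpoints}), and for each $\sigma$-type $t$ in the set $T$ select both a $t$-maximal and a $t$-minimal point. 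Writing $W_i'=\{x_i\}\cup\mset{i}\cup\sset{i}$ and letting $\mathfrak M_i'$ be the restriction of $\mathfrak M_i$ to $W_i'$, the set $W_i'$ is of size $\mathcal O(\max(|\varphi_1|,|\varphi_2|))$. The restriction $\bis'$ of the largest $\sigma$-bisimulation (which is $\equiv_\sigma$ by the temporal Lemma~\ref{bisim-lemma}) is then a temporal $\sigma$-bisimulation: {\bf (\atom)} is immediate, {\bf (\move)} is obtained from the $t$-maximal points exactly as in Theorem~\ref{dperscofinal}, and its mirror image {\bf (\move$^-$)} from the $t$-minimal points. Likewise $\mathfrak M_1',x_1\models\varphi_1$ and $\mathfrak M_2',x_2\models\neg\varphi_2$ follow by induction on subformulas, with the $\Df$-case handled via $\mset{i}$-maximal points and the $\Dp$-case via $\mset{i}$-minimal points.

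For $L=\Lin$ this already finishes the polysize claim, since the restriction of a transitive connected relation is again transitive and connected, so $\mathfrak F_i'$ is automatically a frame for $\Lin$ by Lemma~\ref{l:tempdframes}. The substance of the argument lies in $\LinQ$ and $\LinR$, whose finite frames must be serial and dense (and, for $\LinR$, Dedekind-complete) by Lemma~\ref{l:tempdframes}, properties that a finite subframe of a dense order never inherits directly. I would therefore post-process $\mathfrak F_i'$. Seriality is cheap: by Lemma~\ref{l:tempdframes} the $R$- and $R^-$-final clusters of the dense $\mathfrak M_i$ are non-degenerate, and I would ensure these are among the selected points, so that (being reflexive) they remain non-degenerate endpoints under restriction. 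Density and Dedekind-completeness I would restore by \emph{inserting} fresh clusters into the gaps of the finite chain $\mathfrak F_i'$: between two consecutive degenerate clusters (and at degenerate endpoints) insert a reflexive singleton to kill adjacent irreflexive pairs (for $\LinQ$ and $\LinR$), and for $\LinR$ additionally insert an irreflexive singleton between any two consecutive reflexive clusters. Crucially, each inserted cluster is given the atomic $\delta$-type of an adjacent selected point; performing the insertions in $\mathfrak M_1'$ and $\mathfrak M_2'$ at matched positions lets me extend $\bis'$ by linking an inserted point to the $\bis'$-partner of the neighbour whose type it copies. Only linearly many clusters are added, so the models stay polynomial, and Lemma~\ref{l:tempdframes} then certifies that the repaired frames validate $L$.

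The main obstacle is to show that these insertions preserve \emph{both} the global $\sigma$-bisimulation and the truth of $\varphi_1,\neg\varphi_2$ at $x_i$, and the delicate case is the irreflexive insertions demanded by Dedekind-completeness of $\LinR$: an irreflexive point does not see itself, so it interacts subtly with $\Bf,\Bp$ and is fragile for {\bf (\move)}/{\bf (\move$^-$)}. I would control this exactly as in the nice-model machinery of \S\ref{ss:small}: equip each inserted point with a `parent' among the relevant selected points sharing its atomic type, arrange the parent map to respect $R$ and to preserve the $\mset{i}$-witnesses, and then push an induction on subformulas (as in Lemma~\ref{l:final}~$(a)$) through the enlarged models, so that no subformula changes its truth value at a non-inserted point and the inserted points are matched across the two models. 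This yields the polysize bisimilar model property for all three logics, with the $\sigma$-block analysis of Lemma~\ref{int-prop'} available as a bookkeeping tool if the insertions need to be organised per block.

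Finally, \coNP-completeness follows the template of Theorem~\ref{t:iepcoNP}. The complement of the IEP is in \textsc{NP}: guess polynomial-size pointed models $\mathfrak M_i',x_i$ over finite temporal frames, verify $\mathfrak M_1',x_1\models\varphi_1$ and $\mathfrak M_2',x_2\models\neg\varphi_2$ by polynomial-time model checking, confirm that each $\mathfrak F_i'$ is a frame for $L$ using the polynomially checkable structural criteria of Lemma~\ref{l:tempdframes} (transitive and connected for $\Lin$; additionally serial and dense for $\LinQ$; additionally Dedekind-complete for $\LinR$), and test the existence of a global temporal $\sigma$-bisimulation by the standard coarsest-bisimulation computation. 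For hardness, every normal temporal logic extends classical propositional logic, so its validity problem is already \coNP-hard, and validity reduces to the IEP (for instance, $\chi\in L$ iff $\neg\chi$ and $\bot$ admit an interpolant in $L$, since the only candidate interpolants have empty signature); hence the IEP is \coNP-hard, and therefore \coNP-complete.
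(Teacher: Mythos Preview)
Your treatment of $\Lin$ matches the paper, but the repair step for $\LinQ$ and $\LinR$ has a genuine gap. Inserting a \emph{fresh} reflexive singleton $z$ between consecutive irreflexive points $u,v$ and giving $z$ the atomic $\delta$-type of, say, $u$ does not preserve truth of subformulas of $\varphi_i$: if $u$ is a $\{p\}$-maximal point in $\mathfrak M_i'$, then after the insertion $z\models p$ and, being reflexive, $z\models\Df p$, so $u$ acquires $\Df p$ and any subformula of the shape $\Df(p\land\Df p)$ can flip at points left of $u$. The parent-map cure you invoke from \S\ref{ss:small} does not help: condition \eqref{hhom} demands $\parent(u)R_i\parent(z)$ whenever $uR'z$, but with $\parent(z)\in\{u,v\}$ this forces $uR_iu$ or (via $zR'v$) $vR_iv$, both impossible since $u,v$ are irreflexive; and there is no other selected point strictly between $u$ and $v$ to serve as parent. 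The same obstruction hits the irreflexive insertions for Dedekind-completeness. A second issue is that irr-defects in $\mathfrak M_1'$ and $\mathfrak M_2'$ need not occur at ``matched positions'' at all, so it is unclear how to extend $\bis'$ symmetrically.

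The paper avoids all of this by never inserting fresh points: it goes back to the \emph{original} descriptive frame $\mathfrak F_i$, which by Lemma~\ref{l:tempdframes} genuinely contains a reflexive $z_1$ with $u_1R_1z_1R_1v_1$, and adds $z_1$ together with the $t$-minimal and $t$-maximal points of its $\sigma$-type $t$ to \emph{both} $W_1''$ and $W_2''$. This keeps $W_i''\subseteq W_i$, so the enlarged models remain restrictions of the $\mathfrak M_i$, the subformula-preservation argument of Lemma~\ref{cofinsubfr} goes through unchanged, and the restriction of $\equiv_\sigma$ stays a temporal $\sigma$-bisimulation via the {\bf(\minmax)} invariant. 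For $\LinR$, the ref-defects are cured analogously, but with an additional case split on whether the two reflexive endpoints lie in the same $\sigma$-block (Lemma~\ref{int-prop'}); when they do not, one must choose the irreflexive witness to be already $\sigma$-type-maximal or $\sigma$-type-minimal so that adding its $\bis$-partner in the other model is a single degenerate cluster and no new defect is created. Your \coNP{} argument is fine.
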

\begin{proof}
Suppose $\varphi_1$ and $\varphi_2$ have no interpolant in $L$, $\sigma = \sig(\varphi_1) \cap \sig(\varphi_2)$, and $\delta= \sig(\varphi_1) \cup \sig(\varphi_2)$. By Theorem~\ref{criterion}, there are $\delta$-models $\mathfrak M_i$, for $i=1,2$, based on $\mathfrak M_i$-generated temporal descriptive frames $\mathfrak{F}_i = (W_{i},R_{i},\INT_{i})$ for $L$ with $\mathfrak{M}_{1},x_{1} \sim_{\sigma} \mathfrak{M}_{2},x_{2}$, $\mathfrak{M}_{1},x_{1} \models \varphi_1$ and $\mathfrak{M}_{2}, x_{2} \models \neg \varphi_2$. 
Let $\bis$ be the largest $\sigma$-bisimulation between $\mathfrak M_1$ and $\mathfrak M_2$, that is, 
$y_1\bis y_2$ iff $t_{\mathfrak{M}_1}^{\sigma}(y_1)=t_{\mathfrak{M}_2}^{\sigma}(y_2)$, for all $y_i\in W_i$.
We show that there exist such $\mathfrak M_i$ of polynomial size in $\max(|\varphi_1|,|\varphi_2|)$.

For any $i=1,2$ and $\tau\in \sub(\varphi_i)$ satisfied in $\mathfrak{M}_i$, we take one $\{\tau\}$-maximal and one $\{\tau\}$-minimal point in $\mathfrak M_i$. Let $\mset{i}$ be the set of all selected points and let
\begin{equation*}
T=\bigl\{ t_{\mathfrak{M}_{1}}^{\sigma}(x) \mid x\in \{x_{1}\}\cup\mset{1}\bigr\} \cup\bigl\{ t_{\mathfrak{M}_{2}}^{\sigma}(x) \mid x\in \{x_{2}\}\cup\mset{2}\bigr\}.
\end{equation*}
For each $t\in T$, we take a smallest set $\sset{i} \subseteq W_i$ containing one $t$-maximal and one $t$-minimal point in $\mathfrak M_i$.
Also, we define $\sset{i}$ in such a way that if $\{x_i\}\cup \mset{i}$ already contained some $t$-maximal or -minimal points, then we choose from those, always selecting $x_i$ when $x_i$ is $t=t_{\mathfrak M_i}^\sigma(x_i)$-maximal or -minimal.


Let $W_{i}'=\{x_{i}\}\cup \mset{i} \cup \sset{i}$, $R'_i = \rest{R_i}{W'_i}$, $\mathfrak F'_i = (W'_i,R'_i)$, let $\mathfrak{M}_{i}'$ be the restriction of $\mathfrak{M}_{i}$ to $\mathfrak{F}_{i}'$, and let $x_1'\bis' x_2'$ iff $t_{\mathfrak{M}_1}^{\sigma}(x_1')=t_{\mathfrak{M}_2}^{\sigma}(x_2')$, for all $x_1'\in W_1'$, $x_2'\in W_2'$.
Following the proof of Lemma~\ref{cofinsubfr}, we see that $\mathfrak{M}_{1}',x_{1}\models\varphi_1$, $\mathfrak{M}_{2}',x_{2}\models\neg \varphi_2$, and $\bis'$ is a $\sigma$-bisimulation between $\mathfrak{M}_{1}'$ and $\mathfrak{M}_{2}'$ with 
$x_1\bis' x_2$. Clearly, $\mathfrak F_i' \models \Lin$ and the $\mathfrak M_i$ are of polynomial size in $\max(|\varphi_1|,|\varphi_2|)$.

\smallskip
For $L = \LinQ$, we do not necessarily have $\mathfrak{F}_{i}' \models L$. 
To fix this, we add some extra points from $W_i$ to $W_i'$. As $\mathfrak F_i\models\LinQ$, the $R$- and $R^-$-final clusters in $\mathfrak F_i$ are non-degenerate and, as observed in the selection procedure from \S\ref{sec:warming}, $W_i'$ contains some points from these final clusters.
Thus, $\mathfrak{F}_{i}' \not\models \LinQ$ iff $\mathfrak{F}_{i}'$ contains an irreflexive point $x$ with an immediate irreflexive $R'_i$-successor $y$.  We call such  pair $x,y$ an \emph{irr-defect in} $\mathfrak{F}_{i}'$. We are going to `cure' one irr-defect after the other without introducing new irr-defects in either frame.

Given an irr-defect $u_1,v_1$ in $\mathfrak{F}_{1}'$, we find an $R_1$-reflexive $z_1$ with $u_1 R_1 z_1 R_1 v_1$, which exists by $\mathfrak F_1\models\LinQ$ and Lemma~\ref{l:tempdframes}. Let $t=t_{\mathfrak{M}_{1}}^{\sigma}(z_1)$ and $\bl=\bl_{\mathfrak{M}_{1}}^{\sigma}(z_1)$. 
As $\Diamond_{F}t\subseteq t$ and  $\Diamond_{P}t\subseteq t$, $\bl$ is a non-degenerate $\sigma$-block in $\mathfrak M_1$. By Lemma~\ref{int-prop'}, there are $t$-minimal and $t$-maximal points $z_1^-$ and $z_1^+$ in the non-degenerate clusters $\sC{\bl}$ and $\eC{\bl}$. 
As $\bis(\bl)$ is a non-degenerate $\sigma$-block in $\mathfrak M_2$ by Lemma~\ref{bis-blocks}, there are $t$-minimal and $t$-maximal points $z_2^-$ and $z_2^+$ in the non-degenerate clusters $\sC{\bis(\bl)}$ and $\eC{\bis(\bl)}$.
By adding $z_1$, $z_1^-$, $z_1^+$ to $W_1'$ and $z_2^-$, $z_2^+$ to $W_2'$ we cure the irr-defect $u_1,v_1$ without creating a new
irr-defect in either frame.
%
Let $W_i''$, $i=1,2$, be the sets we obtain after curing all irr-defects in both frames in this way,  
$R''_i = \rest{R_i}{W''_i}$, $\mathfrak F''_i = (W''_i,R''_i)$, let $\mathfrak{M}_{i}''$ the restriction of $\mathfrak{M}_{i}$ to $\mathfrak{F}_{i}''$, and $x_1'\bis'' x_2'$ iff $t_{\mathfrak{M}_1}^{\sigma}(x_1')=t_{\mathfrak{M}_2}^{\sigma}(x_2')$, for all $x_1'\in W_1''$, $x_2'\in W_2''$. Then $\mathfrak F_i''\models\LinQ$, by Lemma~\ref{l:tempdframes},  
and

\medskip
\noindent
{\bf (\minmax)} 
for all $x\in W_1''\cup W_2''$ and $i=1,2$, the set $W_1''$ contains $t_{\mathfrak M_i}^\sigma(x)$-minimal and $t_{\mathfrak M_i}^\sigma(x)$-maximal points in $\mathfrak M_1$, and $W_2''$ contains $t_{\mathfrak M_i}^\sigma(x)$-minimal and $t_{\mathfrak M_i}^\sigma(x)$-maximal points in $\mathfrak M_2$. 

\medskip
\noindent
So it is readily seen (similarly to the proof of Lemma~\ref{cofinsubfr}) that 
$\mathfrak{M}_{1}'',x_{1}\models\varphi_1$, $\mathfrak{M}_{2}'',x_{2}\models\neg \varphi_2$, and $\bis''$ is a $\sigma$-bisimulation between $\mathfrak{M}_{1}''$ and $\mathfrak{M}_{2}''$ with 
$x_1\bis'' x_2$.  
%

\smallskip
Finally, let $L = \LinR$. Since $\LinQ \subseteq \LinR$, we first cure the irr-defects in the $\mathfrak F_i'$, $i=1,2$, as described above.
Let $\mathfrak F_i''$, $i=1,2$, be the resulting serial and dense frames. Thus, $\mathfrak F_i'' \not\models L$ iff $\mathfrak{F}_{i}''$ contains two $<_{R''_i}$-consecutive non-degenerate clusters $C(x) \ne C(y)$. 
We call such $x,y$ a \emph{ref-defect in} $\mathfrak{F}_{i}''$.
We show that the ref-defects can also be cured in a step-by-step manner without introducing new defects of either type, while maintaining {\bf (\minmax)}. 

%

If $u_1,v_1$ is a ref-defect in $\mathfrak F''_1$, Lemma~\ref{l:tempdframes} provides an irreflexive $z_1 \in W_1$ with $u_1 R_1 z_1 R_1 v_1$. Let $t=t_{\mathfrak M_1}^\sigma(z_1)$. The insertion of extra points 
into $W_1''$ depends on whether $u_1$ and $v_1$ are in the same $\sigma$-block in $\mathfrak M_1$ or not.

\emph{Case} 1: $u_1,v_1 \in \bl$, for some $\sigma$-block $\bl$ in $\mathfrak M_1$. By Lemma~\ref{int-prop'}, $\bl$ is non-degenerate, and there are $t$-minimal and $t$-maximal points $z_1^-$ and $z_1^+$ in the non-degenerate clusters $\sC{\bl}$ and $\eC{\bl}$. 
By Lemma~\ref{bis-blocks}, $\bis(\bl)$ is a non-degenerate $\sigma$-block in $\mathfrak M_2$, so 
there are $t$-minimal and $t$-maximal points $z_2^-$ and $z_2^+$ in the non-degenerate clusters $\sC{\bis(\bl)}$ and $\eC{\bis(\bl)}$. 
By adding $z_1$, $z_1^-$, $z_1^+$ to $W_1''$ and $z_2^-$, $z_2^+$ to $W_2''$ we cure the ref-defect $u_1,v_1$ in $\mathfrak F_1''$ and maintain {\bf (\minmax)}.
Also, as {\bf (\minmax)} held in $\mathfrak F_i''$, by Lemma~\ref{int-prop'} we already had some points from $\sC{\bl}$ and $\eC{\bl}$ in $W_1''$ and some points from $\sC{\bis(\bl)}$ and $\eC{\bis(\bl)}$ in $W_2''$. So we did not create new defects in either frame,
and the property {\bf (\minmax)} is maintained.

%

\emph{Case} 2: $u_1 \in \bl^{u_1}$, $v_1 \in \bl^{v_1}$, for $\sigma$-blocks $\bl^{u_1} \ne \bl^{v_1}$ in $\mathfrak M_1$. By the definition of $W_1''$ and $C(u_1)$, $C(v_1)$ being $<_{R_1''}$-consecutive, 
$C(u_1) = C^+_{\bl^{u_1}}$ and $C(v_1) = C^-_{\bl^{v_1}}$, so $z_1\notin\bl^{u_1}$.
We claim that there is an irreflexive $z \in W_1$ such that $u_1 R_1 z R_1 v_1$ and
$z$ is either $t_{\mathfrak{M}_{1}}^{\sigma}(z)$-maximal or $t_{\mathfrak{M}_{1}}^{\sigma}(z)$-minimal.
Indeed, as $u_1 R_1 z_1$, we have $\Df t_{\mathfrak{M}_{1}}^{\sigma}(z_1) \subseteq t_{\mathfrak{M}_{1}}^{\sigma}(u_1)$
and $\Dp t_{\mathfrak{M}_{1}}^{\sigma}(u_1) \subseteq t_{\mathfrak{M}_{1}}^{\sigma}(z_1)$. As $z_1\notin\bl^{u_1}$,
there can be two cases: 
either $(i)$ $\Df t_{\mathfrak{M}_{1}}^{\sigma}(u_1) \not\subseteq t_{\mathfrak{M}_{1}}^{\sigma}(z_1)$ 
or $(ii)$ $\Dp t_{\mathfrak{M}_{1}}^{\sigma}(u_1) \not\subseteq t_{\mathfrak{M}_{1}}^{\sigma}(z_1)$.
In case $(i)$, there is a $\sigma$-formula $\chi$ with $\mathfrak{M}_1,u_1\models \Df\chi$ but $\mathfrak{M}_1,z_1\not\models \Df\chi$. Take a $\{\Df\chi\}$-maximal point $z'$. Clearly, $u_1 R_1 z' R_1 v_1$. If $z'$ is irreflexive, we set $z=z'$ as it is $t_{\mathfrak{M}_{1}}^{\sigma}(z')$-maximal. Otherwise, Lemma~\ref{lem:descr''} gives an immediate degenerate $<_{R_1}$-successor $C(z)$ of $C(z')$ such that $z$ is $t_{\mathfrak{M}_{1}}^{\sigma}(z)$-maximal.
In case $(ii)$, there is a $\sigma$-formula $\chi$ with $\mathfrak{M}_1,u_1\not\models \Dp\chi$ but $\mathfrak{M}_1,z_1\models\chi$,
and so $\mathfrak{M}_1,v_1\models \Dp\chi$. Take a $\{\Dp\chi\}$-minimal point $z'$. Clearly, $u_1 R_1 z' R_1 v_1$. If $z'$ is irreflexive, we set $z=z'$ as it is $t_{\mathfrak{M}_{1}}^{\sigma}(z')$-minimal. Otherwise, Lemma~\ref{lem:descr''} gives an immediate degenerate $<_{R_1}$-predecessor $C(z)$ of $C(z')$ such that $z$ is $t_{\mathfrak{M}_{1}}^{\sigma}(z)$-minimal. 

Let $\bl=\bl_{\mathfrak M_1}^\sigma(z)$. Then $\bl$ is a degenerate $\sigma$-block in $\mathfrak M_1$ by Lemma~\ref{int-prop'}. By Lemma~\ref{bis-blocks}, $\bis(\bl)$ is a degenerate $\sigma$-block in $\mathfrak M_2$ with
$\bis(\bl^{u_1})\intord{\mathfrak F_2}\bis(\bl)\intord{\mathfrak F_2}\bis(\bl^{v_1})$.
Also, by {\bf (\minmax)} in $\mathfrak F_i''$, $\eC{\bis(\bl^{u_1})}$ and $\sC{\bis(\bl^{v_1})}$ are $<_{R_2''}$-consecutive non-degenerate clusters. Therefore, by adding $z$ to $W_1''$ and $z_2$ with  $C(z_2)=\bis(\bl)$ to $W_2''$, we cured the ref-defect $u_1,v_1$ in $\mathfrak F_1''$ and we did not create new defects of either kind in either frame while maintaining {\bf (\minmax)}.
So again it is readily seen (similarly to the proof of Lemma~\ref{cofinsubfr}) that, after fixing all defects, we end up with a pair of models as required that are based on frames for $\LinR$ by Lemma~\ref{l:tempdframes}.

This establishes the polysize bisimilar model property of $L \in \{\Lin, \LinQ, \LinR\}$. We show that the IEP for $L$ is in \coNP{} using the description of finite frames for $L$ in Lemma~\ref{l:tempdframes}. 
\end{proof}


The finitary selection construction in the proof above does not work for logics $L\in\{\Linf,\LinZ\}$.
In fact, these logics do not have the polysize bisimilar model property.
However, below we show that they still have a kind of \qfbmp{} similar to Definition~\ref{d:bmp} in the following sense. We can always witness the lack of an interpolant for $\varphi_1$, $\varphi_2$ in $L$
by a pair of temporal models that are based on frames for $L$, and assembled from
$\mathcal{O}\bigl(\max(|\varphi_1|,|\varphi_2|)\bigr)$-many
`\simple' models (like those in Example~\ref{ex:GL.3-temporal}) that are based 
\emph{\atomic} descriptive frames of the forms $m^<$, $\cluster{k}$, $\mathfrak{C}(\clusterk,\bullet)$, $\mathfrak{C}(\bullet,\clusterk,\bullet)$, and $\mathfrak{C}(\bullet,\clusterk)$, for $m,k=\mathcal{O}\bigl(\max(|\varphi_1|,|\varphi_2|)\bigr)$, $k>0$.

\begin{theorem}\label{Fin-and-Z}
The IEPs for $\Linf$ and $\LinZ$ are both \coNP-complete. 
\end{theorem}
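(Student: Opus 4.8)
The proof of Theorem~\ref{Fin-and-Z} should follow the same architecture as the proof of Theorem~\ref{t:iepcoNP} for modal logics above $\KFT$, adapted to the temporal setting using the infrastructure already built up in \S\ref{sec:temporal}: the temporal block machinery (Lemma~\ref{int-prop'}, the temporal analogue of Lemma~\ref{bis-blocks}, and properties \eqref{maxbisblock2t}--\eqref{maxbisblock1t}), the structural characterisations of descriptive frames in Lemma~\ref{l:tempdframes}, and the two-sided atomic frames $\mathfrak{C}(\clusterk,\bullet)$, $\mathfrak{C}(\bullet,\clusterk,\bullet)$, $\mathfrak{C}(\bullet,\clusterk)$ exhibited in Example~\ref{ex:GL.3-temporal}. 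The goal is to prove a quasi-finite bisimilar model property: the lack of an interpolant for $\varphi_1$, $\varphi_2$ in $L\in\{\Linf,\LinZ\}$ can always be witnessed by a pair of $\sigma$-bisimilar temporal models based on frames for $L$ that are ordered sums of $\mathcal{O}(\max(|\varphi_1|,|\varphi_2|))$-many simple models based on bounded atomic frames of the five listed forms. Given such a structural theorem together with a temporal matching criterion in the style of Lemma~\ref{l:gbisall}, the \coNP{} upper bound follows by the same guess-and-check NP-algorithm for the complement as in Theorem~\ref{t:iepcoNP}, while \coNP-hardness is immediate since validity reduces to the IEP and is already \coNP-hard for these logics.

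\emph{First I would} set up the partition of the witness models $\mathfrak M_i,x_i$ from Theorem~\ref{criterion} into closed intervals $I_i^\ell$ whose restrictions are globally $\sigma$-bisimilar, mirroring Definition~\ref{d:ints}. The key difference is that temporal frames may contain both infinite descending and infinite ascending chains of clusters, and by Lemma~\ref{tem-count} the cluster order $\mathfrak F_c$ is merely a countable linear order (not an ordinal), with both $R$-limit and $R^-$-limit clusters. Consequently, when a $\sigma$-block $\bl$ is not definable in $\mathfrak M_i$, Lemma~\ref{int-prop'}~$(d)$ tells us its \emph{starting} cluster $\sC{\bl}$ may be an $R$-limit and its \emph{ending} cluster $\eC{\bl}$ an $R^-$-limit, so non-definability can arise at either end. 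The step \step{2} extension procedure therefore needs a two-sided version: extending a non-definable block both forward (towards a non-$R^-$-limit definable cluster or into an infinite descending tail) and backward (symmetrically). The bookkeeping of relevant blocks carries over via \eqref{maxbisblock2t}--\eqref{maxbisblock1t}, and the number of intervals remains bounded by a polynomial in $\max(|\varphi_1|,|\varphi_2|)$.

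\emph{Next I would} carry out a temporal analogue of Lemma~\ref{l:replacement}, replacing each pair $(\rest{\mathfrak M_1}{I_1^\ell},\rest{\mathfrak M_2}{I_2^\ell})$ by a matching pair of simple models. Where the modal proof used the one-sided tadpole frames $\chain{k}{\bullet}$, $\chain{k}{\circ}$ to realise an infinite descending tail ending in a limit cluster, here one uses the two-sided frames: $\mathfrak{C}(\bullet,\clusterk,\bullet)$ for an interval that is a neighbourhood of a cluster which is simultaneously an $R$- and $R^-$-limit, and $\mathfrak{C}(\clusterk,\bullet)$, $\mathfrak{C}(\bullet,\clusterk)$ for intervals whose definability fails only at the final or only at the initial end respectively. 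The frame-validity constraints are handled not through canonical formulas but directly through Lemma~\ref{l:tempdframes}: for $\Linf$ one must ensure no non-degenerate cluster $C$ has $(-\infty,C)\in\INT$ or $(C,+\infty)\in\INT$, which is exactly why the two-sided frames with internal generators \eqref{internalS} replace the one-directional ones, as verified in \eqref{gff}; for $\LinZ$ one additionally enforces seriality at the two global ends via \eqref{gfz}, building the outermost $\lhd$-components as $\mathfrak{C}(\clusterk,\bullet)$ and $\mathfrak{C}(\bullet,\clustern)$. The bound on cluster sizes comes from Lemma~\ref{lem:descr00}~$(a)$, giving $k\le 2^{|\delta|}$, with a polynomial refinement $k\le\pbound$ obtained exactly as in Lemma~\ref{l:replacementL}.

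\emph{The hard part will be} the structural partition and replacement in the presence of two-sided limits: in the modal case converse well-foundedness of $\mathfrak F_c$ (Lemma~\ref{lem:descr0}~$(a)$) guaranteed that every non-root cluster had an immediate predecessor and that tails were descending, which drove the whole interval construction; in the temporal case this symmetry breaks and one must track, for each non-definable block, whether the obstruction is an $R$-limit at $\sC{\bl}$, an $R^-$-limit at $\eC{\bl}$, or both, and choose the extension direction and the atomic frame shape accordingly. A subtler point is verifying the frame-theoretic side conditions of Lemma~\ref{l:tempdframes} for the assembled ordered sum $\mathfrak N_i=\mathfrak N_i^0\lhd\dots\lhd\mathfrak N_i^{N-1}$: one must check that no spurious internal set of the form $(-\infty,C)$ or $(C,+\infty)$ is created at the seams between $\lhd$-components, which requires an argument analogous to \eqref{samef} in Lemma~\ref{l:final} showing that any defect witnessed in the small frame $\mathfrak N_i$ would already be present in the original descriptive frame $\mathfrak F_i$ for $L$. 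Once these structural facts are in place, the temporal matching lemma (the analogue of Lemma~\ref{l:gbisall}, built from the bisimulations exhibited in Example~\ref{ex:GL.3-temporal}) and the polynomial checkability of satisfaction (Lemma~\ref{l:mpoly} adapted to two-sided simple models) yield the NP-algorithm for the complement, and hence the \coNP-completeness of the IEP for both $\Linf$ and $\LinZ$.
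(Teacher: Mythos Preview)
Your plan is plausible in spirit but takes a route that is considerably heavier than the paper's, and it misses the key structural simplification that makes $\Linf$ and $\LinZ$ tractable.

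You propose to import the full machinery of \S\ref{section:K4.3}: a temporal analogue of Definition~\ref{d:ints} (interval partition with steps \step{1}--\step{3}), then a temporal Lemma~\ref{l:replacement}, then a \eqref{samef}-style transfer argument. The paper does \emph{none} of this. Instead it proceeds in the style of the cofinal-subframe result Theorem~\ref{cof-str}: it works directly with the list $\bl_i^0,\dots,\bl_i^\partN$ of \emph{relevant} $\sigma$-blocks only, with no interval partition and no irrelevant intervals at all. For each relevant block $\bl_i^j$ it writes down an atomic frame $\mathfrak H_i^j$ (either $\bullet$, or $\mathfrak{C}(\bullet,\clusterkj,\bullet)$, or $\mathfrak{C}(\bullet,\clusterkj,\bullet)\lhd (m_i^j)^<\lhd \mathfrak{C}(\bullet,\clusterkj,\bullet)$ in the $\Linf$ case, with one-sided variants at the ends for $\LinZ$), and a simple parent function $\parent_i$ into the relevant points of $\bl_i^j$, then takes $\mathfrak N_i=\mathfrak N_i^0\lhd\dots\lhd\mathfrak N_i^\partN$.

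The reason this direct construction works---and the insight you are missing---is that Lemma~\ref{l:tempdframes} forces a very rigid structure: in a descriptive frame for $\Linf$, every non-degenerate $\sigma$-block necessarily has $\sC{\bl}$ an $R$-limit and $\eC{\bl}$ an $R^-$-limit cluster, and no non-degenerate cluster contains a point of $\mset{i}$. So there is no case analysis on ``which end is a limit'' and no need to \emph{extend} non-definable blocks: every non-degenerate block is already two-sidedly undefined, and its shape is captured by one or two copies of $\mathfrak{C}(\bullet,\clusterkj,\bullet)$ with the intermediate $\mset{i}$-points (all irreflexive) sandwiched as $(m_i^j)^<$. The ``hard part'' you identify---managing the two-sided limit structure inside an interval partition when the cluster order is not well-founded---is precisely what the paper avoids by this observation. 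Frame validity is then read off Lemma~\ref{l:tempdframes} directly (cf.\ \eqref{gff}, \eqref{gfz}); no seam analysis in the style of \eqref{samef} is required.

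Your heavier approach might be made to work, but you have not explained how the non-well-ordered cluster structure is handled in the temporal analogue of Definition~\ref{d:ints} (the ordinal bookkeeping there depends on Lemma~\ref{lem:descr0}~$(a)$), and this is exactly the obstacle the paper's direct construction sidesteps.
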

\begin{proof}
Given
$\varphi_i$, $\mathfrak M_i$, $x_i$, for $i=1,2$, as above, define sets $\mset{i}$, $\sset{i}$, and 
$W'_i=\{x_i\}\cup \mset{i}\cup \sset{i}$ as in the proof of Theorem~\ref{temp-selection}, calling the points from $W'_i$ \emph{relevant in} $\mathfrak M_i$. A cluster or a $\sigma$-block in $\mathfrak M_i$ is \emph{relevant} if it contains a relevant point in $\mathfrak M_i$.
Given any pair $\bl$, $\bis(\bl)$ of $\sigma$-bisimilar $\sigma$-blocks in $\mathfrak M_1$ and $\mathfrak M_2$,
we can now have the temporal analogue of Lemma~\ref{l:maxbisblock}, dealing not only with $\sset{1}\cap\eC{\bl}$ and $\sset{2}\cap\eC{\bis(\bl)}$ but also
with $\sset{1}\cap\sC{\bl}$ and $\sset{2}\cap\sC{\bis(\bl)}$. 
In particular,
%
\begin{align}
& \label{maxbisblockroot}
\mbox{If $x_1\in \sC{\bl}\cup\eC{\bl}$, then $x_1\in\sset{1}$,}\\
 \nonumber
& \hspace*{4.5cm}\mbox{and if $x_2\in \sC{\bis(\bl)}\cup\eC{\bis(\bl)}$, then $x_2\in\sset{2}$}\textup{;}\\
 \label{maxbisblock2t}
& \mbox{there are $\sigma$-type preserving bijections 
$f^-\colon \sset{1}\cap\sC{\bl}\to \sset{2}\cap\sC{\bis(\bl)}$}\\ 
\nonumber
& \hspace*{5.5cm}\mbox{and $f^+\colon \sset{1}\cap\eC{\bl}\to \sset{2}\cap\eC{\bis(\bl)}$}\textup{;}\\
\label{maxbisblock3t}
& |\sset{1}\cap\sC{\bl}|=|\sset{1}\cap\eC{\bl}|\ne 0\ \mbox{ and }\ |\sset{2}\cap\sC{\bis(\bl)}|=|\sset{2}\cap\eC{\bis(\bl)}|\ne 0\textup{;}\\
\label{maxbisblock1t}
& \mbox{$\bl$ is relevant in $\mathfrak M_1$ iff $\bis(\bl)$ is relevant in $\mathfrak M_2$}.
 \end{align}
Let $\bl_1^0,\dots,\bl_1^\partN$ be all the relevant $\sigma$-blocks in $\mathfrak M_1$ ordered by $\intord{\mathfrak F_1}$,
for some $N=\mathcal{O}(\bigl(\max(|\varphi_1|,|\varphi_2|)\bigr)$.
By \eqref{maxbisblock1t} and Lemma~\ref{bis-blocks}, the $\intord{\mathfrak F_2}$-ordered list of all relevant $\sigma$-blocks in $\mathfrak M_2$ is $\bl_2^0,\dots,\bl_2^\partN$, where $\bl_2^j=\bis(\bl_1^j)$, for $j\leq\partN$.
By Lemma~\ref{bis-blocks}, $\bl_1^j$ is degenerate iff $\bl_2^j$ is degenerate, for $j\leq\partN$.

\emph{Case} $L = \Linf$: 
By Lemmas~\ref{l:tempdframes} and \ref{int-prop'}, $\bl_i^0$ and $\bl_i^\partN$, $i=1,2$, are degenerate. By Lemmas~\ref{lem:descr''}, \ref{l:tempdframes}, \ref{int-prop'}, if $\bl_i^j$ is non-degenerate, then $\sC{\bl_i^j}$ and $\eC{\bl_i^j}$ are non-degenerate
$R^-$- and $R$-limit clusters, and $C\cap \mset{i}=\emptyset$, for every non-degenerate cluster $C$ in $\bl_i^j$. Thus, by \eqref{maxbisblockroot}, we have $\bigl(\{x_i\}\cup\mset{i}\cup\sset{i}\bigr)\cap \sC{\bl_i^j}=\sset{i}\cap \sC{\bl_i^j}$ and $\bigl(\{x_i\}\cup\mset{i}\cup\sset{i}\bigr)\cap \eC{\bl_i^j}=\sset{i}\cap \eC{\bl_i^j}$, for $i=1,2$ and $j\leq\partN$.
Therefore, by \eqref{maxbisblock2t} and \eqref{maxbisblock3t}, 
for every $j\leq\partN$ there is $k^j>0$ such that 
$k^j=|\sset{1}\cap\sC{\bl_1^j}|=|\sset{1}\cap\eC{\bl_1^j}|=|\sset{2}\cap\sC{\bl_2^j}|=|\sset{2}\cap\eC{\bl_2^j}|$.

For all $i=1,2$ and $j\leq\partN$, we let $m_i^j=\bigl|\bigl((\{x_i\}\cup \mset{i})\cap \bl_i^j\bigr)\setminus(\sC{\bl_i^j}\cup\eC{\bl_i^j})\bigr|$ and define an \atomic{} frame $\mathfrak H_i^j=(H_i^j,R_i^j,\INT_i^j)$ by taking 
\[
\mathfrak H_i^j=\left\{
\begin{array}{ll}
\bullet, & \mbox{if $\bl_i^j$ is degenerate;}\\
\mathfrak{C}\bigl(\bullet,\clusterkj,\bullet\bigr), & \mbox{if $\sC{\bl_i^j}=\eC{\bl_i^j}$ is non-degenerate;}\\
\mathfrak{C}\bigl(\bullet,\clusterkj,\bullet\bigr) \lhd (m_i^{j})^< \lhd \mathfrak{C}\bigl(\bullet,\clusterkj,\bullet\bigr), & \mbox{otherwise.}
\end{array}
\right.
\]
Note that $m_1^j$ and $m_2^j$ might be different, and $(\{x_i\}\cup \mset{i})\cap \bl_i^j=\emptyset$ (and so $m_i^j=0$) can happen even when $\sC{\bl_i^j}\ne\eC{\bl_i^j}$.
Let $\mathfrak H_i=(H_i,R_i',\INT_i')= \mathfrak H_i^0\lhd\dots\lhd\mathfrak H_i^\partN$.
It is readily seen that $\mathfrak H_i$ is a frame for $\Linf$, for $i=1,2$.
Next, we define 
a `parent' function  $\parent_i\colon H_i\to W_i'$ such that, for all $x\in H_i$,
\begin{align}
\label{parentfirst}
& \mbox{for all $j\leq \partN$, if $x\in H_i^j$ then $\parent_i(x)\in W_i'\cap\bl_i^j$,}\\
& \mbox{for all $y\in H_i$, if $xR_i' y$ then $\parent_i(x)R_i\parent_i(y)$,}\\
\label{parentlast}
& \mbox{for all $y\in \mset{i}$, if $\parent_i(x)R_i y$ then $xR_i' z$ and $\parent_i(z)=y$ for some $z$.}
\end{align}
Finally, for $j\leq\partN$, we define a model $\mathfrak N_i^j$ based on $\mathfrak H_i^j$ by taking, for all $x\in H_i^j$,
\begin{equation}
\label{modeldef}
\at_{\mathfrak N_i^j}(x)=\at_{\mathfrak M_i}\bigl(\parent_i(x)\bigr),
\end{equation}
and let $\mathfrak N_i= \mathfrak N_i^0\lhd\dots\lhd\mathfrak N_i^\partN$.

Instead of giving the general definitions of $\parent_i$ and $\mathfrak N_i$, we illustrate
the construction in the picture below, where $\mathfrak M_i$
has three degenerate $\sigma$-blocks $\bl_i^0,\bl_i^2$ and $\bl_i^3$ and one non-definable non-degenerate $\sigma$-block $\bl_i^1$;
the relevant points in $\mathfrak M_i$ are underlined; $k^0=k^2=k^3=1$, $k^1=2$ and $m_i^1=3$.
%
%
%
\begin{center}
\begin{tikzpicture}[>=latex,line width=0.4pt,xscale = .7,yscale = .6]
\node[]  at (-.6,0) {{\small $\mathfrak N_i$}};
\node[point,fill=black,scale = 0.5] (d1) at (0,0) {};
\node[point,fill=black,scale = 0.5] (y0l) at (1,0) {};
\node[point,fill=black,scale = 0.5] (y1l) at (2,0) {};
\node[]  at (2.45,0) {{\footnotesize ...}};
\node[point,scale = 0.6] (a0) at (3,0) {};
\node[point,scale = 0.6] (a1) at (3.5,0) {};
\draw[] (3.25,0) ellipse (.5 and .5);
\node[]  at (4.1,0) {{\footnotesize ...}};
\node[point,fill=black,scale = 0.5] (y1r) at (4.5,0) {};
\node[point,fill=black,scale = 0.5] (y0r) at (5.5,0) {};
\node[point,fill=black,scale = 0.5] (m0) at (6.5,0) {};
\node[point,fill=black,scale = 0.5] (m1) at (7.5,0) {};
\node[point,fill=black,scale = 0.5] (m2) at (8.5,0) {};
\node[point,fill=black,scale = 0.5] (z0l) at (9.5,0) {};
\node[point,fill=black,scale = 0.5] (z1l) at (10.5,0) {};
\node[]  at (10.95,0) {{\footnotesize ...}};
\node[point,scale = 0.6] (b0) at (11.5,0) {};
\node[point,scale = 0.6] (b1) at (12,0) {};
\draw[] (11.75,0) ellipse (.5 and .5);
\node[]  at (12.6,0) {{\footnotesize ...}};
\node[point,fill=black,scale = 0.5] (z1r) at (13,0) {};
\node[point,fill=black,scale = 0.5] (z0r) at (14,0) {};
\node[point,fill=black,scale = 0.5] (d2) at (15,0) {};
\node[point,fill=black,scale = 0.5] (d3) at (16,0) {};
\draw[->] (d1) to (y0l);
\draw[->] (y0l) to (y1l);
\draw[->] (y1r) to (y0r);
\draw[->] (y0r) to (m0);
\draw[->] (m0) to (m1);
\draw[->] (m1) to (m2);
\draw[->] (m2) to (z1l);
\draw[->] (z1r) to (z0r);
\draw[->] (z0r) to (d2);
\draw[->] (d2) to (d3);
\node[]  at (-.6,3) {{\small $\mathfrak M_i$}};
\node[point,fill=black,scale = 0.5] (dd1) at (0,3) {};
\draw[-,very thick] (-.1,2.8) to (.1,2.8);
\node[]  at (1.5,3) {$\dots$};
\node[point,scale = 0.6] (aa0) at (3,2.8) {};
\draw[-,very thick] (2.9,2.6) to (3.1,2.6);
\node[point,scale = 0.6] (aa1) at (3.5,2.8) {};
\draw[-,very thick] (3.4,2.6) to (3.6,2.6);
\node[point,scale = 0.6] (aa2) at (3.25,3.2) {};
\draw[] (3.25,2.9) ellipse (.7 and .6);
\node[]  at (5,3) {$\dots$};
\node[point,fill=black,scale = 0.5] (mm0) at (6,3) {};
\draw[-,very thick] (5.9,2.8) to (6.1,2.8);
\node[]  at (6.75,3) {$\dots$};
\node[point,fill=black,scale = 0.5] (mm1) at (7.5,3) {};
\draw[-,very thick] (7.4,2.8) to (7.6,2.8);
\node[]  at (8.25,3) {$\dots$};
\node[point,scale = 0.6] (mm2) at (9,3) {};
\draw[-,very thick] (8.9,2.8) to (9.1,2.8);
\node[point,scale = 0.6] (mmm2) at (9.5,3) {};
\draw[] (9.25,3) ellipse (.5 and .5);
\node[]  at (10.5,3) {$\dots$};
\node[point,scale = 0.6] (bb0) at (11.5,2.8) {};
\draw[-,very thick] (11.4,2.6) to (11.6,2.6);
\node[point,scale = 0.6] (bb1) at (12,2.8) {};
\draw[-,very thick] (11.9,2.6) to (12.1,2.6);
\node[point,scale = 0.6] (bb2) at (11.75,3.2) {};
\draw[] (11.75,2.9) ellipse (.6 and .6);
\node[]  at (13.75,3) {$\dots$};
\node[point,fill=black,scale = 0.5] (dd2) at (15,3) {};
\draw[-,very thick] (14.9,2.8) to (15.1,2.8);
\node[point,fill=black,scale = 0.5] (dd3) at (16,3) {};
\draw[-,very thick] (15.9,2.8) to (16.1,2.8);
\draw[->] (dd1) to (.5,3);
\draw[->] (dd2) to (dd3);
\node[gray]  at (-.5,1.5) {{\small $\parent_i$}};
\draw[->,gray,thin,dashed] (d1) to (0,2.7);
\draw[->,gray,thin,dashed] (y0l) to (2.9,2.55);
\draw[->,gray,thin,dashed] (y1l) to (3.4,2.55);
\draw[->,gray,thin,dashed] (a0) to (3,2.55);
\draw[->,gray,thin,dashed] (a1) to (3.5,2.55);
\draw[->,gray,thin,dashed] (y1r) to (3.6,2.55);
\draw[->,gray,thin,dashed] (y0r) to (3.1,2.55);
\draw[->,gray,thin,dashed] (m0) to (6,2.7);
\draw[->,gray,thin,dashed] (m1) to (7.5,2.7);
\draw[->,gray,thin,dashed] (m2) to (9,2.7);
\draw[->,gray,thin,dashed] (z0l) to (11.4,2.55);
\draw[->,gray,thin,dashed] (z1l) to (11.9,2.55);
\draw[->,gray,thin,dashed] (b0) to (11.5,2.55);
\draw[->,gray,thin,dashed] (b1) to (12,2.55);
\draw[->,gray,thin,dashed] (z1r) to (12.1,2.55);
\draw[->,gray,thin,dashed] (z0r) to (11.6,2.55);
\draw[->,gray,thin,dashed] (d2) to (15,2.7);
\draw[->,gray,thin,dashed] (d3) to (16,2.7);
\draw[-,thick] (-.3,3.6) -- (-.3,3.8) -- (.3,3.8) -- (.3,3.6);
\node[] at (0,4.3) {\footnotesize $\bl_i^0$};
\draw[-,thick] (2.5,3.6) -- (2.5,3.8) -- (12.4,3.8) -- (12.4,3.6);
\node[] at (7,4.3) {\footnotesize $\bl_i^1$};
\draw[-,thick] (14.7,3.6) -- (14.7,3.8) -- (15.3,3.8) -- (15.3,3.6);
\node[] at (15,4.3) {\footnotesize $\bl_i^2$};
\draw[-,thick] (15.7,3.6) -- (15.7,3.8) -- (16.3,3.8) -- (16.3,3.6);
\node[] at (16,4.3) {\footnotesize $\bl_i^3$};
\node[]  at (-.6,-1.5) {{\small $\mathfrak H_i$}};
\node[right] at (.8,-.7) {$\underbrace{\hspace{3.2cm}}$};
\node[right] at (9.3,-.7) {$\underbrace{\hspace{3.2cm}}$};
\node[right] at (-.3,-1.5) {$\bullet\hspace{.5cm}\lhd\hspace{.5cm}\mathfrak{C}(\bullet,\clustert,\bullet)\hspace{.8cm}\lhd \hspace{.8cm}3^<\hspace{.7cm}\lhd\hspace{.8cm}\mathfrak{C}(\bullet,\clustert,\bullet)\hspace{.8cm}\lhd\ \ \bullet\;\lhd\;\bullet$};
\end{tikzpicture}
\end{center}

\noindent
(It can happen
that the distinguished point $x_i$ of $\mathfrak M_i$ belongs to a non-degenerate cluster $C$ in some $\sigma$-block $\bl_i^j$, which is different from $\sC{\bl_i^j}$ and $\eC{\bl_i^j}$. However, as $C\cap \mset{i}=\emptyset$ by Lemmas~\ref{lem:descr''}, \ref{l:tempdframes} and \ref{int-prop'}, we  have \eqref{parentlast} even in this case.)

It is readily seen that this way \eqref{parentfirst}--\eqref{parentlast} hold and $\mathfrak N_i^j$ is based on $\mathfrak H_i^j$, for $j\leq\partN$.
Thus, $\mathfrak H_i^0\lhd\dots\lhd\mathfrak H_i^\partN$, $i=1,2$, is a frame for $\Linf$ by Lemma~\ref{l:tempdframes}. 
Using \eqref{parentfirst}--\eqref{parentlast}, a proof similar to that of Lemma~\ref{l:final}~$(a)$ shows that each point $x$ in $\mathfrak N_i$ makes true exactly the same formulas in $\sub(\varphi_i)$ as its parent $\parent_i(x)$ in $\mathfrak M_i$. It follows that  $\mathfrak{N}_{1},x_{1}' \models \varphi_1$ and $\mathfrak{N}_{2},x_{2}' \models \neg \varphi_2$,
where $x_i=\parent_i(x'_i)$.

Further, the construction and \eqref{maxbisblock2t} 
guarantee that each pair $(\mathfrak N_1^j,\mathfrak N_2^j)$, for $j\leq \partN$, satisfies a
condition similar to 
Definition~\ref{d:match}~$(a)$ or $(c)$.
%
%
%
%
Then a proof similar to that of Lemma~\ref{l:gbisall} shows that $\mathfrak N_1^j$ and $\mathfrak N_2^j$ are \globally{}
$\sigma$-bisimilar for every $j\leq\partN$, and so $\mathfrak{N}_{1},x'_{1} \sim_{\sigma} \mathfrak{N}_{2},x'_{2}$.

\emph{Case} $L = \LinZ$: 
While the definitions of $\mathfrak H_i^j$, for $0<j<\partN$, are the same as above, for $j=0,\partN$ we need new ones.
Now, by Lemmas~\ref{l:tempdframes} and \ref{int-prop'}, $\bl_i^0$ and $\bl_i^\partN$ are non-degenerate, for $i=1,2$.
Also, by Lemmas~\ref{lem:descr''}, \ref{l:tempdframes},  and \ref{int-prop'}, 
the $R^-$-final cluster $\sC{\bl_i^0}$ in $\mathfrak F_i$ is an $R^-$-limit cluster, and 
the $R$-final cluster $\eC{\bl_i^\partN}$ in $\mathfrak F_i$ is an $R$-limit cluster, for $i=1,2$.
There are several cases. If $\partN=0$ (that is, $\bl_i^0=W_i$) and $\sC{\bl_i^0}=\eC{\bl_i^0}$, then we let 
$\mathfrak H_i^0=\clusterkz$. 
If $\partN=0$ and $\sC{\bl_i^0}\ne\eC{\bl_i^0}$, then we let 
$\mathfrak H_i^0=\mathfrak{C}\bigl(\clusterkz,\bullet\bigr)\lhd (m_i^0)^<\lhd \mathfrak{C}\bigl(\bullet,\clusterkz\bigr)$.
If $\partN>0$ then
\[
\mathfrak H_i^0=\left\{
\begin{array}{ll}
\mathfrak{C}\bigl(\clusterkz,\bullet\bigr), & \mbox{if $\sC{\bl_i^0}=\eC{\bl_i^0}$;}\\
\mathfrak{C}\bigl(\clusterkz,\bullet\bigr) \lhd (m_i^{0})^< \lhd \mathfrak{C}\bigl(\bullet,\clusterkz,\bullet\bigr), & \mbox{otherwise,}
\end{array}
\right.
\]
and
\[
\mathfrak H_i^\partN=\left\{
\begin{array}{ll}
\mathfrak{C}\bigl(\bullet,\clusterkn\,\bigr), & \mbox{if $\sC{\bl_i^\partN}=\eC{\bl_i^\partN}$;}\\
\mathfrak{C}\bigl(\bullet,\clusterkn\,,\bullet\bigr) \lhd (m_i^{\partN})^< \lhd \mathfrak{C}\bigl(\bullet,\clusterkn\,\bigr), & \mbox{otherwise.}
\end{array}
\right.
\]
%
%
This way, by Lemma~\ref{l:tempdframes}, $\mathfrak H_i= \mathfrak H_i^0\lhd\dots\lhd\mathfrak H_i^\partN$ is a frame for $\LinZ$, for $i=1,2$.
Apart from these modifications, everything is similar to the $\Linf$ case.

\smallskip
A \coNP-algorithm deciding interpolant existence in $\Linf$ or $\LinZ$ is an obvious adaptation of the algorithm detailed in the proof of Theorem~\ref{t:iepcoNP}.
%
%
%
%
%
%
%
%
%
\end{proof}

We conjecture that the IEP for every consistent finitely axiomatisable Priorean temporal logic is \coNP-complete.



\section{Outlook and open problems}

We have turned the lack of the CIP into a research question by asking whether deciding interpolant existence becomes harder than validity for modal logics without the CIP.
As argued in~\cite{DBLP:journals/lmcs/Place18,DBLP:journals/lmcs/PlaceZ21} for the closely related problem of separability of disjoint regular languages using a smaller language class (such as first-order definable languages), this question can be understood as a generalisation of satisfiability that provides new insights into the expressivity of the logic in question. We have shown that, in contrast to modal logics with nominals, the product modal logic $\mathsf{S5}\times \mathsf{S5}$, and the guarded and two-variable fragments of first-order logic, the complexity of deciding interpolant existence in finitely axiomatisable modal logics of linear frames is in \coNP{} and, therefore, of the same complexity as validity. This appears to be the first general result about Craig interpolants for logics lacking the CIP. It gives rise to many further questions of which we mention only a few:
\begin{description}
\item[Q1] Is there a decidable modal logic above $\mathsf{GL}$, $\KF$, or $\mathsf{K}$ with the undecidable IEP? Currently, the only known example of a decidable logic with the undecidable IEP is the two-variable fragment of first-order logic with two equivalence relations~\cite{DBLP:conf/dlog/WolterZ24}.

\item[Q2] Do all d-persistent (cofinal) subframe logics above $\KF$ have the finite bisimilar model property? Can one show a quasi-finite bisimilar model property for all (cofinal) subframe logics above $\KF$ and use it to prove that interpolant existence is decidable for all finitely axiomatisable ones?

\item[Q3] What is the situation with the IEP for propositional superintuitionistic (aka intermediate) logics and (super)intuitionistic modal logics without the CIP? Note that the G\"odel translation reduces the IEP for propositional superintuitionistic logics to the IEP for (certain fragments of) modal logics above $\SF$; see the proof of~\cite[Theorem 14.9]{DBLP:books/daglib/0030819}. 



\item[Q4] Our proof is not constructive in the sense that is does not provide a non-trivial algorithm for computing interpolants if they exist (beyond exhaustive search) nor any upper bounds on their size. It would be of great interest to develop such algorithms. First steps towards computing interpolants in description logics without CIP are presented in~\cite{jung2025computation}.
\end{description}

Descriptive frames have been crucial for our proofs. It would therefore  be interesting and in line with the modal logic tradition to characterise logics for which descriptive frames can be replaced by Kripke (or even finite) frames in Theorem~\ref{criterion}. While d-persistence is clearly a sufficient condition, $\LinR$ shows that it is not a necessary one; see Example~\ref{noncanR}. It is known, however, that $\LinR$ is strongly complete~\cite{DBLP:journals/mlq/Wolter96a}, which suggests the conjecture that, in Theorem~\ref{criterion},  descriptive frames for $L$ can be replaced by Kripke frames iff $L$ is strongly Kripke complete (in the sense that every $L$-consistent set of formulas is satisfiable in a Kripke frame for $L$). Note that a logic is strongly Kripke complete iff the corresponding variety of modal algebras is complex~\cite{DBLP:journals/apal/Goldblatt89,DBLP:journals/mlq/Wolter96a}.

\medskip
\noindent
\textbf{Acknowledgements.}  We are grateful to the anonymous reviewer whose comments and  suggestions helped us to improve the presentation and terminology.



\providecommand{\noopsort}[1]{}

\end{document}